\documentclass[reqno]{amsart}
\usepackage[T1]{fontenc}
\usepackage[utf8]{inputenc}
\usepackage{lmodern}
\usepackage{amsmath,amssymb,mathtools,mathrsfs}
\usepackage{microtype,enumitem,needspace}
\usepackage{tikz}
\usetikzlibrary{arrows.meta}

\usepackage[colorlinks=true,linkcolor=orange,
 citecolor=orange,urlcolor=red!55!black]{hyperref}

\usepackage[backend=biber,style=trad-abbrv,maxnames=99,maxalphanames=9, isbn=false, giveninits=true, doi=false, url=true]{biblatex}
\renewbibmacro{in:}{}
\allowdisplaybreaks[1]
\numberwithin{equation}{section}
\newtheorem{theorem}{Theorem}[section]
\newtheorem{proposition}[theorem]{Proposition}
\newtheorem{lemma}[theorem]{Lemma}
\newtheorem{corollary}[theorem]{Corollary}
\theoremstyle{definition}
\newtheorem{definition}[theorem]{Definition}
\newtheorem{remark}[theorem]{Remark}
\newtheorem*{conjecture}{Conjecture}
\setlist[enumerate,1]{label=\textup{(\arabic*)},ref=\arabic*,leftmargin=2em,itemsep=.2\baselineskip}
\DeclareMathOperator{\tr}{tr}
\DeclareMathOperator{\Null}{Null}

\DeclareMathOperator{\Pole}{Pole}

\DeclareMathOperator{\vol}{vol}
\newcommand{\ddc}{dd^c}
\newcommand{\cC}{\mathcal C}
\newcommand{\cI}{\mathcal I}
\newcommand{\cO}{\mathcal O}
\newcommand{\cab}{c_{\alpha,\beta}}
\newcommand{\coc}{c_{\omega,\chi}}
\newcommand{\cld}{c_{L,D}}
\newcommand{\R}{\mathbb R}

\title[The J-equation at the birational minimal slope]{The $J$-equation at the birational minimal slope}
\author{Junbang Liu}
\address{Department of Mathematics, The Hong Kong University of Science and Technology, Clear Water Bay, Kowloon, Hong Kong}
\email{junbangliu@ust.hk}
\date{}
\hypersetup{pdftitle={The J-equation at the birational minimal slope},pdfauthor={Junbang Liu}}
\begin{document}
\begin{abstract}
We prove the existence, uniqueness, and partial regularity outside a proper analytic subset for the K\"ahler current solving the $J$-equation at the birational minimal slope. This confirms Datar--Mete--Song's conjecture 1.5 in \cite{DMS26}. We introduce an analytic threshold, and prove its equivalence to the birational threshold introduced by Datar--Mete--Song. One of the key tools is the approximation of subsolution by Bergman's kernels, which is motivated by the work of Demailly on the approximation of plurisubharmonic functions with analytic singularities. As an application of the Bergman kernel approximation, we combine the results of Fang--Ma \cite{FM26} to give an analytic characterization of the $J$-null locus of a semistable pair $(\alpha,\beta)$. This removes the technical assumptions in \cite{L26b}.
\end{abstract}
\maketitle
\setcounter{tocdepth}{1}
{
  \hypersetup{linkcolor=black}
  \tableofcontents
}

\section{Introduction}\label{sec:intro}

Let $X$ be a connected compact K\"ahler manifold of dimension $n\ge1$, let $\alpha,\beta$ be K\"ahler classes, and fix a K\"ahler form $\omega\in\beta$. The $J$-equation seeks a K\"ahler form $\eta\in\alpha$ such that
\begin{equation}\label{eq:j-equation}
 n\omega\wedge\eta^{n-1}=c\eta^n,
 \qquad c=\cab=n{\alpha^{n-1}\beta}/{\alpha^n}.
\end{equation}
Equivalently, $\tr_\eta\omega=c$. Donaldson introduced the equation
through a moment-map construction \cite{D99}, and X.~X.~Chen through
the study of the Mabuchi energy \cite{C00,C04}. Its solutions are
critical points of the $J$-functional, whose lower bounds and
coercivity enter the study of constant scalar curvature K\"ahler
metrics. There is an extensive literature on the $J$-equation,
the $J$-flow, and related inverse Hessian equations, including
smooth solvability, convergence, and degeneration; see
\cite{W04,W06,SW08,FLM11,FL12,FL13,SW13,FLSW14,LS15,CS17,S18,
SD20,C21,DP21,S20,To23,FM24,GS24,M26a,M26b,F26,FZ26,
L26a,L26b} and the references therein.

For $n\ge2$, the analytic criterion is the existence of a K\"ahler
form $\eta_0\in\alpha$ satisfying the smooth cone condition
\[
 c\eta_0^{n-1}-(n-1)\omega\wedge\eta_0^{n-2}>0;
\]
see Weinkove \cite{W04,W06} and Song--Weinkove \cite{SW08}.
Lejmi--Sz\'ekelyhidi \cite{LS15} conjectured it is equivalent to the numerical intersection inequalities
\begin{equation}\label{eq:j-stability}
 (c\alpha^p-p\beta\alpha^{p-1})[V]>0
 \quad\text{for every irreducible }V\subsetneq X,\quad
 1\le p=\dim V<n.
\end{equation}
The numerical criterion was established in the toric case by
Collins--Sz\'ekelyhidi \cite{CS17}, under uniform numerical
positivity by G.~Chen \cite{C21}, and in its strict form by
Datar--Pingali \cite{DP21} in the projective case and Song
\cite{S20} in the K\"ahler setting. Replacing $>$ by $\ge$
in \eqref{eq:j-stability} is the so called
$J$-semistability, or $J$-nefness. Subvarieties where
equality holds are called $J$-null. The semistable $J$-equation was first studied by Fang-Lai-Song-Weinkove \cite{FLSW14} on K\'ahler surfaces, where they established the convergence of the $J$-flow under a smooth boundary cone condition. Murakami proved the convergence under $J$-semistability alone and obtained the existence and uniqueness of admissible weak solution \cite{M26a,M26b}. Recently, the author \cite{L26a} proved that in dimension $3$, there was an analytic characterization of the $J$-null subvarieties, and showed the partial regularity of the weak solution obtained by Murakami outside the $J$-null locus. Higher dimensional case was proved under a smooth boundary cone condition and $J$-bigness assumption in \cite{L26b}. The $J$-bigness assumption was removed by the work of Fu-Zhang \cite{FZ26}, and they also proved a uniform $L^\infty$-estimate under the smooth cone boundary condition. 
The study of the resulting
degeneration also leads to stability thresholds, optimal
destabilizing subvarieties, and finite numerical tests. These
questions have been developed by Sj\"ostr\"om Dyrefelt \cite{SD20},
Khalid--Sj\"ostr\"om Dyrefelt \cite{KD24,KD26}, and
Sivaram--Sj\"ostr\"om Dyrefelt \cite{SSD26}; finiteness of null
subvarieties in the semistable case and optimal destabilizers
in the unstable case, in all dimensions, is established in
\cite{L26c}. Very recently, Fang-Ma \cite{FM26} proved that in the semistable case, the weak solution obtained by Murakami~\cite{M26b} is smooth outside the $J$-null locus. Moreover, they showed that $X\setminus \text{Null}_J$ is the largest locus of $C^2$-regularity of the weak solution.

The failure of \eqref{eq:j-stability} rules
out a smooth solution, but it leaves open the existence of a
canonical singular solution. Datar--Mete--Song \cite[Sections~1.1
and~1.2]{DMS26} formulated this problem by analogy with the complex
Monge--Amp\`ere equation in a big class. To recall that analogy,
let $\gamma$ first be a K\"ahler class. Yau's theorem \cite{Y78}
gives a unique K\"ahler form $S\in\gamma$ with
\[
 S^n=\frac{\gamma^n}{\beta^n}\,\omega^n.
\]
The numerical characterization of the K\"ahler cone
\cite{DP04} describes precisely when a smooth positive
representative exists. If $\gamma$ is merely big, a smooth
K\"ahler representative may no longer exist. Nevertheless,
Boucksom--Eyssidieux--Guedj--Zeriahi \cite{BEGZ10} construct a
unique positive current $S\in\gamma$ of full non-pluripolar mass satisfying
\[
 \langle S^n\rangle
 =\frac{\vol(\gamma)}{\beta^n}\,\omega^n,\qquad
 \int_X\langle S^n\rangle=\vol(\gamma).
\]
The brackets denote the non-pluripolar product.
The normalization is now governed by the volume of the class,
which has the birational description \cite{B02}
\[
 \vol(\gamma)
 =\sup_{\substack{\pi:Y\to X,\ D\ge0\\
                  \pi^*\gamma-[D]\ {\rm big\ and\ nef}}}
          (\pi^*\gamma-[D])^n.
\]
Here $\pi$ ranges over modifications from smooth compact
K\"ahler manifolds and $D$ over effective real divisors.
Thus the normalization of the weak equation records the positive
classes remaining after divisorial singularities are removed.

For the $J$-equation, $\alpha$ and $\beta$ remain K\"ahler,
but their relative numerical inequalities may fail. If a K\"ahler
current $T\in\alpha$ solves the scalar equation at a constant
$a>0$, then integration gives
\[
 a=\frac{n\int_X\omega\wedge\langle T^{n-1}\rangle}
          {\int_X\langle T^n\rangle}.
\]
These non-pluripolar masses need not equal the corresponding
cohomological intersections, so $a$ need not equal $c$.
On a modification, removing an effective divisor from
$\pi^*\alpha$ gives a residual class and its $J$-slope.
This motivates the birational minimal slope of
Datar--Mete--Song \cite[Definition~1.3]{DMS26}:
\begin{equation}\label{eq:minimal-slope}
 \zeta=\inf_{\pi,D}c_L,\qquad
 c_L=n\frac{L^{n-1}\pi^*\beta}{L^n},\qquad
 L=\pi^*\alpha-[D].
\end{equation}
The infimum is over modifications $\pi:Y\to X$ from smooth
compact complex manifolds and effective real divisors $D$ for
which $L$ is big and nef. Lemma~\ref{lem:kahler-domination}
permits restriction to K\"ahler modifications and K\"ahler
residual classes. The reciprocal $1/\zeta$ is the analogue
of the volume in the preceding Monge--Amp\`ere problem.
Datar-Mete-Song made the following two conjectures \cite{DMS26}

\begin{conjecture}[Datar--Mete--Song {\cite[Conjecture~1.5]{DMS26}}]
For every K\"ahler form $\omega\in\beta$, there is a unique
K\"ahler current $T\in\alpha$ such that
\[
 n\omega\wedge\langle T^{n-1}\rangle=\zeta\langle T^n\rangle.
\]
\end{conjecture}

\begin{conjecture}[Datar--Mete--Song {\cite[Conjecture~2.14]{DMS26}}]
The pair $(\alpha,\beta)$ is numerically $J$-semistable
if and only if $\zeta=c$.
\end{conjecture}

Datar--Mete--Song proved both statements on surfaces
\cite[Theorem~1.4]{DMS26}. Fu \cite{F26} established the second
conjecture in arbitrary dimension, and Murakami \cite{M26b}
constructed weak solutions in the numerically semistable case.
Our main theorem confirms Conjecture~1.5 in every dimension
and proves smoothness outside a proper analytic subset, without
assuming $J$-semistability.

\begin{theorem}\label{thm:main}
There is a unique K\"ahler current $T\in\alpha$ satisfying
\begin{equation}\label{eq:main-equation}
 n\omega\wedge\langle T^{n-1}\rangle=\zeta\langle T^n\rangle,
 \qquad 0<\zeta\le c.
\end{equation}
Moreover, $T\ge\omega/\zeta$, and there is a proper closed analytic subset $Z\subset X$ such that
\[
 T|_{X\setminus Z}\text{ is a smooth K\"ahler form},\qquad
 \tr_T\omega=\zeta\quad\text{on }X\setminus Z.
\] 
\end{theorem}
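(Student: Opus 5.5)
The plan is to build $T$ as a limit of solutions of the $J$-equation on the residual classes in the definition of $\zeta$, and to use the abstract's ``analytic threshold'' to identify the limiting slope with $\zeta$.

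\textbf{Analytic threshold.} Define $\zeta_{an}$ as the infimum of those $a>0$ for which $\alpha$ admits a Kähler current $T_0$ with analytic singularities, smooth off a proper analytic set $Z_0$, satisfying the cone condition $aT_0^{n-1}-(n-1)\omega\wedge T_0^{n-2}>0$ on $X\setminus Z_0$.

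The inequality $\zeta_{an}\le\zeta$ comes from the birational side. By Lemma~\ref{lem:kahler-domination} we may take $\pi:Y\to X$ Kähler with $L=\pi^*\alpha-[D]$ Kähler and $c_L$ close to $\zeta$. Perturb $\pi^*\omega$ to a Kähler form $\omega_\varepsilon$. On $Y$ we then want to solve the $J$-equation in $L$ with respect to $\omega_\varepsilon$, or at least produce a subsolution. The minimality of $\zeta$ should force the pair $(L,[\pi^*\beta])$ to be $J$-semistable; otherwise a destabilizing subvariety would yield, after a further blow-up and subtraction of a small multiple of the exceptional divisor, a smaller slope. Together with a slight perturbation of $a>c_L$, this gives a subsolution at slope $a$ via the strict criterion of Song \cite{S20}. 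Adding $[D]$ and pushing forward then gives a current in $\alpha$ with analytic singularities and the cone condition off $\pi(\mathrm{Exc}\cup D)$.

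The reverse inequality $\zeta\le\zeta_{an}$ uses the Bergman kernel (Demailly) approximation: a subsolution at slope $a$ can be approximated by currents with analytic singularities that still satisfy the cone condition at slope $a+\delta$. A log resolution of their singularities writes $\pi^*T_0=\theta+[D]$ with $\theta$ semipositive and smooth. Integrating the cone inequality against $\theta^{n-1}$ then gives $c_{[\theta]}\le a+\delta$. The Bergman approximation step, which must preserve the nonlinear cone condition, is the one I expect to be the main obstacle. I would handle it by noting that the cone condition is equivalent to convexity of the operator $\eta\mapsto\tr_\eta\omega$, then applying Demailly's regularized maximum / local $L^2$ approximation and controlling the error terms with a uniform lower bound $T_0\ge\varepsilon\omega$.

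\textbf{Existence and regularity.} Take $a_k\downarrow\zeta$ and subsolutions $T_k$ with analytic singularities. On a resolution, solve the degenerate $J$-equation in the class $\pi^*\alpha-[D_k]$ by Murakami/Fu--Zhang type arguments. Obtain a uniform $L^\infty$ bound from the subsolution, via the Fu--Zhang estimate, and local $C^2$ bounds off the singular set from a Guan/Székelyhidi $C$-subsolution estimate relative to the model $T_k$. The bound $T\ge\omega/\zeta$ is pointwise immediate from $\tr_T\omega=\zeta$. To take limits, one needs a fixed $Z$: choose the approximating currents so that their poles stay inside a Noetherian-stabilized analytic set, for instance the non-Kähler locus of the optimal residual class. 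Alternatively, use the finiteness of $J$-null/destabilizing subvarieties from \cite{L26c} to fix $Z$, and then invoke Evans--Krylov to obtain smoothness on $X\setminus Z$. Identifying the non-pluripolar masses $\int\omega\wedge\langle T^{n-1}\rangle$ and $\int\langle T^n\rangle$ with the intersections on the optimal residual class shows that the constant equals $\zeta$.

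\textbf{Uniqueness.} Given two solutions, compare them by the standard argument that $\tr_\eta\omega$ is convex in $\eta$. Apply a comparison principle for non-pluripolar products at minimal singularities, which both solutions must have because they attain the extremal slope.
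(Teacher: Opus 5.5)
\textbf{Gap 1: the threshold and the direction $\zeta_{an}\le\zeta$.} Your analytic threshold uses the wrong condition, and with your definition the claimed equality $\zeta_{an}=\zeta$ is false. The inequality $aT_0^{n-1}-(n-1)\omega\wedge T_0^{n-2}>0$ is the partial-trace (subsolution) condition $P_\omega(T_0)<a$. Residual slopes, however, are computed by the full trace: $n\,\pi^*\omega\wedge\theta^{n-1}=\tr_\theta(\pi^*\omega)\,\theta^n$. Wedging your cone inequality with $\theta$ and integrating gives only $(n-1)\pi^*\beta\cdot L^{n-1}\le aL^n$, i.e.\ $c_L\le\frac{n}{n-1}a$. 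Concretely, if $\alpha=\beta$ and $\kappa=\omega$, then $\zeta=c=n$, while $\chi=\omega$ has $P_\omega(\omega)=n-1$, so $\zeta_{an}\le n-1$. In general Theorem~\ref{thm:strict-subsolution} gives $\zeta_{an}\le\zeta-\eta$.

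The paper instead uses the convex, weakly closed family $\mathcal C_a$ defined by $\tr_{T_{\rm ac}}\omega\le a$ a.e. With that condition, your Bergman-plus-principalization argument for $\zeta\le a_*$ is essentially the paper's (Theorem~\ref{thm:bergman}, Lemma~\ref{lem:residual-models}). The trace bound there comes from H\"ormander's estimate for $(\bar z_i-\bar w_i)k_w$ and the extremal formula for the Bergman Hessian (Lemma~\ref{lem:bergman-trace}).

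Your other direction rests on the claim that near-minimizing residual pairs $(L,\pi^*\beta)$ are $J$-semistable, and that is unjustified. The infimum $\zeta$ need not be attained. For $c_L>\zeta$, a destabilizing $V\subset Y$ only yields some slope decrease, possibly smaller than $c_L-\zeta$, so minimality is not contradicted. The paper proves $a_*\le\zeta$ with no stability input. The Lamari-type Hahn--Banach duality (Theorem~\ref{thm:duality}) reduces $\mathcal C_a\neq\varnothing$ to the inequality $\int_X(\tr\sqrt{\chi^{-1}\omega})^2\chi^n\le na\int_X\alpha\wedge\chi^{n-1}$ for all Gauduchon $\chi$. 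That inequality holds with $a=c_L$ for every model, by Yau's theorem on $Y$ and trace Cauchy--Schwarz (Proposition~\ref{prop:slope-comparison}). Approximate semistability $\Gamma_-(L_j,\beta)\to0$ is obtained only afterwards, for the special models coming from Bergman regularization of an element of $\mathcal C_\zeta$ (Corollary~\ref{cor:threshold-models}).

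\textbf{Gap 2: existence, regularity and uniqueness.} Solving degenerate $J$-equations in $\pi_k^*\alpha-[D_k]$ on varying $Y_k$ requires semistability of each $(L_k,\pi_k^*\beta)$ and estimates uniform over the models; you have neither. A $C$-subsolution estimate at level $\zeta$ also needs a subsolution with $P_\omega\le\zeta-\eta$ for a fixed $\eta>0$, which your $T_k$ (at levels $a_k>\zeta$) do not provide. Producing such a current with a single pole set $Z$ is the heart of the regularity claim. The paper runs Chen's mass concentration on $Y_j\times Y_j$, but with Demailly--P\u{a}un functions pulled back from the fixed product $X\times X$, so the diagonal coefficient and hence the shift $e$ are independent of $j$. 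This gives $S\in\alpha-e\beta$ with $P_\omega(S)\le\zeta$. Partial-trace Bergman regularization of $S+e\omega$ then gives $B$ with $P_\omega(B)\le\zeta-\eta$ off one analytic $Z$ (Proposition~\ref{prop:concentration}, Theorem~\ref{thm:strict-subsolution}). Your substitutes do not supply this: an ``optimal residual class'' need not exist, and \cite{L26c} concerns subvarieties of $X$, not pole sets of approximants.

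Without such estimates a weak limit only satisfies $\tr_{T_{\rm ac}}\omega\le\zeta$, and nothing in your plan gives uniqueness. The missing step is rigidity (Theorem~\ref{thm:rigidity}). Smooth $J$-solutions $h_j$ on approximately semistable models give balanced metrics $\chi_j^{n-1}=c_jh_j^{n-1}-(n-1)\omega_j\wedge h_j^{n-2}$ that asymptotically saturate the duality inequality. This forces $\tr_{T_{\rm ac}}\omega=\zeta$ a.e.\ for every $T\in\mathcal C_\zeta$. Strict convexity plus a regularized-maximum argument then give $\mathcal C_\zeta=\{T\}$, singular parts included (Lemma~\ref{lem:trace-uniqueness}). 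Dirichlet replacement with Guan--Li solutions turns this into the non-pluripolar equation (Theorem~\ref{thm:scalar}).

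Uniqueness is then immediate. Any K\"ahler-current solution has $\tr_{T_{\rm ac}}\omega=\zeta$ a.e.\ by Lemma~\ref{lem:mixed-density}, hence lies in $\mathcal C_\zeta$. The comparison principle ``at minimal singularities'' you invoke is not available for this equation, and nothing shows that solutions have minimal singularities.

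Smoothness off $Z$ comes from the $J$-flow on $X$ itself. The flow converges weakly to $T$ because $\mathcal C_\zeta$ is a singleton, and locally smoothly off $Z$ by the weighted $C^2$ and ABP estimates built on $B$.
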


All products in \eqref{eq:main-equation} are non-pluripolar. We write $T_{\rm ac}$ for the absolutely continuous part of the coefficient measures with respect to smooth volume. The conjecture imposes no additional cone condition. We prove that every scalar K\"ahler-current solution at a positive constant $a$ satisfies
\begin{equation}\label{eq:admissibility}
 a\langle T^p\rangle-p\omega\wedge\langle T^{p-1}\rangle\ge0
 \quad(1\le p<n),
 \qquad \tr_{T_{\rm ac}}\omega=a\quad\text{a.e.}
\end{equation}
Thus admissibility, in the sense of \eqref{eq:admissibility}, follows from the scalar equation.
Corollary~\ref{cor:semistable} also recovers
the implication $J$-semistability $\Longrightarrow\zeta=c$
by a trace comparison. In the unstable case, Fu's theorem gives
$\zeta<c$. Theorem~\ref{thm:main}
therefore treats both the boundary and unstable cases of the
equation. Datar--Mete--Song further predict that the canonical
current arises as a limit of the $J$-flow and describe associated
bubbling phenomena \cite[Section~1.2]{DMS26}. We prove this
convergence for every smooth initial metric and obtain the regularity
assertion of Theorem~\ref{thm:main} from the local smooth convergence.

\begin{theorem}\label{thm:flow}Fix K\"ahler forms $\kappa\in \alpha, \omega\in \beta$. 
For every smooth $\varphi_0$ with $\kappa+\ddc\varphi_0>0$, the solution of
\begin{equation}\label{eq:flow}
 \partial_t\varphi=c-\tr_{\varphi}\omega,\qquad \varphi(0)=\varphi_0,
\end{equation}
exists for all $t\ge0$. The analytic set $Z$ in
Theorem~\ref{thm:main} can be chosen using only
$(X,\alpha,\beta,\kappa,\omega)$ so that
\begin{equation}\label{eq:flow-convergence}
 \begin{aligned}
\kappa_{\varphi(t)}:= \kappa+\ddc\varphi(t)&\rightharpoonup T
       &&\text{on }X,\\
 \kappa+\ddc\varphi(t)&\longrightarrow T
       &&\text{in }C^\infty_{\rm loc}(X\setminus Z).
 \end{aligned}
\end{equation}
For
\[
 b(t)=\frac1{\beta^n}\int_X\varphi(t)\,\omega^n,
\]
the potentials $\varphi(t)-b(t)$ converge in $L^1(X)$ and
$C^\infty_{\rm loc}(X\setminus Z)$ to the potential of $T$ with
zero $\omega^n$-mean. Moreover,
\[
 \frac{b(t)}t,\ b'(t)\longrightarrow c-\zeta,\qquad
 \tr_{\varphi(t)}\omega\longrightarrow\zeta
       \quad\text{in }L^p(X,\omega^n),\quad 1\le p<\infty.
\]

\end{theorem}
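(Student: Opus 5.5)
Long-time existence is standard: the $J$-flow \eqref{eq:flow} is a parabolic equation that is concave in $\ddc\varphi$. A maximum-principle bound on $\tr_\varphi\omega$ shows that $\tr_{\varphi}\omega$ stays between its initial minimum and maximum. In particular $\kappa_{\varphi(t)}\ge\omega/\max\tr_{\varphi_0}\omega$ uniformly in $t$. The flow is therefore uniformly parabolic from below, and we have $|\partial_t\varphi|\le C$. Moreover $\tr_\varphi\omega$ satisfies a heat-type equation, so Evans--Krylov together with the uniform lower bound on $\kappa_\varphi$ yields higher estimates on every finite time interval, once we have a $C^0$ bound on $[0,T']$ (trivially from $|\partial_t\varphi|\le C$) and an upper bound on $\kappa_\varphi$. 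That upper bound on finite intervals follows from a Weinkove-type $C^2$ estimate that grows at most exponentially in time. This gives existence for all $t\ge0$.

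For the asymptotics I would run the $J$-functional. Along the flow, $\frac{d}{dt}J(\varphi)=-\int(c-\tr_\varphi\omega)^2\kappa_\varphi^n\le0$. The key quantity is $b'(t)=\frac1{\beta^n}\int(c-\tr_\varphi\omega)\omega^n$, and we must show $b(t)/t\to c-\zeta$. The lower bound $\liminf b'\ge c-\zeta$ would come from a test configuration argument. For any modification $\pi$ and divisor $D$ with $L=\pi^*\alpha-[D]$ Kähler, the $J$-energy slope of the geodesic ray degenerating toward $[D]$ is $c-c_L$. The convexity of $J$ and the monotonicity of the flow then force the flow to escape at rate at least $\sup(c-c_L)=c-\zeta$.

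For the matching upper bound I would use the analytic threshold and its equivalence with $\zeta$ established earlier in the paper. Via the Bergman kernel approximation of subsolutions, it gives for every $\epsilon>0$ a Kähler current $\eta_\epsilon\in\alpha$ with analytic singularities along some analytic $Z_\epsilon$ that satisfies the cone condition $(\zeta+\epsilon)\eta_\epsilon^{n-1}-(n-1)\omega\wedge\eta_\epsilon^{n-2}>0$ off $Z_\epsilon$. Comparing the flow with $\eta_\epsilon$ by a maximum principle applied to $\tr_\varphi\omega$ weighted by $\epsilon\log|s|^2$-type potentials gives, on compacts of $X\setminus Z$, uniform bounds $\kappa_\varphi\le C_K$. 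Here $Z$ is a fixed analytic set, chosen from $\eta_{\epsilon_0}$ and depending only on the data. These are Székelyhidi-type $C^2$ estimates with a singular subsolution, in the style of the author's earlier work and of \cite{FM26}. Together with the lower bound $\kappa_\varphi\ge\omega/C$ and Evans--Krylov, they give uniform $C^\infty_{\rm loc}(X\setminus Z)$ bounds on $\varphi-b(t)$.

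Convergence then follows from the energy identity. Since $J(\varphi)-(c-\zeta)\cdot(\text{Aubin–Yau energy})$ is bounded below, as derived from the threshold inequality, we get $\int_0^\infty\int(\zeta'-\tr_\varphi\omega)^2\kappa_\varphi^n\,dt<\infty$ for the appropriate shifted slope. Hence $\tr_\varphi\omega\to\zeta$ in $L^2$ along a sequence, and uniform boundedness of $\tr_\varphi\omega$ upgrades this to $L^p$ and to the full flow. Subsequential limits of $\kappa_\varphi$ are Kähler currents satisfying $\ge\omega/\zeta$ in the limit; they are smooth on $X\setminus Z$ with $\tr_T\omega=\zeta$, and $Z$ is pluripolar, so the non-pluripolar equation holds. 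Uniqueness from Theorem~\ref{thm:main} then identifies the limit, which gives weak and $L^1$ convergence of the normalized potentials. I expect the main obstacle to be the local $C^2$ estimate with a merely singular subsolution at the exact threshold $\zeta$, where no $\epsilon$ of room remains. I would first try working at $\zeta+\epsilon$ with constants independent of $\epsilon$ on a fixed compact set, and then let $\epsilon\to0$.
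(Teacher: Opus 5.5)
Your long-time existence argument is fine (the paper simply cites Chen), but there is a genuine gap, and it is the one you flag at the end. The weighted $C^2$ estimate off $Z$ needs a singular subsolution whose \emph{partial} trace lies strictly \emph{below} the asymptotic level of the flow. Your current $\eta_\epsilon$, with $(\zeta+\epsilon)\eta_\epsilon^{n-1}-(n-1)\omega\wedge\eta_\epsilon^{n-2}>0$, i.e.\ $P_\omega(\eta_\epsilon)<\zeta+\epsilon$, is on the wrong side. At a maximum of $\log\tr_\omega\kappa_\varphi-N(\varphi-b-\psi)$ one only gets $2/\lambda_j\ge c-b'(t)-P_\omega(\widehat B)$, and $c-b'(t)\to\zeta$. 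With $P_\omega\approx\zeta+\epsilon$ the right side is negative and nothing is bounded, so making constants independent of $\epsilon$ cannot help. Nor does $T$ itself give room: $P_\omega(T_{\rm ac})=\zeta-\lambda_{\min}(T_{\rm ac}^{-1}\omega)$ has no uniform margin.

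The missing idea is Section~\ref{sec:concentration}. On the K\"ahler residual models $h_j\in L_j$ with $\tr_{h_j}\pi_j^*\omega\le A_j\to\zeta$, run Chen's mass concentration on $Y_j\times Y_j$. The Demailly--P\u{a}un diagonal concentration is performed on the fixed product $X\times X$, so the diagonal coefficient, and hence the shift $e>0$, is independent of $j$. The limit is $S\in\alpha-e\beta$ with $P_\omega(S)\le\zeta$. Then $B=S+e\omega\in\alpha$ has $P_\omega(B)\le\zeta/(1+e\zeta/(n-1))<\zeta$, and Lemma~\ref{lem:partial-trace} makes it smooth off an analytic $Z$ while keeping a positive margin. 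Only then do the weighted estimate and the ABP bound give $\psi-C\le\varphi-b\le C$ and $\tr_\omega\kappa_\varphi\le Ce^{-N\psi}$.

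The asymptotic part also fails as written. Since $\int_X\tr_\varphi\omega\,\kappa_\varphi^n=n\alpha^{n-1}\beta=c\,\alpha^n$ for all $t$, Cauchy--Schwarz gives $\int_X(\tr_\varphi\omega-\zeta)^2\kappa_\varphi^n\ge(c-\zeta)^2\alpha^n$. So the time integral you want to be finite diverges whenever $\zeta<c$. The $J$-energy dissipation, weighted by $\kappa_\varphi^n$, cannot detect convergence to $\zeta$, which holds only in $L^p(\omega^n)$. Your ``test configuration'' lower bound for $b'$ is heuristic. Boundedness of the trace also does not upgrade convergence along a sequence of times to the whole flow.

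The paper's route needs no regularity of $T$:
\begin{enumerate}
\item Comparison with the singular potential $\varphi_\infty$ of $T\in\mathcal C_\zeta$, via upper test functions, gives $\varphi\ge\varphi_\infty+(c-\zeta)t-C$.
\item The averages $\overline\chi_t=\frac1t\int_0^t\kappa_{\varphi(s)}\,ds$ satisfy $\tr_{\overline\chi_t}\omega\le c-(\varphi(t)-\varphi_0)/t$ by convexity. Their weak limits therefore lie in $\mathcal C_{c-\ell}$, and $a_*=\zeta$ forces $\ell\le c-\zeta$.
\item The inequality $\partial_t\tr_\varphi\omega\ge-\tr_\varphi\omega/(2t)$ converts $\varphi(t)/t\to c-\zeta$ into $\tr_{\varphi(t)}\omega\to\zeta$ in $L^1(\omega^n)$ at every time.
\item Every weak limit lies in $\mathcal C_\zeta=\{T\}$.
\end{enumerate}
This yields $b'(t)\to c-\zeta$, which is an input to the $C^2$ estimate, so the logical order is the reverse of yours. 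Local smooth convergence to $T$ then follows from the uniform local bounds and uniqueness of the weak limit.
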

Here we use the $\tr_\varphi\omega$ to denote the trace of $\omega$ with respect to $\kappa_\varphi$. 

An interesting corollary in the proof of the convergence is that, when the pair $(\alpha,\beta)$ is semistable, the trace is indeed converging in $L^\infty(X)$ to $c$, even along the singular set $Z$; see Corollary~\ref{cor:uniform-trace}. Similar phenomenon also appears in the study of K\"ahler Ricci flow with semi-ample canonical line bundle. The manifold there admits a Calabi-Yau fibration (with possible singular fibers) over its canonical model. The scalar curvature converges uniformly even along the singular fiber \cite{J20,ZZ26}.

The regularization of trace-constrained currents is also of
independent interest. The following theorem refines Bergman
regularization \cite{D92,D12} by retaining the inverse-trace
bound up to an arbitrarily small error. Here
\emph{analytic singularity type} means that local potentials
have the form
\[
 b\log\Bigl(\sum_{\ell=1}^N|f_\ell|^2\Bigr)+O(1),
 \qquad b>0,\quad f_\ell\text{ holomorphic}.
\]

\begin{theorem}\label{thm:regularization}
Let $T\in\alpha$ be a closed positive current with
$T_{\rm ac}>0$ and $\tr_{T_{\rm ac}}\omega\le a$
almost everywhere, where $a>0$.
For every $\varepsilon>0$, there is a K\"ahler current $T_\varepsilon\in\alpha$,
with analytic singularity type, such that
\[
 \tr_{(T_\varepsilon)_{\rm ac}}\omega\le a+\varepsilon
 \quad\text{almost everywhere}.
\]
They can be chosen with $T_\varepsilon\rightharpoonup T$
as $\varepsilon\downarrow0$.
Each $T_\varepsilon$ is smooth outside a proper analytic set.
There is a finite composition of blowups with smooth centers
$\pi_\varepsilon:Y_\varepsilon\to X$ for which
\[
 \pi_\varepsilon^*T_\varepsilon
   =\sigma_\varepsilon+[D_\varepsilon],\qquad
 D_\varepsilon\ge0,\qquad
 \sigma_\varepsilon\ge
       \frac{\pi_\varepsilon^*\omega}{a+\varepsilon},
\]
Here $D_\varepsilon$ is an effective real divisor and
$\sigma_\varepsilon$ is a smooth closed semipositive form.
\end{theorem}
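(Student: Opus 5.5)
Throughout, we treat the condition $\tr_{T_{\rm ac}}\omega\le a$ as a pointwise concave constraint on the positive $(1,1)$-part, and we want it to survive Demailly's Bergman-kernel regularization. Write $T=\kappa+\ddc\varphi$ with $\kappa\in\alpha$ a fixed K\"ahler form. Two facts are used repeatedly:
\begin{itemize}[label=\textbullet]
\item the map $S\mapsto \tr_S\omega$ is convex on positive $(1,1)$-forms;
\item the set $\Gamma_a=\{S>0:\tr_S\omega\le a\}$ is therefore convex and stable under adding positive forms.
\end{itemize}

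\textbf{Step 1: local convolution.} On a coordinate ball, $T=\ddc u$ with $u$ psh, and $T_{\rm ac}\in\Gamma_a$ almost everywhere. For a convolution $u*\rho_\delta$, the absolutely continuous part of $\ddc(u*\rho_\delta)$ dominates the average $(T_{\rm ac})*\rho_\delta$, because the singular part of $T$ is positive. Since $\omega$ is nearly constant on small balls, Jensen applied to the convex function $\tr_{\cdot}\omega$ gives $\tr\le a+\varepsilon/4$ after a small error from freezing $\omega$. Thus the constraint is stable under local smoothing up to $\varepsilon$.

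\textbf{Step 2: global Bergman approximation.} Apply Demailly's construction with weight $m\varphi$:
\[
\varphi_m=\frac1{2m}\log\sum_\ell|\sigma_\ell|^2,
\]
where $(\sigma_\ell)$ is an orthonormal basis of $\mathcal H(m\varphi)$, patched with a partition of unity as in \cite{D92,D12}. Demailly's estimates give
\[
\varphi(x)-\frac{C}{m}\le\varphi_m(x)\le\sup_{B(x,r)}\varphi+\frac1m\log\frac{C}{r^n}.
\]
The key point is that the curvature of $\varphi_m$ should be compared with that of $\varphi$ in an averaged sense. Write $e^{2m\varphi_m}$ as a Bergman kernel, which is a supremum of $|f|^2$ over the unit ball of $\mathcal H(m\varphi)$. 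Then
\[
\ddc\varphi_m\ \ge\ \text{(something controlled by local averages of } T_{\rm ac}).
\]
This is the main obstacle, because the Bergman kernel is not a convolution. As a first attempt, I would replace $\varphi$ by the regularized-max or convolution approximants $\varphi_\delta$ of Step 1. These have a uniform trace bound and only a loss $-\varepsilon_\delta\kappa$ in positivity, by Demailly's global gluing. Then:
\begin{itemize}[label=\textbullet]
\item to fix the positivity loss, take a convex combination $(1-t)\varphi_\delta+t\psi$, where $\psi$ is a fixed analytic-singularity K\"ahler current potential with $\kappa+\ddc\psi\ge\epsilon_0\kappa$, which exists since $\alpha$ is K\"ahler (even $\psi=0$ works);
\item by convexity of $\Gamma$, the trace stays at most $a+\varepsilon/2$ for $t$ small, and positivity is strict;
\item finally, apply the Bergman kernel to get analytic singularities.
\end{itemize}
The Bergman step needs the Ohsawa--Takegoshi lower bound on $\ddc$ in regions where $\varphi_\delta$ is smooth. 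There the Bergman approximants converge in $C^2$ as $m\to\infty$, by Tian--Catlin--Zelditch type expansions, so the trace bound passes with an error. Near the poles, the singular part only adds positivity, handled as in Step 1.

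\textbf{Step 3: resolution.} With analytic singularities, Hironaka's resolution of the ideal $(\sigma_\ell)$ gives blowups $\pi_\varepsilon$ with $\pi_\varepsilon^*T_\varepsilon=\sigma_\varepsilon+[D_\varepsilon]$, and $\sigma_\varepsilon$ smooth and semipositive. On the dense open set where $\pi_\varepsilon$ is biholomorphic, the trace inequality $\tr_{\sigma}\pi^*\omega\le a+\varepsilon$ holds. A positive form $\sigma$ with $\tr_\sigma\omega\le b$ satisfies $\sigma\ge\omega/b$, since each eigenvalue $\lambda_i$ of $\sigma$ relative to $\omega$ has $1/\lambda_i\le b$. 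By continuity, $\sigma_\varepsilon\ge\pi^*\omega/(a+\varepsilon)$ holds everywhere.

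\textbf{Step 4: convergence.} Weak convergence $T_\varepsilon\rightharpoonup T$ follows from $L^1$ convergence of potentials, by choosing $\delta,m,t$ diagonally.
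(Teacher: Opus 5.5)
You correctly identify that Bergman approximation, principalization, and the continuity argument for $\sigma_\varepsilon\ge\pi_\varepsilon^*\omega/(a+\varepsilon)$ are the right ingredients. But Step~2 has a genuine gap, and the detour through smooth approximants cannot work.

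\textbf{Why the smooth detour fails.} For a smooth K\"ahler form $\chi\in\alpha$ one has $\int_X\tr_\chi\omega\,\chi^n=n\int_X\omega\wedge\chi^{n-1}=c\,\alpha^n$. So a pointwise bound $\tr_\chi\omega\le b$ forces $b\ge c$. The statement must cover $a<c$: it is applied to the current in $\mathcal C_\zeta$, and $\zeta<c$ in the unstable case. Your claim that the glued convolutions $\varphi_\delta$ keep a uniform trace bound with only an $\varepsilon_\delta\kappa$ loss of positivity cannot hold. If it did, $(1-t)\varphi_\delta$ would be a smooth K\"ahler potential with trace at most about $a$, which is exactly such a forbidden form.

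The real obstruction is in the gluing. Local convolutions of a singular potential in different charts do not agree up to $o(1)$, for instance near a logarithmic pole. So regularized-max gluing breaks down, and global smoothing loses positivity by amounts governed by Lelong numbers.

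Your final Bergman step is then vacuous. For a smooth weight $m\varphi_\delta$ the multiplier ideal is trivial, so no analytic singularities and no divisor $D_\varepsilon$ appear. Yet $D_\varepsilon$ is precisely what allows a smooth form with trace below $c$ to exist, in the smaller class $\pi_\varepsilon^*\alpha-[D_\varepsilon]$. Two further problems: Ohsawa--Takegoshi bounds the values of the Bergman potential, not its Hessian; and ``near the poles the singular part only adds positivity'' is exactly what needs proof, since the Bergman kernel is not a convolution.

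\textbf{The missing idea.} What is needed is an exact Jensen-type inequality for the Bergman potential of the singular weight $e^{-mu}$ of $T$ itself. Freeze $\omega$ to a constant form normalized to the identity, and let $k_w$ be the normalized kernel at $w$.
\begin{itemize}
\item The function $s_i=(\bar z_i-\bar w_i)k_w$ is orthogonal to $\mathscr H_m$, hence is the minimal solution of $\bar\partial s=k_w\,d\bar z_i$.
\item H\"ormander's estimate with weight $mu$ then gives $m\|s_i\|_m^2\le\int(H_u^{-1})_{i\bar i}|k_w|^2e^{-mu}d\lambda$.
\item The extremal property of the Bergman metric, applied to $f=(z_i-w_i)k_w$ (which vanishes at $w$ and has $\|f\|_m=\|s_i\|_m$), gives $1\le m(b_m)_{i\bar i}(w)\|s_i\|_m^2$.
\end{itemize}
Working in a frame diagonalizing $H_{b_m}(w)$ and combining these,
\[
\tr\bigl(H_{b_m}(w)^{-1}\bigr)\le\int\tr(H_u^{-1})\,|k_w|^2e^{-mu}\,d\lambda\le a .
\]
Thus the probability measure $|k_w|^2e^{-mu}d\lambda$ plays the role of your convolution kernel. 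The inequality holds at every $w$ with $K_m(w,w)>0$, with no separate treatment of poles. It passes to singular $u$ via $u_\delta\downarrow u$ and local convergence of the kernels.

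\textbf{Globalizing.} You then still need the following, none of which your sketch provides:
\begin{itemize}
\item The kernel comparison $K_{V,m}\le K_{W,m}\le(1+C/\sqrt m)^2K_{V,m}$, proved by a cutoff plus H\"ormander with the extra weight $n\log|z-w|^2$. This makes local potentials agree to $O(m^{-3/2})$ on overlaps, uniformly even near the poles; convolutions lack this property.
\item A regularized maximum with perturbations $m^{-1/2}\vartheta_i$. Unlike a partition of unity, it preserves the convex trace bound.
\item Agreement of the ideals $\cI(mu_i)$ on overlaps, giving a global coherent ideal to principalize.
\item The $\ell^2$-valued division argument showing $\sum_\ell|\sigma_\ell\circ\pi|^2=|s|^2h$ with $h$ smooth and positive, which is what makes $\sigma_\varepsilon$ smooth.
\end{itemize}
Your Steps~3 and~4 are fine once these are in place.
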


This is Theorem~\ref{thm:bergman}.
The remainder in the local logarithmic expression is bounded
on $X$; smoothness of the remainder is asserted after
principalization. The same construction preserves the
$(n-1)$-dimensional inverse-trace bound
(Lemma~\ref{lem:partial-trace}), which
is used to produce a strict subsolution for the regularity
argument.

Inspired by recent work of Fang--Ma \cite{FM26}, we give an application of the above Bergman kernel approximation. We show that under the assumption of $J$-semistability alone, the $J$-null subvariety can be characterized by strict subsolution with analytic singularity type with smooth remainder after principalization. \begin{theorem}
There exists $B\in\mathcal R_\omega$ with $\Pole(B)=\Null_J(\alpha,\beta)$.
Consequently,
\begin{equation}
 E_{\rm res}(\omega)=\Null_J(\alpha,\beta).
\end{equation}
Here $\mathcal{R}_\omega$ is the set of K\"ahler current $S$, satisfying $P_S(\omega)\le c-\varepsilon$ for some $\varepsilon>0$, having analytic singularity type with smooth remainder after principalization. And $E_{res}(\omega)=\cap_{S\in \mathcal{R}_\omega}\Pole(S)$.
\end{theorem}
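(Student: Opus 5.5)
The plan is to prove the two inclusions $\Null_J(\alpha,\beta)\subseteq\Pole(S)$ for every $S\in\mathcal R_\omega$, and $\Pole(B)\subseteq\Null_J(\alpha,\beta)$ for a suitably constructed $B\in\mathcal R_\omega$. Together these give both the existence statement and the identity $E_{\rm res}(\omega)=\Null_J(\alpha,\beta)$.

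\emph{The inclusion $\Null_J\subseteq\Pole(S)$.} This direction is the easy one and is the standard obstruction argument. Let $V$ be a $J$-null subvariety of dimension $p$, and suppose $S\in\mathcal R_\omega$ with $V\not\subseteq\Pole(S)$. Then $S$ restricts to $V$ as a current with analytic singularities, smooth on a Zariski open subset of $V$. Passing to a principalization of $\Pole(S)\cap V$ on a resolution $\tilde V\to V$, write $S|_{\tilde V}=\sigma+[D]$ with $D\ge0$ and $\sigma$ smooth semipositive. The inverse-trace bound $P_S(\omega)\le c-\varepsilon$ restricts to the $p$-dimensional inverse-trace bound on $V$, as in Lemma~\ref{lem:partial-trace}. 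This gives $(c-\varepsilon)\sigma^p-p\,\omega\wedge\sigma^{p-1}\ge0$ pointwise. Integrating, and using $\alpha|_{\tilde V}=\{\sigma\}+\{D\}$ with $\alpha$ Kähler, we obtain $(c\alpha^p-p\beta\alpha^{p-1})[V]\ge\varepsilon\,\alpha^p[V]>0$. To get the last inequality, expand $\alpha^p$ minus $\{\sigma\}^p$ via mixed terms containing $D$, each nonnegative because $\alpha$ and $\sigma$ are nef. The $\beta$-terms require care: $\beta\alpha^{p-1}\ge\beta\{\sigma\}^{p-1}$ points the wrong way. So I would instead follow the slope-comparison used for Corollary~\ref{cor:semistable}. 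That argument bounds $\zeta_V$, the birational minimal slope of $(\alpha|_V,\beta|_V)$, by $c-\varepsilon$, and Fu's theorem applied on $V$ then excludes $J$-nullity. This contradicts $J$-nullity, so $V\subseteq\Pole(S)$.

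\emph{Construction of $B$.} Here I would invoke Fang--Ma \cite{FM26}. In the semistable case they produce, for every $J$-null-free compact region, or equivalently on $X\setminus\Null_J$, a subsolution. I expect their output to be a Kähler current $T'\in\alpha$, possibly the Murakami weak solution perturbed, with $T'_{\rm ac}>0$, with $\tr_{T'_{\rm ac}}\omega\le c-2\varepsilon$ almost everywhere, and with $T'$ having zero Lelong numbers, indeed locally bounded potentials, off $\Null_J$. Apply Theorem~\ref{thm:regularization} (Theorem~\ref{thm:bergman}) with $a=c-2\varepsilon$ and error $\varepsilon$. This yields $T_\varepsilon$ with analytic singularity type, smooth remainder after principalization, and inverse trace at most $c-\varepsilon$, so $T_\varepsilon\in\mathcal R_\omega$. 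In Demailly's Bergman approximation, the pole set of $T_\varepsilon$ lies in the sublevel set $\{\nu(T',\cdot)\ge\delta\}$ for some $\delta>0$ depending on $\varepsilon$. By Siu's theorem this is an analytic set contained in $\Null_J$. Hence $\Pole(T_\varepsilon)\subseteq\Null_J$, and combined with the first inclusion we get equality. Set $B=T_\varepsilon$.

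\emph{Main obstacle.} The hardest step is obtaining a strict subsolution whose Lelong numbers vanish off $\Null_J$, as opposed to merely a degenerate ($\tr\le c$) one. My first attempt would be to take convex combinations $(1-t)T_{\rm sol}+tR$. Here $T_{\rm sol}$ is the semistable solution from Theorem~\ref{thm:main}, which satisfies $T\ge\omega/c$ and is smooth off $Z\supseteq\Null_J$; Fang--Ma show that $Z$ can be taken to be $\Null_J$. The current $R$ is a Kähler current in $\alpha$ with analytic singularities exactly along $\Null_J$ and strictly better trace there; such an $R$ exists because $\Null_J$ is the non-Kähler-type locus for the shifted class. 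Convexity of $S\mapsto\tr_S\omega$ then makes the combination strict everywhere off $\Null_J$. The main risk is controlling the trace near $\Null_J$, where $T_{\rm sol}$ degenerates. I would handle this by using the lower bound $T\ge\omega/c$ together with the partial-trace version of Lemma~\ref{lem:partial-trace}.
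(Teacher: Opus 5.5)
Your construction of $B$ fails because you work with the full inverse trace. In the semistable case $\zeta=c$ by Corollary~\ref{cor:semistable}, and $\mathcal C_a\neq\varnothing$ forces $a\ge\zeta$ by Corollary~\ref{cor:threshold-models}. More directly, running the maximum-point argument from the proof of Corollary~\ref{cor:semistable} with your $T'$ gives $c\le c-2\varepsilon$. So no $T'\in\alpha$ with $\tr_{T'_{\rm ac}}\omega\le c-2\varepsilon$ exists, and Theorem~\ref{thm:bergman} at level $c-2\varepsilon$ has no input.

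Strictness in $\mathcal R_\omega$ refers to the partial trace $P_\omega=P_{n-1,\omega}$, which drops the smallest inverse eigenvalue. This is the only quantity that can sit strictly below $c$. The same confusion undermines your convex-combination fallback. For the semistable solution, $P_\omega(T)=c-1/\lambda_{\max}$, with $\lambda_{\max}$ the largest eigenvalue of $T$ relative to $\omega$, has no uniform margin below $c$ near $\Null_J$. Moreover, the auxiliary current $R$ you need, with analytic singularities exactly along $\Null_J$ and a strict bound, is essentially what the theorem asserts, so that plan is circular.

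The correct input is Fang--Ma's current from Theorem~\ref{thm:null-current}: $S=(1-\tau)\kappa+\ddc v$ with $P_\omega(S_{\rm ac})\le c-\eta$, $\{v=-\infty\}=\Null_J(\alpha,\beta)$, and $v$ smooth off this set. Then $B_0=S+\tau\kappa\in\alpha$ keeps the bound by monotonicity. The partial-trace regularization of Lemma~\ref{lem:partial-trace} (Bergman kernels with $q=n-1$, not Theorem~\ref{thm:bergman}) then gives $B_\varepsilon\in\mathcal R_\omega$ with $P_\omega\le c-\eta+\varepsilon$. Your Lelong-number/Skoda reasoning for $\Pole(B_\varepsilon)\subseteq\Null_J$ is fine from there; the paper phrases it as $\mathcal I(mu)=\cO$ wherever the local potential $u$ is smooth.

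Your first inclusion, $\Null_J\subseteq\Pole(S)$ for every $S\in\mathcal R_\omega$, is not established. You rightly abandon the direct integration, but the replacement does not work as stated. $V$ may be singular, and on a resolution $\nu\colon\widetilde V\to V$ the classes $\nu^*\alpha,\nu^*\beta$ are only big and nef. So neither Fu's theorem nor Corollary~\ref{cor:semistable}, which relies on Song's criterion for K\"ahler classes, is available there. Even granting $\zeta_V<c_V=c$, you still need the global semistability of $(\alpha,\beta)$ to reach a contradiction. You also need a check that the destabilizing subvariety of $\widetilde V$ is not contracted by $\nu$.

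The paper does not prove this inclusion from scratch. It invokes the minimality assertion of Fang--Ma's theorem: every strict weak subsolution smooth off its exact analytic pole set $Z$ has $\Null_J\subseteq Z$. To match their normalization it applies this to $(1-\sigma)B$ with $0<\sigma<1-d/c$. With these two corrections your outline becomes the paper's proof, $\Null_J(\alpha,\beta)\subseteq E_{\rm res}(\omega)\subseteq\Pole(B_\varepsilon)\subseteq\Null_J(\alpha,\beta)$.
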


The thoerem removes the technical assumptions on both the smooth boudary cone condition, and $J$-bigness in \cite{L26b}, but the conclusion is a little bit weaker than that. In the case of \cite{L26b}, the K\"ahler current can be choosen to have analytic singularity and smooth remainder on $X$ itself. As a consequence of the theorem, the $J$-flow converges in $C^\infty_{\rm loc}(X\setminus \Null_J(\alpha,\beta))$ when $\zeta=c=:\cab$.

Finally, we remark that the ideas here may also be extended to a larger family of geometric PDEs, for example, the LYZ(Leung--Yau--Zaslov) equations.  

\subsection*{Outline of the proof}
The starting point is the weakly closed convex family
\[
 \mathcal C_a=\{T\in\alpha:T\ge0,\ dT=0,
       \ T_{\rm ac}>0,\ \tr_{T_{\rm ac}}\omega\le a\text{ a.e.}\}.
\]
Defining $a_*=\inf\{a>0:\mathcal C_a\ne\varnothing\}$,
we first identify $a_*$ with the birational invariant $\zeta$. We call $a_*$ the analytic threshold. 
One of the key tools is the Lamari-type characterization for the inverse trace inequality(Theorem~\ref{thm:duality})
\[
\cC_a\neq \varnothing\quad \Leftrightarrow\quad \int_X\bigl(\tr\sqrt{\chi^{-1}\omega}\bigr)^2\chi^n
 \le na\int_X\alpha\wedge\chi^{n-1},\quad \forall \text{ Gauduchon metric }\chi.
\]
This is an analogy of the Lamari's characterization of a class containing a K\"ahler current $T\geq a\omega$, see \cite{L99,Tosatti16}. 
Comparison with Monge--Amp\`ere equations on modifications gives
$a_*\le\zeta$. For the opposite inequality, we use the principalization of the Bergman approximation(Theorem~\ref{thm:regularization}) as an admissible modification model.  It converts any
$T\in\mathcal C_a$ into smooth semipositive residual forms
on modifications. A further controlled perturbation makes the
residual classes K\"ahler, with slopes at most $a+o(1)$.
Hence $\zeta\le a_*$, and weak compactness gives
$\mathcal C_\zeta\ne\varnothing$.

The same construction gives residual K\"ahler
classes $L_j$ whose slopes tend to $\zeta$ and whose
stability threshold $\Gamma$ (introduced by Sjöström Dyrefelt \cite{SD20}) on modifications tends uniformly to zero.
Smooth $J$-equations on these modifications produce balanced
metrics that detect every positive trace defect. The resulting
integral estimate forces
$\tr_{T_{\rm ac}}\omega=\zeta$ almost everywhere for every
$T\in\mathcal C_\zeta$. Strict convexity identifies their
absolutely continuous parts, and a regularized-maximum argument
then identifies the currents themselves. Thus
$\mathcal C_\zeta=\{T\}$. Local Dirichlet replacement upgrades
this almost-everywhere identity to the scalar non-pluripolar
equation. 

For higher regularity, we evolve any smooth initial metric by the
$J$-flow. Comparison with a potential of $T$ and convexity of the
inverse trace first implies 
\[
 \frac{\varphi(t)}t\longrightarrow c-\zeta\quad\text{in }L^1(X).
\]
The differential inequality
\[
 \partial_t\bigl(\tr_{\varphi}\omega\bigr)
 \ge-\frac1{2t}\tr_{\varphi}\omega
\]
converts time-averaged convergence into convergence of the trace at
every time. Weak closure then puts every current limit in
$\mathcal C_\zeta=\{T\}$. This step uses no regularity of $T$.

To obtain local smooth convergence, we construct
a strict subsolution with one analytic singularity set. We apply
Chen's mass-concentration method \cite{C21} to the residual
models $L_j$. The logarithmic construction of Demailly--P\u{a}un
\cite{DP04}, performed first on the fixed product $X\times X$, gives
a lower bound for the diagonal mass that is uniform in $j$. This
yields a current in $\alpha-e\beta$ for one $e>0$, independent of the
modification. Adding $e\omega$ and applying partial-trace
regularization gives $B=\kappa+\ddc\psi\in\alpha$, smooth outside a
proper analytic set $Z$, with
\[
 B\ge b_0\omega,\qquad
 \max_{\dim_{\mathbb C}H=n-1}
      \tr_{B|_H}(\omega|_H)\le\zeta-\eta
 \quad\text{on }X\setminus Z
\]
for some $b_0,\eta>0$.

The parabolic weighted trace estimate \cite{W06,SW08} and an ABP
estimate \cite{S18} then give
\[
 \psi-C\le\varphi(t)-b(t)\le C,\qquad
 \tr_\omega(\kappa+\ddc\varphi(t))\le Ce^{-N\psi}.
\]
Together with the uniform lower metric bound from the evolution of
the trace, these estimates give uniform parabolic ellipticity on
compact subsets of $X\setminus Z$. Standard Evans--Krylov estimates
and parabolic Schauder bootstrapping give local
smooth precompactness.
Since the weak limit is $T$, the entire flow converges locally
smoothly to $T$, proving its higher regularity.

Sections~\ref{sec:currents}--\ref{sec:bergman} develop the
trace inequalities and regularization.
Section~\ref{sec:threshold} identifies the two thresholds,
and Sections~\ref{sec:rigidity}--\ref{sec:scalar} prove
rigidity, the scalar equation, and uniqueness.
Section~\ref{sec:concentration} constructs the analytic strict
subsolution. Section~\ref{sec:flow}
establish convergence of $J$-flow and the local estimates.

\textbf{Acknowledgements}. The project was initiated during the author's participation in the 2024 SLMath workshop \emph{Geometry and Analysis of Special Structures on Manifolds}. The author would like to thank Jian Song for helpful discussions on the $J$-equation during the workshop. The author also thanks Hao Fang, Biao Ma, and Jinyang Wu for sharing their preprints \cite{FM26,FMW26}. The results in section~\ref{sec:null} was inspired by their work \cite{FM26}.  The author is grateful to Frederick Tsz-Ho Fong for helpful discussions, for bringing the work \cite{J20} to the author's attention, and for raising the question of uniform convergence of the trace along the $J$-flow. The research is partially supported by the Hong Kong General Research Fund \#16305625 of the Hong Kong Research Grants Council.

\textbf{Declaration on the use of AI}. The author used ChatGPT 5.6 and 6 during the preparation of this manuscript for brainstorming possible approaches and exploring related ideas and techniques. The main ideas of the analytic threshold, Bergman approximation, and Perron's method to uniqueness are due to the author. The author takes the full responsibility for the paper's content.
\section{Currents and trace conditions}\label{sec:currents}

Throughout, $dd^c=i\partial\bar\partial$, $\omega\in\beta$ and $\kappa\in\alpha$ are fixed K\"ahler forms, and $H_u=(u_{j\bar k})$ is the complex Hessian. Divisors and their integration currents are distinguished by brackets. Our normalization is
\begin{equation}\label{eq:poincare-lelong}
 dd^c\log|f|^2=2\pi[E]
\end{equation}
when $f$ locally defines an integral divisor $E$.

For positive Hermitian forms $A,B$, $\tr_A B=\tr(A^{-1}B)$. For $1\le q\le n$, let $P_{q,B}(A)$ be the sum of the largest $q$ eigenvalues of $A^{-1/2}BA^{-1/2}$. In a $B$-unitary frame,
\begin{equation}\label{eq:partial-traces}
 P_{q,B}(A)=\max_{Q^2=Q=Q^*,\ \operatorname{rank}Q=q}\tr(QA^{-1}),
 \qquad P_{n,B}(A)=\tr_A B.
\end{equation}
We write $P_B=P_{n-1,B}$. One also has (see \cite{C21})
\[
 P_B(A)=\max_{\dim_{\mathbb C}H=n-1}\tr_{A|_H}(B|_H).
\]
Each $P_{q,B}$ is convex, decreasing in matrix order, and homogeneous of degree $-1$. Moreover,
\[
 P_{q,B}(A)\le a\quad\Longrightarrow\quad A\ge B/a.
\]
The matrix cone $\mathfrak C_a(B)=\{A>0:\tr_A B\le a\}$ is closed and convex, and is preserved by addition of semipositive forms.

\begin{definition}\label{def:trace-family}
For $a>0$, set
\begin{equation}\label{eq:trace-family}
 \mathcal C_a=\{T\in\alpha:T\ge0,\ dT=0,\ T_{\rm ac}>0,
                          \ \tr_{T_{\rm ac}}\omega\le a\text{ a.e.}\},
 \qquad a_*=\inf\{a>0:\mathcal C_a\ne\varnothing\}.
\end{equation}
The absolutely continuous part is taken coefficientwise with respect to smooth volume. The singular coefficient matrix is positive.
\end{definition}

Fix a nonnegative radial $\varrho\in C_c^\infty(\mathbb C^n)$, supported in the unit ball and of integral one, and put $\varrho_r(z)=r^{-2n}\varrho(z/r)$. If $U=A\,d\lambda+U_s\ge0$, with $U_s\ge0$, then
\begin{equation}\label{eq:convolution}
 P_{q,B}(U*\varrho_r)
 \le P_{q,B}(A*\varrho_r)
 \le P_{q,B}(A)*\varrho_r.
\end{equation}
Consequently $P_{q,\omega}(U_{\rm ac})\le a$ almost everywhere is equivalent to $P_{q,B}(U*\varrho_r)\le a$ for every constant form $0<B\le\omega$ on the convolution domain. The converse follows from differentiation of coefficient measures and a countable collection of shrinking balls and frozen backgrounds. These conditions are weakly closed: convolution evaluations are continuous, and their matrices have lower bound $B/a$. When $U=dd^cu$, the convolution is applied to the entire local potential.
\subsection{Non-pluripolar products} 
We first recall the Bedford--Taylor monotone convergence theorem \cite[Theorem~2.1 and Corollary~2.2]{BT82}; see also \cite[Chapter~III, Theorem~3.7(a)]{D12}.
\begin{lemma}\label{lem:bt-convergence}
    Let $\Omega$ be a domain in $\mathbb C^n$, $1\leq p\leq n$, and, for each $0\leq i\leq p$, let $v_{i,j}$ be a decreasing sequence of locally bounded plurisubharmonic functions whose pointwise limit $v_i$ is locally bounded and plurisubharmonic. Then

\[
v_{0,j}dd^cv_{1,j}\wedge\cdots\wedge dd^cv_{p,j}
\longrightarrow v_0dd^cv_1\wedge\cdots\wedge dd^cv_p,
\]
weakly as currents with measure coefficients.
\end{lemma}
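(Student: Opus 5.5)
The plan is the classical Bedford--Taylor induction on $p$, with integration by parts and weak convergence as the basic tools. The statement is local, so I fix $\Omega'\Subset\Omega$ and work there. Subtracting constants keeps all functions negative on $\Omega'$. The decreasing sequences are locally uniformly bounded above by $v_{i,1}$ and below by $v_i$. Hence they are uniformly bounded on $\Omega'$.

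\textbf{Step 1: $p=0$ and mass bounds.} For $p=0$ the claim is $v_{0,j}\to v_0$ in $L^1_{\rm loc}$, which follows from monotone convergence. For general $p$, Chern--Levine--Nirenberg gives uniform mass bounds for $dd^cv_{1,j}\wedge\cdots\wedge dd^cv_{p,j}$ on compact sets, because the $v_{i,j}$ are uniformly bounded. The family is therefore weakly relatively compact, and it suffices to identify every limit.

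\textbf{Step 2: induction on $p$.} Assume the result for $p-1$ factors, with any bounded decreasing zeroth factor. Then
\[
\Theta_j:=dd^cv_{2,j}\wedge\cdots\wedge dd^cv_{p,j}\to\Theta:=dd^cv_2\wedge\cdots\wedge dd^cv_p .
\]
For a test form $\phi$, integration by parts gives
\[
\int v_{0,j}\,dd^cv_{1,j}\wedge\Theta_j\wedge\phi=\int v_{1,j}\,dd^c(v_{0,j}\phi)\wedge\Theta_j .
\]
This exchanges the roles of $v_0$ and $v_1$. The limiting current $dd^c v_1\wedge\Theta$ is defined inductively as $dd^c(v_1\Theta)$, so it suffices to show $v_{0,j}\,dd^cv_{1,j}\wedge\Theta_j\to v_0\,dd^cv_1\wedge\Theta$ for positive test forms.

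\textbf{Step 3: upper and lower bounds for a limit $\mu$.} Let $\mu$ be a weak limit of $v_{0,j}\,dd^cv_{1,j}\wedge\Theta_j$.

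For the upper bound, fix $k$. For $j\ge k$ we have $v_{0,j}\le v_{0,k}$. First reduce to continuous $v_{0,k}$: replace $v_{0,k}$ by a smooth decreasing approximation from above by standard convolution, using upper semicontinuity. The induction hypothesis then applies to the measures $dd^cv_{1,j}\wedge\Theta_j$, and gives $\mu\le v_{0,k}\,dd^cv_1\wedge\Theta$. Letting $k\to\infty$, monotone convergence yields $\mu\le v_0\,dd^cv_1\wedge\Theta$.

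For the lower bound, use the symmetry from Step 2. A limit of $v_{1,j}\,dd^cv_{0,j}\wedge\Theta_j$ is likewise $\le v_1\,dd^cv_0\wedge\Theta$. After symmetrization with cutoffs, equality of the total masses $\int\mu\wedge\phi=\int v_0\,dd^cv_1\wedge\Theta\wedge\phi$ follows. Combined with the one-sided inequality, this forces $\mu=v_0\,dd^cv_1\wedge\Theta$.

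\textbf{Main obstacle.} The hardest point is the upper-semicontinuity step. Weak convergence of measures does not directly multiply against the merely upper semicontinuous, discontinuous $v_0$. The key is quasicontinuity: a bounded psh function is continuous off an open set of arbitrarily small Monge--Amp\`ere capacity. Since all the measures $dd^cv_{1,j}\wedge\Theta_j$ are uniformly dominated by capacity, via the CLN-type estimate
\[
\int_E dd^cv_{1,j}\wedge\Theta_j\le C\,\mathrm{cap}(E,\Omega),
\]
we may modify $v_0$ on a small-capacity open set to obtain a continuous function at error $O(\varepsilon)$ uniformly in $j$. Weak convergence then passes to the limit, and letting $\varepsilon\to0$ concludes. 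These are exactly \cite[Theorem~2.1 and Corollary~2.2]{BT82}, so in the paper this lemma is simply cited.
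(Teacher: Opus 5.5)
The paper does not prove this lemma; it quotes it from Bedford--Taylor and from Demailly's book. So the only question is whether your sketch of the classical argument stands on its own.

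\textbf{What is fine.} Steps~1--2 and the upper bound are correct. Write $\sigma_j:=dd^cv_{1,j}\wedge\Theta_j$ and $\sigma:=dd^cv_1\wedge\Theta$; then $\sigma_j\to\sigma$ by the induction hypothesis and weak continuity of $dd^c$. The comparison $v_{0,j}\le v_{0,k}\le w$, with $w$ continuous, gives $\mu\le v_0\sigma$ for every weak limit $\mu$ of $v_{0,j}\sigma_j$. This half needs no quasicontinuity, because upper semicontinuity works in your favour here.

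\textbf{The gap: the lower bound.} As written, that step only produces a second inequality in the same direction, $\lim v_{1,j}\,dd^cv_{0,j}\wedge\Theta_j\le v_1\,dd^cv_0\wedge\Theta$. Identifying both sides with their swapped versions just returns $\int\mu\wedge\phi\le\int v_0\,\sigma\wedge\phi$. Two upper bounds can never give equality.

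There is also a problem with the Step~2 identity. It does not exchange $v_0$ and $v_1$ against a general test form, because $dd^c(v_{0,j}\phi)$ contains terms like $dv_{0,j}\wedge d^c\phi$. A genuine symmetry holds only for integrals over a ball $B$, after replacing each function by $\max\{v,A(|z|^2-r^2)\}$. This changes nothing on a smaller ball and makes all the functions coincide near $\partial B$.

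The ingredient you never use is $v_{i,j}\ge v_i$. This is also exactly where upper semicontinuity works against you: for $\sigma_j\rightharpoonup\sigma$ one only has $\limsup_j\int v_0\,\sigma_j\wedge\phi\le\int v_0\,\sigma\wedge\phi$. So your ``main obstacle'' paragraph belongs to the lower bound, not the upper one. The correct statement, for positive test forms $\phi$, is
\[
\int v_{0,j}\,\sigma_j\wedge\phi\ \ge\ \int v_0\,\sigma_j\wedge\phi\ \longrightarrow\ \int v_0\,\sigma\wedge\phi .
\]
The convergence here is your quasicontinuity plus uniform capacity-domination argument. Check that the quasicontinuity theorem you invoke does not itself depend on the statement being proved.

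\textbf{A capacity-free alternative.} After the boundary normalization, set $\theta=dd^c|z|^2$ and
$I(w_0,\dots,w_p)=\int_Bw_0\,dd^cw_1\wedge\cdots\wedge dd^cw_p\wedge\theta^{n-p}$. This is symmetric in all $p+1$ entries, and
\[
I(v_{0,j},v_{1,j},\dots)\ge I(v_0,v_{1,j},\dots)=I(v_{1,j},v_0,\dots)\ge I(v_1,v_0,v_{2,j},\dots)\ge\cdots\ge I(v_0,v_1,\dots,v_p).
\]
Test against a cutoff equal to $1$ where the functions differ from $A(|z|^2-r^2)$. The positive current $v_0\sigma-\mu$ then has zero trace mass, so $\mu=v_0\sigma$.

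Your ``two one-sided bounds plus symmetry'' idea does work in one place: proving the symmetry of $I$ itself. There you smooth only one of the two functions. One side then converges by dominated convergence against a fixed measure, the other is bounded by upper semicontinuity, and exchanging the roles gives equality.
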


For a plurisubharmonic potential $u$, put $u_k=\max\{u,-k\}$, $E_k=\{u>-k\}$, and $T=dd^cu$. The non-pluripolar products satisfy

\begin{equation}\label{eq:np-locality}
\mathbf1_{E_k}\langle T^p\rangle
=\mathbf1_{E_k}(dd^cu_k)^p.
\end{equation}
They are the increasing limits of the right-hand sides, put no mass on pluripolar sets and are local in the plurifine topology. In particular $E_k$, and sets $\{u>b\}$ with $b$ smooth, are plurifinely open. In degree one,

\begin{equation}\label{eq:np-degree-one}
\langle dd^cu\rangle=\mathbf1_{\{u>-\infty\}}dd^cu.
\end{equation}
Non-pluripolar products depend only on the currents, not on chosen local potentials, and are symmetric and multilinear in positive closed $(1,1)$-currents. Thus a smooth form can be wedged outside the brackets. On a compact K\"ahler manifold the product of any finite collection of positive closed $(1,1)$-currents, in degrees at most $n$, is locally finite and defines a positive closed current. See \cite[Section 1.2, Definition 1.1, Propositions 1.4 and 1.6, and Theorem 1.8]{BEGZ10}.

Pluripolar sets have zero Lebesgue measure, so \eqref{eq:np-degree-one} gives

\begin{equation}\label{eq:ac-degree-one}
(\langle T\rangle)_{\mathrm{ac}}=T_{\mathrm{ac}}.
\end{equation}

We will also use the mixed Monge--Amp\`ere inequality \cite[Proposition 1.11]{BEGZ10} \begin{lemma}\label{lem:mixed-ma}
Let $T_1,\ldots,T_n$ be positive closed $(1,1)$-currents on a complex $n$-manifold, let $\mu$ be a positive measure, and let $f_1,\ldots,f_n\geq0$ be measurable functions such that

\[
\langle T_j^n\rangle\geq f_j\mu,\qquad 1\leq j\leq n.
\]
Then
\begin{equation}\label{eq:mixed-ma}
\langle T_1\wedge\cdots\wedge T_n\rangle
\geq(f_1\cdots f_n)^{1/n}\mu.
\end{equation}
\end{lemma}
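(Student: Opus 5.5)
Since this is \cite[Proposition~1.11]{BEGZ10}, the plan follows the reduction used there: pass from non-pluripolar products to Bedford--Taylor products of bounded potentials by plurifine locality, and then invoke the mixed inequality for bounded plurisubharmonic functions due to Ko\l{}odziej and Dinew.

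\textbf{Step 1: reduction to bounded potentials.} The statement is local, so fix a small ball $\Omega$ on which $T_j=\ddc u_j$ with $u_j$ plurisubharmonic. Put $u_{j,k}=\max\{u_j,-k\}$ and $E_k=\bigcap_j\{u_j>-k\}$; these sets are plurifinely open. By \eqref{eq:np-locality} and plurifine locality of the Bedford--Taylor products, on $E_k$ we have
\[
\mathbf1_{E_k}\langle T_j^n\rangle=\mathbf1_{E_k}(\ddc u_{j,k})^n,\qquad
\mathbf1_{E_k}\langle T_1\wedge\cdots\wedge T_n\rangle=\mathbf1_{E_k}\,\ddc u_{1,k}\wedge\cdots\wedge\ddc u_{n,k}.
\]
We may discard the pluripolar set $P=\bigcup_j\{u_j=-\infty\}$. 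Indeed, $\langle T_j^n\rangle(P)=0$, so the hypothesis forces $f_j\mu(P)=0$. Hence $(f_1\cdots f_n)^{1/n}\mu$ vanishes on $P$, and we may replace $\mu$ by $\mathbf1_{X\setminus P}\mu$.

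Since $E_k\uparrow X\setminus P$, it then suffices to prove the following for each fixed $k$:
\[
\mathbf1_{E_k}\,\ddc u_{1,k}\wedge\cdots\wedge\ddc u_{n,k}\ \ge\ \mathbf1_{E_k}(f_1\cdots f_n)^{1/n}\mu .
\]
On $E_k$ the hypothesis reads $(\ddc u_{j,k})^n\ge f_j\mu$. Setting $f_j=0$ off $E_k$ keeps this true everywhere. So we are reduced to bounded $u_j$ with $(\ddc u_j)^n\ge f_j\mu$ on $\Omega$, and the goal is the mixed inequality there. Monotonicity in $\mu$ is harmless, because both sides scale in the same way.

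\textbf{Step 2: the pointwise inequality.} For positive Hermitian matrices $A_1,\dots,A_n$, the mixed discriminant satisfies
\[
D(A_1,\dots,A_n)\ge\prod_j(\det A_j)^{1/n},
\]
by G\aa rding's inequality for the hyperbolic polynomial $\det$. This gives the smooth case directly. It also gives the case where the $u_j$ are bounded with Monge--Amp\`ere measures absolutely continuous and $\mu=g\,d\lambda$. In that case one approximates $u_j$ by solutions of Dirichlet problems with smooth positive densities converging in $L^p$, uses Ko\l{}odziej's stability theorem to get uniform convergence of the potentials, and passes to the limit with Lemma~\ref{lem:bt-convergence}.

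\textbf{Step 3: general $\mu$ (main obstacle).} This is where I expect the real difficulty: the case where $\mu$ has a part singular to Lebesgue measure, carried by a non-pluripolar set. There I would follow Dinew's argument. First reduce, by truncation and inner regularity, to $f_j$ bounded and $\mu$ compactly supported with $\mu\le(\ddc u_j)^n/f_j$. Such a $\mu$ does not charge pluripolar sets, so by Cegrell--Ko\l{}odziej it is of the form $(\ddc w)^n$ for some bounded $w$ on the ball, after restricting to where $\mu$ is well dominated.

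Next, solve Dirichlet problems $(\ddc v_j)^n=f_j\mu$ with boundary data $u_j$. The comparison principle then gives control of $v_j$ against $u_j$, and the mixed inequality for the $v_j$ follows by approximating $\mu$ with absolutely continuous measures $\mu_\epsilon\to\mu$ whose Dirichlet solutions converge uniformly (again by stability), combined with Step~2.

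The delicate points are two. The first is transferring the inequality from the $v_j$ back to the $u_j$, which I would do using the domination $(\ddc u_j)^n\ge(\ddc v_j)^n$ and the comparison principle for mixed products. The second is keeping convergence uniform, so that Lemma~\ref{lem:bt-convergence} (or its uniform-convergence analogue) applies. If this transfer proves too hard to carry out directly, I would instead cite Dinew's theorem for bounded potentials and keep only the reduction of Step~1 as the new content.
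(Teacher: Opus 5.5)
Your Step~1 is exactly the reduction behind \cite[Proposition~1.11]{BEGZ10}, and the paper gives no proof of its own beyond that citation. The reduction restricts to the plurifinely open sets $E_k=\bigcap_j\{u_j>-k\}$, notes that $(f_1\cdots f_n)^{1/n}\mu$ does not charge the pluripolar set $P$, and lets $E_k\uparrow\Omega\setminus P$. Together with Dinew's mixed inequality for bounded plurisubharmonic functions, which is your stated fallback, this is a complete proof and matches the source the paper cites.

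Steps~2--3, however, are not a valid proof of Dinew's theorem, and you should replace them by the citation.

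\emph{The transfer step fails.} The comparison principle orders potentials, not measures, and $(dd^cu_j)^n\ge(dd^cv_j)^n$ gives no ordering of mixed products. In $\mathbb C^2$, take $u_1=u_2=|z|^2$, $v_1=K|z_1|^2+K^{-1}|z_2|^2$ and $v_2=K^{-1}|z_1|^2+K|z_2|^2$ with $K>1$. All four functions have the same Monge--Amp\`ere measure. Yet $dd^cv_1\wedge dd^cv_2=\tfrac12(K^2+K^{-2})\,dd^cu_1\wedge dd^cu_2$ is strictly larger, so a lower bound for the $v_j$ does not pass to the $u_j$ this way.

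\emph{The stability step fails.} Ko\l{}odziej's stability gives uniform convergence only for densities bounded in $L^p(d\lambda)$ with $p>1$. It therefore says nothing about mollifications of a measure singular with respect to Lebesgue measure. Also, in Step~2 the hypothesis does not make $(dd^cu_j)^n$ absolutely continuous.

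\emph{The transfer problem is avoidable.} Put $\nu=\sum_j(dd^cu_j)^n$, write $(dd^cu_j)^n=g_j\nu$, and decompose $\mu=h\nu+\mu_s$ with $\mu_s\perp\nu$. The hypothesis forces $f_j=0$ $\mu_s$-a.e.\ and $f_jh\le g_j$ $\nu$-a.e. Hence $(f_1\cdots f_n)^{1/n}\mu\le(g_1\cdots g_n)^{1/n}\nu$, and only the equality case $(dd^cu_j)^n=g_j\nu$ needs to be proved. That equality case is the real content of Dinew's theorem, though, so citing it is the right choice.
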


\subsection{Trace inequalities and scalar solutions}

\begin{lemma}\label{lem:ac-lower}
Let $T=dd^cu\geq0$ locally on a coordinate ball, and suppose its non-pluripolar powers through degree $p$ are locally finite. Write $A=T_{\mathrm{ac}}$ for its measurable positive semidefinite coefficient matrix. If $\Theta$ is a smooth strongly positive $(n-p,n-p)$-form, then

\begin{equation}\label{eq:ac-lower}
\langle T^{p}\rangle\wedge\Theta
\geq A^p\wedge\Theta\qquad \text{as measures,}
\end{equation}
where the right-hand side means the pointwise algebraic wedge of the measurable matrix coefficients, integrated against coordinate volume.
\end{lemma}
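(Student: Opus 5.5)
The plan is to reduce \eqref{eq:ac-lower} to the bounded case by the plurifine locality \eqref{eq:np-locality}, and then prove the bounded case by smoothing. Both sides are positive measures. The right-hand side charges no Lebesgue-null set, since it is $g\,d\lambda$ with $g\ge0$ measurable. It therefore suffices to prove the inequality on the sets $E_k=\{u>-k\}$, because $\bigcup_k E_k$ has full Lebesgue measure: pluripolar sets are Lebesgue-null. On $E_k$ we have $\mathbf1_{E_k}\langle T^p\rangle=\mathbf1_{E_k}(dd^cu_k)^p$ with $u_k=\max\{u,-k\}$. Also, $(dd^cu_k)_{\rm ac}=A$ almost everywhere on $E_k$. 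To see this, note that $E_k$ is plurifinely open and $dd^cu_k=dd^cu$ there, so the coefficient measures agree on $E_k$; in particular their Lebesgue densities agree almost everywhere on $E_k$. So it suffices to prove, for a bounded plurisubharmonic $v$ with $A_v=(dd^cv)_{\rm ac}$,
\[
 (dd^cv)^p\wedge\Theta\ \ge\ A_v^p\wedge\Theta .
\]
We then restrict to $E_k$ and let $k\to\infty$ by monotone convergence on the right-hand side.

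For the bounded case, set $v_r=v*\varrho_r$. These decrease to $v$ as $r\downarrow0$ (using the radial mollifier), so Lemma~\ref{lem:bt-convergence} gives $(dd^cv_r)^p\wedge\Theta\to(dd^cv)^p\wedge\Theta$ weakly. Pointwise, $dd^cv_r=(dd^cv)*\varrho_r\ge A_v*\varrho_r$ in matrix order, because the singular part is positive. The map $M\mapsto M^p\wedge\Theta$ is monotone on positive semidefinite matrices when $\Theta$ is strongly positive. Hence
\[
 (dd^cv_r)^p\wedge\Theta\ \ge\ (A_v*\varrho_r)^p\wedge\Theta .
\]
Next we need a lower semicontinuity statement: for nonnegative test functions $\chi$,
\[
 \liminf_{r\to0}\int\chi\,(A_v*\varrho_r)^p\wedge\Theta\ \ge\ \int\chi\,A_v^p\wedge\Theta .
\]
Here $A_v*\varrho_r\to A_v$ almost everywhere by Lebesgue differentiation, since $A_v\in L^1_{\rm loc}$. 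The integrand $(A_v*\varrho_r)^p\wedge\Theta/d\lambda$ is a nonnegative continuous function of the matrix, so Fatou's lemma applies. Combining the weak convergence with this liminf gives the bounded case.

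The main obstacle is the localization step. We must justify that the density of $dd^cu_k$ agrees with $A$ almost everywhere on $E_k$, even though $E_k$ is only plurifinely open, not Euclidean open, so coefficient measures cannot simply be restricted to an open set. I would handle it by degree one, \eqref{eq:np-degree-one}, applied coefficientwise. Consider the currents $\langle dd^cu\rangle$ and $\langle dd^cu_k\rangle$; by \eqref{eq:np-locality} with $p=1$ they agree after multiplying by $\mathbf1_{E_k}$. By \eqref{eq:ac-degree-one} their absolutely continuous parts are $A$ and $(dd^cu_k)_{\rm ac}$. Since $\mathbf 1_{E_k}\mu$ has density $\mathbf1_{E_k}\cdot(d\mu_{\rm ac}/d\lambda)$ for any measure $\mu$, the densities coincide almost everywhere on $E_k$. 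A second, milder point is the monotonicity of $M\mapsto M^p\wedge\Theta$. This follows from expanding $(M+N)^p$ with $N\ge0$: every mixed term is a wedge of positive $(1,1)$-forms with the strongly positive form $\Theta$, hence nonnegative.
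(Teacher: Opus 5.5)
Your proposal is correct and follows essentially the same route as the paper. Both arguments prove the bounded case by radial mollification, Fatou's lemma and Bedford--Taylor monotone convergence (Lemma~\ref{lem:bt-convergence}). Both then pass to general $u$ through $u_k=\max\{u,-k\}$, the locality identities \eqref{eq:np-locality} and \eqref{eq:ac-degree-one} on $E_k$, and the limit $k\to\infty$.

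The only difference is minor. You first discard the mollified singular part, using $dd^cv_r\ge A_v*\varrho_r$ and monotonicity of $M\mapsto M^p\wedge\Theta$, so you need only Lebesgue differentiation of the $L^1_{\rm loc}$ density. The paper instead uses directly that $dd^cu_\delta\to A$ almost everywhere, which implicitly requires that the mollified singular part tends to zero almost everywhere.
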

\begin{proof}
    First suppose that $u$ is locally bounded. Consider the local convolution regularization $u_\delta$. Then $u_\delta$ decrease to $u$ as $\delta\downarrow0$, and $\ddc u_\delta\to A(x)$ for a.e. $x$. Therefore, for every nonnegative compactly supported smooth $\chi$, applying Fatou's lemma and Lemma~\ref{lem:bt-convergence}, one gets \[
    \int\chi A^p\wedge\Theta\leq \liminf_{\delta\downarrow 0}\int\chi(\ddc u_\delta)^p\wedge\Theta=\int\chi(\ddc u)^p\wedge\Theta.
    \]

    Now for arbitrary $u$, use $u_k=\max\{u,-k\}$. By \eqref{eq:np-locality} for degree $1,p$, and \eqref{eq:ac-degree-one}, the current $\ddc u_k$ has absolutely continuous matrix $A$ a.e. on $E_k=\{u>-k\}$, and its degree-$p$ product agrees with $\langle T^p\rangle$ on $E_k$. Hence by previous argument for bounded potentials, we get \[
    \mathbf1_{E_k}\langle T^p\rangle\wedge\Theta\geq \mathbf1_{E_k}A^p\wedge\Theta.
    \]
    The sets $E_k$ increase to the complement of the pluripolar pole set. Let $k\to \infty$, we get the desired~\eqref{eq:ac-lower}.
    
\end{proof}

Next, we show a reverse inequality in the top degree, using the mixed Monge--Amp\`ere inequality.
\begin{lemma}\label{lem:ac-upper}
On a coordinate ball fix $\theta=dd^c|z|^2$. If $T\geq\varepsilon\theta$ is a positive closed current with locally finite non-pluripolar top product, write $A=T_{\mathrm{ac}}$ and $f=\langle T^n\rangle_{\mathrm{ac}}/\theta^n$.
Then $f\leq\det A$ almost everywhere. 
\end{lemma}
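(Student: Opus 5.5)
The plan is to get the upper bound on $f$ by playing the mixed Monge--Amp\`ere inequality (Lemma~\ref{lem:mixed-ma}) against constant comparison forms. Only one factor of $T$ will be kept, and all the others will be smooth, so that the mixed product becomes an ordinary product whose absolutely continuous density is explicit. Concretely, fix a constant positive Hermitian matrix $M$ and let $S_M=dd^c(z^*Mz)$ be the associated constant Kähler form. Its Monge--Amp\`ere measure is $S_M^n=\det M\,\theta^n$, up to the fixed dimensional normalization, which cancels throughout.

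First, we apply Lemma~\ref{lem:mixed-ma} with $\mu=\theta^n$, $T_1=T$ and $f_1=f$, using that $\langle T^n\rangle\ge\langle T^n\rangle_{\rm ac}=f\theta^n$. For the remaining factors we take $T_2=\dots=T_n=S_M$ with $f_j=\det M$. This gives
\[
 \langle T\rangle\wedge S_M^{n-1}\ \ge\ f^{1/n}(\det M)^{(n-1)/n}\,\theta^n .
\]
Since $S_M$ is smooth, the left side equals $\langle T\rangle\wedge S_M^{n-1}$, with the bracket only on $T$.

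Second, we take absolutely continuous parts on both sides. The Lebesgue decomposition is monotone, and the right side is already absolutely continuous, so the inequality persists for the ac part of the left side. By \eqref{eq:ac-degree-one} that ac part is $A\wedge S_M^{n-1}$, which equals $\det M\cdot\frac1n\tr_M A\;\theta^n$ pointwise. Hence, almost everywhere,
\[
 f^{1/n}\le (\det M)^{1/n}\,\tfrac1n\tr(M^{-1}A).
\]
The exceptional null set depends on $M$. We therefore run this over a countable dense set of positive matrices $M$, for instance those with rational entries, and obtain one common full-measure set on which the inequality holds for all such $M$. Both sides depend continuously on $M>0$, so the inequality then extends to every $M>0$ on that set.

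Third, at a point of this full-measure set where $A(x)$ is defined, we use the hypothesis $T\ge\varepsilon\theta$. This gives $A\ge\varepsilon I$ a.e., so $A(x)>0$ is admissible as a choice of $M$. Taking $M=A(x)$ gives $\tr(M^{-1}A)=n$, and hence $f(x)^{1/n}\le(\det A(x))^{1/n}$, which is $f\le\det A$. Equivalently, this is the AM--GM minimization over $M$.

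The main point needing care is the second step. One must check that restricting the measure inequality to ac parts is legitimate, and that the ac density of $\langle T\rangle\wedge S_M^{n-1}$ is indeed $A\wedge S_M^{n-1}$, with the pluripolar part of $T$ dropped by \eqref{eq:np-degree-one}. The countable-dense-set argument is needed so that the a.e. choice $M=A(x)$, which is itself only measurable, is justified.
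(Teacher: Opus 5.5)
Your proposal is correct and is essentially the paper's own proof. Both arguments apply the mixed Monge--Amp\`ere inequality to $T$ and $n-1$ copies of a constant form, take absolutely continuous parts via \eqref{eq:ac-degree-one}, make the resulting pointwise bound simultaneous over a countable dense family of positive matrices and extend it by continuity, and then minimize at $M=A(x)$ (the AM--GM step), with $T\ge\varepsilon\theta$ guaranteeing $A>0$ almost everywhere.
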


\begin{proof}
    Let $B>0$ be a constant hermitian form. Apply \eqref{eq:mixed-ma} to $T,B,...,B$, and using $\langle T^n\rangle\geq\langle T^n\rangle_{\rm ac}=f\theta^n$, one gets \[
    \langle T\rangle\wedge B^{n-1}\geq f^{1/n}(\det B)^{(n-1)/n}\theta^n.
    \]
    Taking absolutely continuous densities and using \eqref{eq:ac-degree-one} gives \begin{equation}
        \label{eq:density-test}
\frac{\det B}{n}\tr(B^{-1}A)
\geq f^{1/n}(\det B)^{(n-1)/n}.
    \end{equation}
    Choose a countable dense family of positive definite Hermitian matrices, for example matrices with rational real and imaginary entries. Removing the union of their exceptional null sets makes \eqref{eq:density-test} simultaneous for that family. Continuity in $B$ extends it to every positive definite $B$ at each remaining point. Consequently,

\[
f(x)^{1/n}
\leq\inf_{B>0}\frac{(\det B)^{1/n}}n\tr(B^{-1}A(x))
=(\det A(x))^{1/n}.
\]
The last identity follows from arithmetic--geometric mean applied to $B^{-1/2}AB^{-1/2}$, with equality at $B=A$. The K\"ahler lower bound ensures $A>0$ a.e.
\end{proof}

We next pass from an almost-everywhere trace bound to inequalities
of non-pluripolar products. The proof combines the convolution
formulation of weak cone inequalities used by Chen
\cite[Definition~3.3]{C21} and Bedford--Taylar's monotone convergence. 
\begin{lemma}\label{lem:np-cone}
    Let $T=dd^cu\geq\varepsilon\theta$ on a coordinate ball, assume the non-pluripolar powers are locally finite, and let $A=T_{\mathrm{ac}}$. If $B>0$ is a constant Hermitian form, and

\[
\tr(A^{-1}B)\leq a\quad\text{a.e.},
\]
then
\begin{equation}\label{eq:np-cone}
a\langle T^p\rangle-pB\wedge\langle T^{p-1}\rangle\geq0,
\qquad 1\leq p\leq n.
\end{equation}
Here $\langle T^0\rangle=1$.
\end{lemma}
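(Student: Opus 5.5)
Following the hint before the lemma, we combine Chen's convolution formulation of weak cone inequalities with Bedford--Taylor monotone convergence: first treat bounded potentials by smoothing, then pass to general potentials by truncation, exactly as in Lemma~\ref{lem:ac-lower}.

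\emph{Step 1: bounded potentials.} Assume first that $u$ is locally bounded, and let $u_\delta=u*\varrho_\delta$ be the convolution regularization on a slightly smaller ball. Since $B$ is a constant form, the convexity inequality \eqref{eq:convolution} (with $q=n$) together with $\tr(A^{-1}B)\le a$ a.e.\ gives the pointwise bound
\[
\tr_{dd^cu_\delta}B\le a .
\]
So $dd^cu_\delta$ is a smooth form satisfying the pointwise cone condition with respect to $B$. For smooth positive forms this implies
\[
a\,(dd^cu_\delta)^p-p\,B\wedge(dd^cu_\delta)^{p-1}\ge0 .
\]
This is linear algebra: diagonalize $dd^cu_\delta=\mathrm{Id}$ and $B=\mathrm{diag}(\mu_i)$ simultaneously, with $\sum\mu_i\le a$. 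The coefficient of $\prod_{i\in I}dz_i\wedge d\bar z_i$ for $|I|=p$ is then $p!\bigl(a-\sum_{i\in I}\mu_i\bigr)\ge0$, so the form is positive (it is even diagonal). Moreover $u_\delta\downarrow u$, so Lemma~\ref{lem:bt-convergence} lets us pass to the weak limit, and positivity survives. Note that $\langle T^p\rangle=(dd^cu)^p$ in this case.

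\emph{Step 2: general potentials.} Set $u_k=\max\{u,-k\}$, replaced by $\max\{u,\varepsilon|z|^2-k'\}$ if necessary to keep the lower bound $\ge\varepsilon\theta$. The absolutely continuous part of $dd^cu_k$ equals $A$ a.e.\ on $E_k=\{u>-k\}$ by \eqref{eq:np-locality} and \eqref{eq:ac-degree-one}. Off $E_k$, however, the trace bound is not automatically inherited. This is the main obstacle.

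The plan is to avoid needing the bound off $E_k$ and instead work plurifinely. The cone inequality for $dd^cu_k$ only needs to be tested on $E_k$. There, plurifine locality \eqref{eq:np-locality} identifies $\mathbf 1_{E_k}(dd^cu_k)^j$ with $\mathbf 1_{E_k}\langle T^j\rangle$ for $j=p,p-1$.

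To get the inequality on $E_k$ without the global bound, use the $\max$ construction. Put $v=\max(u,w)$, where $w$ is a smooth strictly psh function chosen so that $w$ itself satisfies $\tr_{dd^cw}B\le a$. A multiple $dd^cw=\lambda\theta$ with $\lambda$ large works. The Hessian of $v$ is then a.e.\ either $A$ or $dd^cw$, both of which lie in the convex cone $\mathfrak C_a(B)$. Convolving and applying \eqref{eq:convolution} again keeps the regularization in the cone, since the singular part $\ge0$ only helps because $P$ is decreasing in matrix order.

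Step 1 then applies to $v$. Choosing $w=\lambda|z|^2-k$ with $\lambda$ large, we get $\{u>w\}\supset\{u>-k\}\cap K$ locally. Restricting to this plurifine open set and using plurifine locality gives
\[
\mathbf 1_{E_k}\bigl(a\langle T^p\rangle-pB\wedge\langle T^{p-1}\rangle\bigr)\ge0 .
\]

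\emph{Step 3: conclusion.} Let $k\to\infty$. The sets $E_k$ exhaust the complement of the pluripolar set $\{u=-\infty\}$, which carries no non-pluripolar mass. Hence \eqref{eq:np-cone} holds. The case $p=1$ is just $\langle T\rangle\ge T_{\rm ac}$ combined with the trace bound, which serves as a consistency check.
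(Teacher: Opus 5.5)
Your architecture matches the paper's: convolution plus Bedford--Taylor for bounded potentials, then $\max$ with a quadratic barrier and plurifine locality, then exhaustion. However, Step~2 has a real gap.

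To feed $v=\max(u,w)$ into Step~1 you need $\tr\bigl((dd^cv)_{\rm ac}^{-1}B\bigr)\le a$ almost everywhere. You justify this by saying that the Hessian of $v$ is a.e.\ either $A$ or $dd^cw$. That is clear on the open set $\{u<w\}$, where $v=w$. It also holds on $\{u>w\}$, by plurifine locality. But on the contact set $\{u=w\}$, which is closed and can have positive Lebesgue measure, neither argument says anything about $(dd^cv)_{\rm ac}$. Identifying it there needs extra input that you neither prove nor cite. One option is an approximate second-order differentiability result for psh functions. Another is a Kato-type inequality $dd^c\max(u,w)\ge\mathbf 1_{\{u\ge w\}}dd^cu+\mathbf 1_{\{u<w\}}dd^cw$. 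Plain convolution of $v$ cannot bypass this, because your only control of $dd^c(v*\varrho_\delta)\ge(dd^cv)_{\rm ac}*\varrho_\delta$ goes through $(dd^cv)_{\rm ac}$.

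The paper avoids the issue by never passing through $(dd^cv)_{\rm ac}$. It uses the smooth regularized maxima $v_j=M_{\tau_j}(u_{\delta_j},w)$, with $M_\tau(s,t)=\frac12\bigl(s+t+\sqrt{(s-t)^2+\tau^2}\bigr)$.
\begin{itemize}
\item By the chain rule, $dd^cv_j$ is a convex combination of $dd^cu_{\delta_j}$ and $dd^cw$ plus a semipositive $(1,1)$-form.
\item Hence $\tr_{dd^cv_j}B\le a$ holds pointwise, by convexity and monotonicity of $H\mapsto\tr(H^{-1}B)$.
\item Since $M_\tau$ is nondecreasing in each argument and in $\tau$, one has $v_j\downarrow v$ as $\delta_j,\tau_j\downarrow0$.
\item The linear algebra of your Step~1 plus Bedford--Taylor then gives the cone inequality for $dd^cv$ directly.
\end{itemize}
With this replacement your argument goes through.

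There is also a smaller slip. For $w=\lambda|z|^2-k$ one has $\{u>w\}\subseteq\{u>-k\}$, not $\supseteq$. This is harmless, because the sets $\{u>\lambda|z|^2-k\}$ still increase to $\{u>-\infty\}$. Your localized inequality should be stated on these sets rather than on $E_k$, or you can use $w=\lambda(|z|^2-R^2)-k$ on the ball of radius $R$.
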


\begin{proof}
    Write the decomposition of $T$ as $T=A+T_s$, where the singular matrix-valued measure $T_s$ is positive. Therefore, for local convolution with radial kernel $\varrho_\delta$,\[
    \ddc u_\delta=A*\varrho_\delta+T_s*\varrho_\delta\geq A*\varrho_\delta.
    \]
    By monotonicity,  convexity of $H\mapsto\tr(H^{-1}B)$, and Jensen's inequality \begin{equation}\label{eq:convolved-trace}
\tr\bigl((dd^cu_\delta)^{-1}B\bigr)
\leq\tr\bigl((A*\varrho_\delta)^{-1}B\bigr)
\leq\int\varrho_\delta(y)\tr(A(x-y)^{-1}B)\,dy
\leq a. 
    \end{equation}
It follows that \[
a(\ddc u_\delta)^p-p(\ddc u_\delta)^{p-1}\wedge B\geq 0, \quad \text{ for all }1\le p\le n.
\]
The inequality is strict when $1\le p<n$. Hence, if $u$ is locally bounded, Lemma~\ref{lem:bt-convergence} gives the desired inequality~\eqref{eq:np-cone} by letting $\delta\downarrow 0$.

For arbitrary $u$, choose a quadratic potential $v$ such that $\ddc v=B$, and choose $\lambda\ge n/a$. Set \[
b_C=\lambda v-C, \qquad w_C=\max\{u,b_C\}.
\]
The form $\ddc b_C=\lambda B$ satisfies $\tr_{dd^cb_C}B=n/\lambda\le a$. Consider the following regularized maximum \[
w_{C,j}:=M_{\tau_j}(u_{\delta_j},b_C), \quad M_\tau(s,t)=\frac12(s+t+\sqrt{(s-t)^2+\tau^2}).
\]
Then as $\delta_j\downarrow 0, \tau_j\downarrow 0$, the function $w_{C,j}\downarrow w_{C}$. By convexity, we have $\tr_{dd^cw_{C,j}}B\le a$. Since $w_C$ is bounded, by previous discussion for bounded case, one gets, for $W=\ddc w_C$, \[
a\langle W^p\rangle-p\langle W^{p-1}\rangle\wedge B\geq 0, \quad \text{for all }1\leq p\leq n.
\]

On the plurifinely open set $\{u>b_C\}$, the potentials coincide $w_C=u$. By \eqref{eq:np-locality}, all the products appearing in \eqref{eq:np-cone} coincide there. Those sets increase to $\{u>-\infty\}$ as $C\to \infty$. Their complement is pluripolar and none of the terms in \eqref{eq:np-cone} charges it. Taking increasing restrictions proves \eqref{eq:np-cone} for $T$.  
    
\end{proof}

The preceding density comparisons determine all mixed densities
by applying them to $T+t\omega$ and comparing polynomial
coefficients. This consequence of
Lemmas~\ref{lem:ac-lower} and~\ref{lem:ac-upper}
is the link between the nonpluripolar-product equation and the pointwise trace.
\begin{lemma}\label{lem:mixed-density}
For any K\"ahler current $T$, any smooth K\"ahler form $\omega$, and $0\leq p\leq n$,
\begin{equation}\label{eq:mixed-density}
\big(\langle T^p\rangle\wedge\omega^{n-p}\big)_{\rm ac}
=(T_{\rm ac})^p\wedge\omega^{n-p}.
\end{equation}
Consequently every scalar solution at $a$ satisfies
$\tr((T_{\rm ac})^{-1}\omega)=a$ almost everywhere.
\end{lemma}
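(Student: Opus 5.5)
The plan is to apply the two one-sided density comparisons to the Kähler current $T_t=T+t\omega$ and compare the coefficients of the resulting polynomials in $t$. Work locally on a coordinate ball with $\theta=dd^c|z|^2$, where $T\ge\varepsilon\theta$ since $T$ is a Kähler current. Since $\omega$ is smooth, multilinearity of non-pluripolar products (with smooth forms pulled outside the brackets) gives
\[
\langle (T+t\omega)^n\rangle=\sum_{p=0}^n\binom np t^{n-p}\langle T^p\rangle\wedge\omega^{n-p},\qquad t\ge0.
\]
For every $t\ge0$, $T+t\omega\ge\varepsilon\theta$ and $(T+t\omega)_{\rm ac}=A+t\omega$ with $A=T_{\rm ac}$. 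Lemma~\ref{lem:ac-upper} applied to $T+t\omega$ then gives, almost everywhere,
\[
\sum_{p}\binom np t^{n-p}\,\frac{(\langle T^p\rangle\wedge\omega^{n-p})_{\rm ac}}{\theta^n}\le \frac{(A+t\omega)^n}{\theta^n}=\sum_p\binom np t^{n-p}\frac{A^p\wedge\omega^{n-p}}{\theta^n}.
\]
Lemma~\ref{lem:ac-lower}, applied with the strongly positive form $\Theta=\omega^{n-p}$, gives the termwise reverse inequality $(\langle T^p\rangle\wedge\omega^{n-p})_{\rm ac}\ge A^p\wedge\omega^{n-p}$ for each $p$.

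Write $d_p(x)\ge0$ for the termwise difference of densities. We then have $\sum_p\binom np t^{n-p}d_p(x)\le0$ while every $d_p\ge0$. Taking, say, $t=1$ forces all $d_p=0$ almost everywhere. The only technical point is that the exceptional null sets are countable in number, finitely many $p$ and one value of $t$, so no continuity argument in $t$ is needed. The $p=n$ case is just the two lemmas at $t=0$. The hypothesis of locally finite non-pluripolar powers holds globally on compact $X$ by the facts recalled from \cite{BEGZ10}. This proves \eqref{eq:mixed-density}.

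For the consequence, let $T$ solve $n\omega\wedge\langle T^{n-1}\rangle=a\langle T^n\rangle$ and take absolutely continuous parts of both sides. Using \eqref{eq:mixed-density} with $p=n-1$ and $p=n$, we get $nA^{n-1}\wedge\omega=aA^n$ almost everywhere. Since $A>0$ almost everywhere by the Kähler lower bound, the identity $nA^{n-1}\wedge\omega=\tr(A^{-1}\omega)A^n$ yields $\tr(A^{-1}\omega)=a$ almost everywhere.

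I expect the main obstacle to be the bookkeeping in the first step. One has to check that $(T+t\omega)_{\rm ac}=T_{\rm ac}+t\omega$ coefficientwise, and that taking the absolutely continuous part commutes with the finite sum. Both are immediate from linearity of the Lebesgue decomposition.
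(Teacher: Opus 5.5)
Your argument is correct, but it is organized differently from the paper's. The paper uses Lemma~\ref{lem:ac-lower} only in top degree. Combined with Lemma~\ref{lem:ac-upper}, this gives the two-sided identity $(\langle S^n\rangle)_{\rm ac}=(S_{\rm ac})^n$ for $S=T+t\omega$. The paper applies it for countably many rational $t>0$ on a common full-measure set and reads off \eqref{eq:mixed-density} by comparing coefficients of the resulting polynomial identity in $t$.

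You instead use Lemma~\ref{lem:ac-lower} in every degree $p$ with $\Theta=\omega^{n-p}$ to get $d_p\ge0$ termwise. A single application of Lemma~\ref{lem:ac-upper} at $t=1$ then gives $\sum_p\binom np d_p\le0$, which forces each $d_p=0$.

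The trade-off is this. The paper's route needs the lower bound only in its simplest form, but requires equality at infinitely many (at least $n+1$) values of $t$ and a coefficient-extraction step. Yours uses the full strength of Lemma~\ref{lem:ac-lower}, but needs only one value of $t$ and no polynomial argument.

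Your definition of $d_p$ implicitly assumes the density of $A^p\wedge\omega^{n-p}$ is finite almost everywhere. This follows from Lemma~\ref{lem:ac-lower} itself, since that measure is dominated by the locally finite $\langle T^p\rangle\wedge\omega^{n-p}$. Your derivation of the trace identity is the same as the paper's.
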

\begin{proof}
Lemma~\ref{lem:ac-lower} and Lemma~\ref{lem:ac-upper} give
$(\langle S^n\rangle)_{\rm ac}=(S_{\rm ac})^n$
for every K\"ahler current $S$. Apply this to $S=T+t\omega$ for countably many positive rational $t$. Non-pluripolar multilinearity and linearity of Lebesgue decomposition give, outside a common null set, the polynomial identity
\[
\sum_{p=0}^n\binom np t^{n-p}
\big(\langle T^p\rangle\wedge\omega^{n-p}\big)_{\rm ac}
=
\sum_{p=0}^n\binom np t^{n-p}
(T_{\rm ac})^p\wedge\omega^{n-p}.
\]
Its coefficients agree, proving \eqref{eq:mixed-density}. Taking absolutely continuous parts of the scalar equation and dividing by the strictly positive determinant gives the exact trace equality.
\end{proof}
Now we can show that the solution to the non-pluripolar product equation satisfies both the nonpluripolar cone condition automatically, and the cone condition introduced by Chen \cite{C21} in the sense of local convolution. 
\begin{theorem}[Automatic admissibility]\label{thm:automatic-cone}
    Let $X$ be a compact K\"ahler manifold of complex dimension $n\geq2$, let $\omega$ be a smooth K\"ahler form, and let $T$ be a K\"ahler current. If, for a constant $a>0$,

\begin{equation}\label{eq:scalar-equation}
n\langle T^{n-1}\rangle\wedge\omega
=a\langle T^n\rangle,
\end{equation}
then for every $1\leq p<n$,
\begin{equation}\label{eq:scalar-cone}
a\langle T^p\rangle-p\omega\wedge\langle T^{p-1}\rangle\geq0.
\end{equation}
Moreover, it satisfies the following inverse-trace inequality in the local convolution sense: for every coordinate neighborhood, every constant positive form $B\leq\omega$ there, every local potential $T=dd^cu$, and every radial convolution defined on a smaller neighborhood,

\begin{equation}\label{eq:scalar-density}
\tr_{dd^cu_\delta}B
=\tr\bigl((dd^cu_\delta)^{-1}B\bigr)\leq a.
\end{equation}
\end{theorem}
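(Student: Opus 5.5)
The plan is to reduce everything to Lemma~\ref{lem:np-cone} by first extracting the pointwise trace identity from the equation. By Lemma~\ref{lem:mixed-density}, a scalar solution of \eqref{eq:scalar-equation} satisfies $\tr\bigl((T_{\rm ac})^{-1}\omega\bigr)=a$ almost everywhere. Here we use that $T$ is a K\"ahler current, so locally $T\ge\varepsilon\theta$ and $A=T_{\rm ac}>0$ a.e.

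Next, fix a coordinate neighborhood and a constant positive form $B\le\omega$ on it. Since $H\mapsto\tr(H^{-1}B)$ is increasing in $B$, we get
\[
\tr(A^{-1}B)\le\tr(A^{-1}\omega)=a\quad\text{a.e.}
\]
The convolution inequality \eqref{eq:convolved-trace} in the proof of Lemma~\ref{lem:np-cone} then yields \eqref{eq:scalar-density} directly:
\[
\tr_{dd^cu_\delta}B\le\tr\bigl((A*\varrho_\delta)^{-1}B\bigr)\le(\tr(A^{-1}B))*\varrho_\delta\le a.
\]
Here the first step uses $dd^cu_\delta\ge A*\varrho_\delta$, since the singular part is positive. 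The second step is Jensen's inequality for the convex function $H\mapsto \tr(H^{-1}B)$.

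For \eqref{eq:scalar-cone}, the main obstacle is that $\omega$ is not constant, while Lemma~\ref{lem:np-cone} is stated for constant $B$. I would freeze coefficients. Around a point $x_0$, for any $\eta>0$, choose a small ball on which $(1-\eta)\omega(x_0)\le\omega\le(1+\eta)\omega(x_0)$. Set $B=(1-\eta)\omega(x_0)$. Then $\tr(A^{-1}B)\le a$ a.e. on the ball, so Lemma~\ref{lem:np-cone} gives
\[
a\langle T^p\rangle\ge p(1-\eta)\,\omega(x_0)\wedge\langle T^{p-1}\rangle\ge p\,\frac{1-\eta}{1+\eta}\,\omega\wedge\langle T^{p-1}\rangle
\]
on that ball. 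The second inequality uses positivity of $\langle T^{p-1}\rangle$ and the comparison of the positive forms $\omega(x_0)$ and $\omega/(1+\eta)$. Covering $X$ by finitely many such balls, the inequality $a\langle T^p\rangle-p\frac{1-\eta}{1+\eta}\omega\wedge\langle T^{p-1}\rangle\ge0$ holds globally as a positive current, since positivity is local. Letting $\eta\downarrow0$ gives \eqref{eq:scalar-cone}, because the currents involved have locally finite mass and the limit is a weak limit of positive currents.

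One subtlety I would check is that the constant $a$ in Lemma~\ref{lem:np-cone} is used without slack. This is fine, because that lemma assumes only $\le a$. The case $p=n$ is the equation itself, which is why only $p<n$ is claimed.
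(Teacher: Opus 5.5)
Your proposal is correct and follows essentially the same route as the paper. You extract $\tr\bigl((T_{\rm ac})^{-1}\omega\bigr)=a$ a.e.\ from Lemma~\ref{lem:mixed-density}, deduce \eqref{eq:scalar-density} from the convexity argument \eqref{eq:convolved-trace}, and get \eqref{eq:scalar-cone} by freezing $\omega$ to a constant $B$ on small balls, applying Lemma~\ref{lem:np-cone}, and letting the freezing error tend to zero. The only cosmetic difference is that the paper compares $B$ with $(1-\epsilon)\omega$ rather than with $\omega/(1+\eta)$.
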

\begin{proof}
By Lemma~\ref{lem:mixed-density}, 
\begin{equation}\label{eq:pointwise-trace}
\tr(T_{\rm ac}^{-1}\omega)= a\quad\text{a.e.}
\end{equation}
For any constant $0<B\leq\omega$, we have $\tr(T_{\rm ac}^{-1}B)\leq a$. The positive singular-part decomposition and convexity argument \eqref{eq:convolved-trace} therefore prove \eqref{eq:scalar-density} for every local convolution.

Fix $0<\epsilon<1$. Around every point, continuity and strict positivity of $\omega$ allow a smaller coordinate ball and a constant positive form $B$ such that

\[
(1-\epsilon)\omega\leq B\leq\omega.
\]

Equation \eqref{eq:pointwise-trace} gives $\tr(T_{\rm ac}^{-1}B)\leq a$ there. Apply Lemma~\ref{lem:np-cone} and add the positive current $p(B-(1-\epsilon)\omega)\wedge\langle T^{p-1}\rangle$. We obtain

\[
a\langle T^p\rangle-p(1-\epsilon)\omega\wedge\langle T^{p-1}\rangle\geq0.
\]
 Positivity is local, so the same inequality holds on all of $X$. Letting $\epsilon\downarrow0$ proves \eqref{eq:scalar-cone}. 
    
\end{proof}

\subsection{Upper tests and comparison}
For local replacement we also need comparison with smooth
solutions. The passage to upper test functions is the usual
stability argument for viscosity subsolutions.
 The following convolution proof for the present trace constraint,
including potentials with poles, is essentially due to \cite{M26b}.
\begin{lemma}\label{lem:upper-tests}
Let $U$ be psh on a neighborhood of $\overline B_R$, and let $\omega$ be a smooth K\"ahler form there. Suppose
\begin{equation}\label{eq:weak-trace}
\tr_{(dd^cU)_{\rm ac}}\omega\leq a
\quad\hbox{almost everywhere}.
\end{equation}
If $q\in C^2(\overline{B_R})$ touches $U$ from above at $z_0\in B_R$, then
\begin{equation}\label{eq:upper-test}
dd^cq(z_0)>0,\qquad
\tr_{dd^cq(z_0)}\omega\leq a. 
\end{equation}
Moreover, if $v\in C^\infty(B_R)\cap C^0(\overline{B_R})$ solves $\tr_{\ddc v}\omega=a$, and $U\leq v$ on $\partial B_R$, then $U\leq v$ throughout $B_R$. \end{lemma}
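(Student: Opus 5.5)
The plan has two parts: a viscosity-type touching argument for \eqref{eq:upper-test}, and then a strict perturbation combined with that touching statement for the comparison claim.

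\textbf{Upper tests.} Let $q$ touch $U$ from above at $z_0$, so $q\ge U$ near $z_0$ with equality at $z_0$. First make the contact strict by replacing $q$ with $q_\eta=q+\eta|z-z_0|^2$; it suffices to prove the conclusion for $q_\eta$ and let $\eta\downarrow0$. This limit is legitimate because the set $\{H>0,\ \tr_H\omega(z_0)\le a\}$ has closure inside $\{H\ge \omega(z_0)/a\}$, and $\tr_H\omega$ is continuous there.

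Next regularize $U$. Put $U_\delta=U*\varrho_\delta$ and freeze $\omega$ on a small ball. By \eqref{eq:convolved-trace} applied with a constant $B\le\omega$, we get $\tr_{dd^cU_\delta}B\le a$. Here $B$ is chosen with $(1-\epsilon)\omega(z_0)\le B\le\omega$ on the ball, which can be done by continuity of $\omega$. The singular part is harmless, since $dd^cU_\delta\ge (dd^cU)_{\rm ac}*\varrho_\delta$.

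Since $U_\delta\downarrow U$ and $U$ is upper semicontinuous, a standard argument gives points $z_\delta\to z_0$ at which $U_\delta-q_\eta$ attains a local maximum on a fixed small ball. At $z_\delta$ we then have $dd^cq_\eta(z_\delta)\ge dd^cU_\delta(z_\delta)$. Since $P$ and the trace are decreasing in matrix order, this yields
\[
dd^cq_\eta(z_\delta)\ge B/a>0,\qquad \tr_{dd^cq_\eta(z_\delta)}B\le a .
\]
Letting $\delta\to0$, then $\epsilon\to0$, then $\eta\to0$ gives \eqref{eq:upper-test}. Positivity survives the limit because the lower bound $\omega(z_0)/a$ is uniform.

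\textbf{Comparison.} Suppose $\sup_{B_R}(U-v)=m>0$. The boundary inequality involves the upper semicontinuous $U$ against a continuous $v$. Perturb $v$ to a strict supersolution $v_\tau=v+\tau(|z|^2-R^2)-m/2$, for small $\tau>0$. Then $dd^cv_\tau=dd^cv+\tau\,dd^c|z|^2$, which is strictly larger than $dd^cv$ in matrix order, so $\tr_{dd^cv_\tau}\omega<a$. On $\partial B_R$ we still have $U\le v_\tau+m/2$.

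Choosing $\tau$ small, the function $U-v_\tau$ has positive supremum larger than its boundary values. It therefore attains an interior maximum at some $z_0$, using upper semicontinuity of $U$ on the compact $\overline B_R$. The function $q=v_\tau+\max(U-v_\tau)$ is then $C^2$ near $z_0$ and touches $U$ from above at $z_0$. By \eqref{eq:upper-test}, $\tr_{dd^cq(z_0)}\omega\le a$.

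This does not yet contradict the strict inequality $\tr_{dd^cv_\tau}\omega<a$. The sign must be arranged the other way: we want $v_\tau$ to be a strict \emph{sub}-test, i.e.\ $\tr_{dd^cv_\tau}\omega>a$. So instead take $v_\tau=v-\tau|z|^2+c$, with $\tau$ small enough that $dd^cv_\tau>0$ on compacts. Then $\tr_{dd^cv_\tau}\omega>a$ strictly, because decreasing the Hessian increases the trace. The touching argument now contradicts \eqref{eq:upper-test}.

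\textbf{Main obstacle.} Near $\partial B_R$, $v$ is only continuous up to the boundary, so $dd^cv$ may degenerate. I handle this by working on $B_{R'}$ with $R'<R$ close to $R$: by continuity, $U\le v+o(1)$ on $\partial B_{R'}$, and the $o(1)$ is absorbed into the constant $c$. The main technical care is the choice of the maximizing points $z_\delta$ for poles of $U$. Poles cause no trouble because the maximum of $U_\delta-q_\eta$ occurs where $U$ is near $q(z_0)$, hence away from the $-\infty$ locus.
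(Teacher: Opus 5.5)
Your proposal is correct and follows the paper's proof in both parts. For the touching statement you use the same strict quadratic perturbation of $q$, convolution against a frozen constant $B\le\omega$, and maximizers $z_\delta\to z_0$; for comparison, after your sign correction, you touch $U$ at an interior maximum by $v-\tau|z|^2$ plus a constant, whose trace exceeds $a$. The detour to $B_{R'}$ is unnecessary, because $dd^cv$ cannot degenerate: $\tr_{dd^cv}\omega=a$ already forces $dd^cv\ge\omega/a$ on all of $B_R$, and this is how the paper chooses its perturbation parameter uniformly (your bound on $\partial B_{R'}$ also comes from upper semicontinuity of $U-v$ plus compactness, not from continuity).
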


\begin{proof}
Fix a constant positive form $B\le\omega$ on a small ball
about $z_0$. By the convolution argument in \eqref{eq:convolution},
$U_\delta=U*\varrho_\delta\downarrow U$ satisfies
\begin{equation}\label{eq:trace-comparison}
\tr_{dd^cU_\delta}B\le a,\qquad
dd^cU_\delta\ge B/a.
\end{equation}
For $\epsilon>0$, put $q_\epsilon=q+\epsilon|z-z_0|^2$,
and choose $r>0$ with $U\le q$ on
$\overline{B_r(z_0)}$. Let $z_\delta$ maximize
$U_\delta-q_\epsilon$ on this closed ball. Its maximum is
nonnegative, since $U_\delta(z_0)\ge U(z_0)$.
If $z_{\delta_k}\to z$, monotonicity gives
$U_{\delta_k}\le U_t$ for every fixed $t>0$ and all large
$k$. Letting $k\to\infty$, then $t\downarrow0$, yields
\[
0\le\limsup_{k\to\infty}
 (U_{\delta_k}-q_\epsilon)(z_{\delta_k})
\le U(z)-q_\epsilon(z)
\le-\epsilon|z-z_0|^2.
\]
Thus $z_\delta\to z_0$; in particular $z_\delta\in B_r(z_0)$
for small $\delta$. At these maxima,
\[
dd^cq_\epsilon(z_\delta)\ge dd^cU_\delta(z_\delta)\ge B/a,
\qquad
\tr_{dd^cq_\epsilon(z_\delta)}B\le a.
\]
Passing first $\delta\downarrow0$, then $\epsilon\downarrow0$,
gives the same bounds for $dd^cq(z_0)$.
Continuity of $\omega$ allows
$B=(1-\eta)\omega(z_0)$ on a sufficiently small ball.
Letting $\eta\downarrow0$ proves
\eqref{eq:upper-test}.

For comparison, the equation for $v$ gives
\begin{equation}\label{eq:trace-lower}
dd^cv\ge\omega/a.
\end{equation}
 If $U>v$ somewhere, choose $\epsilon>0$
so small that
\[
\epsilon\,dd^c|z|^2<\omega/a,\qquad
c:=\max_{\overline B_R}
 \{U+\epsilon(|z|^2-R^2)-v\}>0.
\]
The maximum occurs in $B_R$, since $U\le v$ on its
boundary. At a maximizing point,
$q=v-\epsilon(|z|^2-R^2)+c$ touches $U$ from above, and
\[
0<dd^cq=dd^cv-\epsilon\,dd^c|z|^2<dd^cv.
\]
Strict inverse monotonicity gives
\[
\tr_{dd^cq}\omega
>\tr_{dd^cv}\omega=a,
\]
contradicting \eqref{eq:upper-test}.
Taking $U$ to be a second smooth solution gives comparison
for ordered smooth boundary data.\qedhere
\end{proof}

\section{Bergman regularization}\label{sec:bergman}
In this section, we adapt Demailly's regularization \cite{D92} by Bergman kernels to approximate currents while preserving the 
full and partial inverse-trace bounds. The local estimates are obtained by H\"ormander's $L^2$-estimate for the $\bar\partial$ equations. Localization and principalization then allow
global gluing. The resulting approximation is used both to compare
the analytic and birational thresholds and to construct an analytic strict subsolution later.

We first recall some basic facts about the principalization of a coherent ideal sheaf and the local Bergman kernels. For a nonzero coherent
ideal $\mathcal I$ on a smooth compact complex manifold,
\cite[Theorem 1.10]{BM97} gives a finite composition of blowups
with smooth centers $\pi:Y\to X$ such that
\[
\mathcal I\cdot\mathcal O_Y=\mathcal O_Y(-E),
\]
where $E$ is effective and integral, and has simple normal crossings. This is a principalization of $\mathcal I$.
Its local meaning is the following: if $f_1,\ldots,f_N$
generate $\mathcal I$, then on a principalization chart
\[
\begin{gathered}
f_\ell\circ\pi=s\widehat f_\ell,\qquad
s=\gamma z_1^{q_1}\cdots z_k^{q_k},\\
\widehat f_\ell\text{ holomorphic},\qquad
\sum_{\ell=1}^N|\widehat f_\ell|^2>0,
\end{gathered}
\]
where $\gamma$ is holomorphic and nowhere zero and
$q_1,\ldots,q_k$ are nonnegative integers.
We say that a current $T$ has a smooth remainder after principalization if there is a coherent ideal $\cI$, and a principalization $\pi:Y\to X$, such that \begin{equation}
    \begin{aligned}
        \cI\cdot\cO_Y=\cO_Y(-E), \quad \pi^*T=\sigma+[D], \quad D=c_0E, (c_0>0),
    \end{aligned}
\end{equation}
where $\sigma $ is a smooth real closed form on $Y$. We say that a current $T$ has analytic singularity type if locally it has potential \[
c\log\left(\sum_{\ell=1}^N|f_{\ell}|^2\right)+O(1),\qquad c>0,\qquad f_\ell \text{ holomorphic}.
\]

We will use H\"ormander's estimate \cite{H65}, see also \cite[Chapter VIII, Theorem~6.1]{D12}, to bound the trace of the approximation. For convenience, we state a simple form here. 
\begin{proposition}\label{prop:hormander}
    Let $\Omega\subset\mathbb C^n$ be pseudoconvex, let $v\in C^\infty(\Omega)$ be strictly plurisubharmonic, and let $g=\sum g_jd\bar z_j$ be a distributionally $\bar\partial$-closed $(0,1)$-form with
\[
\int_\Omega \sum_{i,j}(H_v^{-1})_{ij}g_i\overline{g_j}\,e^{-v}d\lambda<\infty.
\]
There is $s$ with $\bar\partial s=g$ and
\begin{equation}\label{eq:hormander}
\int_\Omega |s|^2e^{-v}d\lambda
 \le\int_\Omega \sum_{i,j}(H_v^{-1})_{ij}g_i\overline{g_j}\,e^{-v}d\lambda.
\end{equation}
\end{proposition}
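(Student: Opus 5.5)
This is the classical $L^2$ existence theorem, so the plan is to follow Hörmander's functional-analytic argument \cite{H65}: first prove the estimate on relatively compact smooth strictly pseudoconvex subdomains, where $v$ is smooth up to the boundary, and then exhaust $\Omega$.

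\emph{Step 1: the a priori inequality on a good subdomain.} Let $\Omega'\Subset\Omega$ have smooth strictly pseudoconvex boundary. Work in the weighted spaces $L^2_{(0,q)}(\Omega',e^{-v})$, where $\bar\partial$ is a closed, densely defined operator with Hilbert adjoint $\bar\partial^*_v$. For $(0,1)$-forms $\alpha$ in $\operatorname{Dom}\bar\partial\cap\operatorname{Dom}\bar\partial^*_v$ we want
\[
 \|\bar\partial\alpha\|_v^2+\|\bar\partial^*_v\alpha\|_v^2
 \ge\int_{\Omega'}\sum_{i,j}(H_v)_{i\bar j}\alpha_i\overline{\alpha_j}\,e^{-v}d\lambda .
\]
For $\alpha$ smooth up to the boundary and satisfying the $\bar\partial$-Neumann condition, this is the Bochner--Kodaira--Morrey--Kohn identity. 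Its boundary term is the Levi form of $\partial\Omega'$ evaluated on $\alpha$, and it is nonnegative by pseudoconvexity. The gradient term $\sum_{i,j}\int|\partial_{\bar j}\alpha_i|^2e^{-v}$ is dropped. Since $v$ is smooth on $\overline{\Omega'}$, the Friedrichs mollifier lemma shows that such smooth forms are dense in $\operatorname{Dom}\bar\partial\cap\operatorname{Dom}\bar\partial^*_v$ for the graph norm, so the inequality extends to the whole domain.

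\emph{Step 2: duality.} Set $M=\int_{\Omega'}\sum(H_v^{-1})_{ij}g_i\overline{g_j}\,e^{-v}d\lambda$. Take $\alpha\in\operatorname{Dom}\bar\partial^*_v$ and decompose $\alpha=\alpha_1+\alpha_2$ with $\alpha_1\in\ker\bar\partial$ and $\alpha_2\perp\ker\bar\partial$. Then $\alpha_2\in\ker\bar\partial^*_v$, and $(g,\alpha_2)_v=0$ because $g$ is $\bar\partial$-closed. Cauchy--Schwarz in the metric $H_v$ and Step 1 applied to $\alpha_1$ give
\[
 |(g,\alpha)_v|^2=|(g,\alpha_1)_v|^2\le M\int H_v(\alpha_1,\alpha_1)e^{-v}\le M\|\bar\partial^*_v\alpha_1\|_v^2=M\|\bar\partial^*_v\alpha\|_v^2 .
\]
Hence $\bar\partial^*_v\alpha\mapsto(\alpha,g)_v$ is a well-defined functional of norm at most $\sqrt M$ on the range of $\bar\partial^*_v$. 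Extending it by zero on the orthogonal complement and applying Riesz gives $s_{\Omega'}$ with $\bar\partial s_{\Omega'}=g$ on $\Omega'$ and $\|s_{\Omega'}\|_v^2\le M\le$ the full integral over $\Omega$.

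\emph{Step 3: exhaustion.} Since $\Omega$ is pseudoconvex, it has a smooth strictly plurisubharmonic exhaustion $\rho$; one can add $|z|^2$ and regularize a continuous psh exhaustion. By Sard's theorem, $\Omega_k=\{\rho<c_k\}$ for regular values $c_k\uparrow\infty$ are smooth strictly pseudoconvex and exhaust $\Omega$. The solutions $s_k$ on $\Omega_k$ have uniformly bounded weighted norms, so a diagonal weak limit $s$ exists in $L^2_{\rm loc}$. Then $\bar\partial s=g$ in distributions, and weak lower semicontinuity of the norm on each fixed $\Omega_m$, followed by $m\to\infty$, gives \eqref{eq:hormander}.

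I expect the main obstacle to be Step 1: justifying the density of forms smooth up to the boundary in the graph norm and getting the correct sign of the boundary term. Choosing relatively compact smooth subdomains is meant to avoid Hörmander's auxiliary weights, since $v$ is then bounded and smooth on $\overline{\Omega_k}$. If density proves delicate, I would first use Hörmander's three-weight device with weights $e^{-v-2\psi}$, $e^{-v-\psi}$, $e^{-v}$ for a suitable exhaustion $\psi$, and then remove it in the limit.
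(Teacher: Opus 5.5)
Your proposal is correct and is the classical H\"ormander argument: the Morrey--Kohn--H\"ormander inequality on smooth strictly pseudoconvex sublevel sets of an exhaustion, Riesz duality through the splitting $\alpha=\alpha_1+\alpha_2$, and a weak limit along the exhaustion. This is exactly what the paper relies on, since it gives no proof of its own and simply quotes the result from \cite{H65} and \cite[Chapter VIII, Theorem~6.1]{D12}.

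The one check you leave implicit is that $g\in L^2_{(0,1)}(\Omega',e^{-v})$, which you need both for $g\in\ker\bar\partial$ and for $(g,\alpha_2)_v=0$. It holds because $H_v\le C\,I$ on $\overline{\Omega'}$, so $|g|^2\le C\sum_{i,j}(H_v^{-1})_{ij}g_i\overline{g_j}$ there.
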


Let $\Omega\Subset\mathbb C^n$ be pseudoconvex and let
$\psi\in\operatorname{PSH}(\Omega)$, $\psi\not\equiv-\infty$.
Define
\[
\begin{aligned}
\mathscr H(\Omega,\psi)
 &=\left\{f\in\mathcal O(\Omega):
        \int_\Omega|f|^2e^{-\psi}d\lambda<\infty\right\},\\
\mathcal I(\psi)_x
 &=\{g\in\mathcal O_{\Omega,x}:
             |g|^2e^{-\psi}\text{ is locally integrable near }x\}.
\end{aligned}
\]
Nadel's coherence and local generation theorem \cite{Nadel}
asserts that $\mathcal I(\psi)$ is coherent and is generated by
any orthonormal basis $(\sigma_\ell)$ of
$\mathscr H(\Omega,\psi)$; see also \cite[Proposition~5.7 and its proof]{D00}. In particular, for every
$U\Subset\Omega$ there is a finite $N$ such that
\begin{equation}\label{eq:ideal-generators}
\mathcal I(\psi)|_U=(\sigma_1,\ldots,\sigma_N)|_U.
\end{equation}
Explicitly, if $x\in U$ and $g$ is holomorphic near $x$,
then
\[
|g|^2e^{-\psi}\in L^1_{\rm loc}\text{ near }x
\quad\Longleftrightarrow\quad
g=\sum_{\ell=1}^N a_\ell\sigma_\ell
\text{ near }x,
\]
with holomorphic coefficients $a_\ell$. 

\subsection{Local Bergman approximation}

Let $\Omega\Subset\mathbb C^n$ be pseudoconvex and
$u\in\operatorname{PSH}(\Omega)$, $u\not\equiv-\infty$.
For $m>0$, consider the weighted Hilbert space
\[
\begin{aligned}
\mathscr H_m
&=\left\{f\in\mathcal O(\Omega):
 \|f\|_m^2=\int_\Omega|f|^2e^{-mu}d\lambda<\infty\right\},
\\
\langle f,g\rangle_m&=\int_\Omega f\bar g\,e^{-mu}d\lambda.
\end{aligned}
\]
The inner product is linear in its first argument.
The submean inequality and local upper bounds for $u$ make
evaluation $f\mapsto f(w)$ a continuous linear functional on
$\mathscr H_m$. Its Riesz representative is denoted
$K_m(\,\cdot\,,w)$. For every $f\in\mathscr H_m$, one has the 
\emph{reproducing property}
\[
f(w)=\langle f,K_m(\,\cdot\,,w)\rangle_m
=\int_\Omega f(z)\overline{K_m(z,w)}e^{-mu(z)}d\lambda(z).
\]
In particular, the kernel recovers the value of every function
in the weighted space by integration against that function.
For any orthonormal basis $(\sigma_\ell)$,
\[
K_m(z,w)=\sum_{\ell\ge1}\sigma_\ell(z)\overline{\sigma_\ell(w)}.
\]
This expression is independent of the basis. It is holomorphic
in $z$ and antiholomorphic in $w$; the series and its
derivatives converge locally uniformly.

The reproducing property and Cauchy--Schwarz give
\[
K_m(w,w)=\|K_m(\,\cdot\,,w)\|_m^2
=\sup_{\|f\|_m\le1}|f(w)|^2.
\]
When $K_m(w,w)>0$, the supremum is attained by the
\emph{normalized kernel}
\[
k_w(z)=\frac{K_m(z,w)}{\sqrt{K_m(w,w)}},
\qquad
\|k_w\|_m=1,\qquad
k_w(w)=\sqrt{K_m(w,w)}.
\]
If $K_m(w,w)=0$, every function in $\mathscr H_m$ vanishes
at $w$. When the domain must be specified, we write
$K_{V,m}$ and $\|\cdot\|_{V,m}$. The same definitions and
identities hold with $d\lambda$ replaced by
$\rho\,d\lambda$.

Define
\[
b_m(z)=\frac1m\log K_m(z,z)
=\frac1m\log\sum_{\ell\ge1}|\sigma_\ell(z)|^2.
\]
\begin{proposition}[{Demailly \cite[Proposition~3.1]{D92}}]
   \[
u(z)-\frac{C_1}{m}
\le b_m(z)
\le\sup_{|\zeta-z|<r}u(\zeta)
 +\frac1m\log\frac{C_2}{r^{2n}},
\qquad
0<r<d(z,\partial\Omega).
\]
\end{proposition}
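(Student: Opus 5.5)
The lower bound follows from the extremal characterization $K_m(z,z)=\sup_{\|f\|_m\le1}|f(z)|^2$, once we produce a suitable test function by Hörmander's estimate. The upper bound follows from the submean inequality. Throughout, constants $C_1,C_2$ depend only on $n$ and the diameter of $\Omega$.

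\emph{Upper bound.} Let $f\in\mathscr H_m$ with $\|f\|_m\le1$, and let $0<r<d(z,\partial\Omega)$. Since $|f|^2$ is plurisubharmonic, the mean value inequality on the ball $B(z,r)$ gives
\[
|f(z)|^2\le\frac{n!}{\pi^nr^{2n}}\int_{B(z,r)}|f|^2\,d\lambda
\le\frac{n!}{\pi^nr^{2n}}\,e^{m\sup_{B(z,r)}u}\int_\Omega|f|^2e^{-mu}\,d\lambda .
\]
Taking the supremum over such $f$ and applying $\frac1m\log$ yields the right-hand inequality with $C_2=n!/\pi^n$.

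\emph{Lower bound.} Fix $z_0\in\Omega$ with $u(z_0)>-\infty$; otherwise there is nothing to prove. We want $f\in\mathcal O(\Omega)$ with $f(z_0)=a$, where $|a|^2=e^{mu(z_0)}$, and $\|f\|_m\le C$. Then $K_m(z_0,z_0)\ge|a|^2/C^2$, which gives $b_m(z_0)\ge u(z_0)-\frac{2\log C}{m}$.

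\begin{enumerate}
\item \emph{The extension problem.} Solve the extension problem from the single point $z_0$ by Ohsawa--Takegoshi. To stay within what the paper states, we can instead run the standard Hörmander argument with Proposition~\ref{prop:hormander}.
\item \emph{Cutoff.} Take a cutoff $\theta$ equal to $1$ near $z_0$ and supported in $B(z_0,\delta)$. Set
\[
g=\bar\partial(a\theta),\qquad v=mu+2n\log|z-z_0|+|z|^2 .
\]
The weight $v$ is strictly plurisubharmonic, with $H_v\ge\mathrm{Id}$.
\item \emph{Regularization.} Proposition~\ref{prop:hormander} requires $v$ smooth. We therefore apply it to decreasing smooth regularizations of $mu$ together with a smoothed logarithm. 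The uniform $L^2$ bounds then pass to a weak limit.
\item \emph{Vanishing at $z_0$.} The solution $s$ of $\bar\partial s=g$ satisfies $\int|s|^2e^{-v}<\infty$. Since $e^{-2n\log|z-z_0|}$ is not integrable at $z_0$, this forces $s(z_0)=0$. Hence $f=a\theta-s$ is holomorphic with $f(z_0)=a$.
\end{enumerate}

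\emph{Main obstacle.} The difficulty is the size of the weighted norm of $g$. The form $g$ lives on the annulus $\{\delta/2<|z-z_0|<\delta\}$. There $e^{-mu}$ is not controlled by $e^{-mu(z_0)}$ unless $\delta$ depends on $m$, and that dependence would ruin the $C_1/m$ rate. Two remedies are available.
\begin{itemize}
\item The cleanest is Ohsawa--Takegoshi, which gives directly $\|f\|_m^2\le C|a|^2e^{-mu(z_0)}=C$ with $C$ depending only on $\operatorname{diam}\Omega$. This is Demailly's original route, and I would simply cite it.
\item Alternatively, with Hörmander alone one obtains the weaker bound $u(z)-C_1/m$ only after restricting to points where $u$ is comparable to its mean on small spheres. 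That only yields the inequality after an additional limiting argument.
\end{itemize}
I would therefore invoke the Ohsawa--Takegoshi extension theorem for the lower bound and use the submean argument above for the upper bound.
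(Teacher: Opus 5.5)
Your final argument is correct and is exactly Demailly's proof of \cite[Proposition~3.1]{D92}, which the paper cites without reproducing. The submean inequality for $|f|^2$ on $B(z,r)$ gives the upper bound with $C_2=n!/\pi^n$, and the Ohsawa--Takegoshi extension from the single point $\{z\}$ (constant depending only on $n$ and $\operatorname{diam}\Omega$), combined with the extremal formula for $K_m(z,z)$, gives the lower bound; the H\"ormander detour in your steps (1)--(4) and the ``main obstacle'' discussion are not used in the final argument and can be dropped.
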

In particular,
\[
b_m\longrightarrow u
\quad\text{pointwise and in }L^1_{\rm loc},\qquad
\nu(u,z)-\frac{2n}{m}\le\nu(b_m,z)\le\nu(u,z).
\]
Here $\nu(u,z)$ is the Lelong number of $u$ at $z$.

If $0<c\le\rho\le C$ on $\Omega$, replacing $d\lambda$
by $\rho\,d\lambda$ places the new diagonal kernel between
$C^{-1}K_m(z,z)$ and $c^{-1}K_m(z,z)$, by the extremal
formula above. The corresponding potentials therefore differ
by $O(m^{-1})$.
The lower approximation bound also gives
\[
K_m\not\equiv0,\qquad \{K_m(z,z)=0\}\subset\{u=-\infty\}.
\]

We now combine Proposition~\ref{prop:hormander}
with the extremal formula for the Bergman metric, to give a proof that the local Bergman approximations preserve the inverse trace bounds. 
\begin{lemma}\label{lem:bergman-trace}
Let $\Omega\Subset\mathbb C^n$ be bounded pseudoconvex, and let $u$ be plurisubharmonic near $\overline\Omega$. Fix a constant positive form $B$, $a>0$, and $1\le q\le n$. Suppose
\begin{equation}\label{eq:bergman-full}
 P_{q,B}(H_{u*\varrho_r})\le a
 \quad\text{on }\Omega\text{ for all sufficiently small }r>0.
\end{equation}
For the kernel of $e^{-mu}d\lambda$, the potential $b_m=m^{-1}\log K_m(z,z)$ is smooth on $\{K_m(z,z)>0\}$, and
\begin{equation}\label{eq:bergman-partial}
 H_{b_m}>0,\qquad P_{q,B}(H_{b_m})\le a.
\end{equation}
For a smooth positive density $\rho\,d\lambda$ defined near $\overline\Omega$, the bound is $a/(1-C/m)$ for all sufficiently large $m$.
\end{lemma}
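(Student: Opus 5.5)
We follow Demailly's proof of the lower bound on the Bergman metric, but keep track of the full Hessian rather than just its trace. Smoothness of $b_m$ on $\{K_m(z,z)>0\}$ is standard, since $K_m(z,\bar z)$ is real-analytic there. Fix $w$ with $K_m(w,w)>0$ and a vector $\xi\in\mathbb C^n$. The extremal characterization of the Bergman metric gives
\[
 m\,H_{b_m}(w)(\xi,\bar\xi)=\frac{\sup\{|\langle df(w),\xi\rangle|^2:\ \|f\|_m\le1,\ f(w)=0\}}{K_m(w,w)} .
\]
So we need a lower bound on this quantity, obtained by building competitors with Proposition~\ref{prop:hormander}. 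The natural target inequality is
\[
 m\,H_{b_m}(w)\ \ge\ (1-C/m)\,m\,H_{u_r}(w)-\text{error},
\]
where $u_r=u*\varrho_r$ for a suitable $r$. We need it in matrix form, because $P_{q,B}$ is decreasing in matrix order. Granting this, convexity and homogeneity of degree $-1$ give $P_{q,B}(H_{b_m})\le a(1+o(1))$, and \eqref{eq:bergman-partial} should follow in the limit.

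The construction of competitors is as follows. Let $k_w$ be the normalized kernel and $\chi$ a cutoff equal to $1$ near $w$ with support in a ball of radius $\delta$. Take the affine function $\ell(z)=\langle z-w,\bar\eta\rangle$, where $\eta$ is to be optimized. Set $F=\chi\ell k_w$ and solve $\bar\partial s=\bar\partial\chi\,\ell k_w$ with weight
\[
 v=mu+2n\log|z-w|+|z|^2 .
\]
The log pole forces $s(w)=0$ and $ds(w)=0$. The $\bar\partial\chi$ term lives on an annulus where $|\ell|$ is of size $\delta$, so $\|s\|$ is small. Then $f=F-s$ has $f(w)=0$, $df(w)=\sqrt{K_m(w,w)}\,\bar\eta$, and $\|f\|_m\lesssim\|\ell k_w\|_{L^2(\delta\text{-ball})}$.

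This reduces everything to an upper bound for $\int|\ell|^2|k_w|^2e^{-mu}$, and the Hessian of $u$ must enter here through a duality/Cauchy--Schwarz step. Plain Hörmander with weight $mu$ only produces a trace-type quantity, not a matrix one. This is the step I expect to be the main obstacle.

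My first attempt would use the convexity hypothesis directly. The hypothesis \eqref{eq:bergman-full} gives $H_{u_r}\ge B/a$ uniformly in $r$. Since the cone $\{P_{q,B}\le a\}$ is convex and closed, it suffices to show that $H_{b_m}(w)$ lies above an average of the matrices $H_{u_r}$ over a small ball, up to $O(1/m)$. The reason is that $P_{q,B}$ of an average is at most the average of $P_{q,B}$ (Jensen, as in \eqref{eq:convolved-trace}), and matrix monotonicity then finishes.

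To obtain this averaging, I would replace $u$ by $u_r$ in the weight, paying a factor $e^{m\sup|u-u_r|}$. This cost is not uniform, so instead I would compare the kernels $K_m$ with the kernels of the Gaussian model $e^{-m Q}$, where $Q$ is the quadratic Taylor polynomial of $u_r$ at $w$. For that model the Bergman metric equals $H_Q$ exactly. The error terms are then controlled by the sub-mean value property. If this comparison fails, the fallback is to average the extremal problem over translates and use Lemma~\ref{lem:upper-tests}-type viscosity reasoning at the maxima.

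For the density $\rho\,d\lambda$, we write $\rho=e^{-\log(1/\rho)}$ and absorb $m^{-1}\log(1/\rho)$ into $u$. This shifts $H_u$ by $O(1/m)$ in matrix norm, so $H_{u+m^{-1}\log(1/\rho)}\ge(1-C/m)H_u$ because $H_u\ge B/a$. Homogeneity then gives the bound $a/(1-C/m)$.
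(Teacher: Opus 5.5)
\textbf{There is a genuine gap.} The step you flag as the main obstacle, bounding $\int|\ell|^2|k_w|^2e^{-mu}$ through the Hessian of $u$, is the entire content of the lemma, and you leave it open.

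\textbf{The missing idea.} Your premise is mistaken: Hörmander's estimate (Proposition~\ref{prop:hormander}) is already directional, since its right-hand side is $\int\sum(H_v^{-1})_{ij}g_i\bar g_je^{-v}$. What you need is an orthogonality observation that makes this estimate sharp for your competitor.
\begin{itemize}
\item Drop the cutoff. Since $\Omega$ is bounded, $(z_i-w_i)k_w\in\mathscr H_m$ already.
\item The function $s_i=(\bar z_i-\bar w_i)k_w$ solves $\bar\partial s_i=k_w\,d\bar z_i$.
\item By the reproducing property, $\langle s_i,f\rangle_m=\langle k_w,(z_i-w_i)f\rangle_m=0$ for every $f\in\mathscr H_m$, because $(z_i-w_i)f$ vanishes at $w$. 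Hence $s_i$ is the minimal-norm solution, and Hörmander with weight $mu$ gives
\[
\|(z_i-w_i)k_w\|_m^2=\|s_i\|_m^2\le\frac1m\int_\Omega(H_u^{-1})_{i\bar i}\,|k_w|^2e^{-mu}\,d\lambda .
\]
\item Insert $f=(z_i-w_i)k_w$ into your (correct) extremal formula, in a unitary frame diagonalizing $H_{b_m}(w)$. This gives $1\le m(b_m)_{i\bar i}(w)\|s_i\|_m^2$. So $H_{b_m}(w)>0$ and $1/(b_m)_{i\bar i}(w)\le\int(H_u^{-1})_{i\bar i}\,d\mu_w$, where $d\mu_w=|k_w|^2e^{-mu}d\lambda$ is a probability measure.
\item Sum over the indices $J$ of the $q$ largest reciprocal eigenvalues. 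Pointwise, $\sum_{i\in J}(H_u^{-1})_{i\bar i}\le P_{q,I}(H_u)\le a$ by \eqref{eq:partial-traces}. This yields exactly $P_{q,I}(H_{b_m}(w))\le a$, with no error term and no localization near $w$.
\end{itemize}
So the relevant average is of $H_u^{-1}$ against $\mu_w$, which need not concentrate near $w$. It is not an average of $H_{u_r}$ over a small ball. Your arithmetic-mean target is stronger than what this mechanism gives, since the matrix harmonic mean lies below the arithmetic mean.

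\textbf{Why your substitutes do not prove the statement.}
\begin{itemize}
\item A pointwise comparison $mH_{b_m}(w)\ge(1-C/m)\,mH_{u_r}(w)-\text{error}$, or a Gaussian model built from the Taylor polynomial of $u_r$, needs uniform higher-derivative control of $u_r$. That control degenerates as $r\to0$ when $u$ is singular.
\item Even for smooth $u$, that route only gives $a(1+o(1))$ as $m\to\infty$. The lemma asserts $P_{q,B}(H_{b_m})\le a$ exactly for the unweighted kernel, and an $O(1/m)$ error at fixed $m$ cannot be removed by passing to a limit.
\item You give no mechanism for passing from smooth weights to singular $u$. The paper proves the smooth case first. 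It then takes $u_\delta=u*\varrho_\delta\downarrow u$ and shows $K_{m,\delta}(w,w)\downarrow K_m(w,w)$ using the extremal problem, normal families and Fatou's lemma. This upgrades to $K_{m,\delta}\to K_m$ in $C^\infty_{\rm loc}$, and closedness of $\{P_{q,I}\le a\}$ finishes.
\end{itemize}

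\textbf{Smaller points.} The weight $2n\log|z-w|=n\log|z-w|^2$ forces $s(w)=0$ but not $ds(w)=0$; for that you would need $(n+1)\log|z-w|^2$. With the orthogonality argument the cutoff and correction are unnecessary anyway. Your treatment of the density $\rho$ agrees with the paper.
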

\begin{proof}
A constant linear change of coordinates reduces $B$ to $I$. Its constant Jacobian only multiplies the kernel by a constant. First assume $u$ is smooth; letting $\delta\downarrow0$ in the
hypothesis gives $P_{q,I}(H_u)\leq a$.  For $K_m(w,w)>0$, set
\[
 s_i(z)=(\bar z_i-\bar w_i)k_w(z).
\]
Reproduction gives $\langle s_i,f\rangle_m
=\langle k_w,(z_i-w_i)f\rangle_m=0, \forall f\in\mathscr H_m$, i.e. $s_i\perp\mathscr H_m$, so $s_i$ is the minimal-norm solution of $\bar\partial s_i=k_w\,d\bar z_i$. H\"ormander's estimate gives
\begin{equation}
 \|s_i\|_m^2\le\frac1m\int_\Omega(H_u^{-1})_{i\bar i}|k_w|^2e^{-mu}d\lambda.
\end{equation}
To relate this estimate to $b_m$, choose an orthonormal basis
with $\sigma_1=k_w$. The reproducing property implies
$\sigma_\ell(w)=0$ for every $\ell\ge2$.
Differentiating $b_m=m^{-1}\log\sum_\ell|\sigma_\ell|^2$ at
$w$, and direct computation gives 
\begin{equation}
 m(b_m)_{i\bar j}(w)=
 \frac{\sum_{\ell\ge2}\partial_i\sigma_\ell(w)\overline{\partial_j\sigma_\ell(w)}}{K_m(w,w)}.
\end{equation}
Every $f\in\mathscr H_m$ with $f(w)=0$ expands in the
remaining basis functions $\sigma_\ell$ with $\ell\ge 2$. Cauchy--Schwarz inequality therefore gives
\[
|\partial_i f(w)|^2
\le m(b_m)_{i\bar i}(w)K_m(w,w)\,\|f\|_m^2.
\]
In unitary coordinates diagonalizing $H_{b_m}(w)$, apply this to $f=(z_i-w_i)k_w$. Since $\Omega$ is bounded, $f\in\mathscr H_m$, and
\[
 1\le m(b_m)_{i\bar i}(w)\|s_i\|_m^2.
\]
Thus $H_{b_m}(w)>0$. Let $J$ index its $q$ largest reciprocal eigenvalues. Then
\[
 P_{q,I}(H_{b_m}(w))
 \le m\sum_{i\in J}\|s_i\|_m^2
 \le\int_\Omega\sum_{i\in J}(H_u^{-1})_{i\bar i}|k_w|^2e^{-mu}d\lambda\le a.
\]
For general $u$, take $u_\delta=u*\varrho_\delta\downarrow u$
on the fixed domain $\Omega$, and denote the corresponding
kernel by $K_{m,\delta}$.
Since $e^{-mu_\delta}\uparrow e^{-mu}$, the extremal formula
shows that $K_{m,\delta}(w,w)$ decreases as
$\delta\downarrow0$, and
\[
K_{m,\delta}(w,w)\ge K_m(w,w).
\]
For fixed $w$, the normalized kernels
\[
\frac{K_{m,\delta}(z,w)}{\sqrt{K_{m,\delta}(w,w)}}
\]
form a normal family, by local upper bounds for $u_\delta$
and the submean inequality. Any subsequential limit $f$
satisfies, by Fatou's lemma,
\[
\|f\|_m\le1,\qquad
|f(w)|^2=\lim_{\delta\downarrow0}K_{m,\delta}(w,w).
\]
The extremal formula for $K_m$ gives the opposite inequality
for this limit. Hence $K_{m,\delta}(w,w)\downarrow K_m(w,w)$.
The bounds
\[
|K_{m,\delta}(z,w)|^2
\le K_{m,\delta}(z,z)K_{m,\delta}(w,w)
\]
give local normality of the two-variable kernels.
Their limits are determined by the diagonal, so polarization and
Cauchy estimates yield $K_{m,\delta}\to K_m$ in
$C^\infty_{\rm loc}(\Omega\times\Omega)$.
On $\{K_m(z,z)>0\}$, the logarithmic Hessians converge.
Closedness of $\{H>0:P_{q,I}(H)\le a\}$ proves the desired inequality.

Finally, the density is absorbed into the weight:
$e^{-mu_\delta}\rho=e^{-m(u_\delta-m^{-1}\log\rho)}$.
Since $H_{u_\delta}\ge I/a$ and $H_{\log\rho}$ is bounded on
$\overline\Omega$, there is $C$, independent of $m,\delta$, with
\[
H_{u_\delta}-\frac1mH_{\log\rho}
\ge(1-C/m)H_{u_\delta}>0
\]
for $m>C$.
Inversion bounds its partial inverse trace by $a/(1-C/m)$.
Apply the smooth result to $u_\delta-m^{-1}\log\rho$, then let
$\delta\downarrow0$ by the preceding kernel-convergence argument.
\end{proof}

To compare local approximations on overlaps, we use the standard
cutoff and $\bar\partial$-correction method for Bergman kernels.
Here the curvature grows with $m$, and the proof records
the resulting relative error uniformly near the base locus.
\begin{lemma}\label{lem:localization}
Let $W\Subset V\Subset\mathbb C^n$ be bounded pseudoconvex domains and $W'\Subset W$. Suppose $u$ is plurisubharmonic on a neighborhood of $\overline V$, with $dd^cu\ge\delta\,dd^c|z|^2$, $\delta>0$. Use the same density $\rho\,d\lambda$ for both kernels, where
$\rho>0$ is smooth on a neighborhood of $\overline V$. Then, for sufficiently large $m$,
\begin{equation}\label{eq:kernel-comparison}
K_{V,m}(w,w)\le K_{W,m}(w,w)
 \le(1+C/\sqrt m)^2K_{V,m}(w,w),\qquad w\in W'.
\end{equation}
The constant $C$ is uniform,  depending only on $W'\Subset W$.
\end{lemma}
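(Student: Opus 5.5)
The first inequality follows from the extremal characterization. Since $W\subset V$, restriction maps the unit ball of $\mathscr H_{V,m}$ into the unit ball of $\mathscr H_{W,m}$, both taken with respect to $e^{-mu}\rho\,d\lambda$. Hence
\[
K_{V,m}(w,w)=\sup_{\|f\|_{V,m}\le1}|f(w)|^2\le\sup_{\|f\|_{W,m}\le1}|f(w)|^2=K_{W,m}(w,w).
\]

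For the reverse inequality we use the standard cutoff and $\bar\partial$-correction. Fix $w\in W'$ with $K_{W,m}(w,w)>0$ and let $k_w$ be the normalized $W$-kernel. We want to extend it to $V$ with nearly the same norm and the same value at $w$.

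Choose a cutoff $\chi\in C_c^\infty(W)$ with $\chi\equiv1$ on a neighborhood $W''$ of $\overline{W'}$, and set $g=\bar\partial(\chi k_w)=k_w\bar\partial\chi$. This form is supported in $W\setminus W''$, where $|w-z|\ge d_0>0$, with $d_0$ depending only on $W'\Subset W''\Subset W$.

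To force the correction to vanish at $w$, solve $\bar\partial s=g$ on $V$ in Proposition~\ref{prop:hormander} with the weight
\[
v=mu-\log\rho+2n\,\theta(z)\log|z-w|^2 .
\]
Here $\theta$ is a cutoff equal to $1$ near $w$ and supported in $B_{d_0/2}(w)$. The terms $2n\theta\log|z-w|^2$ and $-\log\rho$ spoil plurisubharmonicity only by $O(1)$ Hessian errors away from $w$. These are absorbed by $m\delta\,dd^c|z|^2$ once $m$ is large; this is exactly where $dd^cu\ge\delta\,dd^c|z|^2$ is used. We therefore get $H_v\ge \tfrac{m\delta}{2}I$ on $V$. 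Strictly, $\log|z-w|^2$ is not smooth, so one regularizes it or applies the singular-weight version of H\"ormander's estimate; this is routine.

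On $\operatorname{supp} g$ the additional weight $|z-w|^{-4n\theta}$ is bounded, since $\theta=0$ there or $|z-w|\ge d_0$. H\"ormander's estimate then gives
\[
\int_V|s|^2e^{-v}d\lambda\le\frac{C}{m}\int_{W\setminus W''}|k_w|^2e^{-mu}\rho\,d\lambda\le\frac Cm .
\]
Non-integrability of $|z-w|^{-4n}$ near $w$ forces $s(w)=0$, since $s$ is holomorphic near $w$ because $g=0$ there. Because $|z-w|^{-4n\theta}\ge c>0$ on the bounded set $V$, we also get $\|s\|_{V,m}\le C/\sqrt m$.

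Now put $F=\chi k_w-s\in\mathcal O(V)$. Then $F(w)=k_w(w)=\sqrt{K_{W,m}(w,w)}$, and
\[
\|F\|_{V,m}\le\|k_w\|_{W,m}+\|s\|_{V,m}\le1+C/\sqrt m .
\]
The extremal formula gives
\[
K_{V,m}(w,w)\ge\frac{|F(w)|^2}{(1+C/\sqrt m)^2},
\]
which is the claimed bound. If $K_{W,m}(w,w)=0$, the inequality is trivial.

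The main point to check is uniformity of $C$ in $w\in W'$ and in $m$, together with the threshold for large $m$. The constants $\sup|\bar\partial\chi|$, $d_0$, the bounds on $\rho$, and the Hessian error of $\theta\log|z-w|^2$ all depend only on the geometry of $W'\Subset W$, on $\rho$, and on $\delta$. The only $m$-dependence is the factor $1/m$ coming from the curvature $m\delta$.
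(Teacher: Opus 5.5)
Your proof is correct and follows the paper's argument closely: restriction gives the easy inequality, then you cut off the normalized $W$-kernel and correct it with H\"ormander's estimate, using a logarithmic pole at $w$ to force $s(w)=0$, and conclude with the extremal formula. The differences are cosmetic. You use one fixed cutoff equal to $1$ near $\overline{W'}$, where the paper uses $w$-dependent cutoffs with uniform bounds. Your localized weight $2n\theta\log|z-w|^2$ is more than needed, since $n\log|z-w|^2$ is already globally plurisubharmonic and its coefficient $n$ already gives non-integrability. The paper also writes out the regularization $v_\nu\downarrow v$ that you call routine.
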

\begin{proof}
Restriction decreases the Hilbert norm, hence
$K_{V,m}\le K_{W,m}$.
For the reverse bound, we extend the normalized kernel from
$W$ to a holomorphic function on $V$, preserving its value
at $w$ and increasing its norm by at most $C/\sqrt m$.
Fix $w\in W'$ with $K_{W,m}(w,w)>0$, and let $k_w$ be its
normalized kernel, so $\|k_w\|_{W,m}=1$.
Choose $0\le\eta\le1$, $\eta\in C_c^\infty(W)$, with
$\eta=1$ near $w$, such that
\[
d(w,\operatorname{supp}\bar\partial\eta)\ge r>0,\qquad
\|\bar\partial\eta\|_\infty\le C.
\]
These constants can be chosen uniformly for $w\in W'$.
For the weight
\[
v=mu-\log\rho+n\log|z-w|^2
\]
we have $H_v\ge(m\delta-C)I\ge m\delta I/2$ away from $w$.
The form $k_w\bar\partial\eta$, extended by zero, is
$\bar\partial$-closed on $V$, and
\[
\int_V|k_w\bar\partial\eta|^2e^{-v}d\lambda
\le C\int_W|k_w|^2e^{-mu}\rho\,d\lambda=C.
\]
Thus H\"ormander's estimate\eqref{eq:hormander} gives
$\bar\partial s=k_w\bar\partial\eta$ with
\begin{equation}\label{eq:kernel-correction}
\int_V|s|^2e^{-mu}|z-w|^{-2n}\rho\,d\lambda\le C/m.
\end{equation}

To justify the singular weight, use
\[
v_\nu=m(u*\varrho_\nu)-\log\rho
      +n\log(|z-w|^2+\nu^2)\downarrow v.
\]
The curvature and right-hand-side estimates above are uniform in
$\nu$. Let $s_\nu$ be the resulting solutions. After passage to
a subsequence, $s_\nu\rightharpoonup s$ in $L^2_{\rm loc}(V)$
and $\bar\partial s=k_w\bar\partial\eta$.
Weak lower semicontinuity on an exhaustion of $V$ gives, for
each fixed $\nu_0>0$,
\[
\int_V|s|^2e^{-v_{\nu_0}}d\lambda
\le\liminf_{\nu\downarrow0}
 \int_V|s_\nu|^2e^{-v_{\nu_0}}d\lambda
\le C/m.
\]
Indeed, $e^{-v_{\nu_0}}\le e^{-v_\nu}$ for $\nu<\nu_0$.
Monotone convergence as $\nu_0\downarrow0$ gives
\eqref{eq:kernel-correction}.

The function $s$ is holomorphic near $w$.
Local upper bounds for $u$, together with $\rho>0$, imply
\[
\int_{|z-w|<r}|s|^2|z-w|^{-2n}d\lambda<\infty,
\]
so $s(w)=0$. Otherwise its constant term would produce
$\int_0^r t^{-1}dt=\infty$.
Since $V$ is bounded, \eqref{eq:kernel-correction}
also gives $\|s\|_{V,m}\le C/\sqrt m$.
Therefore
\[
\begin{gathered}
F=\eta k_w-s\in\mathscr H_m(V),\\
F(w)=\sqrt{K_{W,m}(w,w)},\qquad
\|F\|_{V,m}\le1+C/\sqrt m,
\end{gathered}
\]
and the extremal characterization implies
\[
K_{V,m}(w,w)
\ge\frac{|F(w)|^2}{\|F\|_{V,m}^2}
\ge\frac{K_{W,m}(w,w)}{(1+C/\sqrt m)^2}.
\]
If $K_{W,m}(w,w)=0$, the desired inequalities follow from
$0\le K_{V,m}\le K_{W,m}$.
All constants depend only on the curvature lower bound, the density,
and the indicated cutoff bounds, and not on $K_{W,m}(w,w)$.
\end{proof}

The common singularity type is determined by Nadel's multiplier
ideal theorem \cite{Nadel}, in the local generation form of
\cite[Proposition~5.7]{D00}. The following is its application
to our weighted kernels. Note that a smooth positive density does not
change local integrability.
\begin{lemma}\label{lem:common-ideal}
In the setting of Lemma~\ref{lem:localization}, take $m$ sufficiently large that $mu-\log\rho$ has a strictly positive curvature lower bound. The ideal generated by an orthonormal basis of $\mathscr H_m(V)$, restricted to any relatively compact smaller set, equals
\begin{equation}\label{eq:common-ideal}
\mathcal I(mu)_w=\{g\in\mathcal O_{V,w}:|g|^2e^{-mu}
                         \text{ is locally integrable near }w\}.
\end{equation}
It is coherent. When two local potentials differ by a smooth pluriharmonic function, these sheaves agree on the overlap.
\end{lemma}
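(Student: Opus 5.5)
The plan is to reduce Lemma~\ref{lem:common-ideal} to Nadel's local generation theorem, in the form \eqref{eq:ideal-generators}, applied to the weight $\psi=mu-\log\rho$. Then check that neither the density nor a pluriharmonic change of potential alters the ideal.

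\textbf{Step 1: the density.} Since $\rho$ is smooth and positive near $\overline V$, we have $c\le\rho\le C$. Hence
\[
\mathscr H_m(V)=\Bigl\{f\in\mathcal O(V):\int_V|f|^2e^{-mu}\rho\,d\lambda<\infty\Bigr\}
\]
coincides as a set with $\mathscr H(V,\psi)$, and its norm is equivalent to the unweighted one. Also $|g|^2e^{-\psi}$ is locally integrable exactly when $|g|^2e^{-mu}$ is, so $\mathcal I(\psi)=\mathcal I(mu)$. For $m$ as in the statement, $\psi$ is plurisubharmonic, indeed strictly so with a uniform lower bound $H_\psi\ge(m\delta-C)I>0$. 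So $\mathscr H(V,\psi)$ is exactly the Hilbert space $\mathscr H(\Omega,\psi)$ of Nadel's theorem with $\Omega=V$.

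\textbf{Step 2: generation.} Nadel's theorem, via \eqref{eq:ideal-generators}, states that an orthonormal basis of $\mathscr H(V,\psi)$ generates $\mathcal I(\psi)$ on every $U\Subset V$, with finitely many members sufficing on $U$. The point to address is that the theorem is stated for one particular inner product, while our basis is orthonormal for the $\rho$-weighted one. I would handle this by observing that the ideal generated at a point by any orthonormal basis equals the ideal generated by all of $\mathscr H$. One inclusion is immediate. For the other, every $f\in\mathscr H$ is an $L^2$-convergent expansion in the basis, which converges locally uniformly by the submean inequality, and coherence together with the Krull/closedness-of-submodules property then places the germ of $f$ in the ideal generated by finitely many $\sigma_\ell$. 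This step is just a repetition of the argument in \cite[Proposition~5.7]{D00}: if $g\in\mathcal I(\psi)_w$, solve a $\bar\partial$-problem with H\"ormander's estimate (Proposition~\ref{prop:hormander}), using the weight $\psi+(n+k)\log|z-w|^2$ and a cutoff of $g$. This yields $F\in\mathscr H$ with $F-g\in\mathfrak m_w^{k}\cdot\mathcal I(\psi)_w$, and Krull's lemma then gives $g$ in the generated ideal. The strict curvature bound from Step~1 is exactly what makes the H\"ormander step work uniformly. Coherence is part of Nadel's theorem.

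\textbf{Step 3: overlaps.} If two local potentials satisfy $u'=u+h$ with $h$ smooth pluriharmonic, then $e^{-mu'}=e^{-mu}e^{-mh}$, and $e^{-mh}$ is locally bounded above and below. The multiplier ideals $\mathcal I(mu)$ and $\mathcal I(mu')$ therefore coincide on the overlap. By Step~2 each equals the ideal generated by the respective Bergman bases, so these glue to a global coherent sheaf.

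The main obstacle is Step~2, namely making sure that local generation by the basis holds up to the relatively compact subset and not merely at points with nonvanishing kernel. The H\"ormander argument with the extra singular weight $\log|z-w|^2$ requires the same approximation $v_\nu\downarrow v$ used in the proof of Lemma~\ref{lem:localization} when $u$ is not smooth. I would import that justification verbatim.
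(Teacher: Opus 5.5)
Your proposal is correct and follows the paper's route: absorb the density into the weight $\psi=mu-\log\rho$, invoke Nadel's local generation \eqref{eq:ideal-generators} for coherence and finite generation on relatively compact subsets, and compare $e^{-mu_i}$ with $e^{-mu_j}$ through the locally bounded factor $e^{-m(u_i-u_j)}$. The re-derivation in your Step~2 is unnecessary. Since $e^{-\psi}d\lambda=e^{-mu}\rho\,d\lambda$, the $\rho$-weighted inner product is literally the one in Nadel's theorem, so your basis is already an orthonormal basis of $\mathscr H(V,\psi)$.
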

\begin{proof}
Set $\psi=mu-\log\rho$. Since $\rho$ is smooth and strictly
positive on a neighborhood of $\overline V$, there is a constant
$C$ such that
\[
dd^c\psi\ge(m\delta-C)\,dd^c|z|^2>0
\]
for all sufficiently large $m$. Moreover,
\[
e^{-\psi}=e^{-mu}\rho,\qquad
\mathscr H_m(V)=\mathscr H(V,\psi),\qquad
\mathcal I(\psi)=\mathcal I(mu).
\]
The last equality follows because $\rho$ is locally bounded
above and below by positive constants. Applying the local
$L^2$-generation theorem
\eqref{eq:ideal-generators}
proves both coherence and generation by finitely many basis
functions on every relatively compact smaller set.

If two local potentials $u_i,u_j$ differ by a smooth
pluriharmonic function, then
\[
e^{-mu_i}=e^{-m(u_i-u_j)}e^{-mu_j}.
\]
The first factor is locally bounded above and below by positive
constants. Hence the local integrability conditions coincide,
and $\mathcal I(mu_i)=\mathcal I(mu_j)$ on the overlap.
\end{proof}

After principalization \cite[Theorem~1.10]{BM97}, finite
generation identifies a common divisor of the basis functions.
We justify division of the entire Hilbert-valued series below.
This gives a smooth remainder on the modification and a bounded
logarithmic remainder on the base.
\begin{lemma}\label{lem:smooth-remainder}
Let $(\sigma_\ell)$ be a Hilbert basis as above, generating the
coherent ideal $\mathcal I$. Let $\pi:Y\to V$ be a
principalization with
$\mathcal I\cdot\mathcal O_Y=\mathcal O_Y(-E)$.
If $s$ is a local generator of $\mathcal O_Y(-E)$, then
\begin{equation}\label{eq:smooth-remainder}
\sum_\ell|\sigma_\ell\circ\pi|^2=|s|^2h,
\end{equation}
where $h$ is smooth and strictly positive. Thus the  Bergman kernel $b_m$, has a smooth resolved remainder.
\end{lemma}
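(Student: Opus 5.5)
The plan is to work locally on a principalization chart and show that the whole Hilbert-valued series is divisible by $s$. Fix a point $y\in Y$ and a chart there. Let $\sigma_1,\dots,\sigma_N$ be the finitely many basis elements from \eqref{eq:ideal-generators} that generate $\mathcal I$ on a neighbourhood of $\pi(y)$. Write $\sigma_\ell\circ\pi=s\widehat\sigma_\ell$ for these $\ell$, with $\sum_{\ell\le N}|\widehat\sigma_\ell|^2>0$.

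Every further basis element $\sigma_\ell$ with $\ell>N$ lies in $\mathcal I$. Hence near $\pi(y)$ it can be written as $\sigma_\ell=\sum_{k\le N}a_{\ell k}\sigma_k$, so that $\sigma_\ell\circ\pi=s\widehat\sigma_\ell$ with $\widehat\sigma_\ell$ holomorphic. It follows that
\[
h=\sum_\ell|\widehat\sigma_\ell|^2
\]
is well defined pointwise and satisfies $\sum_\ell|\sigma_\ell\circ\pi|^2=|s|^2h$ off $\{s=0\}$. Strict positivity is immediate, since $h\ge\sum_{\ell\le N}|\widehat\sigma_\ell|^2>0$.

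The main obstacle is smoothness of $h$: the decompositions $a_{\ell k}$ are not uniform in $\ell$, so I will avoid them. Instead I will treat $F=(\sigma_\ell\circ\pi)_\ell$ as a holomorphic map into $\ell^2$. This map is holomorphic because the series $\sum_\ell\sigma_\ell(z)\overline{\sigma_\ell(w)}=K_m(z,w)$ converges locally uniformly together with derivatives, so $z\mapsto(\sigma_\ell(z))_\ell\in\ell^2$ is holomorphic (the Riesz map $w\mapsto\overline{K_m(\cdot,w)}$). Now divide by $s$ as a vector-valued function. In the chart $s=\gamma z_1^{q_1}\cdots z_k^{q_k}$, and I will divide one coordinate factor at a time. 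If a holomorphic $\ell^2$-valued $G$ has every component vanishing on $\{z_1=0\}$, then $G|_{z_1=0}\equiv0$ as a vector, and
\[
G/z_1=\int_0^1\partial_{z_1}G(tz_1,z')\,dt
\]
is again holomorphic and $\ell^2$-valued, and agrees componentwise with $\widehat\sigma_\ell$. Componentwise divisibility by the full monomial holds because each $\sigma_\ell\circ\pi\in\mathcal O_Y(-E)$. Iterating over the factors gives a holomorphic $\widehat F:U\to\ell^2$ with $F=s\widehat F$.

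Then $h=\|\widehat F\|^2_{\ell^2}$ is real-analytic, hence smooth, and positive by the previous step. Consequently
\[
\pi^*b_m=\frac1m\log|s|^2+\frac1m\log h,
\]
and $\pi^*dd^cb_m=\tfrac{2\pi}{m}[E]+\tfrac1m dd^c\log h$ by \eqref{eq:poincare-lelong}. This is precisely a smooth remainder after principalization, with $c_0=2\pi/m$. Finally, changing the local generator $s$ multiplies $h$ by $|{\rm unit}|^{-2}$, so the conclusion is independent of that choice.
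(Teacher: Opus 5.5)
Your proposal is correct and follows essentially the paper's argument. You view $(\sigma_\ell)$ as a holomorphic $\ell^2$-valued map and divide it by each coordinate factor of $s$ using componentwise divisibility; you do this via $G/z_1=\int_0^1\partial_{z_1}G(tz_1,z')\,dt$, while the paper uses vanishing vector-valued Taylor coefficients. You then get strict positivity from the finitely many local generators whose quotients by $s$ have no common zero, as the paper does. The paper also records the bounded base remainder $\log K_m=\log\sum_{\ell\le N}|\sigma_\ell|^2+O(1)$, which follows from your identity by continuity of $h/\sum_{\ell\le N}|\widehat\sigma_\ell|^2$ and properness of $\pi$.
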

\begin{proof}
The basis functions define a holomorphic map
$z\mapsto(\sigma_\ell(z))_{\ell\ge1}$ with values in $\ell^2$.
Indeed, the components are holomorphic and
\[
\sup_{z\in K}\sum_{\ell\ge1}|\sigma_\ell(z)|^2
=\sup_{z\in K}K_m(z,z)<\infty
\qquad(K\Subset V).
\]
Local boundedness and the Cauchy formula give holomorphicity
with values in $\ell^2$.

On a principalization chart, write
$s=\gamma z_1^{q_1}\cdots z_k^{q_k}$, with $\gamma$ a
nowhere-vanishing holomorphic function.
Each $\sigma_\ell\circ\pi$ is divisible by $s$.
For a factor $z_1^{q_1}$, the first $q_1$ Taylor
coefficients vanish in every component, hence also as vectors
in $\ell^2$. Division of the vector-valued Taylor series by
$z_1^{q_1}$ therefore preserves holomorphicity. Repeating this
for all factors gives
\[
y\longmapsto
\left(\frac{\sigma_\ell\circ\pi}{s}(y)\right)_{\ell\ge1}
\quad\text{holomorphic with values in }\ell^2.
\]
Consequently
\[
h=\sum_{\ell\ge1}\left|\frac{\sigma_\ell\circ\pi}{s}\right|^2
\]
is smooth and satisfies
\eqref{eq:smooth-remainder}.

Fix $y\in Y$. Choose finitely many basis elements generating
the local weighted integrability ideal near $\pi(y)$, as in
\eqref{eq:ideal-generators}.
The local form of principalization stated above says that
their quotients by $s$ have no common zero. Thus
\[
h(y)\ge\sum_{\ell=1}^N
\left|\frac{\sigma_\ell\circ\pi}{s}(y)\right|^2>0.
\]
The quotient of $h$ by this finite sum is continuous and
independent of the choice of $s$. Properness of $\pi$
makes it bounded on the inverse image of a sufficiently small
compact neighborhood. Multiplying by $|s|^2$ and descending
to that neighborhood gives
\[
\sum_{\ell=1}^N|\sigma_\ell(z)|^2
\le K_m(z,z)\le C\sum_{\ell=1}^N|\sigma_\ell(z)|^2.
\]
It follows that
\[
\log K_m(z,z)
=\log\sum_{\ell=1}^N|\sigma_\ell(z)|^2+O(1),
\qquad
\log(K_m\circ\pi)=\log|s|^2+\log h.
\]
The latter remainder is smooth, as asserted.
\end{proof}

\subsection{Global regularization} 
We now carry out Demailly's local-to-global construction
\cite[Section~3]{D92}, using the regularized maximum of
\cite[Chapter~I, Lemma~5.18]{D12}.
Lemmas~\ref{lem:bergman-trace} and
\ref{lem:localization} provide the trace control
and overlap estimates; Lemma~\ref{lem:smooth-remainder}
provides the resolved smoothness.
\begin{theorem}\label{thm:bergman}
For every $T\in\mathcal C_a$ and $\varepsilon>0$, there exists a positive current $T_\varepsilon\in\alpha$, with analytic singularity type and smooth remainder after principalization, such that
\begin{equation}\label{eq:bergman-resolution}
T_\varepsilon\in\mathcal C_{a+\varepsilon},\qquad
\pi_\varepsilon^*T_\varepsilon
 =\sigma_\varepsilon+[D_\varepsilon],\qquad
\sigma_\varepsilon\ge\frac{\pi_\varepsilon^*\omega}{a+\varepsilon}.
\end{equation}
Here $\pi_\varepsilon$ is the principalization, which is a finite composition of blowups with smooth centers, $D_\varepsilon$ is effective real, and $\sigma_\varepsilon$ is smooth and semipositive. The currents can be selected with $T_\varepsilon\to T$ weakly as $\varepsilon\downarrow0$.
\end{theorem}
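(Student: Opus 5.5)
The plan is to run Demailly's local-to-global construction from \cite[Section~3]{D92} on a current already made strictly positive, and to check that each step costs at most an arbitrarily small amount of inverse trace.

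\textbf{Step 1: a slightly stronger starting current.} First replace $T$ by $T'=(1-t)T+t\kappa$ with $t>0$ small. Convexity and monotonicity of $\tr_{\cdot}\omega$ give $\tr_{T'_{\rm ac}}\omega\le a(1+O(t))$. Moreover $T'\ge t\kappa$, which supplies the uniform curvature lower bound $dd^cu\ge\delta\,dd^c|z|^2$ needed in Lemmas~\ref{lem:localization} and~\ref{lem:common-ideal}.

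\textbf{Step 2: local data.} Cover $X$ by finitely many coordinate balls $V_j$, with shrinkings $W_j''\Subset W_j'\Subset W_j\Subset V_j$ still covering $X$. Write $T'=dd^cu_j$ on each $V_j$. On each ball, freeze a constant form $B_j$ with $(1-\eta)\omega\le B_j\le\omega$. By \eqref{eq:convolution}, $P_{n,B_j}(H_{u_j*\varrho_r})\le a(1+O(t))$.

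\textbf{Step 3: local Bergman potentials.} Take the density $\rho_j=\omega^n/d\lambda$, so the weights are compatible across charts. Lemma~\ref{lem:bergman-trace} gives smooth local potentials $b_{j,m}=m^{-1}\log K_{V_j,m}$, away from their poles, with
\[
 \tr_{dd^cb_{j,m}}B_j\le a(1+O(t))(1-C/m)^{-1}.
\]
Since $B_j\ge(1-\eta)\omega$, this yields $\tr_{dd^cb_{j,m}}\omega\le a(1+O(t)+O(\eta)+O(1/m))$.

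\textbf{Step 4: gluing.} Compare $b_{j,m}$ with the global quasi-potential through $u_j=\varphi+\phi_j$, with $\phi_j$ smooth local potentials of $\kappa$. Lemma~\ref{lem:localization}, applied to $W_j\cap W_k$ viewed inside both $V_j$ and $V_k$, together with the pluriharmonic-change invariance, shows that the functions $b_{j,m}-\phi_j$ agree on overlaps up to $O(m^{-1/2})/m+O(\text{osc})$.

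The gluing itself follows Demailly. Set
\[
 \varphi_m=\widetilde\max_j\bigl(b_{j,m}-\phi_j+\epsilon_m(\theta_j-1)\bigr),
\]
where the $\theta_j$ are cutoffs equal to $1$ on $W_j''$ and negative near $\partial W_j'$, and $\epsilon_m\to0$ dominates the overlap error. The ends of the patches are then never selected. The regularized maximum of \cite[Lemma~5.18]{D12} is a convex combination on the Hessian level, up to the Hessians of the cutoff terms. The error $\epsilon_m dd^c\theta_j$ is $O(\epsilon_m)\omega$, and this perturbation is absorbed by the lower bound $dd^cb\ge\omega/(2a)$, since $\tr$ is Lipschitz there. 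Convexity of $\mathfrak C_{a'}(\omega)$ then gives $T_\varepsilon=\kappa+dd^c\varphi_m\in\mathcal C_{a+\varepsilon}$ for $t,\eta$ small and $m$ large.

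\textbf{Step 5: analytic singularities and resolution.} Lemma~\ref{lem:common-ideal} shows that every local piece has poles along the single coherent ideal $\cI(mu_j)$, and these glue to a global ideal $\cI(m\varphi)$. Principalize it by \cite{BM97}. Lemma~\ref{lem:smooth-remainder} then writes each $m b_{j,m}\circ\pi=\log|s|^2+\log h_j$ with $h_j$ smooth and positive.

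After pulling back, the regularized max of smooth functions plus the common $m^{-1}\log|s|^2$ gives $\pi^*T_\varepsilon=\sigma_\varepsilon+[D_\varepsilon]$ with $D_\varepsilon=m^{-1}E$, using the normalization \eqref{eq:poincare-lelong} with a $2\pi$ absorbed into $D$. Here $\sigma_\varepsilon$ is smooth. On the dense open set where $\pi$ is biholomorphic, $\sigma_\varepsilon$ has inverse trace at most $a+\varepsilon$ against $\pi^*\omega$. By continuity, $\sigma_\varepsilon\ge\pi^*\omega/(a+\varepsilon)$ holds everywhere, and $\sigma_\varepsilon$ is semipositive.

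\textbf{Step 6: weak convergence.} We have $\varphi_m\to\varphi$ in $L^1$ by Demailly's estimate $b_m\to u$, and $T'\to T$ as $t\to0$. Hence choosing $t,\eta\to0$ and $m\to\infty$ gives $T_\varepsilon\rightharpoonup T$.

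\textbf{Main obstacle.} The hardest part is the gluing in Step 4. The overlap estimate in Lemma~\ref{lem:localization} is only multiplicative, so it gives a relative error of order $\log(1+C/\sqrt m)/m$ in the potentials. These errors must dominate the mismatch between patches. At the same time, the Hessian contribution of the cutoffs, of size $\epsilon_m$, must stay small compared to $1/a$, and the regularized maximum must not destroy the common analytic singularity type near the poles. I would handle the poles by noting that near $\{K=0\}$ all competing terms share the same $\log|s|^2$ singularity, so their differences are bounded. The cutoff selection therefore remains valid near the pole set, and the regularized maximum of the differences is smooth upstairs.
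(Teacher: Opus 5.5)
Your proposal is correct and follows essentially the same route as the paper. The paper likewise uses local Bergman kernels for one fixed smooth volume form, the trace bound of Lemma~\ref{lem:bergman-trace} against frozen constant forms, overlap comparison via Lemma~\ref{lem:localization} together with the exact pluriharmonic isometry, a cutoff-perturbed regularized maximum, and principalization through Lemmas~\ref{lem:common-ideal} and~\ref{lem:smooth-remainder}. Two minor remarks: your Step~1 is unnecessary, because $T\in\mathcal C_a$ already gives $T\ge\omega/a$, which supplies the curvature lower bound. Also, with the common measure $\omega^n$, the isometry $f\mapsto e^{mh_{jk}/2}f$ makes the overlap error exactly $O(m^{-3/2})$, so your unexplained ``$O(\mathrm{osc})$'' term does not actually appear.
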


\begin{proof}
Write $T=\kappa+dd^c\phi$, with $\kappa\in\alpha$ smooth, and
fix a smooth positive volume form $dV$.
For $\eta>0$, choose a finite cover
\[
U_i'\Subset U_i\Subset V_i\Subset\widetilde V_i,\qquad
\bigcup_iU_i'=X,
\]
by nested coordinate balls, and constant positive forms $B_i$ with
\begin{equation}\label{eq:frozen-background}
B_i\le\omega\le(1+\eta)B_i
\quad\text{on }\widetilde V_i.
\end{equation}
Let $dd^cq_i=\kappa$ on $\widetilde V_i$ and set
$u_i=\phi+q_i$. Then
\[
dd^cu_i=T,\qquad T\ge\omega/a,\qquad
\tr(T_{\rm ac}^{-1}B_i)\le a.
\]
By the discussion after Definition~\ref{def:trace-family}, the convolutions
of $u_i$ satisfy the same frozen matrix inequality.

Let $K_{i,m}$ be the kernel on $V_i$ for the weight
$e^{-mu_i}dV$, and set
\[
v_{i,m}=\frac1m\log K_{i,m}(z,z)-q_i.
\]
Lemma~\ref{lem:bergman-trace} gives, away from its singular locus,
\begin{equation}\label{eq:local-trace}
\tr_{\kappa+dd^cv_{i,m}}\omega
\le\frac{(1+\eta)a}{1-C/m}.
\end{equation}
Here and below $C$ may depend on the fixed finite cover and
may increase from one estimate to the next.
All limits in $m$ are taken after choosing $\eta$ and this cover.

We next compare the potentials on overlaps.
For $w\in U_i\cap U_j$, choose a common ball
$w\in W\Subset V_i\cap V_j$, with uniform interior and cutoff
bounds. Such bounds exist because
$\overline U_i\Subset V_i$ and the cover is finite.
On the coordinate ball $W$, one can write $u_i-u_j=\operatorname{Re}h_{ij}$, with
$h_{ij}$ holomorphic, since $u_i-u_j$ is pluriharmonic.
Multiplication by $e^{mh_{ij}/2}$ is an isometry between the two
weighted spaces on $W$, since both use $dV$.
Their kernels therefore satisfy
\[
K_{W,i,m}=e^{m\operatorname{Re}h_{ij}}K_{W,j,m},\qquad
\frac1m\log K_{W,i,m}-q_i
=\frac1m\log K_{W,j,m}-q_j.
\]
Applying Lemma~\ref{lem:localization} in each chart
and taking logarithms yields
\begin{equation}\label{eq:overlap}
|v_{i,m}-v_{j,m}|
\le\frac2m\log(1+C/\sqrt m)=O(m^{-3/2})
\end{equation}
on $U_i\cap U_j$ off the singular locus.
By Lemma~\ref{lem:common-ideal}, the local weighted
integrability conditions agree:
\[
\mathcal I(mu_i)=\mathcal I(mu_j)\quad\text{on overlaps}.
\]
They define a nonzero coherent ideal $\mathcal I_m$ on $X$,
locally generated by the corresponding basis functions. Its zero
set $Z_m$ is characterized in each chart by
\[
Z_m\cap U_i=\{x\in U_i:K_{i,m}(x,x)=0\}.
\]

Choose $-1\le\vartheta_i\le1$ smooth on $U_i$, with
$\vartheta_i=1$ on $U_i'$, $\vartheta_i=-1$ near
$\partial U_i$, and $-C\omega\le dd^c\vartheta_i\le C\omega$.
Put
\[
\tau_m=m^{-1/2},\qquad
\widetilde v_{i,m}=v_{i,m}+\tau_m\vartheta_i.
\]
For large $m$, \eqref{eq:local-trace}
gives $\kappa+dd^cv_{i,m}\ge\omega/(2(1+\eta)a)$.
Consequently, after increasing $C$,
\[
\begin{aligned}
\kappa+dd^c\widetilde v_{i,m}
&\ge\kappa+dd^cv_{i,m}-C\tau_m\omega\\
&\ge(1-C\tau_m)(\kappa+dd^cv_{i,m}).
\end{aligned}
\]
Combining inversion with
\eqref{eq:local-trace}, and
increasing $C$ once more, gives
\begin{equation}\label{eq:perturbed-trace}
\tr_{\kappa+dd^c\widetilde v_{i,m}}\omega
\le A_m:=\frac{(1+\eta)a}{1-C\tau_m}
\end{equation}
for all sufficiently large $m$.

Let $M_r$ be the standard regularized maximum, defined using a
product mollifier as in \cite[I.5.18]{D12}. Its properties are
\[
\begin{gathered}
\max_i t_i\le M_r(t)\le\max_i t_i+r,\qquad
M_r(t+s\mathbf1)=M_r(t)+s,\\
\partial_iM_r\ge0,\qquad
\sum_i\partial_iM_r=1,\qquad D^2M_r\ge0.
\end{gathered}
\]
Moreover, an entry $t_i$ can be omitted whenever
$t_i+2r<\max_{k\ne i}t_k$.
For $x$ near $\partial U_i$, choose $j$ with $x\in U_j'$.
Then \eqref{eq:overlap} gives
\[
\widetilde v_{j,m}(x)-\widetilde v_{i,m}(x)
\ge2\tau_m-\frac2m\log(1+C/\sqrt m)>\tau_m/2
\]
for large $m$.
Hence the $i$-th entry is omitted near its boundary, and
\begin{equation}\label{eq:glued-potential}
v_m(x)=M_{\tau_m/4}
 \bigl((\widetilde v_{i,m}(x))_{\,i:\,x\in U_i}\bigr),
\qquad x\in X\setminus Z_m,
\end{equation}
is a well-defined smooth function.

On a neighborhood with a fixed set of contributing charts,
write $M_i,M_{ij}$ for the first and second derivatives of
this regularized maximum. The chain rule gives
\[
\kappa+dd^cv_m
=\sum_i M_i(\kappa+dd^c\widetilde v_{i,m})
 +i\sum_{i,j}M_{ij}
       \partial\widetilde v_{i,m}\wedge
       \bar\partial\widetilde v_{j,m}.
\]
The last form is semipositive, and $M_i\ge0$, $\sum_iM_i=1$.
Convexity and monotonicity of the inverse-trace function imply
\[
\tr_{\kappa+dd^cv_m}\omega
\le\sum_i M_i
 \tr_{\kappa+dd^c\widetilde v_{i,m}}\omega
\le A_m
\quad\text{on }X\setminus Z_m.
\]

Choose a principalization $\pi_m:Y_m\to X$ such that
\[
\mathcal I_m\cdot\mathcal O_{Y_m}=\mathcal O_{Y_m}(-E_m),
\]
where $E_m$ is an effective integral divisor.
If $s$ is a local generator of $\mathcal O_{Y_m}(-E_m)$,
Lemma~\ref{lem:smooth-remainder} gives
\[
K_{i,m}\circ\pi_m=|s|^2h_{i,m},\qquad
g_{i,m}:=\frac1m\log h_{i,m}-q_i\circ\pi_m
             +\tau_m\vartheta_i\circ\pi_m\in C^\infty.
\]
Thus
\[
\widetilde v_{i,m}\circ\pi_m
=\frac1m\log|s|^2+g_{i,m}.
\]
The boundary inequalities for differences of the $g_{i,m}$
extend across $\{s=0\}$ by continuity. Translation invariance
of $M_{\tau_m/4}$ therefore yields locally
\[
v_m\circ\pi_m
=\frac1m\log|s|^2+g_m,\qquad
g_m=M_{\tau_m/4}(g_{i,m})\in C^\infty.
\]
By \eqref{eq:poincare-lelong},
\begin{equation}\label{eq:resolved-current}
\begin{gathered}
\pi_m^*(\kappa+dd^cv_m)=\sigma_m+[D_m],\\
D_m=\frac{2\pi}{m}E_m,\qquad
\sigma_m=\pi_m^*\kappa+dd^cg_m.
\end{gathered}
\end{equation}
The local expressions for $\sigma_m$ agree and define a smooth
closed form on $Y_m$.

Each $K_{i,m}$ is continuous and vanishes on $Z_m$.
The bounds for the regularized maximum show that $v_m\to-\infty$
as a point approaches $Z_m$.
Extend $v_m$ by $-\infty$ there.
The functions $q_i+v_m$ are plurisubharmonic off $Z_m$ and
locally bounded above, so their upper-semicontinuous extensions
are plurisubharmonic. Hence
\[
T_m:=\kappa+dd^cv_m\ge0,\qquad T_m\in\mathcal C_{A_m}.
\]
Lemma~\ref{lem:smooth-remainder} and the bounded
overlap differences give the stated analytic singularity type.
On the complement of the exceptional and base divisors,
\[
\sigma_m\ge\frac{\pi_m^*\omega}{A_m}.
\]
Both forms are smooth, so this inequality holds on all of $Y_m$.

Finally, for the fixed cover,
\[
A_m\longrightarrow(1+\eta)a,\qquad
|v_m-v_{i,m}|\le C\tau_m
\quad\text{on }U_i\setminus Z_m.
\]
The local Bergman approximation gives
$\|v_{i,m}-\phi\|_{L^1(U_i,dV)}\to0$, and therefore
\[
\|v_m-\phi\|_{L^1(X,dV)}
\le\sum_i\|v_{i,m}-\phi\|_{L^1(U_i,dV)}
 +C\tau_m\operatorname{Vol}_{dV}(X)
\longrightarrow0.
\]
Given $\varepsilon>0$, first choose $\eta$ with
$(1+\eta)a<a+\varepsilon$, then $m$ so large that
\[
A_m\le a+\varepsilon,\qquad
\|v_m-\phi\|_{L^1(X,dV)}<\varepsilon.
\]
Set $T_\varepsilon=T_m$, $\pi_\varepsilon=\pi_m$.
These choices give \eqref{eq:bergman-resolution}
and $T_\varepsilon\rightharpoonup T$ as $\varepsilon\downarrow0$.
\end{proof}

The same construction applies with $q=n-1$ in
Lemma~\ref{lem:bergman-trace}. We record this consequence
of Theorem~\ref{thm:bergman} for the strict
subsolution in Section~\ref{sec:concentration}.
\begin{lemma}\label{lem:partial-trace}
Let $Q\in\alpha$ be a K\"ahler current on a compact K\"ahler $n$-fold, $n\ge2$, with $P_\omega(Q_{\rm ac})\le a_0$ almost everywhere. For every $\epsilon>0$ and smooth representative $\kappa\in\alpha$, there are a proper closed analytic set $Z$ and $B=\kappa+dd^c\psi$ such that
\[
 B\ge\frac{\omega}{a_0+\epsilon},\qquad
 B\in C^\infty(X\setminus Z),\qquad
 P_\omega(B)\le a_0+\epsilon\quad\text{on }X\setminus Z.
\]
The potential $\psi\to-\infty$ along $Z$, has logarithmic singularity type in a coherent ideal, and has a smooth logarithmic remainder after principalization.
\end{lemma}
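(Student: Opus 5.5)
We rerun the proof of Theorem~\ref{thm:bergman} for the current $Q$, replacing the full inverse trace by the partial trace $P_\omega=P_{n-1,\omega}$. Write $Q=\kappa+dd^c\phi$ and fix $\eta>0$. Choose a finite cover by nested coordinate balls $U_i'\Subset U_i\Subset V_i\Subset\widetilde V_i$ and constant forms $B_i$ satisfying \eqref{eq:frozen-background}.

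\textbf{Step 1: frozen partial-trace bound for the local potentials.} Take local potentials $u_i=\phi+q_i$. Since $B_i\le\omega$ and $P_{n-1,B}(A)$ is the maximum of $\tr(QA^{-1})$ over rank-$(n-1)$ projections in a $B$-unitary frame, it is monotone in $B$. Hence $P_{n-1,B_i}(Q_{\rm ac})\le a_0$ almost everywhere. By \eqref{eq:convolution}, the same bound holds for $H_{u_i*\varrho_r}$, so hypothesis \eqref{eq:bergman-full} of Lemma~\ref{lem:bergman-trace} is satisfied with $q=n-1$.

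We also need a strict lower bound. The inequality $P_{q,B}\le a\Rightarrow A\ge B/a$ gives $Q_{\rm ac}\ge B_i/a_0$. In addition, $Q$ is a Kähler current, so $Q\ge\delta\omega$ for some $\delta>0$. This uniform strict positivity is what Lemmas~\ref{lem:localization} and \ref{lem:common-ideal} require.

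\textbf{Step 2: local Bergman potentials and gluing.} Lemma~\ref{lem:bergman-trace} with $q=n-1$ and density $dV$ gives
\[
P_\omega(\kappa+dd^cv_{i,m})\le\frac{(1+\eta)a_0}{1-C/m}
\]
off $Z_m$, where $v_{i,m}=m^{-1}\log K_{i,m}-q_i$. The factor $1+\eta$ comes from comparing $P_{n-1,B_i}$ with $P_{n-1,\omega}$ via $\omega\le(1+\eta)B_i$; this uses that $P_{n-1,\cdot}$ is monotone and homogeneous of degree one in its second argument. The overlap estimate \eqref{eq:overlap}, the common ideal $\mathcal I_m$ from Lemma~\ref{lem:common-ideal}, and the $\tau_m\vartheta_i$ perturbation carry over verbatim, since none of them involves the trace.

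\textbf{Step 3: the gluing step, which is the main point to verify.} We must check that the regularized maximum preserves the partial-trace bound. The chain-rule formula writes $\kappa+dd^cv_m$ as a convex combination $\sum_iM_i(\kappa+dd^c\widetilde v_{i,m})$ plus a semipositive form. Because $P_{n-1,\omega}$ is convex and decreasing in matrix order, this gives
\[
P_\omega(\kappa+dd^cv_m)\le\max_i P_\omega(\kappa+dd^c\widetilde v_{i,m}).
\]
The perturbation by $\tau_m\vartheta_i$ costs only a factor $(1-C\tau_m)^{-1}$, by homogeneity of degree $-1$, exactly as in \eqref{eq:perturbed-trace}. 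Choosing $\eta$ small and then $m$ large makes the bound at most $a_0+\epsilon$. The lower bound $B\ge\omega/(a_0+\epsilon)$ then follows from $P_{q,B}\le a\Rightarrow A\ge B/a$.

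\textbf{Step 4: conclusion.} Set $\psi=v_m$ and $Z=Z_m$, the zero set of the nonzero coherent ideal $\mathcal I_m$. This set is a proper analytic subset, and $\psi\to-\infty$ along it. Smoothness on $X\setminus Z$ holds because the local kernels are smooth and positive there. Lemma~\ref{lem:smooth-remainder} together with the principalization $\pi_m$ provides the logarithmic singularity type and the smooth remainder $g_m=M_{\tau_m/4}(g_{i,m})$, as in \eqref{eq:resolved-current}.

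I expect no serious obstacle. The only point needing care is that every trace-specific step in Theorem~\ref{thm:bergman} uses only convexity, monotonicity and homogeneity of the trace functional, all of which $P_{n-1,\omega}$ shares. The strict positivity of $Q$, needed for the localization and ideal lemmas, comes from $Q$ being a Kähler current.
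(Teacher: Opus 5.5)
Your proposal is correct and is essentially the paper's proof: rerun the construction of Theorem~\ref{thm:bergman} with Lemma~\ref{lem:bergman-trace} applied at $q=n-1$. Each trace-dependent step uses only convexity, monotonicity and homogeneity of $P_\omega$, together with the lower bound that a partial-trace bound implies. That lower bound is also needed in the $\tau_m\vartheta_i$ perturbation, which you describe as trace-independent: there $\kappa+dd^cv_{i,m}\ge\omega/(2(1+\eta)a_0)$, obtained from $P_{q,B}\le a\Rightarrow A\ge B/a$, is what gives $\kappa+dd^c\widetilde v_{i,m}\ge(1-C\tau_m)(\kappa+dd^cv_{i,m})$.
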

\begin{proof}
Apply the construction of Theorem~\ref{thm:bergman}, using Lemma~\ref{lem:bergman-trace} with $q=n-1$. Every step uses only convexity, monotonicity, homogeneity, and the lower bound implied by $P_\omega\le a_0$. Thus the same global potential $v_m$ satisfies
\[
 P_\omega(\kappa+dd^cv_m)\le\frac{(1+\eta)a_0}{1-Cm^{-1/2}}
 \quad\text{on }X\setminus Z_m.
\]
Choose $\eta>0$, then one $m$, so that this bound is at most $a_0+\epsilon$. Set $\psi=v_m$ and $Z=Z_m$. The principalization and extension arguments in that theorem give the remaining assertions.
\end{proof}

\section{The analytic and birational thresholds}\label{sec:threshold}
In this section, we establish the equivalence between $a_*$ from the analytic threshold and $\zeta$ from the birational threshold. One side is proved by a Lamari type duality \cite[Lemma 3.3]{L99}, and the other side is proved using the Bergman approximation discussed in the previous section. We also obtain K\"ahler residual models
on which the numerical inequalities for $J$-stability become asymptotically nonnegative.

\subsection{Birational tests}

We prove some basic estimates for the birational minimal slope $\zeta$ in this subsection. 

The first observation is that Hironaka's domination theorem \cite[Corollary~2]{H75} and a
small exceptional-divisor perturbation allow us to use K\"ahler
residual classes in Datar--Mete--Song's definition of minimal slope $\zeta$
\cite[Definition~1.3]{DMS26}. The slopes are preserved in the limit.

\begin{lemma}
    \label{lem:kahler-domination}
The minimal slope $\zeta$ is unchanged if its modifications are required to be smooth compact K\"ahler manifolds, and the residual class $L=\pi^*\alpha-[D]$ can be chosen to be K\"ahler. 
\end{lemma}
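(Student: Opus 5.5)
Let $\zeta_K$ denote the infimum in \eqref{eq:minimal-slope} restricted to K\"ahler modifications with K\"ahler residual class $L$. Since the restricted family is a subfamily, $\zeta\le\zeta_K$. For the reverse inequality it suffices to show: for every admissible pair $(\pi,D)$ with $\pi:Y\to X$ a modification from a smooth compact complex manifold and $L=\pi^*\alpha-[D]$ big and nef, and for every $\epsilon>0$, there is a K\"ahler modification $\pi':Y'\to X$ and an effective $D'$ with $L'=\pi'^*\alpha-[D']$ K\"ahler and $c_{L'}\le c_L+\epsilon$.

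First I will reduce to the K\"ahler case. By Hironaka's domination theorem \cite[Corollary~2]{H75}, there is a finite composition of blowups with smooth centers $\mu:Y'\to X$ that factors as $\mu=\pi\circ\nu$ with $\nu:Y'\to Y$. Since $X$ is K\"ahler and the centers are smooth, $Y'$ is K\"ahler. Moreover, there is an effective $\mu$-exceptional divisor $F$ with $\mu^*\kappa-t[F]$ K\"ahler for small $t>0$; this is the standard fact for a composition of blowups, where $-F$ is relatively ample. The pullback $\nu^*L=\mu^*\alpha-[\nu^*D]$ is still big and nef. The intersection numbers are preserved by the projection formula, so $c_{\nu^*L}=c_L$ with $\pi^*\beta$ replaced by $\mu^*\beta$. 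Thus we may assume $Y$ is K\"ahler and $\pi$ is a composition of smooth blowups carrying such an $F$.

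Next comes the perturbation. For small $s,t>0$ set
\[
 L_{s,t}=(1-s)L+s(\pi^*\alpha-t[F])=\pi^*\alpha-[(1-s)D+stF].
\]
The divisor $(1-s)D+stF$ is effective. The class $\pi^*\alpha-t[F]$ is K\"ahler, and $L$ is nef. Hence $L_{s,t}$ is nef plus a positive multiple of a K\"ahler class, so it is K\"ahler. As $s\downarrow0$ with $t$ fixed, $L_{s,t}\to L$ in $H^{1,1}(Y,\R)$. The intersection numbers $L_{s,t}^n$ and $L_{s,t}^{n-1}\cdot\pi^*\beta$ are polynomial in $s$, hence continuous. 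Since $L$ is big and nef, $L^n=\vol(L)>0$, so $c_{L_{s,t}}\to c_L$. Choosing $s$ small gives $c_{L_{s,t}}\le c_L+\epsilon$, and therefore $\zeta_K\le\zeta$.

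The main point to check is the K\"ahler property of $\pi^*\alpha-t[F]$ on the dominating blowup. I would argue this inductively over the blowups: at each step, $\mathcal O(-E)$ is relatively ample for the blowup of a smooth center. The pullback of a K\"ahler class minus a small multiple of the exceptional divisor is then K\"ahler, and one sums the exceptional divisors with decreasing small coefficients. A second, minor point is that nef classes on the possibly non-K\"ahler $Y$ pull back to nef classes. This holds because pullback of smooth forms with small negative parts preserves the defining approximation property up to a fixed reference form on $Y'$.
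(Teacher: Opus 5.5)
Your proof is correct and follows essentially the same route as the paper. Both arguments use Hironaka domination by a composition of smooth blowups, a K\"ahler class $\mu^*\alpha-t[F]$ on the dominating model, the convex combination of that class with the pulled-back big and nef class, and continuity of the slope using $L^n>0$. You also spell out two points the paper leaves implicit: the invariance of the intersection numbers under $\nu^*$, and the preservation of nefness under pullback.
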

\begin{proof}
    We use the following form of Hironaka's resolution of singularities \cite[Corollary 2]{H75}: if $\pi:Y\to X$ is a modification between smooth compact complex manifolds, there are a modification $p:Z\to Y$ and a composite $\pi\circ p:Z\to X$ of blow-ups along smooth centers. For a composition of blowups there is an effective exceptional real divisor $E$ such that
\begin{equation}\label{eq:kahler-infimum}
K=p^*\pi^*\alpha-[E]\quad\hbox{is K\"ahler}.
\end{equation}
For $L=\pi^*\alpha-[D]$ big and nef set
\begin{equation}\label{eq:kahler-perturbation}
L_\varepsilon=(1-\varepsilon)p^*L+\varepsilon K
=p^*\pi^*\alpha-\big((1-\varepsilon)p^*D+\varepsilon E\big),\qquad 0<\varepsilon<1.
\end{equation}
Its remainder is effective and its class is K\"ahler, since it is a positive multiple of a K\"ahler class plus a nef class. Continuity and $L^n>0$ give $c_{L_\varepsilon}\to c_L$. Taking infima proves the assertion.
\end{proof}
\begin{lemma}\label{lem:positive-slope}
$0<\zeta\leq c$.
\end{lemma}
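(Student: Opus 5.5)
The upper bound $\zeta\le c$ is immediate from the definition. Take $\pi=\mathrm{id}_X$ and $D=0$; then $L=\alpha$ is K\"ahler, in particular big and nef, and its slope is $c_L=n\alpha^{n-1}\beta/\alpha^n=c$. Since $\zeta$ is an infimum over a family containing this test, $\zeta\le c$.

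For the positive lower bound, I would bound every admissible slope from below by a uniform constant. Lemma~\ref{lem:kahler-domination} lets me restrict to K\"ahler modifications $\pi:Y\to X$ with $L=\pi^*\alpha-[D]$ K\"ahler, $D\ge0$. Fix a constant $\lambda>0$ with $\lambda\alpha-\beta$ a K\"ahler class on $X$; this exists because $\alpha$ is K\"ahler and $X$ is compact, e.g.\ $\lambda\kappa\ge 2\omega$. Then
\[
\pi^*\beta=\lambda\pi^*\alpha-\pi^*(\lambda\alpha-\beta)=\lambda L+\lambda[D]-\pi^*(\lambda\alpha-\beta).
\]
This form of the identity is not directly useful. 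Instead I would compare the other way: $\pi^*\beta\ge \mu\,\pi^*\alpha$ for some $\mu>0$, since $\omega\ge\mu\kappa$ pointwise.

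Then
\[
L^{n-1}\cdot\pi^*\beta\ \ge\ \mu\,L^{n-1}\cdot\pi^*\alpha=\mu\bigl(L^n+L^{n-1}\cdot[D]\bigr)\ \ge\ \mu L^n.
\]
The first inequality holds because $L^{n-1}$ is represented by a positive form (a power of a K\"ahler form on $Y$) and $\pi^*(\omega-\mu\kappa)\ge0$ is semipositive. The last inequality uses $L^{n-1}\cdot[D]\ge0$: the class $L$ is K\"ahler and $D$ is effective, so $\int_D\eta^{n-1}\ge0$ for any K\"ahler $\eta\in L$. Hence $c_L=nL^{n-1}\pi^*\beta/L^n\ge n\mu$ for every admissible K\"ahler test. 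Taking the infimum and invoking Lemma~\ref{lem:kahler-domination} gives $\zeta\ge n\mu>0$.

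The only step needing care is the reduction to K\"ahler residual classes, so that $L^{n-1}$ has a positive smooth representative and pairs nonnegatively with effective divisors. In the merely big and nef case one would need the limiting argument instead. That limiting argument is exactly what Lemma~\ref{lem:kahler-domination} provides, since $c_{L_\varepsilon}\to c_L$. I therefore expect no real obstacle.
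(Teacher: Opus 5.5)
Your proof is correct and is essentially the paper's argument: the identity test gives $\zeta\le c$, and the pointwise bound $\omega\ge\mu\kappa$ together with $L^{n-1}\cdot\pi^*\alpha=L^n+L^{n-1}\cdot[D]\ge L^n$ gives every slope at least $n\mu$. The only difference is cosmetic: you justify the intersection inequalities by restricting to K\"ahler residual classes via Lemma~\ref{lem:kahler-domination}, which does not depend on this lemma, whereas the paper approximates the nef class by K\"ahler classes; the detour through $\lambda\alpha-\beta$ can simply be deleted.
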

\begin{proof}

The identity test proves the upper bound. Choose a K\"ahler form $\kappa\in\alpha$ and $\delta>0$ such that $\omega\geq\delta\kappa$. For any test, nefness and effectiveness give

\[
L^{n-1}\pi^*\beta
\geq\delta L^{n-1}\pi^*\alpha
=\delta L^n+\delta L^{n-1}[D]
\geq\delta L^n.
\]
The intersection inequalities follow by approximating the nef class by K\"ahler classes. Hence every test slope is at least $n\delta$.

\end{proof}

A bounded slope also prevents the residual volume from tending
to zero. This is a direct application of the
Khovanskii--Teissier inequality in the form of
\cite[Theorem~2.1]{LX16}.
\begin{lemma}\label{lem:volume-bound}
If an admissible test $L$ has slope at most $M$, then
\[
L^n\geq(n/M)^n\beta^n.
\]
In particular, a minimizing sequence has a uniform positive lower bound for its top volume.
\end{lemma}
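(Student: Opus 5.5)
The plan is to apply the Khovanskii--Teissier inequality to the nef class $L$ and the K\"ahler class $\pi^*\beta$ on $Y$, and then use the slope bound. By Lemma~\ref{lem:kahler-domination} we may work on a compact K\"ahler modification $\pi:Y\to X$. The class $L=\pi^*\alpha-[D]$ is big and nef, and $\pi^*\beta$ is nef and big. The mixed-volume inequality of \cite[Theorem~2.1]{LX16} holds on compact K\"ahler manifolds for nef classes, by approximating nef classes with K\"ahler ones. It gives
\[
 L^{n-1}\cdot\pi^*\beta\ \ge\ (L^n)^{(n-1)/n}\,\bigl((\pi^*\beta)^n\bigr)^{1/n}
 =(L^n)^{(n-1)/n}(\beta^n)^{1/n}.
\]
Here we use $(\pi^*\beta)^n=\beta^n$, since $\pi$ is bimeromorphic.

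Next we insert the slope bound. The hypothesis $c_L=nL^{n-1}\pi^*\beta/L^n\le M$ means $L^{n-1}\pi^*\beta\le (M/n)L^n$. Combining this with the displayed inequality gives
\[
 (M/n)\,L^n\ \ge\ (L^n)^{(n-1)/n}(\beta^n)^{1/n}.
\]
Since $L^n>0$ by bigness, we may divide by $(L^n)^{(n-1)/n}$. This yields $(L^n)^{1/n}\ge (n/M)(\beta^n)^{1/n}$, that is, $L^n\ge (n/M)^n\beta^n$.

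For a minimizing sequence, the slopes converge to $\zeta$, which is finite by Lemma~\ref{lem:positive-slope}. So they are eventually bounded by $M=\zeta+1$, and the first part gives the uniform lower bound $(n/(\zeta+1))^n\beta^n$.

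The only delicate point is the validity of Khovanskii--Teissier for a class that is merely nef and big on a possibly non-projective $Y$. I would handle it by first proving the inequality for $L_\varepsilon=L+\varepsilon\pi^*\beta$-type perturbations, or for the K\"ahler perturbations of Lemma~\ref{lem:kahler-domination}. These are K\"ahler, so the Hodge index/Teissier inequalities hold for them. Then let $\varepsilon\to0$, using continuity of intersection numbers.
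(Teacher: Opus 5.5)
Your proof is correct and is essentially the paper's argument: you apply the mixed Khovanskii--Teissier inequality \cite[Theorem~2.1]{LX16} to $n-1$ copies of $L$ and one copy of $\pi^*\beta$ on the K\"ahler model, then combine it with $c_L\le M$. One small correction to your closing caveat: $L+\varepsilon\pi^*\beta$ need not be K\"ahler, since $\pi^*\beta$ is only semipositive on $Y$, so you should perturb by $\varepsilon$ times a K\"ahler class of $Y$ instead --- though citing \cite{LX16} directly for big and nef classes, as the paper does, makes this step unnecessary.
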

\begin{proof}
We use the mixed Khovanskii--Teissier inequality for nef and big classes \cite[Theorem 2.1]{LX16}: if $B_1,\ldots,B_n$ are big and nef real $(1,1)$-classes on a compact K\"ahler $n$-fold, then
\[
B_1\cdots B_n\geq\prod_{j=1}^n(B_j^n)^{1/n},
\]
with equality precisely when all $B_j$ are proportional. 

Apply it to $n-1$ copies of $L$ and one copy of $\pi^*\beta$, all big and nef on the K\"ahler model from the previous lemma. Then
\[
M\geq n\frac{L^{n-1}\pi^*\beta}{L^n}
\geq n\left(\frac{\beta^n}{L^n}\right)^{1/n}.
\]
This proves the lemma. 
\end{proof}

Combining the preceding mixed-intersection inequality with
$L^n\le\alpha^n$ gives a lower bound expressed entirely
in the original classes.
\begin{lemma}
    $\zeta\geq n(\beta^n/\alpha^n)^{1/n}$.
\end{lemma}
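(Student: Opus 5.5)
The plan is to apply the Khovanskii--Teissier bound of Lemma~\ref{lem:volume-bound} to each admissible test and then bound the residual volume from above by $\alpha^n$. By Lemma~\ref{lem:kahler-domination}, it suffices to consider tests on a smooth compact K\"ahler modification $\pi:Y\to X$, with $L=\pi^*\alpha-[D]$ K\"ahler and $D\ge0$ effective. For such a test, the proof of Lemma~\ref{lem:volume-bound}, applied with $M=c_L$, gives
\[
 c_L=n\frac{L^{n-1}\pi^*\beta}{L^n}\ge n\Bigl(\frac{(\pi^*\beta)^n}{L^n}\Bigr)^{1/n}
 =n\Bigl(\frac{\beta^n}{L^n}\Bigr)^{1/n}.
\]
This uses $(\pi^*\beta)^n=\beta^n$, which holds because $\pi$ is bimeromorphic.

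Next I would show $L^n\le\alpha^n$ by peeling off $D$ one factor at a time. Write
\[
 (\pi^*\alpha)^n-L^n=\sum_{k=0}^{n-1}(\pi^*\alpha)^{k}\cdot L^{n-1-k}\cdot[D],
\]
using $\pi^*\alpha-L=[D]$ and the telescoping identity $x^n-y^n=(x-y)\sum x^ky^{n-1-k}$ for commuting classes. Each term is an intersection of the effective divisor $D$ with a product of nef classes ($\pi^*\alpha$ is nef as a pullback of a K\"ahler class, and $L$ is K\"ahler). Each term is therefore nonnegative: restricted to each prime component of $D$, it is an integral of a product of semipositive classes, or one can approximate the nef classes by K\"ahler classes on $Y$. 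Since $(\pi^*\alpha)^n=\alpha^n$, this gives $L^n\le\alpha^n$.

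Combining the two displays gives $c_L\ge n(\beta^n/\alpha^n)^{1/n}$ for every K\"ahler test, and taking the infimum over tests proves the lemma. The only point needing care is the nonnegativity of $D\cdot(\text{nef})^{n-1}$ on the K\"ahler manifold $Y$. I would handle it by writing $D=\sum d_iD_i$ and expressing each term as $\int_{\widetilde D_i}$ of pullbacks of K\"ahler approximants to a resolution $\widetilde D_i$ of $D_i$, then passing to the limit. Everything else is formal.
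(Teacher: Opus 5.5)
Your proposal is correct and matches the paper's proof: the mixed Khovanskii--Teissier inequality gives $c_L\ge n(\beta^n/L^n)^{1/n}$, and the telescoping identity, with $D$ effective and $\pi^*\alpha, L$ nef, gives $L^n\le\alpha^n$. Your reduction to K\"ahler tests via Lemma~\ref{lem:kahler-domination}, and your justification of $D\cdot(\text{nef})^{n-1}\ge0$, are extra care that the paper leaves implicit.
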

\begin{proof}
    $\pi^*\alpha$ and $L$ are nef and their difference is the effective divisor $D$, so
\[
\alpha^n-L^n
=D\cdot\sum_{k=0}^{n-1}(\pi^*\alpha)^{n-1-k}L^k\geq0.
\]
The lower bound follows from the mixed Khovanskii--Teissier inequality.
\end{proof}

\subsection{The analytic threshold}
In this section, we discuss the analytic threshold which characterizes the solvability of equation $\tr_{T_{\rm ac}}\omega=a$.  The analytic threshold is more natural for solving the $J$-equation.  Recall the definitions \begin{equation}\label{eq:threshold}
 \mathcal C_a=\{T\in\alpha:T\ge0,\ dT=0,\ T_{\rm ac}>0,
                          \ \tr_{T_{\rm ac}}\omega\le a\text{ a.e.}\},
 \qquad a_*=\inf\{a>0:\mathcal C_a\ne\varnothing\}.
\end{equation}

We first show that $a_*$ is attained for $\cC_{a_*}\neq \emptyset$.
\begin{lemma}\label{lem:attainment}
 $0<a_*<\infty$, and $\mathcal C_{a_*}\ne\emptyset$. Every $T\in\mathcal C_a$ satisfies
\begin{equation}\label{eq:weak-subsolution}
T\geq\omega/a,\qquad
a\langle T^p\rangle-p\omega\wedge\langle T^{p-1}\rangle\geq0
\quad(1\leq p\leq n).
\end{equation}
\end{lemma}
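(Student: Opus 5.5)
The proof splits into four parts: the pointwise bounds in \eqref{eq:weak-subsolution}, then $a_*<\infty$, then $a_*>0$, and finally attainment by weak compactness.

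\emph{The two bounds in \eqref{eq:weak-subsolution}.} Let $T\in\mathcal C_a$. The matrix inequality $\tr_A\omega\le a\Rightarrow A\ge\omega/a$ holds pointwise for $A=T_{\rm ac}$. The singular part is positive, so $T\ge T_{\rm ac}d\lambda$ as currents, and hence $T\ge\omega/a$. In particular $T$ is a K\"ahler current. Its non-pluripolar products are then well-defined and locally finite on the compact K\"ahler manifold by \cite{BEGZ10}. The cone inequalities follow from the localization already used in the proof of Theorem~\ref{thm:automatic-cone}. Fix $0<\epsilon<1$ and cover $X$ by small balls carrying constant forms $B$ with $(1-\epsilon)\omega\le B\le\omega$. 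There $\tr(T_{\rm ac}^{-1}B)\le a$, so Lemma~\ref{lem:np-cone} gives $a\langle T^p\rangle-pB\wedge\langle T^{p-1}\rangle\ge0$. Adding $p(B-(1-\epsilon)\omega)\wedge\langle T^{p-1}\rangle\ge0$, using locality, and letting $\epsilon\downarrow0$ gives all $1\le p\le n$. The case $p=n$ is also covered by Lemma~\ref{lem:np-cone}.

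\emph{$a_*<\infty$.} Any K\"ahler form $\kappa\in\alpha$ has $\tr_\kappa\omega$ bounded by some $a_0$ on compact $X$. Thus $\kappa\in\mathcal C_{a_0}$.

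\emph{$a_*>0$.} I would integrate the case $p=n$ of \eqref{eq:weak-subsolution}: $a\int\langle T^n\rangle\ge n\int\omega\wedge\langle T^{n-1}\rangle$. The lower bound $T\ge\omega/a$ gives $\int\omega\wedge\langle T^{n-1}\rangle\ge a^{-(n-1)}\int\omega^n$. Monotonicity of non-pluripolar masses \cite{BEGZ10} gives $\int\langle T^n\rangle\le\alpha^n$, and $\int\langle T^{n-1}\rangle\wedge\omega\le\alpha^{n-1}\beta$. Simpler still, $T\ge\omega/a$ alone forces $\alpha-\beta/a$ to be pseudoeffective, so $\alpha^{n-1}\cdot(\alpha-\beta/a)\ge0$ after pairing with the nef class $\alpha^{n-1}$. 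This yields $a\ge\alpha^{n-1}\beta/\alpha^n>0$, so $a_*>0$.

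\emph{Attainment, the main step.} Take $a_k\downarrow a_*$ and $T_k\in\mathcal C_{a_k}$. The classes are fixed, so the masses $\int T_k\wedge\kappa^{n-1}$ are bounded, and weak compactness of positive closed currents in $\alpha$ gives a subsequence $T_k\rightharpoonup T$, which is closed and positive. The membership $T\in\mathcal C_{a_*}$ then follows from the weak closedness stated after Definition~\ref{def:trace-family}. On each ball with a frozen constant form $B\le\omega$, the condition is equivalent to $P_{n,B}(T_k*\varrho_r)\le a_k$ for all small $r$. Convolutions of weakly converging currents converge locally uniformly, and the set $\{H>0:\tr(H^{-1}B)\le a\}$ is closed, so positivity is preserved by the lower bound $H\ge B/a_k\ge B/(2a_*)$. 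Hence $P_{n,B}(T*\varrho_r)\le a_*$ for all $r$. The converse direction, differentiating coefficient measures, then gives $\tr_{T_{\rm ac}}\omega\le a_*$ a.e. It also gives $T_{\rm ac}\ge\omega/a_*>0$. The delicate point is that convergence on fixed balls with frozen $B$ suffices. One must still take $B$ ranging over $(1-\epsilon)\omega$ and let $\epsilon\downarrow0$ through a countable family, exactly as in the discussion after \eqref{eq:convolution}.
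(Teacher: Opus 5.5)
Your proposal is correct and follows the paper's proof essentially step for step. You get $T\ge\omega/a$ from the pointwise matrix bound and positivity of the singular part, and $a_*>0$ by pairing $T\ge\omega/a$ with $\alpha^{n-1}$ (the paper pairs with $\kappa^{n-1}$); $a_*<\infty$ comes from a K\"ahler form in $\alpha$, attainment from weak compactness plus fixed-radius convolution closedness and differentiation of measures, and the non-pluripolar inequalities from Lemma~\ref{lem:np-cone} with frozen backgrounds $(1-\epsilon)\omega\le B\le\omega$. Your extra argument for $a_*>0$ via non-pluripolar masses is also valid but not needed.
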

This proves attainment of an analytic weak-subsolution threshold.
\begin{proof}
For $\kappa\in\alpha$ K\"ahler,
\[
 0<\frac{\beta\alpha^{n-1}}{\alpha^n}\le a_*
 \le\max_X\tr_\kappa\omega<\infty.
\]
Indeed, $T\in\mathcal C_a$ implies $T\ge\omega/a$; pair with $\kappa^{n-1}$, one gets $\alpha^n\geq \alpha^{n-1}\beta/a$. This implies the lower bound for $a_*$. The upper bound is trivial.  Choose $a_j\downarrow a_*$, $T_j\in\mathcal C_{a_j}$. Their masses are $\alpha^n$, so a subsequence converges weakly to a positive current $T\in\alpha$. Equation~\eqref{eq:convolution}, at each fixed radius, gives
\[
 \tr_{T*\varrho_r}B\le a_*
 \quad\text{for every constant }0<B\le\omega.
\]
Differentiation of measures and shrinking frozen backgrounds yield $T\in\mathcal C_{a_*}$. Lemma~\ref{lem:np-cone}, followed by the same freezing argument, gives the non-pluripolar inequalities for every $T\in\mathcal C_a$.
\end{proof}

The following Cauchy-Schwarz inequality will be used many times. 
\begin{lemma}[Trace Cauchy--Schwarz inequality]\label{lem:trace-cs}
For positive definite Hermitian matrices $A,B$ of the same size,
\begin{equation}\label{eq:trace-cs}
\bigl(\tr B^{1/2}\bigr)^2
\le (\tr A)\,\tr(A^{-1}B).
\end{equation}
Equality holds if and only if $A=tB^{1/2}$ for some $t>0$.
\end{lemma}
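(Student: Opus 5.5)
The plan is to reduce \eqref{eq:trace-cs} to the classical Cauchy--Schwarz inequality for the Hilbert--Schmidt inner product $\langle X,Y\rangle=\tr(X^*Y)$ on matrices. The key is to factor the quantity $\tr B^{1/2}$ as a single inner product whose two factors have squared norms $\tr A$ and $\tr(A^{-1}B)$.

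\textbf{Factorization.} Set $X=A^{1/2}$ and $Y=A^{-1/2}B^{1/2}$, using the positive square roots of $A$ and $B$. Then
\[
\langle X,Y\rangle=\tr\bigl(A^{1/2}A^{-1/2}B^{1/2}\bigr)=\tr B^{1/2}.
\]
For the norms,
\[
\|X\|^2=\tr A,\qquad
\|Y\|^2=\tr\bigl(B^{1/2}A^{-1}B^{1/2}\bigr)=\tr(A^{-1}B),
\]
the last step by cyclicity of the trace.

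\textbf{Inequality.} Cauchy--Schwarz now gives
\[
\bigl(\tr B^{1/2}\bigr)^2=|\langle X,Y\rangle|^2\le \|X\|^2\,\|Y\|^2,
\]
which is exactly \eqref{eq:trace-cs}.

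\textbf{Equality case.} Since both $X$ and $Y$ are nonzero, equality holds if and only if $Y=\lambda X$ for some scalar $\lambda$. Because $\langle X,Y\rangle=\tr B^{1/2}>0$, the scalar $\lambda$ is real and positive. The condition $A^{-1/2}B^{1/2}=\lambda A^{1/2}$ is equivalent to $B^{1/2}=\lambda A$, that is, $A=tB^{1/2}$ with $t=1/\lambda>0$.

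Conversely, if $A=tB^{1/2}$, a direct computation confirms equality. Indeed, $\tr A=t\tr B^{1/2}$ and $A^{-1}B=t^{-1}B^{1/2}$, so the right-hand side of \eqref{eq:trace-cs} equals $\bigl(\tr B^{1/2}\bigr)^2$.

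The only delicate point is the equality analysis, specifically ruling out a complex or negative $\lambda$. This is handled by the positivity of $\tr B^{1/2}$, so I expect no genuine obstacle.
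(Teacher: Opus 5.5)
Your proof is correct, including the equality case. It is the coordinate-free form of the paper's argument. The paper diagonalizes $A$, applies scalar Cauchy--Schwarz to the diagonal entries $c_{i\bar i}$ of $B^{1/2}$, and then adds back the off-diagonal terms $\sum_{j}|c_{i\bar j}|^2$. Written in an eigenbasis of $A$, that is exactly your Hilbert--Schmidt Cauchy--Schwarz for the pair $A^{1/2}$ and $A^{-1/2}B^{1/2}$. Your version has the extra merit of actually proving the equality characterization, which the paper states but does not verify.
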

\begin{proof}
Choose unitary basis such that $A=\text{diag}(a_1,...,a_n)$. Write $B^\frac{1}{2}$ as $(c_{i\bar j})$. Then Cauchy-Schwarz inequality gives \[
\bigl(\sum_ic_{i\bar i}\bigl)^2\leq \bigl(\sum_ia_i\bigl)\bigl(\sum_i\frac{|c_{i\bar i}|^2}{a_i}\big)\leq \bigl(\sum_ia_i\bigl)\bigl(\sum_i\frac{\sum_j|c_{i\bar j}|^2}{a_i}\big)=\tr A\tr(A^{-1}B).
\]
\end{proof}
Usually we apply the above inequality in the following form\[
(\tr\sqrt{\chi^{-1}\omega})^2\leq \tr_{\chi}\Omega\tr_{\Omega}\omega,
\] for any positive Hermitian form $\Omega$. 

The following theorem is a trace-constrained version of Lamari duality \cite[Lemme~3.3]{L99}; see \cite[Lemma~2.1]{Tosatti16} for an English account. The method of using Hahn-Banach separation was due to Sullivan \cite{Sullivan76}. 
\begin{theorem}[Lamari-type characterization]\label{thm:duality}
For a K\"ahler class $\alpha$ and fixed smooth K\"ahler form $\omega$ on a compact K\"ahler manifold of dimension $n\geq2$, for $a>0$, one has $\mathcal C_a\ne\varnothing$ if and only if
\[
 \int_X\bigl(\tr\sqrt{\chi^{-1}\omega}\bigr)^2\chi^n
 \le na\int_X\alpha\wedge\chi^{n-1}
\]
for every Gauduchon metric $\chi$, that is, every smooth Hermitian metric satisfying $dd^c(\chi^{n-1})=0$. 

Equivalently,
\begin{equation}\label{eq:duality}
a_*=
\sup_{\substack{\chi\ {\rm smooth\ Hermitian\  metric}\\dd^c(\chi^{n-1})=0}}
\frac{\displaystyle\int_X
 \big(\tr\sqrt{\chi^{-1}\omega}\big)^2\chi^n}
{\displaystyle n\int_X\alpha\wedge\chi^{n-1}}.
\end{equation}
\end{theorem}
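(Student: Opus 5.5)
Necessity is the easy direction, and I would do it first. Take $T\in\mathcal C_a$ and a Gauduchon metric $\chi$. Since $\chi^{n-1}$ is $dd^c$-closed and $T=\kappa+dd^c\phi$, we have
\[
\int_X T\wedge\chi^{n-1}=\int_X\kappa\wedge\chi^{n-1}=\int_X\alpha\wedge\chi^{n-1}.
\]
Dropping the singular part, which pairs nonnegatively with $\chi^{n-1}$, this is at least $\int_X T_{\rm ac}\wedge\chi^{n-1}=\frac1n\int_X\tr_\chi(T_{\rm ac})\,\chi^n$. Pointwise a.e., Lemma~\ref{lem:trace-cs} in the form $(\tr\sqrt{\chi^{-1}\omega})^2\le\tr_\chi T_{\rm ac}\,\tr_{T_{\rm ac}}\omega$ gives
\[
(\tr\sqrt{\chi^{-1}\omega})^2\le a\,\tr_\chi T_{\rm ac}.
\]
Integrating against $\chi^n$ yields the inequality. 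Hence the sup in \eqref{eq:duality} is $\le a$ for every $a$ with $\mathcal C_a\neq\varnothing$, so the sup is at most $a_*$.

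For sufficiency I would use Hahn--Banach in the style of Sullivan/Lamari, after relaxing by $\varepsilon$. Because $\mathcal C_{a'}$ is weakly closed (Lemma~\ref{lem:attainment}), it suffices to produce, for each $a'>a$, a current in $\alpha$ that lies in $\mathcal C_{a'}$.

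Work in the Fréchet space of real $(1,1)$-currents and consider the convex set
\[
\mathcal K=\{T=\kappa+dd^c\phi:\ T\ \text{smooth},\ \tr_T\omega< a'\},
\]
or equivalently its weak closure. I would phrase the problem as separating the affine space $\kappa+dd^c C^\infty$ from the open convex cone $\mathcal P=\{\Theta \text{ smooth}: \Theta>0,\ \tr_\Theta\omega<a'\}$. This cone is convex because $\tr_\Theta\omega$ is convex in $\Theta$, and it is stable under adding positive forms.

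Suppose the two sets are disjoint. Hahn--Banach then gives a nonzero real $(n-1,n-1)$-current $\Psi$ that annihilates $dd^c C^\infty$ (so $dd^c\Psi=0$) and satisfies
\[
\int\kappa\wedge\Psi\le\int\Theta\wedge\Psi\quad\text{for all }\Theta\in\mathcal P.
\]
Stability of $\mathcal P$ under adding positive forms forces $\Psi\ge0$. Minimizing pointwise over $\Theta$ with $\tr_\Theta\omega\le a'$ gives
\[
\inf_\Theta\int\Theta\wedge\Psi=\frac1{na'}\int(\tr\sqrt{\Psi'^{-1}\omega})^2\,dV,
\]
where $\Psi'$ is the associated Hermitian form and the minimizer is $\Theta\propto$ the $\omega$-adapted square root, by the equality case of Lemma~\ref{lem:trace-cs}. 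This produces $na'\int\alpha\wedge\Psi\le\int(\ldots)$, which is the reverse of the hypothesis with $a'>a$, provided the $dd^c$-closed positive current $\Psi$ can be written as $\chi^{n-1}$ for a Gauduchon $\chi$.

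\textbf{Main obstacle: regularizing the separating current $\Psi$.} It is only a positive $dd^c$-closed current, possibly degenerate. I would first perturb it to $\Psi+\delta\chi_0^{n-1}$, with $\chi_0$ a fixed Gauduchon metric, to get strict positivity. I would then smooth it by a positivity-preserving, $dd^c$-closedness-preserving regularization, either convolution in charts combined with a partition of unity and a correction, or the standard Lamari argument, which replaces $\Psi$ by smooth strictly positive $dd^c$-closed forms converging weakly. Michelsohn's $(n-1)$-th root then gives $\chi$ with $\chi^{n-1}=\Psi_\delta$.

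The functional $\Psi\mapsto\int(\tr\sqrt{\Psi'^{-1}\omega})^2$ is concave and homogeneous of degree one, and it is lower semicontinuous enough along these regularizations. The reason is that it is an infimum of linear functionals, so Jensen's inequality under convolution goes the favorable way. The contradiction then survives the regularization, by letting $\delta\to0$ and using $a'>a$.

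Finally, the existence of smooth $\Theta\in\kappa+dd^cC^\infty$ with $\tr_\Theta\omega<a'$ is more than needed, since that is a point of $\mathcal C_{a'}$. Letting $a'\downarrow a$ and applying weak closedness gives $\mathcal C_a\neq\varnothing$, and the formula \eqref{eq:duality} follows.
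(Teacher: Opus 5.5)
Your necessity argument is correct and is the same as the paper's. The sufficiency argument has a genuine gap, and it is not the technical one you flag: as set up, it would prove a false statement. By separating the smooth affine space $\kappa+dd^cC^\infty(X)$ from the open cone $\mathcal P$ of smooth forms, you aim to show that whenever the Gauduchon inequality holds at level $a$, then for every $a'>a$ there is a smooth K\"ahler form $\Theta\in\alpha$ with $\tr_\Theta\omega<a'$. Integrating $n\omega\wedge\Theta^{n-1}=(\tr_\Theta\omega)\,\Theta^n<a'\Theta^n$ would then give $c<a'$. Since the inequality holds at $a=a_*$ by the necessity direction, your argument would force $a_*\ge c$ for every pair. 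But in the $J$-unstable case $a_*=\zeta<c$ (Fu's theorem together with $a_*=\zeta$).

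So for $a'\in(a_*,c)$ your two sets really are disjoint, and no contradiction can be drawn from the separating current $\Psi$. The regularization step must therefore fail in general. Indeed, a positive $dd^c$-closed $(n-1,n-1)$-current need not be a weak limit of smooth positive $dd^c$-closed forms, even after adding $\delta\chi_0^{n-1}$. On the blow-up of $\mathbb P^2$ at a point, suppose Gauduchon forms satisfy $G_k\rightharpoonup[E]+\delta\chi_0$. Writing $[E]=e+dd^cf$ with $e$ smooth and closed gives $0<\int_EG_k=\int_Xe\wedge G_k\to E\cdot E+\delta\int_E\chi_0<0$ for small $\delta$, a contradiction. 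The specific tools you propose also fail:
\begin{itemize}
\item Chart convolutions glued by a partition of unity lose $dd^c$-closedness.
\item Projecting back onto $\ker dd^c$ introduces an error that is small only in negative norms, so positivity is lost.
\item The concave functional $\Psi\mapsto\inf_\Theta\int\Theta\wedge\Psi$ is only upper semicontinuous, which is the wrong direction for passing your hypothesis to a weak limit. Your Jensen remark covers genuine translation-convolutions, not $dd^c$-closed approximations.
\end{itemize}
Lamari's argument does not regularize a separating current at all.

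The missing idea is to admit singular, non-closed currents into the convex set and to separate in the space of currents rather than smooth forms. The paper works in the real $(1,1)$-currents with the weak-$*$ topology, whose continuous dual is the smooth $(n-1,n-1)$-forms. It takes $\mathscr C_a$ to be all positive currents $U$ with $\tr_{U_{\rm ac}}\omega\le a$ a.e., and $\mathscr F$ to be all closed currents in $\alpha$. The set $\mathscr C_a$ is weakly closed by the convolution characterization. The difference $\mathscr C_a-\mathscr F$ is weakly closed because $\langle U,\kappa^{n-1}\rangle=\langle U-S,\kappa^{n-1}\rangle+\alpha^n$ bounds masses. The separating functional is then automatically a smooth form $\sigma$, which is $dd^c$-closed and semipositive, and Michelsohn gives $\sigma+\epsilon\kappa^{n-1}=\chi^{n-1}$. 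Because $\mathscr C_a$ imposes no closedness, the pointwise Cauchy--Schwarz minimizer is admissible. This contradicts the hypothesis directly at level $a$, with no $a'$ and no regularization. The element produced is in general a singular current, which is exactly what the unstable case requires.
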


\begin{proof}
Lemma~\ref{lem:trace-cs}, positivity of singular coefficients, and $dd^c\chi^{n-1}=0$ give
\[
 T\in\mathcal C_a\quad\Longrightarrow\quad
 \int_X\kappa\wedge\chi^{n-1}
 =\int_XT\wedge\chi^{n-1}
 \ge\frac1{na}\int_X(\tr\sqrt{\chi^{-1}\omega})^2\chi^n,
\]where we used that $nT_{\rm ac}\wedge\chi^{n-1}=\tr_{\chi}T_{\rm ac}\chi^n$.
We denote the right-hand side of \eqref{eq:duality} as $b_*$. 
Thus $0<b_*\le a_*$.

Fix $a\ge b_*$. Let $E=\mathcal A^{n-1,n-1}_{\mathbb R}(X)$, with its $C^\infty$ Fr\'echet topology, and give the space $E'$ of real $(1,1)$-currents its weak-* topology. Set
\[
 \mathscr C_a=\{U\in E':U\ge0,\ \tr_{U_{\rm ac}}\omega\le a\text{ a.e.}\},
 \qquad
 \mathscr F=\{\kappa+dd^cu:u\in\mathcal D'(X,\mathbb R)\}.
\] Here $\mathcal{D}'(X,\R)$ is the space of distributions. 
The currents in $\mathscr C_a$ need not to be closed. This nonempty convex set is weakly closed by \eqref{eq:convolution}. The $dd^c$-lemma identifies $\mathscr F$ with the closed currents in class $\alpha$. It is weakly closed, since membership is equivalent to
\[
 dS=0,\qquad \langle S-\kappa,\eta\rangle=0
 \quad\text{for every smooth closed real }(2n-2)\text{-form }\eta.
\]
Only the $(n-1,n-1)$-component enters the pairing.

The difference $\mathscr C_a-\mathscr F$ is weakly closed. Indeed, if a net $U_\nu-S_\nu\to R$, with $U_\nu\in\mathscr C_a$, $S_\nu\in\mathscr F$, then
\[
 \langle U_\nu,\kappa^{n-1}\rangle
 =\langle U_\nu-S_\nu,\kappa^{n-1}\rangle+\alpha^n.
\]
Positivity therefore bounds the masses of $U_\nu$ on a tail. Weak compactness gives a subnet $U_\nu\to U\in\mathscr C_a$, and $S_\nu\to U-R\in\mathscr F$. Hence $R\in\mathscr C_a-\mathscr F$.

If $\mathscr C_a\cap\mathscr F=\varnothing$, Hahn--Banach separation gives a weak-* continuous functional $\ell$ and $c_0>0$ such that $\ell(U-S)\ge c_0$. Such a functional is evaluation on a smooth form: continuity gives finitely many $\eta_j\in E$ with
\[
 |\ell(R)|\le C\max_j|\langle R,\eta_j\rangle|.
\]
It factors through $R\mapsto(\langle R,\eta_j\rangle)_j$, so $\ell(R)=\langle R,\sigma\rangle$ for a finite real linear combination $\sigma$ of the $\eta_j$. Thus
\[
 \int_X(U-S)\wedge\sigma\ge c_0
 \quad(U\in\mathscr C_a,\ S\in\mathscr F).
\]
Since each $S=\kappa+\ddc u$ in the distribution sense, $S_t:=\kappa+t\ddc u\in \mathscr{F}$ for all $t\in \R$. The separating inequality gives $\int_X\ddc u\wedge\sigma=0$. It follows $\ddc\sigma=0.$
Similarly, $U+tQ\in \mathscr{C}_a$ for any positive current $Q$ and $t>0$. The separation inequality with $t\to \infty$ gives $\int_XQ\wedge\sigma\geq 0$. It follows that the smooth form $\sigma\geq 0$. 

For sufficiently small $\epsilon>0$,
$\sigma_\epsilon=\sigma+\epsilon\kappa^{n-1}$ is smooth, strictly positive and $dd^c$-closed, and
\[
\int_X(U-\kappa)\wedge\sigma_\epsilon\geq c_0/2
\quad(U\in\mathscr C_a).
\] Now we apply the observation of Michelsohn \cite[pp.~279--280]{M82} that 
every smooth strictly positive $(n-1,n-1)$-form $\sigma_\epsilon$ has the form $\sigma_\epsilon=\chi^{n-1}$. Thus $\chi$ is Gauduchon here.
By \eqref{eq:trace-cs}, the infimum of $\int_XU\wedge\chi^{n-1}$ over $\mathscr C_a$ equals
\[
\frac1{na}\int_X
\big(\tr\sqrt{\chi^{-1}\omega}\big)^2\chi^n.
\]
There is no closure constraint on $U$, so the smooth pointwise minimizing form from \eqref{eq:trace-cs} attains this infimum. Separation therefore gives
\[
\int_X\kappa\wedge\chi^{n-1}+c_0/2
\leq\frac1{na}\int_X
\big(\tr\sqrt{\chi^{-1}\omega}\big)^2\chi^n
\leq\int_X\kappa\wedge\chi^{n-1},
\]
where the last inequality is $a\geq b_*$. This contradiction proves
$\mathscr C_a\cap\mathscr F\ne\varnothing$.
Every current in this intersection is in $\mathcal C_a$, and is automatically K\"ahler because it dominates $\omega/a$.
Hence $a_*\leq b_*$, proving \eqref{eq:duality}.

\end{proof}

With this Lamari-type characterization of $\cC_{a}$ being nonempty, one can show that the birational minimal slope is bounded below by the analytic slope $a_*$. 
\begin{proposition}\label{prop:slope-comparison}
For every Gauduchon metric $\chi$ on $X$, every K\"ahler modification $\pi:Y\to X$, and every effective real divisor $D$ such that $L=\pi^*\alpha-[D]$ is big and nef, set $c_L:=nL^{n-1}\pi^*\beta/L^n$. Then 
\begin{equation}\label{eq:slope-comparison}
\int_X(\tr\sqrt{\chi^{-1}\omega})^2\chi^n\leq n c_L\int_Y L\wedge\pi^*(\chi^{n-1})
\leq n c_L\int_X\alpha\wedge\chi^{n-1}.
\end{equation}
\end{proposition}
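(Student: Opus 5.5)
The plan is to prove the first inequality by solving a Monge--Amp\`ere equation on $Y$ whose right-hand side is the Cauchy--Schwarz defect density. That inequality then follows from a pointwise inequality, a Cauchy--Schwarz in $L^2$, and cohomological evaluation of the resulting integrals. The second inequality is elementary. Since $dd^c(\chi^{n-1})=0$, the pairing $\int_Y L\wedge\pi^*(\chi^{n-1})$ depends only on the class. It equals
\[
\int_X\alpha\wedge\chi^{n-1}-\int_Y[D]\wedge\pi^*(\chi^{n-1}),
\]
and the last term is nonnegative because $D\ge0$ and $\pi^*(\chi^{n-1})$ is a semipositive $(n-1,n-1)$-form.

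For the first inequality, set $F=(\tr\sqrt{\chi^{-1}\omega})^2\chi^n$, a smooth positive volume form on $X$. First reduce to $L$ K\"ahler. Replace $L$ by $L_\epsilon=L+\epsilon\omega_Y$ for a K\"ahler form $\omega_Y$ on $Y$. Then $c_{L_\epsilon}\to c_L$ and $\int_Y L_\epsilon\wedge\pi^*\chi^{n-1}\to\int_Y L\wedge\pi^*\chi^{n-1}$, since $L^n>0$. Also replace the degenerate density $\pi^*F$ by $F_\delta=\pi^*F+\delta\omega_Y^n>0$. By Yau's theorem, there is a K\"ahler form $\eta\in L_\epsilon$ with
\[
\eta^n=k\,F_\delta,\qquad k=\frac{L_\epsilon^n}{\int_YF_\delta}.
\]
Off the exceptional locus $\pi$ is biholomorphic, so the pointwise computations below can be done on $X$ minus a proper analytic set, which has measure zero.

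At each such point, apply Lemma~\ref{lem:trace-cs} with $\Omega=\eta$, in the form $(\tr\sqrt{\chi^{-1}\omega})^2\le\tr_\chi\eta\,\tr_\eta\omega$. Using $n\eta\wedge\chi^{n-1}=\tr_\chi\eta\,\chi^n$ and $n\omega\wedge\eta^{n-1}=\tr_\eta\omega\,\eta^n$, this becomes the density inequality
\[
F\le\frac{P\,Q}{R},\qquad P=n\eta\wedge\pi^*\chi^{n-1},\quad Q=n\pi^*\omega\wedge\eta^{n-1},\quad R=\eta^n .
\]
Since $R=kF_\delta\ge kF$, we get $F^2\le PQ/k$, so $F\le k^{-1/2}\sqrt{PQ}$. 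Integrating and applying Cauchy--Schwarz gives
\[
\int_X F\le k^{-1/2}\Bigl(\int_Y P\Bigr)^{1/2}\Bigl(\int_Y Q\Bigr)^{1/2}.
\]
Both integrals are cohomological, because $\eta$ is closed and smooth and $\pi^*\chi^{n-1}$ is $dd^c$-closed: $\int_Y P=n\int_Y L_\epsilon\wedge\pi^*\chi^{n-1}$ and $\int_Y Q=nL_\epsilon^{n-1}\pi^*\beta$. Squaring and substituting $k$ yields
\[
\Bigl(\int_XF\Bigr)^2\le\frac{\int_YF_\delta}{L_\epsilon^n}\,n\int_YL_\epsilon\wedge\pi^*\chi^{n-1}\cdot nL_\epsilon^{n-1}\pi^*\beta ,
\]
that is,
\[
\Bigl(\int_XF\Bigr)^2\le\Bigl(\int_YF_\delta\Bigr)\,c_{L_\epsilon}\,n\int_YL_\epsilon\wedge\pi^*\chi^{n-1}.
\]

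Let $\delta\to0$. Then $\int_Y F_\delta\to\int_X F$, and dividing by $\int_X F>0$ gives the first inequality for $L_\epsilon$. Letting $\epsilon\to0$ gives it for $L$.

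The only delicate point is the pointwise step: $\pi^*\chi$ degenerates on the exceptional set. This is handled by working on the Zariski-open set where $\pi$ is an isomorphism, together with the positive perturbation $\delta\omega_Y^n$, which keeps the Monge--Amp\`ere data smooth and positive so that Yau's theorem applies directly.
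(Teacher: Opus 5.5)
Your proposal is correct and follows essentially the same route as the paper: Yau's theorem on $Y$ in the perturbed K\"ahler class $L+\epsilon[\omega_Y]$, with the pulled-back Cauchy--Schwarz density plus a small positive term as right-hand side, then the pointwise trace Cauchy--Schwarz inequality, an integral Cauchy--Schwarz, and cohomological evaluation via $dd^c(\pi^*\chi^{n-1})=0$. The second inequality is likewise obtained by discarding the nonnegative divisor term. The only differences are cosmetic: you use separate parameters $\epsilon,\delta$ where the paper uses one, and you square pointwise before integrating rather than applying the integral Cauchy--Schwarz in the form $\int f\ge(\int g)^2/\int(g^2/f)$.
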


\begin{proof}

Write \[
\coc=\int_X (\tr\sqrt{\chi^{-1}\omega})^2\chi^n, \qquad   F=\pi^*[(\tr\sqrt{\chi^{-1}\omega})^2\chi^n]/h^n\geq 0,\] where $h$ is a fixed smooth K\"ahler form on $Y$. For $\epsilon>0$, the class $L_\epsilon:=L+\epsilon[h]$ is K\"ahler. We set \[
c_\epsilon:=\coc+\epsilon\int_Yh^n, \qquad c_{L,\epsilon}=nL_\epsilon^{n-1}\pi^*\beta/L_{\epsilon}^n.
\]
By Yau's theorem \cite{Y78}, there is a smooth K\"ahler form $\Omega_\epsilon\in L_\epsilon$, such that
\begin{equation}\label{eq:comparison-ma}
\Omega_\epsilon^n
=\frac{L_\epsilon^n}{c_\epsilon}(F+\epsilon)h^n.
\end{equation}

Off the exceptional set, the Cauchy-Schwarz inequality gives\[
\pi^*(\tr\sqrt{\chi^{-1}\omega})^2\leq \tr_{\pi^*\chi}\Omega_\epsilon\,\tr_{\Omega_\epsilon}\pi^*\omega
\]
So 
\begin{equation}\label{eq:comparison-cs}
\begin{aligned}
n\int_YL_\epsilon\wedge\pi^*\chi^{n-1}
&=\int_Y\tr_{\pi^*\chi}\Omega_\epsilon \pi^*\chi^n\geq\int_Y\frac{(\pi^*\tr\sqrt{\chi^{-1}\omega})^2}{\tr_{\Omega_\epsilon}\pi^*\omega}\pi^*\chi^n\geq\frac{\coc^2}{\displaystyle\int_Y\tr_{\Omega_\epsilon}\pi^*\omega Fh^n}.
\end{aligned}
\end{equation}
 Equation \eqref{eq:comparison-ma} gives
\[
\begin{aligned}
\int_Y\tr_{\Omega_\epsilon}\pi^*\omega Fh^n
&\leq\frac{c_\epsilon}{L_\epsilon^n}
       \int_Y\tr_{\Omega_\epsilon}\pi^*\omega\Omega_\epsilon^n=\frac{c_\epsilon}{L_\epsilon^n}
       n\int_Y\Omega_\epsilon^{n-1}\wedge \pi^*\omega
=c_\epsilon c_{L,\epsilon}.
\end{aligned}
\]
Combining this with \eqref{eq:comparison-cs} gives
\[
\coc^2\leq nc_\epsilon c_{L,\epsilon}
\int_YL_\epsilon\wedge\pi^*\chi^{n-1}.
\]
The intersection expressions are continuous in $\epsilon$, $L^n>0$, and $c_\epsilon\to \coc>0$. Letting $\epsilon\to0$ proves the first inequality in \eqref{eq:slope-comparison}.

For the second inequality use the positive $dd^c$-closed form $\pi^*\chi^{n-1}$:
\[
\int_YL\wedge\pi^*\chi^{n-1}
=\int_X\alpha\wedge\chi^{n-1}
-\int_D\pi^*\chi^{n-1}
\leq\int_X\alpha\wedge\chi^{n-1}.
\]
The integral over a real effective divisor is the corresponding nonnegative real linear combination of integrals over prime divisors. Since $c_L>0$, the claimed inequality follows.

\end{proof}

\begin{corollary}\label{cor:critical-family}
For every pair of K\"ahler classes $\alpha,\beta$ on a compact K\"ahler $n$-fold, $n\geq2$, and every K\"ahler form $\omega\in\beta$, there is a K\"ahler current $T\in\alpha$ with
\[
\tr\big((T_{\rm ac})^{-1}\omega\big)\leq\zeta
\quad\text{almost everywhere},
\]
and
\begin{equation}\label{eq:critical-family}
T\geq\frac{\omega}{\zeta},\qquad
\zeta\langle T^p\rangle-p\omega\wedge\langle T^{p-1}\rangle\geq0
\quad(1\leq p\leq n).
\end{equation}
In particular, $a_*\le\zeta$.
\end{corollary}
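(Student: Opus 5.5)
The plan is to combine the Lamari-type characterization (Theorem~\ref{thm:duality}) with Proposition~\ref{prop:slope-comparison}, and then to read off the remaining properties from Lemma~\ref{lem:attainment}.

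\emph{Step 1: the slope bound.} Fix an arbitrary Gauduchon metric $\chi$ on $X$. For any admissible test, that is, a K\"ahler modification $\pi:Y\to X$ and an effective real divisor $D$ with $L=\pi^*\alpha-[D]$ big and nef, Proposition~\ref{prop:slope-comparison} gives
\[
\int_X\bigl(\tr\sqrt{\chi^{-1}\omega}\bigr)^2\chi^n\le n c_L\int_X\alpha\wedge\chi^{n-1}.
\]
By Lemma~\ref{lem:kahler-domination}, the infimum defining $\zeta$ may be taken over exactly such K\"ahler tests. The right-hand side is linear in $c_L$ with a positive coefficient $\int_X\alpha\wedge\chi^{n-1}$, so taking the infimum over all tests yields
\[
\int_X\bigl(\tr\sqrt{\chi^{-1}\omega}\bigr)^2\chi^n\le n\zeta\int_X\alpha\wedge\chi^{n-1}
\]
for every Gauduchon $\chi$.

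\emph{Step 2: apply the duality.} Since $\zeta>0$ by Lemma~\ref{lem:positive-slope}, Theorem~\ref{thm:duality} with $a=\zeta$ gives $\mathcal C_\zeta\neq\varnothing$. Equivalently, formula \eqref{eq:duality} gives $a_*\le\zeta$ directly.

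\emph{Step 3: read off the properties.} Pick any $T\in\mathcal C_\zeta$. By the definition of $\mathcal C_\zeta$, $T$ is closed and positive in $\alpha$ with $\tr((T_{\rm ac})^{-1}\omega)\le\zeta$ almost everywhere. Lemma~\ref{lem:attainment}, applied with $a=\zeta$, supplies $T\ge\omega/\zeta$ together with the non-pluripolar inequalities \eqref{eq:critical-family}. In particular $T$ is a K\"ahler current.

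\emph{Main obstacle.} The only delicate point is the interchange in Step~1: the infimum over tests must be taken after fixing $\chi$. This is legitimate because the inequality holds test by test and $\int_X\alpha\wedge\chi^{n-1}>0$. One also has to note that Proposition~\ref{prop:slope-comparison} requires K\"ahler models, which is why Lemma~\ref{lem:kahler-domination} is invoked rather than arbitrary smooth modifications.
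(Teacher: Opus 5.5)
Your proposal is correct and follows essentially the same argument as the paper. Proposition~\ref{prop:slope-comparison} bounds each Gauduchon ratio by the slope of every K\"ahler test, Lemma~\ref{lem:kahler-domination} turns this into a bound by $\zeta$, and Lemma~\ref{lem:attainment} supplies the stated properties of elements of $\mathcal C_\zeta$. The only cosmetic difference is that you apply the ``if'' direction of Theorem~\ref{thm:duality} directly at $a=\zeta$, whereas the paper first deduces $a_*\le\zeta$ and then uses the attained set $\mathcal C_{a_*}\subseteq\mathcal C_\zeta$; the two routes are equivalent.
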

\begin{proof}

For each Gauduchon metric, \eqref{eq:slope-comparison} bounds its ratio in \eqref{eq:duality} by the slope $c_L$ of every admissible K\"ahler modification. Lemma~\ref{lem:kahler-domination} allows the infimum over all original modifications. Therefore every ratio in \eqref{eq:duality} is at most $\zeta$, and $a_*\leq\zeta$. Lemma~\ref{lem:attainment} gives $T\in\mathcal C_{a_*}\subseteq\mathcal C_\zeta$, and its conclusions give \eqref{eq:critical-family}.
\end{proof}

\subsection{Residual classes at the minimal slope}

For a K\"ahler residual class $L=\pi^*\alpha-[D]$, write
\begin{equation}\label{eq:residual-threshold}
 \Gamma(L,\beta)=\inf_{\substack{V\subsetneq Y\text{ irreducible}\\1\le p=\dim V<n}}
 \frac{(c_LL^p-p\pi^*\beta L^{p-1})[V]}{(n-p)L^p[V]},
 \qquad \Gamma_-(L,\beta)=\max\{0,-\Gamma(L,\beta)\}.
\end{equation}
We use $\inf\varnothing=+\infty$. Thus $\Gamma_-=0$ means that all these numerical inequalities are nonnegative, so called $J$-semistable. $\Gamma$ was called the stability threshold, and was introduced by Sjöström Dyrefelt in the study of optimal lower bounds of $J$-functional \cite{SD20}. 

To show the opposite inequality $\zeta\leq a_*$, we use the principalization given by Bergman approximations. One can perturb the smooth semipositive remainder to be K\"ahler, make it a admissible  modification, and keep the control on the inverse trace. 
\begin{lemma}\label{lem:residual-models}
Suppose $\pi:Y\to X$ is a composition of blowups with smooth centers, and
\[
R\in\mathcal C_a, \quad \pi^*R=\sigma+[D],\quad
\sigma\text{ smooth},\quad D\ge0.
\]
There are effective real divisors $D_t$ and K\"ahler forms $h_t\in L_t: =\pi^*\alpha-[D_t]$, $0<t<1$, such that
\begin{equation}\label{eq:residual-trace}
\tr_{h_t}\pi^*\omega\le a_t:=a/(1-t).
\end{equation}
In particular, 
\begin{equation}\label{eq:residual-numerical}
c_{L_t}=n\frac{\pi^*\beta L_t^{n-1}}{L_t^n}\le a_t,\qquad
\int_V(a_tL_t^p-p\pi^*\beta L_t^{p-1})\ge0
\end{equation}
for every proper positive-dimensional irreducible analytic $V\subset Y$, $p=\dim V$.
\end{lemma}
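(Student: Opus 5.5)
The plan is to perturb the smooth semipositive remainder $\sigma$ by a small multiple of a K\"ahler form on $Y$ that lies in a class of the form $\pi^*\alpha-[E]$. The perturbation must keep the inverse trace bound up to the factor $1/(1-t)$.

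\textbf{Step 1: bounds on $\sigma$.} Since $R\in\mathcal C_a$, Lemma~\ref{lem:attainment} gives $R\ge\omega/a$. Off the exceptional and support divisors, $\pi^*R=\sigma$ is smooth. There $\sigma\ge\pi^*\omega/a$ and $\tr_\sigma\pi^*\omega\le a$, since the absolutely continuous part of $\pi^*R$ equals $\sigma$ where $\pi$ is biholomorphic. Both forms are smooth, so by continuity $\sigma\ge\pi^*\omega/a$ on all of $Y$. However $\pi^*\omega$ degenerates along the exceptional locus, so $\sigma$ is only semipositive there.

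\textbf{Step 2: a K\"ahler form in a residual class.} Since $\pi$ is a composition of blowups with smooth centers, there is an effective exceptional real divisor $E$ with $K=\pi^*\alpha-[E]$ K\"ahler, as in the proof of Lemma~\ref{lem:kahler-domination}. Choose a K\"ahler form $\theta\in K$.

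\textbf{Step 3: the convex combination.} For $0<t<1$, set
\[
h_t=(1-t)\sigma+t\theta,\qquad D_t=(1-t)D+tE .
\]
This is a K\"ahler form, because $\sigma\ge0$ and $\theta>0$. Its class is $(1-t)(\pi^*\alpha-[D])+t(\pi^*\alpha-[E])=\pi^*\alpha-[D_t]$, and $D_t$ is effective. Where $\pi^*\omega$ is nondegenerate we have $h_t\ge(1-t)\sigma$. Monotonicity and degree $-1$ homogeneity of the inverse trace then give
\[
\tr_{h_t}\pi^*\omega\le \frac{1}{1-t}\tr_\sigma\pi^*\omega\le a_t .
\]
Along the exceptional locus, $\tr_{h_t}\pi^*\omega$ is still a smooth function, since $h_t>0$. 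The inequality therefore extends by continuity, which gives \eqref{eq:residual-trace}.

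\textbf{Step 4: numerical consequences.} From $\tr_{h_t}\pi^*\omega\le a_t$ we get $a_th_t^n-n\pi^*\omega\wedge h_t^{n-1}\ge0$ pointwise. Integrating gives $c_{L_t}\le a_t$.

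For a subvariety $V$ of dimension $p$, note that the eigenvalues $\lambda_i$ of $\pi^*\omega$ relative to $h_t$ satisfy $\sum_i\lambda_i\le a_t$ with $\lambda_i\ge0$. Hence the same bound holds for the restriction to any $p$-dimensional subspace. Indeed, the trace of the restricted form is at most the sum of the $p$ largest eigenvalues, by the max formula \eqref{eq:partial-traces}, and this is at most $a_t$. Thus $a_th_t^p-p\pi^*\omega\wedge h_t^{p-1}\ge0$ on the regular part of $V$. Integrating over $V_{\rm reg}$ gives the second inequality in \eqref{eq:residual-numerical}.

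\textbf{Main obstacle.} The only delicate point is justifying $\sigma\ge0$ and the trace bound across the exceptional divisor, where $\pi^*\omega$ degenerates. I plan to handle it purely through smoothness of $\sigma$ together with density of the complement of the divisors, as in Step~1.
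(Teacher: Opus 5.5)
Your proposal is correct and follows the paper's proof step for step. Both use the same convex combination $h_t=(1-t)\sigma+t\theta$ with $D_t=(1-t)D+tE$. Both prove the inverse-trace bound off the exceptional and support divisors and extend it by continuity. Both then integrate over $Y$ and over $V_{\rm reg}$.

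The only difference is cosmetic and lies in the subvariety step. The paper takes an $h_t$-orthonormal basis adapted to $T_yV_{\rm reg}$. It then uses $\sum_{k\le p}(\pi^*\omega)_{k\bar k}\le\sum_{k\le n}(\pi^*\omega)_{k\bar k}$, which holds because the diagonal entries of a semipositive matrix are nonnegative. Your bound through the $p$ largest eigenvalues is Ky Fan's principle in an $h_t$-unitary frame. It is not literally the formula \eqref{eq:partial-traces}, but it yields the same inequality.
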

\begin{proof}
Set $b=\pi^*\omega$.
On the complement of the exceptional divisor and $\operatorname{Supp}D$,
the form $\sigma$ corresponds to $R$, and hence
\[
\tr_\sigma b\le a,\qquad \sigma\ge b/a.
\]
The second inequality extends to $Y$ by continuity.
 Choose an effective
exceptional real divisor $E$ and a K\"ahler form
$\kappa\in\pi^*\alpha-[E]$. Define
\begin{equation}\label{eq:residual-forms}
h_t=(1-t)\sigma+t\kappa,\qquad D_t=(1-t)D+tE.
\end{equation}
Then
\[
h_t>0,\qquad D_t\ge0,\qquad
[h_t]=\pi^*\alpha-[D_t].
\]
Off the exceptional and base divisors, matrix inversion gives
\[
h_t\ge(1-t)\sigma
\quad\Longrightarrow\quad
\tr_{h_t}b
\le\frac{\tr_\sigma b}{1-t}
\le\frac a{1-t}=a_t.
\]
Since $h_t>0$ everywhere, continuity proves
\eqref{eq:residual-trace} on $Y$.
Integration yields
\[
n\pi^*\beta L_t^{n-1}
=\int_Y\tr_{h_t}b\,h_t^n
\le a_tL_t^n.
\]

For a complex $p$-plane $P\subset T_yY$, choose an
$h_t$-orthonormal basis adapted to $P$.
The matrix $(b_{k\bar\ell})$ is semipositive, so
\[
\tr_{h_t|_P}(b|_P)
=\sum_{k=1}^p b_{k\bar k}
\le\sum_{k=1}^n b_{k\bar k}
=\tr_{h_t}b\le a_t.
\]
Taking $P=T_yV_{\rm reg}$ gives
\[
\begin{aligned}
\int_V(a_tL_t^p-p\pi^*\beta L_t^{p-1})
&=\int_{V_{\rm reg}}(a_th_t^p-pb\wedge h_t^{p-1})\\
&=\int_{V_{\rm reg}}
 \bigl(a_t-\tr_{h_t|_{TV_{\rm reg}}}
                   (b|_{TV_{\rm reg}})\bigr)h_t^p
\ge0.
\end{aligned}
\]
This proves \eqref{eq:residual-numerical}.
\end{proof}

\begin{corollary}\label{cor:threshold-models}
\begin{equation}\label{eq:slope-threshold}
\mathcal C_a\ne\varnothing\quad\Longrightarrow\quad\zeta\le a.
\end{equation}
Moreover, there is a sequence of \emph{K\"ahler} residual models $L_j=\pi_j^*\alpha-[D_j]$ with
\begin{equation}\label{eq:selected-models}
c_{L_j}\longrightarrow\zeta,\qquad
\Gamma_-(L_j,\beta)\longrightarrow0.
\end{equation}
\end{corollary}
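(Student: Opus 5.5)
The plan is to feed the Bergman regularization of Theorem~\ref{thm:bergman} into the perturbation of Lemma~\ref{lem:residual-models}, and then pass to the limit.

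\textbf{Step 1: proof of \eqref{eq:slope-threshold}.} Suppose $\mathcal C_a\ne\varnothing$. Pick $T\in\mathcal C_a$ and fix $\varepsilon>0$.

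1. Theorem~\ref{thm:bergman} gives $T_\varepsilon\in\mathcal C_{a+\varepsilon}$ and a composition of blowups with smooth centers $\pi_\varepsilon:Y_\varepsilon\to X$. It satisfies $\pi_\varepsilon^*T_\varepsilon=\sigma_\varepsilon+[D_\varepsilon]$, with $\sigma_\varepsilon$ smooth and $D_\varepsilon\ge0$.
2. These are exactly the hypotheses of Lemma~\ref{lem:residual-models}, with $R=T_\varepsilon$ and $a$ replaced by $a+\varepsilon$.
3. For every $0<t<1$, that lemma produces a K\"ahler residual class $L_t=\pi_\varepsilon^*\alpha-[D_t]$ with $c_{L_t}\le(a+\varepsilon)/(1-t)$.
4. $Y_\varepsilon$ is an iterated blowup of a K\"ahler manifold, so it is K\"ahler. $L_t$ is K\"ahler, hence big and nef. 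So $L_t$ is an admissible test in \eqref{eq:minimal-slope}.
5. Therefore $\zeta\le(a+\varepsilon)/(1-t)$. Letting $t\downarrow0$ and then $\varepsilon\downarrow0$ gives $\zeta\le a$.

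\textbf{Step 2: the sequence of models.} Combining Step~1 with Corollary~\ref{cor:critical-family} gives $a_*=\zeta$, and Lemma~\ref{lem:attainment} gives some $T\in\mathcal C_\zeta$. Run Step~1 with $a=\zeta$, choosing $\varepsilon=t=1/j$. Call the resulting class $L_j$; it is a K\"ahler residual model satisfying
\[
 \zeta\le c_{L_j}\le a_j:=\frac{\zeta+1/j}{1-1/j}\longrightarrow\zeta .
\]
The lower bound holds because $L_j$ is admissible.

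\textbf{Step 3: the stability threshold.} I must show $\Gamma_-(L_j,\beta)\to0$. Take any irreducible $V\subsetneq Y_j$ of dimension $1\le p<n$. The numerical part of \eqref{eq:residual-numerical} gives $\int_V a_jL_j^p-p\pi_j^*\beta L_j^{p-1}\ge0$. Hence
\[
 (c_{L_j}L_j^p-p\pi_j^*\beta L_j^{p-1})[V]\ge -(a_j-c_{L_j})\,L_j^p[V].
\]
Since $L_j$ is K\"ahler, $L_j^p[V]>0$. Dividing by $(n-p)L_j^p[V]$ gives
\[
 \Gamma(L_j,\beta)\ge-\frac{a_j-c_{L_j}}{n-p}\ge-(a_j-\zeta).
\]
So $\Gamma_-(L_j,\beta)\le a_j-\zeta\to0$. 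The bound is uniform in $V$ because the normalization in \eqref{eq:residual-threshold} divides by $L^p[V]$.

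\textbf{Main obstacle.} I do not expect a serious difficulty. The step needing care is confirming that the models from Lemma~\ref{lem:residual-models} are legitimate tests in the definition of $\zeta$: the modification must be smooth and K\"ahler, and $D_t$ must be effective. Both follow from the construction by blowups with smooth centers and the choice $D_t=(1-t)D+tE$ with $E$ exceptional and effective. If needed, Lemma~\ref{lem:kahler-domination} covers the admissibility question.
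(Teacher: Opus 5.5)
Your proposal is correct and matches the paper's proof: Theorem~\ref{thm:bergman} fed into Lemma~\ref{lem:residual-models} gives admissible K\"ahler models with $\zeta\le c_{L_j}\le a_j\to a$, and for $a=\zeta$, \eqref{eq:residual-numerical} gives $\Gamma_-(L_j,\beta)\le a_j-c_{L_j}\le a_j-\zeta\to0$. The differences are cosmetic: the paper takes $T\in\mathcal C_\zeta$ directly from Corollary~\ref{cor:critical-family} (which already gives $T\in\mathcal C_{a_*}\subseteq\mathcal C_\zeta$) rather than going through $a_*=\zeta$ and Lemma~\ref{lem:attainment}, and with your choice $t=1/j$ you should start at $j\ge2$ so that $0<t<1$.
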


 Combining \eqref{eq:slope-threshold} with the previous inequality $a_*\le\zeta$ gives $a_*=\zeta$.
 
\begin{proof}
Let $T\in\mathcal C_a$, and choose
$\varepsilon_j\downarrow0$, $t_j\downarrow0$.
Theorem~\ref{thm:bergman} and 
Lemma~\ref{lem:residual-models} give K\"ahler
residual models $L_j=\pi_j^*\alpha-[D_j]$ satisfying
\[
\tr_{h_j}\pi_j^*\omega\le
a_j:=\frac{a+\varepsilon_j}{1-t_j}\longrightarrow a,\qquad
\zeta\le c_{L_j}\le a_j.
\]
Passing to the limit proves
\eqref{eq:slope-threshold}.

Now choose $T\in\mathcal C_\zeta$, whose existence is supplied
by Corollary~\ref{cor:critical-family}.
The same construction gives
$\zeta\le c_{L_j}\le a_j\to\zeta$.
For every proper irreducible $p$-dimensional subvariety $V\subset Y_j$,
\eqref{eq:residual-numerical} implies
\[
\begin{aligned}
\frac{\int_V(p\pi_j^*\beta\,L_j^{p-1}-c_{L_j}L_j^p)}
     {(n-p)\int_VL_j^p}
&\le\frac{a_j-c_{L_j}}{n-p}\le a_j-c_{L_j}.
\end{aligned}
\]
Here $\int_VL_j^p>0$, $n-p\ge1$, and $a_j-c_{L_j}\ge0$.
Taking the supremum and then its nonnegative part yields
\[
0\le\Gamma_-(L_j,\beta)
\le a_j-c_{L_j}\le a_j-\zeta\longrightarrow0.
\]
Thus \eqref{eq:selected-models}
holds. Finally, $\mathcal C_a\ne\varnothing\Rightarrow a\ge\zeta$
gives $a_*\ge\zeta$, while
Corollary~\ref{cor:critical-family} gives the
reverse inequality.
\end{proof}

\section{Rigidity at the critical slope}\label{sec:rigidity}
In this section, we show that every current at the critical threshold satisfies
the trace equality almost everywhere, and that there is only one
such current. The proof constructs balanced metrics from smooth
$J$-solutions on the residual models and uses them in the dual
inequality of Section~\ref{sec:threshold}. Strict convexity then gives equality of the currents, including
their singular parts.
Throughout this section, $n\ge3$. For a residual test, set
\[
 q_L=c_LL^{n-1}-(n-1)\pi^*\beta L^{n-2},\qquad
 c_{L,D}=q_L[D].
\]

\subsection{An intersection estimate and balanced metrics}
We first vary the residual class toward $\pi^*\alpha$.
The defining infimum of Datar--Mete--Song \cite{DMS26} bounds
the slope along this segment. A one-sided Taylor estimate
then controls the divisorial pairing, with constants independent
of the modification.
\begin{lemma}\label{lem:orthogonality}
For any $M>0$, there is a constant $C>0$, independent of the modification model, such that every admissible test $(\pi,D)$ with $c_L\le M$ and $c_L-\zeta\le C/8$ satisfies
\begin{equation}\label{eq:divisor-pairing}
\cld=\int_D(c_LL^{n-1}-(n-1)\pi^*\beta L^{n-2})\le \frac{\alpha^n}{n}\sqrt{2C(c_L-\zeta)}.
\end{equation}
Thus every minimizing sequence of big-and-nef residual models has $\max\{\cld,0\}\to 0$.
\end{lemma}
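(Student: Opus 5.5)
The plan is to vary the residual class along the segment toward $\pi^*\alpha$ and use that every point of the segment is again an admissible test, so its slope is at least $\zeta$. For $s\in[0,1]$ set
\[
 L_s=\pi^*\alpha-(1-s)[D]=(1-s)L+s\,\pi^*\alpha .
\]
This is a convex combination of nef classes, so it is nef. It is big because $L_s^n\ge L^n>0$, which follows from $\tfrac{d}{ds}L_s^n=nL_s^{n-1}\cdot D\ge0$ and the effectivity of $D$. The remainder $(1-s)D$ is effective. Hence $(\pi,(1-s)D)$ is an admissible test, and the definition \eqref{eq:minimal-slope} gives
\[
 f(s):=c_{L_s}=\frac{nN(s)}{V(s)}\ge\zeta,\qquad N(s)=L_s^{n-1}\pi^*\beta,\quad V(s)=L_s^n .
\]

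Next I compute the first derivative at $s=0$. We have $N'(0)=(n-1)L^{n-2}\pi^*\beta\cdot D$ and $V'(0)=nL^{n-1}\cdot D$. Therefore
\[
 f'(0)=\frac{n\bigl((n-1)L^{n-2}\pi^*\beta\cdot D\bigr)V-nN\cdot nL^{n-1}D}{V^2}
 =-\frac{n}{L^n}\,\cld .
\]
So a positive $\cld$ means the slope decreases to first order when $D$ is shrunk. Assuming $f''\le C$ on $[0,1]$, Taylor's formula gives
\[
 \zeta\le c_L-s\frac{n\,\cld}{L^n}+\frac{C}{2}s^2 ,
\]
which rearranges to
\[
 \frac{n\,\cld}{L^n}\le\frac{c_L-\zeta}{s}+\frac{Cs}{2}.
\]
I then choose $s=\sqrt{2(c_L-\zeta)/C}$. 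This lies in $[0,1]$ because $c_L-\zeta\le C/8$. The choice yields $\cld\le \tfrac{L^n}{n}\sqrt{2C(c_L-\zeta)}$, and $L^n\le\alpha^n$ (proved in the lemma before the analytic threshold) gives \eqref{eq:divisor-pairing}. If $c_L=\zeta$, let $s\downarrow0$ to get $\cld\le0$.

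The main obstacle is the uniform bound on $f''$, with $C$ depending only on $M$, $\alpha$, $\beta$ and $n$. I would write $D=\pi^*\alpha-L$, so that $N(s)$ and $V(s)$ become polynomials in $s$. Their coefficients are mixed intersections of the nef classes $\pi^*\alpha$, $L$ and $\pi^*\beta$, each $\ge0$. Each such coefficient is bounded by a constant times $\alpha^n$: use $\pi^*\beta\le K\pi^*\alpha$, $L\le\pi^*\alpha$ modulo effective classes, and monotonicity of intersections of nef classes. The same bounds control $N,N',N'',V',V''$. For the denominator, $V(s)\ge V(0)=L^n\ge(n/M)^n\beta^n$ by Lemma~\ref{lem:volume-bound}, since $c_L\le M$. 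The quotient rule then bounds $f''$ uniformly on $[0,1]$, independently of the model.

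The final assertion follows by applying the estimate along a minimizing sequence, for which $c_L\to\zeta$, so eventually $c_L-\zeta\le C/8$ and $\max\{\cld,0\}\to0$.
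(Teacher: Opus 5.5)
Your proposal is correct and follows essentially the same route as the paper: deform along $L_s=(1-s)L+s\pi^*\alpha$, use $c_{L_s}\ge\zeta$, compute $c'(0)=-n\cld/L^n$, bound the second derivative uniformly using the volume lower bound of Lemma~\ref{lem:volume-bound} and mixed-intersection bounds, and then optimize in $s$. Your Taylor remainder $Cs^2/2$ is in fact what makes the choice $s=\sqrt{2(c_L-\zeta)/C}$ give exactly the stated constant $\sqrt{2C(c_L-\zeta)}$; the paper's cruder $(c_L-\zeta)/s+Cs$ would give an extra factor $3/2$ with that $s$, and your separate treatment of the case $c_L=\zeta$ fills a small omission in the paper.
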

\begin{proof}
   By Lemma~\ref{lem:volume-bound}, $L^n\geq (n/M)^n\beta^n$. Consider the path
\begin{equation}\label{eq:orthogonality}
L_s=L+sD=(1-s)L+s\pi^*\alpha,\qquad 0\le s\le1.
\end{equation}
The whole path is big and nef, with effective remainder $(1-s)D$. When $L$ is K\"ahler it stays K\"ahler for $s<1$. In particular,
\begin{equation}\label{eq:slope-variation}
c(s)=n\frac{\pi^*\beta L_s^{n-1}}{L_s^n}\ge\zeta.
\end{equation}
Since $\pi^*\alpha-L=D$ is effective,  $L^{p}\pi^*\alpha^{n-p}\leq\alpha^n$, and $\pi^*\beta L^{p}\pi^*\alpha^{n-1-p}\leq \beta\alpha^{n-1}$. 
For $v(s)=L_s^n$ and $w(s)=n\pi^*\beta L_s^{n-1}$, expansion gives 
\[
(n/M)^n\beta^n=L^n\le v\leq \alpha^n,
\]
\[
0\leq \int_DnL_s^{n-1}=v'(s)= nDL_s^{n-1}=n(L-\pi^*\alpha)L_s^{n-1}\leq nLL_{s}^{n-1}\leq n\alpha^n,
\]\[
|v''(s)|=|n(n-1)D^2L_s^{n-2}|=|n(n-1)(L-\pi^*\alpha)^2L^{n-2}|\leq n(n-1)\alpha^n.
\]
Similarly, 
\[
0<w\le nb,\quad 0\le w'\le n(n-1)b,\quad
|w''|\le4n(n-1)(n-2)b.
\]
The quotient $c=w/v$ thus has $|c''|\le C$ throughout $[0,1]$.
Direct differentiation gives
\begin{equation}\label{eq:divisor-bound}
c'(0)=-\frac{n \cld}{L^n}.
\end{equation} Taylor expansion of $c(s)$ gives 
\[
c(s)=c_L+c'(0)s+O(s^2).
\]
Combining \eqref{eq:slope-variation} and \eqref{eq:divisor-bound}, one has \[
n\cld/L^n\leq (c_L-\zeta)/s+Cs.
\]
For $0<c_L-\zeta\le C/8$, choose $s=\sqrt{2(c_L-\zeta)/C}$, giving the desired inequality.
\end{proof}

We next solve a perturbed smooth $J$-equation on each model
using Song's numerical criterion \cite{S20}.
Michelsohn's positive-root construction
\cite[pp.~279--280]{M82} turns its closed cone form
into a balanced metric. The estimates below show that these
metrics asymptotically realize the equality in the Cauchy-Schwarz inequality in Lemma~\ref{lem:trace-cs} and Theorem~\ref{thm:duality}.
\begin{lemma}\label{lem:balanced-metrics}
Suppose K\"ahler modifications $\pi_j:Y_j\to X$, with $L_j=\pi_j^*\alpha-[D_j]$ being K\"ahler,  satisfy
\[
c_{L_j}\to\zeta,\qquad \Gamma_-(L_j,\beta)\to0.
\]
Then there are smooth balanced metrics $\chi_j$ on $Y_j$ such that,
\begin{equation}\label{eq:balanced-pairings}
\begin{aligned}
n\pi_j^*\alpha[\chi_j^{n-1}]&=c_{L_j}L_j^n+o(1),\\
\int_{Y_j}
 \bigl(\tr\sqrt{\chi_j^{-1}\pi_j^*\omega}\bigr)^2\chi_j^n
&=c_{L_j}^2L_j^n+o(1).
\end{aligned}
\end{equation}
Moreover, 
\begin{equation}\label{eq:balanced-mass}
n\int_{Y_j}\pi_j^*\omega\wedge \chi_j^{n-1}\le C,\qquad
\chi_j^n\ge c_0(\pi_j^*\omega)^n,
\end{equation}
where $C,c_0>0$ are independent of the modifications.
Here $\chi_j^{-1}\pi_j^*\omega$ is the nonnegative,
$\chi_j$-self-adjoint endomorphism associated with
$\pi_j^*\omega$; its nonnegative square root is defined also on
the exceptional locus.

\end{lemma}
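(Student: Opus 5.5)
The plan is to produce $\chi_j$ from a smooth solution of a slightly perturbed $J$-equation on each model $Y_j$. Fix a K\"ahler form $\eta_j\in L_j$, normalized for instance so that $\eta_j\le C\pi_j^*\kappa$ off a small neighbourhood of the exceptional set; only its class enters the numerics. Choose $s_j\downarrow0$ with $s_j>\Gamma_-(L_j,\beta)$, which is possible since $\Gamma_-(L_j,\beta)\to0$. Set
\[
\omega_j=\pi_j^*\omega+s_j\eta_j,\qquad c_j'=n\frac{[\omega_j]L_j^{n-1}}{L_j^n}=c_{L_j}+ns_j .
\]
The form $\omega_j$ is K\"ahler on $Y_j$, which is needed to invoke Song's numerical criterion \cite{S20}. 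Its numerical condition is checked directly: for $V\subsetneq Y_j$ of dimension $p$,
\[
(c_j'L_j^p-p[\omega_j]L_j^{p-1})[V]
=(c_{L_j}L_j^p-p\pi_j^*\beta L_j^{p-1})[V]+s_j(n-p)L_j^p[V]
\ge(n-p)L_j^p[V]\bigl(\Gamma(L_j,\beta)+s_j\bigr)>0 .
\]
Song's theorem therefore gives a K\"ahler form $\Omega_j\in L_j$ with $\tr_{\Omega_j}\omega_j=c_j'$ and the cone condition
\[
\Theta_j:=c_j'\Omega_j^{n-1}-(n-1)\omega_j\wedge\Omega_j^{n-2}>0 .
\]
The form $\Theta_j$ is closed and strictly positive, so Michelsohn's construction \cite{M82} gives a balanced $\chi_j$ with $\chi_j^{n-1}=\Theta_j/(n-1)!$. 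The factor $(n-1)!$ is absorbed into the normalization of $\chi_j$.

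\emph{First pairing.} Write $\pi_j^*\alpha=L_j+[D_j]$ and pair with $[\Theta_j]$. A direct expansion gives
\[
n\,L_j\cdot[\Theta_j]=c_{L_j}L_j^n+O(s_j),
\]
where the $O(s_j)$ term is uniform because $L_j^n\le\alpha^n$. The divisorial term $D_j\cdot[\Theta_j]$ is nonnegative, since $\Theta_j>0$ and $D_j\ge0$. It equals $c_{L_j,D_j}+O(s_j)$ up to the same uniform intersection bounds, and Lemma~\ref{lem:orthogonality} gives $\max\{c_{L_j,D_j},0\}\to0$. Together these give the first line of \eqref{eq:balanced-pairings}.

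\emph{Uniform bounds \eqref{eq:balanced-mass}.} The mass bound is
\[
n\int\pi_j^*\omega\wedge\chi_j^{n-1}\le C\,c_j'\,\pi_j^*\beta\,L_j^{n-1}\le C\,\beta\alpha^{n-1}
\]
by effectivity of $D_j$. For the volume bound, diagonalize $\omega_j$ with respect to $\Omega_j$ with eigenvalues $\lambda_i$, so $\sum_i\lambda_i=c_j'$. Then $\Theta_j$ has eigenvalues proportional to $\lambda_i$, and hence
\[
\det(\chi_j^{n-1})\sim\det\omega_j\,(\det\Omega_j)^{n-2}.
\]
Since $\Omega_j\ge\omega_j/c_j'\ge\pi_j^*\omega/c_j'$ and $c_j'\le M$, this gives $\chi_j^n\ge c_0(\pi_j^*\omega)^n$ with $c_0$ independent of $j$.

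\emph{Second pairing, upper bound.} By Lemma~\ref{lem:trace-cs} with the auxiliary form $\Omega_j$, and since $\pi_j^*\omega\le\omega_j$,
\[
\bigl(\tr\sqrt{\chi_j^{-1}\pi_j^*\omega}\bigr)^2\le\tr_{\chi_j}\Omega_j\,\tr_{\Omega_j}\omega_j=c_j'\,\tr_{\chi_j}\Omega_j .
\]
Integrating against $\chi_j^n$ gives at most $c_j'\,n\,L_j\cdot[\chi_j^{n-1}]=c_{L_j}^2L_j^n+o(1)$, by the computation in the first pairing.

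\emph{Second pairing, lower bound (main obstacle).} Cauchy--Schwarz is not pointwise sharp here. Equality would need $\Omega_j\propto(\chi_j^{-1}\omega_j)^{1/2}$, and the Michelsohn root gives $\chi_j$-eigenvalues proportional to $(\prod\lambda)^{1/(n-1)}/\lambda_i$, which is not of that form. I would first try a global rather than pointwise bound. The idea is to combine Proposition~\ref{prop:slope-comparison} on the K\"ahler model $Y_j$ (with Gauduchon metric $\chi_j$ and the tests $L_j+sD_j$) with the fact that $c_{L_j}$ is asymptotically minimal, so that the Cauchy--Schwarz defect is squeezed to $o(1)$. Concretely, the Yau-type argument of that proposition yields an inequality chain of the form $\mathrm{LHS}\le n c\int L\wedge\chi^{n-1}$. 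Its slack can be traced back through the trace Cauchy--Schwarz step, and I would aim to run this backwards to show that the defect in the upper-bound chain above is controlled by $c_j'-\zeta+\max\{c_{L_j,D_j},0\}\to0$. If this does not close, the fallback is to change the balanced metric: replace the Michelsohn root by a different positive $(n-1,n-1)$-root of a closed form built from $\Omega_j$ and $\omega_j$ that is closer to the Cauchy--Schwarz equality case, and accept $o(1)$ errors coming from $s_j$.
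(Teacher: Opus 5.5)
Your construction is the paper's: Song's criterion for the perturbed pair $(L_j,[\omega_j])$, followed by Michelsohn's root of the cone form. Your treatment of the first pairing, of \eqref{eq:balanced-mass}, and of the upper half of the second pairing is correct. One normalization point: take $\chi_j^{n-1}=\Theta_j$ with ordinary wedge powers, since a factor $1/(n-1)!$ would rescale both pairings.

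The genuine gap is the lower bound in the second line of \eqref{eq:balanced-pairings}. This is exactly the direction Theorem~\ref{thm:rigidity} needs, and your diagnosis of it is wrong. Your own eigenvalue computation shows that Cauchy--Schwarz is pointwise sharp. In a frame with $\Omega_j=I$ and $\omega_j=\mathrm{diag}(\lambda_i)$, one has $\chi_j=\mathrm{diag}(P/\lambda_i)$ with $P=(\prod_k\lambda_k)^{1/(n-1)}$. So $\chi_j^{-1}\Omega_j$ has eigenvalues $\lambda_i/P$, while $(\chi_j^{-1}\omega_j)^{1/2}$ has eigenvalues $\lambda_i/\sqrt P$. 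These are proportional with the pointwise factor $P^{-1/2}$, which is all that Lemma~\ref{lem:trace-cs} requires. Hence, exactly,
\[
\bigl(\tr\sqrt{\chi_j^{-1}\omega_j}\bigr)^2\chi_j^n=(\tr_{\Omega_j}\omega_j)^2\,\Omega_j^n=(c_j')^2\,\Omega_j^n .
\]
The paper records this as $\frac{\chi_j^n}{h_j^n}\chi_j^{-1}h_j=h_j^{-1}\omega_j$. It obtains this from $n\eta\wedge q_j=\langle\omega_j,\eta\rangle_{h_j}h_j^n$ for all real $(1,1)$-forms $\eta$.

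The remaining step is to pass from $\omega_j$ to $\pi_j^*\omega$. Monotonicity of the square root goes the wrong way here, and no bound $\pi_j^*\omega\ge(1-\delta)\omega_j$ holds near the exceptional locus. Put $A=\Omega_j^{-1}\omega_j\ge B=\Omega_j^{-1}\pi_j^*\omega\ge0$. The identity above gives $\chi_j^{-1}\pi_j^*\omega=(\Omega_j^n/\chi_j^n)\,AB$, whose eigenvalues are those of $B^{1/2}AB^{1/2}\ge B^2$. Therefore
\[
\bigl(\tr\sqrt{\chi_j^{-1}\pi_j^*\omega}\bigr)^2\chi_j^n\ge(\tr B)^2\,\Omega_j^n\ge\bigl((c_j')^2-2c_j's_j\tr_{\Omega_j}\eta_j\bigr)\Omega_j^n ,
\]
and integrating costs only $2nc_j's_jL_j^n\to0$.

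Your fallback via Proposition~\ref{prop:slope-comparison} cannot substitute for this. That proposition, like Theorem~\ref{thm:duality}, bounds $\int(\tr\sqrt{\chi^{-1}\omega})^2\chi^n$ from above for every Gauduchon $\chi$. Minimality of $\zeta$ only gives $c_L\ge\zeta$, which does not reverse that inequality. The near-extremality of the particular $\chi_j$ in the duality is the real content of the lemma. It must come from the explicit algebra of the Michelsohn root, as above.
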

\begin{proof}
We construct $\chi_j$ from a smooth $J$-equation on each model.
Choose $\varepsilon_j>\Gamma_-(L_j,\beta)$ with $\varepsilon_j\to0$, choose
a K\"ahler form $\theta_j\in L_j$, and put
\[
\omega_j=\pi_j^*\omega+\varepsilon_j\theta_j,\qquad
c_j=c_{L_j}+n\varepsilon_j.
\]
Both $L_j$ and $[\omega_j]$ are K\"ahler. For every proper
irreducible $p$-dimensional subvariety $Z\subset Y_j$,
\[
\begin{aligned}
\int_Z(c_jL_j^p-p[\omega_j]L_j^{p-1})
&=\int_Z(c_{L_j}L_j^p-p\pi_j^*\beta L_j^{p-1})
  +(n-p)\varepsilon_j\int_ZL_j^p\\
&\ge(n-p)(\varepsilon_j-\Gamma_-(L_j,\beta))\int_ZL_j^p>0.
\end{aligned}
\]
The numerical criterion of Song \cite{S20} therefore gives a K\"ahler form
$h_j\in L_j$ satisfying $\tr_{h_j}\omega_j=c_j$.
Set
\begin{equation}\label{eq:balanced-form}
q_j=c_jh_j^{n-1}-(n-1)\omega_j\wedge h_j^{n-2}.
\end{equation}
This form is closed and strictly positive. Hence by the construction of Michelsohn
\cite[pp.~279--280]{M82} , there is a unique smooth positive
Hermitian form $\chi_j$ with $q_j=\chi_j^{n-1}$.
Thus $\chi_j$ is balanced, since $d(\chi_j^{n-1})=0$.

We first compute the cohomological pairing in
\eqref{eq:balanced-pairings}. By construction,
\begin{equation}\label{eq:balanced-class}
[q_j]=c_{L_j}L_j^{n-1}-(n-1)\pi^*\beta L_j^{n-2}+\varepsilon_jL_j^{n-1}, 
\end{equation}
Since $D_j$ is effective,
\[
0\le[q_j]D_j=c_{L_j,D_j}+\varepsilon_jL_j^{n-1}D_j,
\qquad
0\le L_j^{n-1}D_j=L_j^{n-1}(\pi_j^*\alpha-L_j)\le\pi_j^*\alpha\,L_j^{n-1}\le\alpha^n.
\]
Lemma~\ref{lem:orthogonality} gives
$(c_{L_j,D_j})_+\to0$, so
\begin{equation}\label{eq:divisor-limit}
-\varepsilon_j\alpha^n\le c_{L_j,D_j}\le(c_{L_j,D_j})_+,
\qquad c_{L_j,D_j}\longrightarrow0.
\end{equation}
Using $\pi_j^*\alpha=L_j+D_j$ and $nL_jq_{L_j}=c_{L_j}L_j^n$, we obtain
\[
n\pi_j^*\alpha[\chi_j^{n-1}]
=c_{L_j}L_j^n+nc_{L_j,D_j}
 +n\varepsilon_j\pi_j^*\alpha\,L_j^{n-1}
=c_jL_j^n+o(1).
\]

For the integral estimate, we work on the biholomorphic locus. For any real $(1,1)$-form $\eta$, \[
\begin{aligned}
    n\eta\wedge q_j=&c_j n\eta\wedge h_j^{n-1}-n(n-1)\omega_j\wedge\eta\wedge h_j^{n-2}\\
    =&c_j\tr_{h_j}\eta h_j^n-(\tr_{h_j}\omega_j\tr_{h_j}\eta-\langle \omega_j,\eta\rangle_{h_j})h_j^n\\
    =&\langle \omega_j,\eta\rangle_{h_j} h_j^n.
\end{aligned}
\]
Hence \[\langle\chi_j,\eta\rangle_{\chi_j}\chi_j^n=n\eta\wedge\chi_j^{n-1}=\langle \omega_j,\eta\rangle_{h_j}h_j^n \quad  \text{ for any real $(1,1)$-form $\eta$.}\]
It follows that 
\begin{equation}\label{eq:metric-identity}
\frac{\chi_j^n}{h_j^n}\chi_j^{-1}h_j=h_j^{-1}\omega_j.
\end{equation}
Taking the determinant gives \[
\left(\frac{\chi_j^n}{h_j^n}\right)^{n-1}=\frac{\omega_j^n}{h_j^n}.
\]
Consequently,
\[
\bigl(\tr\sqrt{\chi_j^{-1}\pi_j^*\omega}\bigr)^2\chi_j^n
=
\bigl(\tr\sqrt{h_j^{-1}\omega_j h_j^{-1}\pi^*_j\omega}\bigr)^2h_j^n.
\]

In an $h_j$-unitary frame put $A=h_j^{-1}\omega_j$ and $B=h_j^{-1}\pi_j^*\omega$. Then $A\ge B\ge0$, and
\[
 B^{1/2}AB^{1/2}\ge B^2,\qquad
 A^{1/2}BA^{1/2}\le A^2.
\]
The two matrices on the left have the same eigenvalues. Monotonicity of the square root gives
\[
 \begin{aligned}
 (\tr B)^2h_j^n
 &\le\bigl(\tr\sqrt{A^{1/2}BA^{1/2}}\bigr)^2h_j^n\\
 &=\bigl(\tr\sqrt{\chi_j^{-1}\pi_j^*\omega}\bigr)^2\chi_j^n
 \le(\tr A)^2h_j^n=c_j^2h_j^n.
 \end{aligned}
\]
Now
$\tr_{h_j}\pi_j^*\omega
=c_j-\varepsilon_j\tr_{h_j}\theta_j$.
Integrating on $Y_j$ gives
\begin{equation}\label{eq:trace-integral}
\begin{aligned}
0\le c_j^2L_j^n
 -\int_{Y_j}
  \bigl(\tr\sqrt{\chi_j^{-1}\pi_j^*\omega}\bigr)^2\chi_j^n
&\le2c_j\varepsilon_j
 \int_{Y_j}(\tr_{h_j}\theta_j)h_j^n\\
&=2nc_j\varepsilon_jL_j^n.
\end{aligned}
\end{equation}

Since $L_j^n\le\alpha^n$, the numbers $c_j$ are bounded, and
$c_j-c_{L_j}=n\varepsilon_j\to0$, this proves the second identity in
\eqref{eq:balanced-pairings}.

It remains to prove \eqref{eq:balanced-mass}.
Choose a fixed K\"ahler form $\kappa\in\alpha$ and $C>0$ with
$\omega\le C\kappa$. The first identity already proved gives
\[
n\int_{Y_j}\pi_j^*\omega\wedge \chi_j^{n-1}
\le Cn\pi_j^*\alpha[\chi_j^{n-1}]\le C,
\]
after increasing $C$.
The trace equation implies $h_j\ge\omega_j/c_j$, so the volume
identity in \eqref{eq:metric-identity} gives
\[
\chi_j^n
\ge c_j^{-n(n-2)/(n-1)}\omega_j^n
\ge c_0(\pi_j^*\omega)^n,
\]
with $c_0>0$ independent of $j$.
\end{proof}

\subsection{Uniqueness from equality in the trace bound}
Our goal is to show there is a unique K\"ahler current in the critical set $\cC_\zeta$. We first show the uniqueness holds under the assumption that all K\"ahler currents in this set satisfy the inverse trace identity for the absolutely continuous part. 
\begin{lemma}\label{lem:trace-uniqueness}
 Suppose every $T\in \cC_a$ satisfies
\begin{equation}\label{eq:saturated-trace}
\tr_{T_{\rm ac}}\omega=a
\quad\hbox{almost everywhere}.
\end{equation}
Then $\mathcal C_a$ consists of one current. 
\end{lemma}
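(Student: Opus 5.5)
Take $T_1,T_2\in\mathcal C_a$ and compare them through their average. Since $\mathcal C_a$ is convex, $T=\tfrac12(T_1+T_2)\in\mathcal C_a$. Absolutely continuous parts are linear, so $T_{\rm ac}=\tfrac12(A_1+A_2)$ with $A_i=(T_i)_{\rm ac}$. By hypothesis all three of $T_1,T_2,T$ saturate the trace identity \eqref{eq:saturated-trace} almost everywhere:
\[
\tr\bigl(\bigl(\tfrac{A_1+A_2}2\bigr)^{-1}\omega\bigr)=a=\tfrac12\tr(A_1^{-1}\omega)+\tfrac12\tr(A_2^{-1}\omega).
\]
The map $A\mapsto\tr(A^{-1}\omega)$ is strictly convex on positive definite matrices when $\omega>0$: for $H\ne0$ its second derivative is $2\tr(A^{-1}HA^{-1}HA^{-1}\omega)>0$. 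Equality in the midpoint convexity inequality therefore forces $A_1=A_2$ almost everywhere. So the absolutely continuous parts coincide, and it remains to identify the singular parts.

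To handle the singular parts, write $T_i=\kappa+dd^c\phi_i$ and use a maximum construction. Set $\phi=\max\{\phi_1,\phi_2\}$, or a regularized maximum if needed, and $S=\kappa+dd^c\phi$. Then $S$ is a closed positive current in $\alpha$. The goal is to show $S\in\mathcal C_a$. On the plurifine open sets $\{\phi_1>\phi_2\}$ and $\{\phi_2>\phi_1\}$, $S$ agrees with $T_1$ resp.\ $T_2$, so its absolutely continuous part is $A_1=A_2$ there. On the contact set $\{\phi_1=\phi_2\}$, off a null set, both functions are twice differentiable in the a.e.\ (Alexandrov-type) sense with the same Hessian $A_1=A_2$, and $S_{\rm ac}$ should equal that common matrix a.e. A cleaner route, matching the argument of Lemma~\ref{lem:np-cone}, is to work with local convolutions: by convexity and monotonicity of the inverse trace, $M_\tau(u_{1,\delta},u_{2,\delta})$ satisfies $\tr_{dd^c\cdot}B\le a$, and the weak closedness of the convolution condition (discussion after Definition~\ref{def:trace-family}) passes this to the limit. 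Either way $S\in\mathcal C_a$.

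Applying the hypothesis to $S$ gives $\tr(S_{\rm ac}^{-1}\omega)=a$ a.e. Since $S_{\rm ac}\ge A_1$ on each piece, strict monotonicity of the inverse trace gives $S_{\rm ac}=A_1$ almost everywhere.

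Now compare masses. The three currents $S$, $T_1$, $T_2$ lie in the same class $\alpha$ and have the same absolutely continuous part, so their singular parts have equal total mass against $\kappa^{n-1}$. The plan is to use $\phi\ge\phi_i$ to derive $\phi_1=\phi_2+$const. Normalize so that $\sup(\phi_1-\phi_2)=0$, giving $\phi\ge\phi_2$. The difference $v=\phi_1-\phi_2$ satisfies $dd^cv=(T_1)_s-(T_2)_s$, a difference of singular measures. Applying the same max argument to $\max\{\phi_1+c,\phi_2\}$ for every constant $c$, and using the domination principle or a comparison of singular parts restricted to the sets $\{\phi_1+c>\phi_2\}$, should show that $(T_1)_s=(T_2)_s$ on each such set. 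Letting $c$ vary then yields $T_1=T_2$.

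I expect the main obstacle to be this last step: promoting equality of absolutely continuous parts to equality of the currents. It requires care on the contact sets and a domination principle for currents that differ only by singular measures. I would first try a Perron-type argument: show that the upper envelope of potentials of currents in $\mathcal C_a$ with fixed sup-normalization is itself in $\mathcal C_a$, and that any $T_i$ strictly below it would contradict saturation somewhere.
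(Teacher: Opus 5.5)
Your first step, strict convexity of $H\mapsto\tr(H^{-1}\omega)$ applied at the midpoint, matches the paper. It gives $(T_1)_{\rm ac}=(T_2)_{\rm ac}=:A$, and hence the same $A$ for every member of $\mathcal C_a$.

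The gap is the second half: you never give an argument that identifies the singular parts, and the tools you name do not supply one.
\begin{itemize}
\item With the plain maximum $\max\{\phi_1,\phi_2+s\}$, the new positive mass created by the max is carried by the contact set $\{\phi_1-\phi_2=s\}$. That set is Lebesgue-null for all but countably many $s$. So this mass is singular, the hypothesis (which only constrains absolutely continuous parts) cannot see it, and knowing $S_{\rm ac}=A$ says nothing about $h=\phi_1-\phi_2$.
\item Equality of the singular masses against $\kappa^{n-1}$ holds automatically for any two currents in $\alpha$ with the same ac part, so it carries no information.
\item The domination-principle and Perron-envelope suggestions are hopes, not arguments. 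A potential lying below another is compatible with saturation unless you show it forces extra \emph{absolutely continuous} mass somewhere.
\end{itemize}

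The missing idea is a \emph{strictly convex} smoothing of the maximum. Take $M(x,y)=\tfrac12\bigl(x+y+\chi(x-y)\bigr)$ with $\chi$ even and convex, $\chi(t)=|t|$ for $|t|\ge1$, and $\chi''>0$ on $(-1,1)$. This is the same as averaging your plain maxima over the shift, which is the version of ``letting $c$ vary'' that actually works. Its complex Hessian is a convex combination of the two Hessians plus the term $\tfrac12\chi''(v_1-v_2)\,i\partial(v_1-v_2)\wedge\bar\partial(v_1-v_2)$. Apply this to the convolutions $U_{i,\delta}$ and pass to the limit with Fatou's lemma. This uses $u_i\in W^{1,1}_{\rm loc}$, $dd^cU_{i,\delta}\to A$ a.e., and $\partial(h*\varrho_\delta)\to\partial h$ a.e. For $w=M(u_1,u_2+s)$ it gives both $\kappa+dd^cw\in\mathcal C_a$ and
\[
(\kappa+dd^cw)_{\rm ac}\ \ge\ A+\tfrac12\chi''(h-s)\,i\partial h\wedge\bar\partial h\quad\text{a.e.}
\]
Every member of $\mathcal C_a$ has ac part exactly $A$, so the gradient term vanishes and $dh=0$ a.e. on $\{|h-s|<1\}$. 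Taking $s\in\mathbb Q$ gives $dh=0$ a.e. on $X$. Since $h\in W^{1,1}(X)$ and $X$ is connected, $h$ is constant, so $T_1=T_2$. The smoothing converts the contact-set mass, which the plain maximum hides in the singular part, into an absolutely continuous quantity that the saturation hypothesis can detect.
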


\begin{proof}
Fix $T_1,T_2\in\mathcal C_a$, and write
$T_i=\kappa+dd^cu_i$, where $\kappa\in\alpha$ is a K\"ahler form.
The function $H\mapsto\tr(H^{-1}\omega)$ is strictly
convex on positive Hermitian matrices. Hence $(T_1+T_2)/2\in\mathcal C_a$.
By hypothesis, its inverse trace and those of $T_1,T_2$ all equal $a$
almost everywhere. Equality in the convexity inequality gives
\begin{equation}\label{eq:equal-densities}
(T_1)_{\rm ac}=(T_2)_{\rm ac}=:A
\quad\hbox{almost everywhere}.
\end{equation}
The same argument shows that every member of $\mathcal C_a$ has
absolutely continuous part $A$. We must show that their potentials
differ by constants.

Fix a smooth even convex function $\chi$ with
$\chi(t)=|t|$ for $|t|\ge1$, $|\chi'|\le1$, and
$\chi''>0$ on $(-1,1)$. Define the regularized maximum
\[
M(x,y):=\frac{x+y+\chi(x-y)}2.
\]
Then \[
\max\{x,y\}\le M(x,y)\le\max\{x,y\}+\frac12.
\]
For finite $t$, set
$M(-\infty,t)=M(t,-\infty)=t$, and set
$M(-\infty,-\infty)=-\infty$.
The displayed bound ensures convergence at common poles; when only
one argument tends to $-\infty$, $M$ eventually equals the other,
since $M(x,y)=\max\{x,y\}$ for $|x-y|\ge1$.

Put $h=u_1-u_2$, regarded as an $L^1$ function, and for each
$s\in\mathbb R$ let $w=M(u_1,u_2+s)$.
We claim that $\kappa+dd^cw\in\mathcal C_a$ and
\begin{equation}\label{eq:uniqueness-gradient}
(\kappa+dd^cw)_{\rm ac}
\ge A+\frac{\chi''(h-s)}2\,i\partial h\wedge\bar\partial h
\quad\hbox{almost everywhere}.
\end{equation}
The weak derivatives appearing here exist, as verified below.
The claim implies uniqueness: the left-hand side equals $A$,
so $dh=0$ almost everywhere on $\{|h-s|<1\}$.
Taking all $s\in\mathbb Q$ covers the full-measure set where $h$ is
finite. Thus $dh=0$ almost everywhere on $X$.
Since $h\in W^{1,1}(X)$ and $X$ is connected, $h$ is constant
almost everywhere, and $T_1=T_2$.

It remains to prove the claim. Work on a coordinate ball
$\Omega$ where $\kappa=dd^cr$, and put $U_i=u_i+r$.
Let $U_{i,\delta}=U_i*\varrho_\delta$ be their convolutions with a fixed
nonnegative smooth radial kernel, scaled to radius
$\delta$. On smaller balls define
\[
W_\delta=M(U_{1,\delta},U_{2,\delta}+s).
\]
Since $U_{i,\delta}\downarrow U_i$, monotonicity of $M$ and its
extension at $-\infty$ give
\[
W_\delta\downarrow M(U_1,U_2+s)=:w+r.
\]
Here the last equality follows from
$M(x+t,y+t)=M(x,y)+t$.
The first derivatives of $M$ lie in $[0,1]$, so for every
$K\Subset\Omega$,
\[
\|W_\delta-(w+r)\|_{L^1(K)}
\le\sum_{i=1}^2\|U_{i,\delta}-U_i\|_{L^1(K)}
\longrightarrow0.
\]
The bound by the ordinary maximum also shows $w+r\in L^1_{\rm loc}$.
Consequently $dd^cW_\delta\rightharpoonup\kappa+dd^cw$.

For smooth real functions $v_1,v_2$, the chain rule gives
\[
\begin{aligned}
dd^cM(v_1,v_2)
={}&\frac{1+\chi'(v_1-v_2)}2\,dd^cv_1
 +\frac{1-\chi'(v_1-v_2)}2\,dd^cv_2\\
&+\frac{\chi''(v_1-v_2)}2\,
 i\partial(v_1-v_2)\wedge\bar\partial(v_1-v_2).
\end{aligned}
\]
Thus its Hessian is a convex combination of the two Hessians plus
a nonnegative form. In particular $W_\delta$, and hence $w+r$,
is psh. Fix a constant positive form $B\le\omega$ on $\Omega$.
The convolution argument in Lemma~\ref{lem:attainment}
gives
$\tr_{dd^cU_{i,\delta}}B\le a$.
Convexity  gives $\tr_{dd^cW_\delta}B\le a$.
The argument of Lemma~\ref{lem:attainment} therefore yields
$\kappa+dd^cw\in\mathcal C_a$.

To justify \eqref{eq:uniqueness-gradient}, first note
that $U_i\in W^{1,1}_{\rm loc}$, see \cite[Theorem 1.48]{GZ17}.
 Thus $h\in W^{1,1}(X)$.
 Lebesgue
differentiation therefore gives, almost everywhere,
\[
\begin{aligned}
U_{1,\delta}-U_{2,\delta}
   &=h*\varrho_\delta\longrightarrow h,\\
\partial(U_{1,\delta}-U_{2,\delta})
   &=(\partial h)*\varrho_\delta\longrightarrow\partial h,\\
dd^cU_{i,\delta}&\longrightarrow A.
\end{aligned}
\]
Applying the smooth Hessian identity with
$v_1=U_{1,\delta}$, $v_2=U_{2,\delta}+s$, its right-hand side
therefore converges almost everywhere to
$A+\frac12\chi''(h-s)i\partial h\wedge\bar\partial h$.
For every smooth compactly supported positive $(n-1,n-1)$-form
$\psi$, Fatou's lemma gives
\[
\begin{aligned}
\int_\Omega
 \left(A+\frac{\chi''(h-s)}2\,i\partial h\wedge\bar\partial h\right)
 \wedge\psi
&\le\liminf_{\delta\downarrow0}
 \int_\Omega dd^cW_\delta\wedge\psi\\
&=\langle\kappa+dd^cw,\psi\rangle.
\end{aligned}
\]
This proves local integrability of the nonnegative gradient term and
an inequality of currents. Taking absolutely continuous parts gives
\eqref{eq:uniqueness-gradient}.
\end{proof}

Now we can apply Lemma~\ref{lem:balanced-metrics} to rule out any positive trace defect at the critical threshold.
\begin{theorem}\label{thm:rigidity}
For $n\ge3$,
\begin{equation}\label{eq:critical-equation}
a_*=\zeta,\qquad \mathcal C_\zeta=\{T\},\qquad
\tr((T)_{\rm ac}^{-1}\omega)=\zeta
\quad\hbox{almost everywhere}.
\end{equation}
\end{theorem}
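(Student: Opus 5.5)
The equality $a_*=\zeta$ is already contained in Corollary~\ref{cor:critical-family} and Corollary~\ref{cor:threshold-models}. So the plan is to prove that every $T\in\mathcal C_\zeta$ satisfies $\tr_{T_{\rm ac}}\omega=\zeta$ almost everywhere, and then invoke Lemma~\ref{lem:trace-uniqueness}. That lemma yields $\mathcal C_\zeta=\{T\}$; nonemptiness comes from Corollary~\ref{cor:critical-family}.

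To prove saturation, fix $T\in\mathcal C_\zeta$ and write $f=\zeta-\tr_{T_{\rm ac}}\omega\ge0$. We use the K\"ahler residual models $L_j$ from Corollary~\ref{cor:threshold-models} and the balanced metrics $\chi_j$ on $Y_j$ from Lemma~\ref{lem:balanced-metrics}. The balanced condition gives $dd^c(\chi_j^{n-1})=0$. Hence $\int_{Y_j}\pi_j^*T\wedge\chi_j^{n-1}=\pi_j^*\alpha[\chi_j^{n-1}]$, since pullback of a closed positive current under a modification preserves the class. Off the exceptional set, which has measure zero, we have $\pi_j^*T\ge(\pi_j^*T)_{\rm ac}=\pi_j^*T_{\rm ac}$. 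We apply the pointwise Cauchy--Schwarz inequality of Lemma~\ref{lem:trace-cs} with $\Omega=\pi_j^*T_{\rm ac}$, but keep the defect:
\[
\bigl(\tr\sqrt{\chi_j^{-1}\pi_j^*\omega}\bigr)^2\le \tr_{\chi_j}\pi_j^*T_{\rm ac}\,\bigl(\zeta-\pi_j^*f\bigr).
\]
Integrating against $\chi_j^n$ and using \eqref{eq:balanced-pairings} gives
\[
c_{L_j}^2L_j^n+o(1)\le \zeta\bigl(c_{L_j}L_j^n+o(1)\bigr)-\int_{Y_j}\pi_j^*f\,\tr_{\chi_j}(\pi_j^*T_{\rm ac})\,\chi_j^n .
\]
Since $c_{L_j}\to\zeta$ and $L_j^n\le\alpha^n$, this forces
\[
\int_{Y_j}\pi_j^*f\,\tr_{\chi_j}(\pi_j^*T_{\rm ac})\,\chi_j^n\to0.
\]

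Next we bound this integrand from below on a fixed scale on $X$. We use $T_{\rm ac}\ge\omega/\zeta$. Then
\[
\tr_{\chi_j}\pi_j^*T_{\rm ac}\,\chi_j^n\ge\zeta^{-1}\tr_{\chi_j}(\pi_j^*\omega)\,\chi_j^n .
\]
Writing $\mu_j$ for the eigenvalues of $\pi_j^*\omega$ relative to $\chi_j$, we need a lower bound for $\sum\mu_j\cdot\chi_j^n/(\pi_j^*\omega)^n$, and we want it with respect to $(\pi_j^*\omega)^n$. By AM--GM,
\[
\tr_{\chi_j}\pi_j^*\omega\,\chi_j^n\ge n\Bigl(\frac{(\pi_j^*\omega)^n}{\chi_j^n}\Bigr)^{1/n}\chi_j^n=n\Bigl(\frac{\chi_j^n}{(\pi_j^*\omega)^n}\Bigr)^{(n-1)/n}(\pi_j^*\omega)^n .
\]
The volume bound $\chi_j^n\ge c_0(\pi_j^*\omega)^n$ in \eqref{eq:balanced-mass} then gives a uniform positive constant $c_1$ with
\[
\int_X f\,\omega^n=\int_{Y_j}\pi_j^*f\,(\pi_j^*\omega)^n\le c_1^{-1}\int_{Y_j}\pi_j^*f\,\tr_{\chi_j}(\pi_j^*T_{\rm ac})\,\chi_j^n\to0 .
\]
Hence $f=0$ almost everywhere. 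Integrability of the terms is fine because the $f$-term is bounded by the mass $\int\pi_j^*T\wedge\chi_j^{n-1}$, which is finite.

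The main obstacle is the first step. The identity $\int_{Y_j}\pi_j^*T\wedge\chi_j^{n-1}=n^{-1}\pi_j^*\alpha[\chi_j^{n-1}]$ requires pulling back a singular current with no analytic structure and pairing it with a merely $dd^c$-closed form. I would handle this by pulling back local potentials, since $\pi_j^*T=\pi_j^*\kappa+dd^c(u\circ\pi_j)$ is well defined as a closed positive current. Stokes' theorem then applies because $dd^c(\chi_j^{n-1})=0$. One must also check that the absolutely continuous part of $\pi_j^*T$ dominates $\pi_j^*T_{\rm ac}$, which holds since $\pi_j$ is biholomorphic off a null set. A second point is the restriction $n\ge3$: it enters through Lemma~\ref{lem:balanced-metrics}, via Song's criterion and the form $h_j^{n-2}$, and is inherited rather than reproved.
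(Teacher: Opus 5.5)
Your proposal is correct and is essentially the paper's proof. It uses the balanced metrics of Lemma~\ref{lem:balanced-metrics}, the pointwise trace Cauchy--Schwarz inequality with the defect $\zeta-\tr_{T_{\rm ac}}\omega$ kept, a uniform lower bound from AM--GM together with $\chi_j^n\ge c_0(\pi_j^*\omega)^n$, and finally Lemma~\ref{lem:trace-uniqueness}; the only difference is that the paper writes the defect as $f_j/t-f_j/\zeta\ge m(\zeta-t)/\zeta^2$ with $f_j\ge m$, instead of going through $T_{\rm ac}\ge\omega/\zeta$, and the factor $n^{-1}$ in your last paragraph is a slip, since $\int_{Y_j}\pi_j^*T\wedge\chi_j^{n-1}=\pi_j^*\alpha[\chi_j^{n-1}]$ as in your first display.
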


\begin{proof}
Corollary~\ref{cor:threshold-models} gives $a_*=\zeta$ and the residual models required in Lemma~\ref{lem:balanced-metrics}. Lemma~\ref{lem:attainment} gives $\mathcal C_\zeta\ne\varnothing$.
Let $\chi_j$ be the metrics supplied by Lemma~\ref{lem:balanced-metrics}. On the biholomorphic locus, regard them as forms on $X$, and set
\[
 f_j=\bigl(\tr\sqrt{\chi_j^{-1}\omega}\bigr)^2\chi_j^n/\omega^n.
\]
Since the exceptional locus has zero measure, \eqref{eq:balanced-pairings} gives
\[
 \int_X f_j\omega^n=\int_{Y_j}
 \bigl(\tr\sqrt{\chi_j^{-1}\pi_j^*\omega}\bigr)^2\chi_j^n=c_{L_j}^2L_j^n+o(1).
\]

We shall use the uniform lower bound $f_j\ge m>0$.
Indeed, if $\lambda_1,\ldots,\lambda_n$ are the eigenvalues of
$\chi_j$ relative to $\omega$, the arithmetic--geometric mean
inequality and \eqref{eq:balanced-mass} give
\[
\begin{aligned}
f_j
&=\left(\sum_{k=1}^n\lambda_k^{-1/2}\right)^2
  \prod_{k=1}^n\lambda_k\ge n^2\left(\frac{\chi_j^n}{\omega^n}\right)^{(n-1)/n}
 \ge n^2c_0^{(n-1)/n}=:m>0
\end{aligned}
\]
almost everywhere on $X$, uniformly in $j$.

Fix $U\in\mathcal C_\zeta$, and put
$t=\tr(U_{\rm ac}^{-1}\omega)$, so that
$0<t\le\zeta$ almost everywhere.
Lemma~\ref{lem:trace-cs}, applied pointwise
with parameter $t(x)$, yields
\[
nU_{\rm ac}\wedge\chi_j^{n-1}\ge\frac{f_j}{t}\,\omega^n
\]
on the biholomorphic locus.
The local-potential pullback $\pi_j^*U$ is positive and closed
in $\pi_j^*\alpha$. On this locus its absolutely continuous
part corresponds to $U_{\rm ac}$; its singular coefficient
measures are positive. Integrating the preceding inequality
and bounding by the total mass on $Y_j$, we obtain
\begin{equation}\label{eq:pairing-bound}
\begin{aligned}
n\pi_j^*\alpha[\chi_j^{n-1}]
&=n\int_{Y_j}\pi_j^*U\wedge \chi_j^{n-1}\\
&\ge\int_X\frac{f_j}{t}\,\omega^n
 \ge\frac1\zeta\int_X f_j\omega^n.
\end{aligned}
\end{equation}
Here the first equality uses $\ddc \chi_j^{n-1}=0$.

Because $0<t\le\zeta$ and $f_j\ge m$,
\begin{equation}\label{eq:trace-defect}
\frac{f_j}{t}-\frac{f_j}{\zeta}
=\frac{f_j(\zeta-t)}{\zeta t}
\ge\frac{m}{\zeta^2}(\zeta-t)\ge0
\quad\hbox{almost everywhere}.
\end{equation}
Combining this estimate with
\eqref{eq:pairing-bound} gives
\begin{equation}\label{eq:defect-limit}
\begin{aligned}
0\le\frac{m}{\zeta^2}\int_X(\zeta-t)\omega^n
&\le n\pi_j^*\alpha[\chi_j^{n-1}]
 -\frac1\zeta\int_Xf_j\omega^n\\
&=\left(c_{L_j}-\frac{c_{L_j}^2}{\zeta}\right)L_j^n+o(1)
 \longrightarrow0.
\end{aligned}
\end{equation}
The convergence follows from $c_{L_j}\to\zeta$ and
$L_j^n\le\alpha^n$. The integral on the left is independent of $j$;
since its integrand is nonnegative, it follows that
\[
\tr(U_{\rm ac}^{-1}\omega)=t=\zeta
\quad\hbox{almost everywhere}.
\]
This holds for every $U\in\mathcal C_\zeta$.
The family is nonempty, so Lemma~\ref{lem:trace-uniqueness}
now gives $\mathcal C_\zeta=\{T\}$, including equality of the
singular parts. This proves the theorem.
    
\end{proof}

\section{The nonpluripolar product equation and uniqueness}\label{sec:scalar}
We use the uniqueness theorem to prove that the unique element in $\cC_\zeta$ satisfies the nonpluripolar-product $J$-equation. The idea is on each coordinate ball, replacing the currents by solution to the $J$-equation on the ball preserves the plurisubharmonicity and the inverse trace bound. Then uniqueness gives the equation. 

\subsection{Dirichlet replacement}
We need the following results of Guan-Li \cite{GL12} to construct solutions locally. 
\begin{theorem}\label{thm:dirichlet}
    Let $(M,\omega)$ be a compact Hermitian manifold of complex dimension $n>1$ with smooth boundary, let $\chi$ be a smooth real $(1,1)$-form, let $0<\psi\in C^\infty(\overline M)$, and let $\varphi\in C^\infty(\partial M)$. Put $\chi_v=\chi+\ddc v.$
Suppose there is $\underline v\in C^2(\overline M)$ with
\[
\chi_{\underline v}>0,\qquad
\chi_{\underline v}^n\geq
\psi\,\chi_{\underline v}^{n-1}\wedge\omega
\quad\hbox{on }\overline M,\qquad
\underline v=\varphi\quad\hbox{on }\partial M.
\]
Then the Dirichlet problem
\begin{equation}\label{eq:dirichlet}
\chi_v>0,\qquad
\chi_v^n=\psi\,\chi_v^{n-1}\wedge\omega,
\qquad v|_{\partial M}=\varphi
\end{equation}
has a unique admissible smooth solution, with its prescribed continuous boundary values. 
\end{theorem}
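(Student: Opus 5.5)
The plan is the standard continuity method for this fully nonlinear elliptic equation with a subsolution, following Guan--Li \cite{GL12}. First rewrite the equation in concave form: with $\lambda$ the eigenvalues of $\chi_v$ relative to $\omega$, the equation reads
\[
F(\chi_v):=-\frac{\sigma_{n-1}(\lambda)}{\sigma_n(\lambda)}=-\frac1\psi,
\]
and the operator $F$ is elliptic and concave on the admissible cone $\{\lambda_i>0\}$. The subsolution hypothesis says $F(\chi_{\underline v})\ge-1/\psi$.

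\emph{Continuity path.} Deform the right-hand side from $\psi_0:=\chi_{\underline v}^n/(\chi_{\underline v}^{n-1}\wedge\omega)\ge\psi$ to $\psi$ through
\[
\psi_t=\bigl((1-t)\psi_0^{-1}+t\psi^{-1}\bigr)^{-1}.
\]
Then $\underline v$ is a subsolution for every $\psi_t$, and $v=\underline v$ solves the problem at $t=0$. Openness follows from the implicit function theorem, since the linearized operator is uniformly elliptic with no zeroth-order term and has Dirichlet data, so it is invertible between the appropriate H\"older spaces. Uniqueness, and the $C^0$ bound, come from the comparison principle. The interior part is exactly the touching argument of Lemma~\ref{lem:upper-tests}. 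Applying it gives $\underline v\le v\le h$, where $h$ solves a harmonic-type problem, for instance $\Delta_\omega h+\tr_\omega\chi=0$ with $h=\varphi$ on $\partial M$; this upper barrier works because admissibility gives $\tr_\omega\chi_v>0$.

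\emph{A priori estimates for closedness.} In order:
\begin{enumerate}
\item The barriers $\underline v\le v\le h$ agree on $\partial M$, so they give the boundary gradient bound.
\item The interior gradient bound comes from a maximum-principle argument applied to $\log|\nabla v|^2_\omega+\phi(v)$.
\item The boundary second-derivative estimates split by direction. Tangential--tangential derivatives are fixed by the boundary data. Tangential--normal derivatives use a barrier built from $v-\underline v$ and the distance function, and this step crucially uses that the subsolution inequality is strict in the concave sense. Normal--normal derivatives follow from the equation together with a uniform positivity of the tangential block, obtained as in Caffarelli--Nirenberg--Spruck/Guan.
\item The interior $C^2$ bound applies the maximum principle to $\log\lambda_{\max}+\phi(|\nabla v|^2)+\psi(v-\underline v)$, absorbing the torsion terms of $\omega$ via the gradient bound.
\end{enumerate}
Once $C^2$ is controlled, the eigenvalues stay in a compact subset of the cone, because $\sum\lambda_i^{-1}=1/\psi$ is bounded and $\lambda$ is bounded above. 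Hence the equation is uniformly elliptic and concave. Evans--Krylov, including its boundary version (Krylov), then gives $C^{2,\alpha}$ up to the boundary, and Schauder bootstrapping gives $C^\infty$. This closes the continuity method at $t=1$.

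The main obstacle I expect is the boundary double-normal and mixed second-derivative estimate on a general Hermitian, non-K\"ahler, non-pseudoconvex boundary. There the subsolution must replace any geometric convexity of $\partial M$, and the torsion terms of $\omega$ must be controlled. I would follow Guan's technique, using the quantitative strict subsolution inequality (after a tiny perturbation $\underline v-\epsilon\rho$, with $\rho$ a defining function) to build the barrier.
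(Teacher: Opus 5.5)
The paper does not prove this statement. It quotes it from Guan--Li \cite{GL12} and uses it only on a ball with $\chi=0$, $\psi=n/a$ and the smooth subsolution $\widetilde\varphi+A(|z|^2-R^2)$. Your continuity-method outline is the right architecture for the cited result. However, the step you single out as the main obstacle rests on a perturbation that does not work. A general defining function does not produce a strict subsolution. If $\rho<0$ in $M$, then $\ddc\rho\ge0$ at an interior minimum of $\rho$, so the inverse trace of $\chi_{\underline v-\epsilon\rho}$ cannot drop there. If $\rho>0$ in $M$, then near a pseudoconcave boundary component (e.g.\ $\rho=|z|^2-1$ near the inner sphere of a spherical shell) $-\epsilon\,\ddc\rho<0$, and the inverse trace goes up. A $\rho$ that works must be strictly superharmonic for the linearization at $\underline v$. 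Building one amounts to solving a linear Dirichlet problem whose coefficients are only continuous when $\underline v\in C^2$.

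The missing observation is that no perturbation is needed: for this equation, strictness is automatic in the form the barriers use. In eigenvalues the equation reads $\sum_i\lambda_i^{-1}=n/\psi$ (not $1/\psi$), since $n\chi_v^{n-1}\wedge\omega=(\tr_{\chi_v}\omega)\chi_v^n$. If $\chi_{\underline v}\le\Lambda\omega$ on $\overline M$, discarding the smallest inverse eigenvalue gives
\[
P_\omega(\chi_{\underline v})\le\frac n\psi-\frac1\Lambda .
\]
In an $\omega$-unitary frame diagonalizing $\chi_v$, the linearization is $L=\sum_i\lambda_i^{-2}\partial_i\bar\partial_i$. Cauchy--Schwarz over the $n-1$ indices other than the largest eigenvalue gives
\[
L(\underline v-v)\ \ge\ \frac{\bigl(n/\psi-\lambda_{\max}^{-1}\bigr)^2}{n/\psi-\Lambda^{-1}}-\frac n\psi .
\]
This is at least a fixed $\theta>0$ once $\lambda_{\max}$ is large, while for bounded $\lambda_{\max}$ the operator $L$ is uniformly elliptic. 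Since $n/\psi^2\le\sum_i\lambda_i^{-2}\le n^2/\psi^2$, this is exactly the dichotomy Guan uses for non-strict subsolutions. It is what makes your barrier built from $v-\underline v$ and the distance function work. The same margin on complex tangent hyperplanes of $\partial M$ is the input for the double-normal estimate.

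The continuity path also needs regularity you do not have. For $\underline v\in C^2$ only, $\psi_0$ is merely continuous. So $\underline v$ is not a $C^{2,\alpha}$ starting point, $\psi_t$ need not be H\"older for $t<1$, and the implicit function theorem does not apply. Instead, take a smooth $v_0$ with $v_0=\varphi$ on $\partial M$ and $\|v_0-\underline v\|_{C^2}$ small, so that $\chi_{v_0}>0$. Then run $n/\psi_t=(1-t)\tr_{\chi_{v_0}}\omega+t\,n/\psi$ starting from $v_0$. By convexity of $A\mapsto\tr(A^{-1}\omega)$, the functions $w_t=(1-t)v_0+t\underline v$ are $C^2$ subsolutions for every $t$, with uniform $C^2$ bounds and a uniform margin as above. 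Hence the estimates are uniform along the path. In the paper's application the subsolution is smooth, so your original path already works there.
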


We only apply \eqref{eq:dirichlet} to the case where $M$ is a Euclidean coordinate ball $B_R$, $\chi=0$, and $\psi=n/a$. Extend the smooth boundary value $\varphi$ smoothly to $\overline B_R$. For sufficiently large $A$,
\[
\underline v=\widetilde\varphi+A(|z|^2-R^2)
\]
has $dd^c\underline v\geq(n/a)\omega$, hence
$\tr_{dd^c\underline v}\omega\leq a$. This is precisely the subsolution inequality in \eqref{eq:dirichlet}. Consequently every smooth boundary value on the ball admits a smooth interior solution of
\begin{equation}\label{eq:ball-equation}
a(dd^cv)^n=n\omega\wedge(dd^cv)^{n-1},
\qquad dd^cv>0.
\end{equation}

The boundary values of the potential of a current may be $-\infty$.
The following elementary approximation uses distance
sup-convolutions and summable smoothing errors to retain
monotonicity. It prepares the boundary data for
Theorem~\ref{thm:dirichlet}.
\begin{lemma}\label{lem:boundary-data}
Every upper semicontinuous $f:S^{2n-1}\to[-\infty,\infty)$, bounded above and not identically $-\infty$, is the pointwise decreasing limit of smooth functions $\varphi_j$ on the sphere.
\end{lemma}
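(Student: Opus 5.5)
We approximate $f$ from above by Lipschitz functions using distance sup-convolutions, then smooth each one with a small error that we force to be summable, so that the sequence stays decreasing. Let $d$ be the round distance on $S:=S^{2n-1}$ and $M=\sup f<\infty$. For $k\ge1$ we set
\[
 f_k(x)=\sup_{y\in S}\bigl(f(y)-k\,d(x,y)\bigr).
\]
Each $f_k$ is finite, since it is bounded above by $M$ and below by $f(y_0)-k\pi$ for any point $y_0$ with $f(y_0)>-\infty$. It is $k$-Lipschitz, satisfies $f_k\ge f$, and the sequence $f_k$ is pointwise decreasing in $k$.

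The first step is to show $f_k\downarrow f$, and this is where upper semicontinuity enters. Fix $x$ and $t>f(x)$. By upper semicontinuity there is $r>0$ with $f<t$ on $B_r(x)$. Then $f_k(x)\le\max\{t,\ M-kr\}$, which is $t$ for large $k$. This holds also when $f(x)=-\infty$, with $t$ arbitrary.

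The second step smooths $f_k$ while keeping monotonicity. Using a partition of unity and convolution in charts, or the heat semigroup on the sphere, each continuous $f_k$ admits a smooth $g_k$ with $\|g_k-f_k\|_\infty<2^{-k}$. We then set
\[
 \varphi_k=g_k+3\cdot2^{-k}.
\]
This gives $f_k+2\cdot2^{-k}\le\varphi_k\le f_k+4\cdot2^{-k}$. Monotonicity follows because
\[
 \varphi_{k+1}\le f_{k+1}+4\cdot2^{-k-1}\le f_k+2\cdot2^{-k}\le\varphi_k .
\]
Finally, $\varphi_k-f_k\to0$ together with $f_k\downarrow f$ gives $\varphi_k\downarrow f$ pointwise, including at points where $f=-\infty$.

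The main obstacle is only the bookkeeping at points where $f=-\infty$. There the sup-convolution must still tend to $-\infty$. This is handled by the estimate in the first step together with the hypothesis that $f$ is bounded above. The hypothesis $f\not\equiv-\infty$ is used only to keep each $f_k$ finite, so that smoothing makes sense.
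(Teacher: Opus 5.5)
Your proposal is correct and follows essentially the same argument as the paper: a distance sup-convolution $f_k(x)=\sup_y\{f(y)-k\,d(x,y)\}$ with upper semicontinuity giving $f_k\downarrow f$, then smoothing to within $2^{-k}$ and shifting by $3\cdot2^{-k}$ to keep the sequence decreasing. The bookkeeping inequalities match the paper's line for line.
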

\begin{proof}

For the distance $d$ of a fixed smooth Riemannian metric, put
\[
g_j(x)=\sup_y\{f(y)-j d(x,y)\}.
\]
Each $g_j$ is finite and $j$-Lipschitz, with
$f\le g_{j+1}\le g_j$.
Given $x$ and a real number $t>f(x)$, upper semicontinuity
gives $r>0$ such that $f(y)\le t$ when $d(x,y)<r$.
Consequently
\[
f(x)\le g_j(x)
\le\max\{t,\sup f-jr\},
\qquad
\limsup_{j\to\infty}g_j(x)\le t.
\]
Taking the infimum over $t>f(x)$ proves $g_j\downarrow f$,
also at points where $f=-\infty$.

By smooth approximation on the sphere, choose $h_j\in C^\infty$
with $\|h_j-g_j\|_\infty\le2^{-j}$, and set
$\varphi_j=h_j+3\cdot2^{-j}$. Then
\[
\begin{gathered}
g_j+2^{1-j}\le\varphi_j\le g_j+2^{2-j},\\
\varphi_{j+1}\le g_{j+1}+2^{1-j}
\le g_j+2^{1-j}\le\varphi_j.
\end{gathered}
\]
Hence $\varphi_j\downarrow f$.\qedhere
\end{proof}

The next statement is a consequence of
Bedford--Taylor convergence (Lemma~\ref{lem:bt-convergence})
and non-pluripolar locality.
\begin{lemma}\label{lem:scalar-limit}
Let $v_j$ be smooth psh functions on a domain $B\subset\mathbb C^n$, decreasing to a psh function $v\not\equiv-\infty$. Suppose $\omega$ is smooth K\"ahler, $a>0$, and
\begin{equation}\label{eq:decreasing-solutions}
a(dd^cv_j)^n=n\omega\wedge(dd^cv_j)^{n-1}.
\end{equation}
Assume the non-pluripolar products of $dd^cv$ are locally finite.  Then \begin{equation}\label{eq:limit-equation}
a\langle(dd^cv)^n\rangle
=n\omega\wedge\langle(dd^cv)^{n-1}\rangle.
\end{equation}
\end{lemma}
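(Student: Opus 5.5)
The plan is to reduce to Bedford--Taylor convergence for bounded potentials by truncation, and then use the plurifine locality \eqref{eq:np-locality} to pass to the non-pluripolar products. Truncation by a constant destroys the equation. Instead, I would truncate by a strictly psh barrier, as in the proof of Lemma~\ref{lem:np-cone}.

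Work on a relatively compact ball $B'\Subset B$. Choose a quadratic potential $b$ with $dd^cb\ge(n/a)\omega$ on $B'$, so that $\tr_{dd^cb}\omega\le a$ there. For $C>0$ put $b_C=b-C$, and consider
\[
w_{j,C}=\max\{v_j,b_C\},\qquad w_C=\max\{v,b_C\}.
\]
Then $w_{j,C}\downarrow w_C$, and $w_C$ is bounded. Since $v_j$ solves the equation, $\tr_{dd^cv_j}\omega=a$. Replacing the max by a regularized max $M_{\tau_j}$ with $\tau_j\downarrow0$ and using convexity keeps $\tr\le a$. It does not keep equality, however, so Bedford--Taylor applied to $w_{j,C}$ alone yields only the inequality
\[
a\langle (dd^cw_C)^n\rangle-n\omega\wedge\langle (dd^cw_C)^{n-1}\rangle\ge0 .
\]
This is also the direction given by Lemma~\ref{lem:np-cone}. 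Restricting to the plurifine open set $\{v>b_C\}$ and letting $C\to\infty$ gives
\[
a\langle(dd^cv)^n\rangle\ge n\omega\wedge\langle(dd^cv)^{n-1}\rangle
\]
on $\{v>-\infty\}$, and hence everywhere, since neither side charges pluripolar sets.

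The reverse inequality is the main obstacle. For it I would exploit that the equation holds exactly for $v_j$ on the plurifine open sets $\{v_j>b_C\}$, where $w_{j,C}=v_j$. By locality, the measure
\[
\mu_{j,C}:=a(dd^cw_{j,C})^n-n\omega\wedge(dd^cw_{j,C})^{n-1}\ge0
\]
vanishes on $\{v_j>b_C\}$. It is therefore supported on $\{v_j\le b_C\}\subset\{v\le b_C\}$, using $v\le v_j$.

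By Bedford--Taylor (Lemma~\ref{lem:bt-convergence}), $\mu_{j,C}$ converges weakly to $\mu_C:=a(dd^cw_C)^n-n\omega\wedge(dd^cw_C)^{n-1}$. The supports lie in the closed sets $\{v_j\le b_C\}$. These do not decrease, but each is contained in $\{v\le b_C\}$. That set is not closed, since $v$ is only usc, so the support argument needs care. I would handle it with the standard Bedford--Taylor fact that for a bounded psh $u$ and a quasi-continuous test function, weak convergence of Monge--Ampère type measures along decreasing sequences holds against bounded quasicontinuous functions. I would test $\mu_{j,C}$ against $g=\min\{(v-b_C)_+,1\}\cdot\theta$ with $\theta\in C_c^\infty$, $\theta\ge0$. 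This gives $\int g\,d\mu_C\le\liminf\int g\,d\mu_{j,C}=0$, since $g=0$ on $\{v\le b_C\}\supset\operatorname{supp}\mu_{j,C}$. Quasicontinuity of $g$ justifies passing to the limit; this follows from the Bedford--Taylor convergence theorem with quasicontinuous bounded coefficients.

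It follows that $\mu_C=0$ on $\{v>b_C\}$, and the products of $w_C$ agree with those of $v$ there by \eqref{eq:np-locality}. Letting $C\to\infty$, the sets $\{v>b_C\}$ increase to $\{v>-\infty\}$. Non-pluripolar products do not charge the complement, so local finiteness gives \eqref{eq:limit-equation} on $B'$, and hence on $B$.

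In fact the equality argument already contains the inequality, so the first step is only a sanity check. The whole proof thus rests on the quasicontinuous test against a bounded decreasing sequence.
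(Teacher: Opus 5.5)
Your argument is correct and is essentially the paper's proof. The paper multiplies the equation by $h_M=\min\{1,(v+M)_+\}=1+\max(v,-M)-\max(v,-M+1)$, which vanishes wherever the truncation $\max(v_j,-M)$ is active. It then passes to the limit by weighted Bedford--Taylor monotone convergence, applies locality on $\{v>-M\}\subset\{v_j>-M\}$, and lets $M\to\infty$.

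Several parts of your version are unnecessary, and one can be simplified.
\begin{itemize}
\item Truncating by a constant works just as well as your barrier $b_C$, since the weight kills the region where truncation is active. Neither truncation preserves the equation anyway.
\item You do not need your first step or the sign $\mu_{j,C}\ge0$. The first step also tacitly assumes $dd^cv_j>0$, which the equation alone does not force.
\item Your quasicontinuous test becomes elementary once you note that $\min\{(v-b_C)_+,1\}=1+\max\{v,b_C\}-\max\{v,b_C+1\}$ is a difference of bounded psh functions. The weighted monotone convergence theorem then gives $\int g\,d\mu_{j,C}\to\int g\,d\mu_C$ directly.
\end{itemize}
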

\begin{proof}

Work on a ball $B'\Subset B$. For $M\in\mathbb N$, put
\[
\begin{gathered}
w_{j,M}=\max(v_j,-M),\qquad w_M=\max(v,-M),\\
h_M=\min\{1,\max\{0,v+M\}\}
   =1+w_M-\max(v,-M+1).
\end{gathered}
\]
For each fixed $M$,
\[
w_{j,M}\downarrow w_M,\qquad
-M\le w_M\le w_{j,M}\le\max\{\sup_{B'}v_1,-M\}.
\]
The functions $1+w_M$ and $\max(v,-M+1)$ are locally
bounded psh. Apply weighted Bedford--Taylor monotone convergence
\cite[III, Theorem 3.7(a), p.~147]{D12} to each of these
fixed factors, with the fixed closed positive current
$\omega^{n-p}$. Subtracting the resulting limits gives
\[
h_M\omega^{n-p}\wedge(dd^cw_{j,M})^p
\rightharpoonup h_M\omega^{n-p}\wedge(dd^cw_M)^p,
\qquad p=n-1,n.
\]

Since
\[
\{h_M>0\}=\{v>-M\}\subset\{v_j>-M\},
\]
ordinary locality and the non-pluripolar truncation identity
\cite[Definition 1.1 and Proposition 1.4]{BEGZ10} give
\[
\begin{aligned}
h_M(dd^cw_{j,M})^p&=h_M(dd^cv_j)^p,\\
h_M(dd^cw_M)^p&=h_M\langle(dd^cv)^p\rangle,
\qquad p=n-1,n.
\end{aligned}
\]
Multiplying \eqref{eq:decreasing-solutions} by $h_M$
and letting $j\to\infty$ therefore yields
\[
a h_M\langle(dd^cv)^n\rangle
=n h_M\omega\wedge\langle(dd^cv)^{n-1}\rangle.
\]
Since $0\le h_M\uparrow\mathbf1_{\{v>-\infty\}}$, and both
non-pluripolar measures are locally finite and vanish on
$\{v=-\infty\}$, monotone convergence as $M\to\infty$
proves \eqref{eq:limit-equation}.\qedhere
\end{proof}

Now, we adapt the local replacement step in the
Perron's method(see
\cite[Sections~5--6]{BT76}) to the trace equation.
Here Guan--Li supplies the smooth solutions, while
Lemmas~\ref{lem:upper-tests} and
\ref{lem:scalar-limit} justify comparison and
passage to singular boundary data.
\begin{lemma}\label{lem:replacement}
Let $U$ be psh and $\omega$ a smooth K\"ahler form on a
neighborhood of the closed coordinate ball $\overline B_R$.
Assume $\tr_{U_{\rm ac}}\omega\leq a$ there.
There are solutions
$v_j\in C^\infty(B_R)\cap C^0(\overline B_R)$ of
\eqref{eq:ball-equation}, decreasing to a psh
function $v$, such that
\begin{equation}\label{eq:replacement}
v\geq U\quad\hbox{on }B_R,\qquad
\limsup_{B_R\ni z\to p}v(z)\leq U(p)
\quad(p\in\partial B_R).
\end{equation}
The limit is not identically $-\infty$, satisfies $\tr_{(\ddc v)_{\rm ac}}\omega\leq a$ on the ball, and the function
\begin{equation}\label{eq:replaced-equation}
W=\begin{cases}v,&z\in B_R,\\ U,&z\notin B_R\end{cases}
\end{equation}
with boundary value $U$ is psh on a neighborhood of $\overline B_R$ and satisfies $\tr_{(\ddc W)_{\rm ac}}\omega\leq a$ there.
\end{lemma}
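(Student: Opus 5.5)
The plan is the classical Perron/Bedford--Taylor replacement with smooth Guan--Li solutions. First, approximate the boundary trace of $U$. Since $U$ is psh near $\overline B_R$, its restriction $f=U|_{\partial B_R}$ is upper semicontinuous and bounded above. It is not identically $-\infty$ on the sphere: otherwise the sub-mean value property on the sphere would force $U\equiv-\infty$. (If necessary, shrink or perturb $R$ slightly; the pole set is pluripolar and so cannot contain a sphere.)

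Lemma~\ref{lem:boundary-data} gives smooth $\varphi_j\downarrow f$. By Theorem~\ref{thm:dirichlet}, with the explicit subsolution $\widetilde\varphi_j+A_j(|z|^2-R^2)$, we obtain solutions $v_j\in C^\infty(B_R)\cap C^0(\overline B_R)$ of \eqref{eq:ball-equation} with $v_j=\varphi_j$ on $\partial B_R$.

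Next come monotonicity and the lower bound, both via the comparison in Lemma~\ref{lem:upper-tests}. Since $v_{j+1}$ is a smooth psh function with $\tr_{dd^cv_{j+1}}\omega=a$, it satisfies \eqref{eq:weak-trace}. Because $v_{j+1}\le v_j$ on the boundary, comparison gives $v_{j+1}\le v_j$, so the sequence decreases to a psh limit $v$ (or to $-\infty$). For the lower bound, apply Lemma~\ref{lem:upper-tests} to $U$ itself, using $\tr_{U_{\rm ac}}\omega\le a$ and $U\le\varphi_j$ on $\partial B_R$. This gives $U\le v_j$ for every $j$, hence $v\ge U\not\equiv-\infty$.

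For the boundary $\limsup$, I would fix $p\in\partial B_R$ and $j$. Continuity of $v_j$ up to the boundary gives $\limsup_{z\to p}v(z)\le v_j(p)=\varphi_j(p)$, and letting $j\to\infty$ yields $\le U(p)$.

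For the trace bound of $v$: each $v_j$ satisfies $\tr_{dd^cv_j}B\le a$ for constant $B\le\omega$ on small balls. By convexity, convolutions preserve this, and since $v_j*\varrho_r\to v*\varrho_r$ smoothly for fixed $r$ (by monotone $L^1_{\rm loc}$ convergence), the weak-closedness remark after Definition~\ref{def:trace-family} gives $\tr_{(dd^cv)_{\rm ac}}\omega\le a$ in $B_R$.

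The main obstacle is the gluing. That $W$ is psh follows from the standard gluing lemma: $W=\max(v,U)$ inside, $W=U$ outside, together with the $\limsup$ condition \eqref{eq:replacement}. However, a trace bound for $W_{\rm ac}$ across $\partial B_R$ is delicate, because $\partial B_R$ has measure zero while the bound must hold in the convolution sense. I would handle it as follows. Away from the sphere the bound is local and already known. The sphere is Lebesgue-null, so the a.e. statement for $W_{\rm ac}$ only needs that $(dd^cW)_{\rm ac}$ coincides a.e. with $(dd^cv)_{\rm ac}$ in $B_R$ and with $(dd^cU)_{\rm ac}$ outside $\overline B_R$. This holds because the absolutely continuous part of a measure restricted to an open set is the restriction of the absolutely continuous part. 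Any mass that $dd^cW$ carries on $\partial B_R$ is singular and positive, hence harmless, since the convex cone is stable under adding positive singular parts (cf.\ \eqref{eq:convolution}).

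If the paper's convention requires the convolution formulation, I would instead approximate $W$ by $M_{\tau}(v_j+\epsilon_j(|z|^2-R^2),\,U_\delta)$ near the boundary. This is a regularized maximum of two functions each satisfying the frozen trace bound, and convexity (as in the proof of Lemma~\ref{lem:np-cone}) preserves the bound. One then passes to the decreasing limit as in Lemma~\ref{lem:attainment}.
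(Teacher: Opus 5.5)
Your proposal is correct and follows essentially the same route as the paper. The shared steps are: smooth boundary data from Lemma~\ref{lem:boundary-data} fed into Guan--Li solutions; the comparison of Lemma~\ref{lem:upper-tests} for both $v_{j+1}\le v_j$ and $U\le v_j$; weak closedness for the trace bound on $v$; the submean-value gluing for plurisubharmonicity of $W$; and the observation that $\partial B_R$ is Lebesgue-null, so the a.e.\ bound for $(dd^cW)_{\rm ac}$ follows from locality. Your closing regularized-maximum fallback is unnecessary, since the a.e.\ condition is already equivalent to the convolution formulation by the remark after Definition~\ref{def:trace-family}. As written it would also not glue: $v_j\ge U$ near the sphere, so the inner entry is never dropped there.
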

\begin{proof}

The maximum principle and $U\not\equiv-\infty$ imply
$U|_{\partial B_R}\not\equiv-\infty$.
Choose smooth $\varphi_j\downarrow U|_{\partial B_R}$ by
Lemma~\ref{lem:boundary-data}, and solve
\eqref{eq:ball-equation} with
$v_j|_{\partial B_R}=\varphi_j$.
Lemma~\ref{lem:upper-tests} gives
\[
U\le v_{j+1}\le v_j.
\]
Hence, by \cite[Chapter I, theorem 5.4]{D12},
\[
v_j\downarrow v\in\operatorname{PSH}(B_R),\qquad
U\le v,\qquad v_j\longrightarrow v\text{ in }L^1_{\rm loc}(B_R).
\]
The weak closedness proved in
Lemma~\ref{lem:attainment} yields
\[
dd^cv\ge\omega/a,\qquad
\tr_{(dd^cv)_{\rm ac}}\omega\le a
\quad\text{almost everywhere on }B_R.
\]
For $p\in\partial B_R$ and every fixed $j$, continuity of
$v_j$ up to the boundary gives
\[
\limsup_{B_R\ni z\to p}v(z)
\le\lim_{B_R\ni z\to p}v_j(z)=\varphi_j(p).
\]
Taking the infimum over $j$ proves
\eqref{eq:replacement}.

Define $W$ by \eqref{eq:replaced-equation}.
The boundary inequality makes $W$ upper semicontinuous.
Moreover, $U\le W$ and $W$ is locally bounded above, so
$W\in L^1_{\rm loc}$.
It is psh away from $\partial B_R$.
At $p\in\partial B_R$, for every unit vector
$\xi\in\mathbb C^n$ and sufficiently small $r>0$,
\[
\begin{aligned}
W(p)=U(p)
&\le\frac1{2\pi}\int_0^{2\pi}U(p+re^{it}\xi)\,dt\\
&\le\frac1{2\pi}\int_0^{2\pi}W(p+re^{it}\xi)\,dt.
\end{aligned}
\]
This inequality is automatic when $U(p)=-\infty$.
The submean characterization on complex lines therefore
proves $dd^cW\ge0$.

Off the sphere, locality gives
\[
(dd^cW)_{\rm ac}=
\begin{cases}
(dd^cv)_{\rm ac}&\text{on }B_R,\\
(dd^cU)_{\rm ac}&\text{outside }\overline B_R.
\end{cases}
\]
Since $\partial B_R$ has zero smooth volume, these identities
and the trace bounds for $U,v$ imply
\[
\tr_{(dd^cW)_{\rm ac}}\omega\le a
\quad\text{almost everywhere}.
\]
Together with $dd^cW\ge0$, this proves the lemma.
\end{proof}
The preceding replacement preserves $\mathcal C_a$.
If this family is a singleton, the current must coincide
with its replacement on every coordinate ball. This is
the local replacement principle underlying the
Perron method \cite{BT76}, applied here using the rigidity
already established.

\begin{theorem}\label{thm:scalar}
Let $X$ be a connected compact K\"ahler manifold of dimension
$n\ge2$, let $\alpha$ be a K\"ahler class, and let $\omega$
be a K\"ahler form. For $a>0$, suppose that the family $\mathcal C_a$ satisfies
\[
\mathcal C_a=\{T\}.
\]
Then $T\ge\omega/a$ and
\begin{equation}\label{eq:scalar-solution}
a\langle T^n\rangle
=n\omega\wedge\langle T^{n-1}\rangle.
\end{equation}
\end{theorem}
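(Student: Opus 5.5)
The plan is to combine the local replacement of Lemma~\ref{lem:replacement} with the singleton hypothesis $\mathcal C_a=\{T\}$. This shows that on every coordinate ball the local potential of $T$ equals the decreasing limit of smooth solutions of \eqref{eq:ball-equation}. Lemma~\ref{lem:scalar-limit} then gives the equation locally. The bound $T\ge\omega/a$ is immediate from Lemma~\ref{lem:attainment}, since $T\in\mathcal C_a$.

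Fix a coordinate ball $\overline B_R$ inside a chart where $\kappa=dd^cr$, and write $T=dd^cU$ with $U=r+u$ psh. Since $T\in\mathcal C_a$, we have $\tr_{U_{\rm ac}}\omega\le a$. Lemma~\ref{lem:replacement} then produces smooth solutions $v_j\downarrow v$ and a glued function $W$, equal to $v$ on $B_R$ and to $U$ outside, defined on a neighborhood $\Omega$ of $\overline B_R$. The function $W$ is psh with $\tr_{(dd^cW)_{\rm ac}}\omega\le a$, and $W\ge U$.

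Next, globalize. Set $u'=W-r$ on $\Omega$ and $u'=u$ elsewhere. Because $W=U$ off $\overline B_R$, this is a well-defined quasi-psh function with $\kappa+dd^cu'\ge0$, and hence $T':=\kappa+dd^cu'$ is a closed positive current in $\alpha$. Its absolutely continuous part satisfies the trace bound almost everywhere, since $\partial B_R$ is Lebesgue-null and the bound holds on each side. Also $T'_{\rm ac}>0$, because $dd^cv\ge\omega/a$. Therefore $T'\in\mathcal C_a$, and the hypothesis forces $T'=T$. Consequently $u'-u$ is pluriharmonic on $X$, hence constant by compactness. Since $u'=u$ outside $\overline B_R$ (and $X\setminus\overline B_R$ is nonempty), the constant is zero, so $v=U$ almost everywhere on $B_R$. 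Both functions are psh, so $v=U$ everywhere on $B_R$.

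Finally, apply Lemma~\ref{lem:scalar-limit} to $v_j\downarrow v=U$ on $B_R$. The non-pluripolar products of $T$ are locally finite by the global theory recalled from \cite{BEGZ10}. This gives $a\langle (dd^cU)^n\rangle=n\omega\wedge\langle(dd^cU)^{n-1}\rangle$ on $B_R$. Non-pluripolar products are local and independent of the choice of potential, so covering $X$ by such balls yields \eqref{eq:scalar-solution}.

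The main obstacle I expect is the gluing step, namely checking that the globally defined $T'$ genuinely lies in $\mathcal C_a$. This needs two things. First, $W$ must be psh across $\partial B_R$, which Lemma~\ref{lem:replacement} supplies through the boundary $\limsup$ inequality. Second, the trace condition must pass to the absolutely continuous part even though $dd^cW$ may carry singular mass on the sphere. The singular mass is positive and does not enter $T'_{\rm ac}$, so it should be harmless. The remaining steps are direct applications of the earlier lemmas.
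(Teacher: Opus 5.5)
Your proposal is correct and follows the paper's proof essentially step for step. You use Dirichlet replacement on a ball via Lemma~\ref{lem:replacement}, glue the result into a current in $\mathcal C_a$, invoke $\mathcal C_a=\{T\}$ to see that the glued potential differs from $u$ by a $dd^c$-closed function (hence constant, hence zero) so that $v=U$ on $B_R$, and finish with Lemma~\ref{lem:scalar-limit} using local finiteness from \cite{BEGZ10}, exactly as the paper does (your explicit remarks on upgrading $v=U$ a.e.\ to everywhere and on the singular mass along $\partial B_R$ are points the paper handles implicitly).
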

\begin{proof}
Lemma~\ref{lem:attainment} gives
$T\ge\omega/a$. Choose a K\"ahler form $\kappa\in\alpha$
and write $T=\kappa+dd^cu$.
On a coordinate neighborhood of a closed ball
$\overline B_R\subsetneq X$, choose $r$ with $dd^cr=\kappa$
and put $U=u+r$. Then $dd^cU=T$.

Lemma~\ref{lem:replacement} gives smooth
solutions $v_j\downarrow v\ge U$ and permits the definition
\[
\widetilde u=
\begin{cases}
v-r&\text{on }B_R,\\
u&\text{on }X\setminus B_R.
\end{cases}
\]
Its positivity and trace conclusions give
\[
\kappa+dd^c\widetilde u
\in\mathcal C_a=\{T\},
\qquad
dd^c(\widetilde u-u)=0.
\]
Thus $\widetilde u-u$ is distributionally $\kappa$-harmonic.
Elliptic regularity and compactness make it constant.
Since it vanishes on $X\setminus\overline B_R$, it is zero;
therefore
\[
v_j\downarrow v=U\quad\text{on }B_R
\]
as plurisubharmonic representatives.

The non-pluripolar products of $T$ are locally finite by
\cite[Proposition 1.6]{BEGZ10}. Applying
Lemma~\ref{lem:scalar-limit} yields
\[
a\langle(dd^cU)^n\rangle
=n\omega\wedge\langle(dd^cU)^{n-1}\rangle
\quad\text{on }B_R.
\]
Such balls cover $X$, proving
\eqref{eq:scalar-solution}.\qedhere
\end{proof}

With the discussion above, we can also give another proof of Datar-Mete-Song's minimal slope conjecture, which was originally proved by Fu \cite{F26}.
Recall that the pair $(\alpha,\beta)$ is \emph{$J$-semistable} if
\[
 (c\alpha^p-p\beta\alpha^{p-1})[V]\ge0
 \quad\text{for every irreducible }V\subsetneq X,\quad 1\le p=\dim V<n.
\]
\begin{corollary}\label{cor:semistable}
Let $X$ be a connected compact K\"ahler manifold of dimension
$n\ge2$, and let $\alpha,\beta$ be K\"ahler classes.
If $(\alpha,\beta)$ is numerically $J$-semistable, then
\[
\zeta
=c=n\frac{\alpha^{n-1}\beta}{\alpha^n}.
\]
\end{corollary}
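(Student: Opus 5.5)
We already know $\zeta\le c$ from Lemma~\ref{lem:positive-slope}, and Corollary~\ref{cor:threshold-models} together with Corollary~\ref{cor:critical-family} gives $\zeta=a_*$. So it suffices to prove $a_*\ge c$. I would do this with the Lamari-type characterization, Theorem~\ref{thm:duality}, by exhibiting balanced (hence Gauduchon) test metrics $\chi_\varepsilon$ on $X$ whose duality ratio tends to $c$. This is the same balanced-metric construction as in Lemma~\ref{lem:balanced-metrics}, now carried out on $X$ itself, with no modification needed.

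\emph{Perturbation to strict stability.} Fix a K\"ahler form $\kappa\in\alpha$. For $\varepsilon>0$ put $\omega_\varepsilon=\omega+\varepsilon\kappa\in\beta+\varepsilon\alpha$, so the slope is $c_\varepsilon=c+n\varepsilon$. For every irreducible $V\subsetneq X$ with $p=\dim V\ge 1$,
\[
(c_\varepsilon\alpha^p-p(\beta+\varepsilon\alpha)\alpha^{p-1})[V]
=(c\alpha^p-p\beta\alpha^{p-1})[V]+(n-p)\varepsilon\,\alpha^p[V]>0 .
\]
Song's numerical criterion \cite{S20} then gives a K\"ahler form $\eta_\varepsilon\in\alpha$ with $\tr_{\eta_\varepsilon}\omega_\varepsilon=c_\varepsilon$. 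The form $q_\varepsilon=c_\varepsilon\eta_\varepsilon^{n-1}-(n-1)\omega_\varepsilon\wedge\eta_\varepsilon^{n-2}$ is closed and strictly positive. By Michelsohn's construction we write $q_\varepsilon=\chi_\varepsilon^{n-1}$, with $\chi_\varepsilon$ balanced; for $n=2$ we simply take $\chi_\varepsilon=q_\varepsilon$.

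\emph{Computing the ratio.} For the denominator, $nβα^{n-1}=cα^n$ gives
\[
n\alpha\cdot[q_\varepsilon]=nc_\varepsilon\alpha^n-(n-1)(c+n\varepsilon)\alpha^n=c_\varepsilon\alpha^n .
\]
For the numerator, repeat the pointwise argument of Lemma~\ref{lem:balanced-metrics} with $A=\eta_\varepsilon^{-1}\omega_\varepsilon\ge B=\eta_\varepsilon^{-1}\omega$. This yields
\[
\bigl(\tr\sqrt{\chi_\varepsilon^{-1}\omega}\bigr)^2\chi_\varepsilon^n\ \ge\ (\tr_{\eta_\varepsilon}\omega)^2\eta_\varepsilon^n=(c_\varepsilon-\varepsilon\tr_{\eta_\varepsilon}\kappa)^2\eta_\varepsilon^n .
\]
Integrating and applying Cauchy--Schwarz (Jensen) with respect to $\eta_\varepsilon^n$, whose total mass is $\alpha^n$, the right-hand side integrates to at least
\[
\frac{\bigl(c_\varepsilon\alpha^n-n\varepsilon\alpha^n\bigr)^2}{\alpha^n}=c^2\alpha^n ,
\]
using $\int\tr_{\eta_\varepsilon}\kappa\,\eta_\varepsilon^n=n\alpha^n$. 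Hence the ratio in \eqref{eq:duality} for $\chi_\varepsilon$ is at least $c^2/c_\varepsilon$. Theorem~\ref{thm:duality} gives $a_*\ge c^2/(c+n\varepsilon)$, and letting $\varepsilon\downarrow0$ gives $a_*\ge c$. Therefore $\zeta=a_*=c$.

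\emph{Main obstacle.} The only delicate step is the pointwise inequality $\bigl(\tr\sqrt{\chi^{-1}\omega}\bigr)^2\chi^n\ge(\tr_\eta\omega)^2\eta^n$. I will take it verbatim from the proof of Lemma~\ref{lem:balanced-metrics}: the metric identity \eqref{eq:metric-identity}, then $B^{1/2}AB^{1/2}\ge B^2$ and monotonicity of the square root. That proof uses only $\tr_\eta\omega_\varepsilon=c_\varepsilon$ and $\omega_\varepsilon\ge\omega$, so it applies directly with $Y=X$.
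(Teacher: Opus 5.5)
Your proof is correct, and it takes a genuinely different route from the paper's. The paper argues pointwise with a maximum principle. It takes $T\in\mathcal C_\zeta$ from Corollary~\ref{cor:critical-family} and writes $T=h_\epsilon+dd^cu_\epsilon$, where $h_\epsilon$ is Song's solution of $\tr_{h_\epsilon}(\omega+\epsilon\kappa)=c+n\epsilon$. At a maximum point $x_\epsilon$ of $u_\epsilon$, a local potential of $h_\epsilon$ is an upper test for the potential of $T$. Lemma~\ref{lem:upper-tests} then gives $\tr_{h_\epsilon}\omega(x_\epsilon)\le\zeta$, hence $c+n\epsilon\le(1+C\epsilon)\zeta$.

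You argue dually and globally instead. You turn Song's solution, via Michelsohn's root, into a balanced test metric $\chi_\epsilon$ for \eqref{eq:duality}. The matrix estimate from Lemma~\ref{lem:balanced-metrics}, followed by Jensen, shows that its ratio is at least $c^2/(c+n\epsilon)$. The class computation $n\alpha\cdot[q_\epsilon]=c_\epsilon\alpha^n$, the pointwise bound $\bigl(\tr\sqrt{\chi_\epsilon^{-1}\omega}\bigr)^2\chi_\epsilon^n\ge(\tr_{\eta_\epsilon}\omega)^2\eta_\epsilon^n$, and the case $n=2$ with $\chi_\epsilon=q_\epsilon$ all check out.

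What each route buys:
\begin{itemize}
\item The paper's argument is shorter and needs neither Michelsohn's construction nor monotonicity of the matrix square root. However, it needs an actual element of $\mathcal C_\zeta$. That uses the Hahn--Banach half of Theorem~\ref{thm:duality} and attainment (Lemma~\ref{lem:attainment}), together with the viscosity lemma.
\item Your argument needs only the elementary half of the duality. You also do not need Corollary~\ref{cor:threshold-models}, since only $a_*\le\zeta$ enters your conclusion.
\item You can in fact bypass $a_*$ entirely. Proposition~\ref{prop:slope-comparison}, with Lemma~\ref{lem:kahler-domination}, bounds the ratio of your $\chi_\epsilon$ by $c_L$ for every admissible test. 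This gives $c_L\ge c^2/(c+n\epsilon)$ directly, so $\zeta\ge c$ follows without $\mathcal C_a$, Bergman regularization, or Hahn--Banach.
\item Your argument also shows that, in the semistable case, the supremum in \eqref{eq:duality} is asymptotically attained by balanced metrics on $X$ itself. This is the $Y=X$ instance of the mechanism behind Theorem~\ref{thm:rigidity}.
\end{itemize}
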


\begin{proof}
The identity test gives $\zeta\le c$. Fix K\"ahler forms
$\kappa\in\alpha$, $\omega\in\beta$, and choose $C>0$
with $\kappa\le C\omega$. For $\epsilon>0$, set
$\omega_\epsilon=\omega+\epsilon\kappa$. For every proper
positive-dimensional reduced irreducible analytic subvariety
$V\subset X$, with $p=\dim V$, semistability gives
\[
\begin{aligned}
&\bigl((c+n\epsilon)\alpha^p
-p(\beta+\epsilon\alpha)\alpha^{p-1}\bigr)[V]\\
&\qquad=(c\alpha^p-p\beta\alpha^{p-1})[V]
+(n-p)\epsilon\alpha^p[V]>0.
\end{aligned}
\]
Since
\[
n\frac{\alpha^{n-1}(\beta+\epsilon\alpha)}{\alpha^n}
=c+n\epsilon,
\]
the smooth numerical criterion \cite[Corollary 1.2]{S20}
provides a K\"ahler form $h_\epsilon\in\alpha$ satisfying
\[
\tr_{h_\epsilon}\omega_\epsilon=c+n\epsilon.
\]

By Corollary~\ref{cor:critical-family},
there is a K\"ahler current $T\in\alpha$ with
\[
\tr_{T_{\rm ac}}\omega\le\zeta
\quad\text{almost everywhere}.
\]
Write $T=h_\epsilon+dd^cu_\epsilon$, where $u_\epsilon$
is the upper-semicontinuous quasi-psh representative normalized
by $\sup_Xu_\epsilon=0$, and choose a maximum point
$x_\epsilon$. Locally near $x_\epsilon$, write
$h_\epsilon=dd^cr_\epsilon$, with $r_\epsilon$ smooth.
Then $u_\epsilon+r_\epsilon$ is a psh potential of $T$, and
\[
u_\epsilon+r_\epsilon\le r_\epsilon,\qquad
(u_\epsilon+r_\epsilon)(x_\epsilon)=r_\epsilon(x_\epsilon).
\]
Thus $r_\epsilon$ is a smooth upper test, and
Lemma~\ref{lem:upper-tests} gives
$\tr_{h_\epsilon}\omega(x_\epsilon)\le\zeta$.
Since $\omega_\epsilon\le(1+C\epsilon)\omega$,
\[
\begin{aligned}
c+n\epsilon
&=\tr_{h_\epsilon}\omega_\epsilon(x_\epsilon)\\
&\le(1+C\epsilon)
\tr_{h_\epsilon}\omega(x_\epsilon)
\le(1+C\epsilon)\zeta.
\end{aligned}
\]
Letting $\epsilon\downarrow0$ gives $c\le\zeta$, hence
$\zeta=c$.
\end{proof}

\section{An analytic strict subsolution}\label{sec:concentration}

Throughout this section, $n\ge2$, $\kappa\in\alpha$ and
$\omega\in\beta$ are the fixed K\"ahler forms, and $a>0$.
We prove that $\mathcal C_a\ne\varnothing$ implies the existence of a
K\"ahler current in $\alpha$, smooth outside an analytic set, for which
the partial trace is strictly less than $a$. The construction follows
G.~Chen's mass-concentration method \cite[Section~3]{C21},
using the diagonal concentration of Demailly--P\u{a}un \cite{DP04}.
The additional work here is to make the estimates uniform over the modifications.
Bergman regularization then gives the barrier used in
Section~\ref{sec:flow}. 
The convolution characterization
\eqref{eq:convolution} will be applied without mention: for a positive closed current
$R$ and a smooth positive form $\omega$,
\[
 P_\omega(R_{\rm ac})\le a\quad 
 a.e. \quad\Longleftrightarrow\quad
 P_{\omega_0}(R*\varrho_r)\le a
\]
on every coordinate ball, for every constant positive form $\omega_0\le \omega$
and every convolution whose support stays in that ball. 

We start with the regularization. The required forms on the modifications follow from
Theorem~\ref{thm:bergman} and
Lemma~\ref{lem:residual-models}.
Their intersection bounds keep the later concentration
constants uniform.
\begin{lemma}\label{lem:kahler-models}
Suppose $\mathcal C_a\ne\varnothing$. There are modifications
$\pi_j:Y_j\to X$, effective real divisors $D_j$ on the  compact
K\"ahler manifolds $Y_j$, and K\"ahler forms
\begin{equation}\label{eq:kahler-models}
 h_j\in L_j:=\pi_j^*\alpha-[D_j],\qquad
 \Omega_j:=\pi_j^*\omega,\qquad
 \tr_{h_j}\Omega_j\le A_j,\qquad A_j\longrightarrow a.
\end{equation}
For these models,
\begin{equation}\label{eq:mixed-bounds}
 0\le L_j^r(\pi_j^*\beta)^{n-r}
       \le\alpha^r\beta^{n-r}\quad(0\le r\le n),
 \qquad
 L_j^n\ge(n/A_j)^n\int_X\omega^n.
\end{equation}
\end{lemma}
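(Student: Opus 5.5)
The plan is to combine Theorem~\ref{thm:bergman} with Lemma~\ref{lem:residual-models}, exactly as in the proof of Corollary~\ref{cor:threshold-models}, and then read off the intersection bounds from effectivity and nefness.

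\emph{Construction of the models.} Fix $T\in\mathcal C_a$ and sequences $\varepsilon_j\downarrow0$, $t_j\downarrow0$. Theorem~\ref{thm:bergman} gives a principalization $\pi_j:Y_j\to X$, which is a finite composition of blowups with smooth centers and hence has $Y_j$ compact K\"ahler. It also gives $T_{\varepsilon_j}\in\mathcal C_{a+\varepsilon_j}$ with $\pi_j^*T_{\varepsilon_j}=\sigma_j+[D_j']$, where $\sigma_j$ is smooth and $D_j'\ge0$. Apply Lemma~\ref{lem:residual-models} with $R=T_{\varepsilon_j}$ and $t=t_j$. This yields effective $D_j$ and a K\"ahler form $h_j\in L_j=\pi_j^*\alpha-[D_j]$ with
\[
\tr_{h_j}\Omega_j\le A_j:=\frac{a+\varepsilon_j}{1-t_j}\longrightarrow a .
\]

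\emph{Intersection bounds.} Nonnegativity is clear, since $L_j$ and $\pi_j^*\beta$ are K\"ahler. For the upper bound, write $\pi_j^*\alpha=L_j+D_j$ and telescope. For $0\le r\le n$,
\[
(\pi_j^*\alpha)^r(\pi_j^*\beta)^{n-r}-L_j^r(\pi_j^*\beta)^{n-r}
=\sum_{k=0}^{r-1}D_j\cdot(\pi_j^*\alpha)^{r-1-k}L_j^{k}(\pi_j^*\beta)^{n-r}\ge0.
\]
Each term is the integral over an effective divisor of a product of nef classes, so it is nonnegative; this uses the same argument as in the lemma following Lemma~\ref{lem:volume-bound}. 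The first term equals $\alpha^r\beta^{n-r}$ because $\pi_j$ is bimeromorphic.

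For the volume bound, integrate the trace inequality to get
\[
c_{L_j}=n\frac{\pi_j^*\beta L_j^{n-1}}{L_j^n}\le A_j .
\]
Lemma~\ref{lem:volume-bound} with $M=A_j$ then gives $L_j^n\ge(n/A_j)^n\beta^n=(n/A_j)^n\int_X\omega^n$.

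\emph{Main obstacle.} The only delicate point is that the models are genuinely K\"ahler and the inequalities are not merely numerical. This is supplied by the K\"ahler perturbation $h_t=(1-t)\sigma+t\kappa$ in Lemma~\ref{lem:residual-models}, together with the fact that blowups of compact K\"ahler manifolds along smooth centers remain K\"ahler. Once those are in place, everything else is routine.
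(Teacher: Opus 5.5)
Your proposal is correct and follows the paper's proof: the same Bergman-plus-residual-model construction with $A_j=(a+\varepsilon_j)/(1-t_j)$, and the same telescoping identity for the mixed bounds. One small slip: $\pi_j^*\beta$ is only nef (semipositive), not K\"ahler, but that still suffices for nonnegativity. The only real variation is the volume bound: the paper integrates the pointwise AM--GM consequence $h_j^n\ge(n/A_j)^n\Omega_j^n$ of $\tr_{h_j}\Omega_j\le A_j$, whereas you pass through $c_{L_j}\le A_j$ and the Khovanskii--Teissier inequality of Lemma~\ref{lem:volume-bound}, and both routes give the same constant.
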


\begin{proof}
Fix $T\in\mathcal C_a$. Apply
Theorem~\ref{thm:bergman}, followed by
Lemma~\ref{lem:residual-models}, with
$\varepsilon_j,t_j\downarrow0$. The resulting forms satisfy
\eqref{eq:kahler-models} with
\[
 A_j=\frac{a+\varepsilon_j}{1-t_j}.
\]

For $1\le r\le n$, the identity
\[
 (\pi_j^*\alpha)^r-L_j^r
 =[D_j]\sum_{\ell=0}^{r-1}
          (\pi_j^*\alpha)^{r-1-\ell}L_j^\ell
\]
has nonnegative intersection with $(\pi_j^*\beta)^{n-r}$: each
factor other than $[D_j]$ is nef, and $D_j$ is effective.
This proves the first assertion in
\eqref{eq:mixed-bounds}, including $r=0$.
Off the exceptional locus, the arithmetic--geometric mean inequality
and $\tr_{h_j}\Omega_j\le A_j$ give
$h_j^n\ge(n/A_j)^n\Omega_j^n$. Integration proves the second.
\end{proof}

Now, we state the geometric concentration step of
Demailly--P\u{a}un \cite[Lemma~2.1(iii)--(iv)]{DP04} for the readers' convenience.
The statement below specializes their construction to a smooth
submanifold and records uniformity for compact subsets of it.
\begin{lemma}[Demailly--P\u{a}un mass concentration]
\label{lem:dp-concentration}
Let $(M,\Theta)$ be a compact K\"ahler manifold of dimension $N$,
and let $Z\subset M$ be a nonempty closed complex submanifold of
codimension $p\ge1$.
Fix a smooth tubular identification with the normal bundle of $Z$,
using the metric induced by $\Theta$.
There are constants $\tau,c_*,s_0,b>0$ and functions
$\psi_s\in C^\infty(M,\mathbb R)$, $0<s<s_0$, such that
\begin{equation}\label{eq:dp-forms}
 \Theta_s:=\Theta+\tau dd^c\psi_s\in[\Theta],
 \qquad \Theta_s\ge\tfrac12\Theta.
\end{equation}
For a compact set $K\subset Z$, let $\mathcal T_s(K)$ be the closed
normal tubular neighborhood of radius $c_*s$ over $K$. Then
\begin{equation}\label{eq:dp-mass}
 \int_{\mathcal T_s(K)}\Theta_s^p\wedge\Theta^{N-p}
       \ge b\int_K\Theta^{N-p}.
\end{equation}
The constants are independent of $K$ and $s$.
\end{lemma}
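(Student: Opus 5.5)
We will follow the construction of Demailly--P\u{a}un \cite[Lemma~2.1]{DP04}, specialized to a smooth center; the uniformity claims come from compactness of $Z$ and of $M$.

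First we set up coordinates near $Z$. Cover $Z$ by finitely many coordinate charts $W_k$ of $M$ in which $Z=\{w_1=\dots=w_p=0\}$, adapted to the tubular identification, so that $\sum_{\ell\le p}|w_\ell|^2$ is comparable to the squared normal distance $d_Z^2$, uniformly in $k$. Take a partition of unity $\theta_k$ on a neighborhood of $Z$. Then set
\[
 \psi_s=\chi_s\Bigl(\log\bigl(d_Z^2+s^2\bigr)\Bigr),
\]
where $d_Z^2$ is the smooth squared normal distance (equivalently $\sum_k\theta_k\sum_{\ell\le p}|w^{(k)}_\ell|^2$). Here $\chi_s$ is a convex nondecreasing cutoff that equals $t$ for $t\le\log(4c_*^2s^2)$, say, and becomes constant for $t\ge\log(r_0^2)$, with $r_0$ fixed inside the tube. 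In the standard DP version, $\chi_s$ smoothly caps the log at a level $\log r_0^2$ independent of $s$.

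The key step is the Hessian lower bound. The function $\log(d_Z^2+s^2)$ satisfies $dd^c\log(d_Z^2+s^2)\ge -C\Theta$, with $C$ independent of $s$. The negative contribution comes only from the non-holomorphic error in $d_Z^2$ versus $\sum|w_\ell|^2$ and from the cutoff region, where everything is bounded independently of $s$. Composing with the convex nondecreasing $\chi_s$, whose $\chi_s'\le1$ and whose second derivatives live where $d_Z\sim r_0$, preserves a bound $dd^c\psi_s\ge -C\Theta$ uniformly in $s$. We then choose $\tau=1/(2C)$, which gives \eqref{eq:dp-forms}. Since $\psi_s$ is smooth, $\Theta_s\in[\Theta]$ is immediate.

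Next comes the mass estimate. In a chart, on the region $d_Z\le c_*s$, the dominant term is
\[
 \tau\,dd^c\log\Bigl(\sum_{\ell\le p}|w_\ell|^2+s^2\Bigr).
\]
Its $p$-th power wedged with the tangential part $\Theta^{N-p}$ restricts on each normal slice to $\tau^p$ times the pullback of the Fubini--Study-type form $\bigl(dd^c\log(|w'|^2+s^2)\bigr)^p$ on $\mathbb C^p$. That form is scale invariant, $w'\mapsto w'/s$. Its mass on the ball $\{|w'|\le c_*s\}$ is therefore a fixed positive number $m(c_*)>0$, independent of $s$. Expanding $\Theta_s^p\wedge\Theta^{N-p}$ as
\[
 \bigl(\Theta+\tau\,dd^c\psi_s\bigr)^p\wedge\Theta^{N-p}
\]
and using $\Theta_s\ge\frac12\Theta$, all terms are positive, so it suffices to keep the pure $(\tau dd^c\psi_s)^p$ term. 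That term is comparable in normal directions to the model term, up to errors bounded by $C$ times lower-order powers in $s$, which contribute $O(s^2)\to0$ on a tube of volume $O(s^{2p})$. Fubini over $K$ then yields $b\int_K\Theta^{N-p}$ with $b\approx\tau^p m(c_*)/C'$ after shrinking $s_0$. This fixes $b$ and $s_0$ independently of $K$.

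The main obstacle is the precise comparison between the model in holomorphic normal coordinates and the global function built from the tubular metric: the mixed normal/tangential terms in $dd^c\log(d_Z^2+s^2)$ must be shown to be $O(1)$ (not $O(1/s)$) near $Z$. We expect this to work because $d_Z^2-\sum|w_\ell|^2=O(|w'|^3)$ with coordinates chosen so that $Z$ is totally geodesic to first order. Then $\partial\bar\partial$ of the ratio error divided by $d_Z^2+s^2$ stays bounded, exactly as in \cite[Lemma~2.1]{DP04}.
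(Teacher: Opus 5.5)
There is a genuine gap, and it sits exactly at the step you flag as the main obstacle: the uniform bound $dd^c\log(d_Z^2+s^2)\ge -C\Theta$. The size estimate $d_Z^2-\sum_\ell|w_\ell|^2=O(|w'|^3)$ does not imply this bound.

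In the radial normal direction the model $\log(|w'|^2+s^2)$ has eigenvalue only $s^2/(|w'|^2+s^2)^2$. A cubic error contributes terms of order $|w'|^3/(|w'|^2+s^2)^2\approx|w'|^{-1}$ when $s\ll|w'|$, and nothing absorbs them. For instance, with $p=1$ and $F=|w|^2+\mathrm{Re}(w^3)$,
\[
 \partial_w\partial_{\bar w}\log(F+s^2)
 =\frac{s^2-2\,\mathrm{Re}(w^3)-\tfrac94|w|^4}{(F+s^2)^2}.
\]
At $w=r>0$ with $s\le r^2$ this is about $-2/r$, so no bound uniform in $s$ can come from the size of the error alone.

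Conversely, size is not what matters. The function $\langle H(x)w',w'\rangle$, with $H$ smooth and positive definite, also differs from its frozen quadratic part by $O(|w'|^3)$, yet its logarithm is quasi-psh. You give no reason why the Riemannian squared normal distance has that kind of structure, and ``$Z$ totally geodesic to first order'' does not supply it. Nor is this what \cite{DP04} does: they never use a metric distance.

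The missing idea is to build $\psi_s$ from holomorphic local defining functions, as in \cite[Lemma~2.1]{DP04}; the paper invokes only that construction. Take $F=\sum_k\theta_k\sum_\ell|g_{k\ell}|^2$, where $(g_{k1},\dots,g_{kp})$ are local generators of the ideal of $Z$.
\begin{itemize}
\item On a chart with generators $g=(g_1,\dots,g_p)$ one has $g_{(k)}=A_kg$ with $A_k$ holomorphic and invertible. Hence $F=|g|_H^2$ for the smooth positive definite matrix $H=\sum_k\theta_kA_k^*A_k$.
\item Then $F+s^2$ is the squared norm of the nowhere-vanishing holomorphic section $(g,s)$ of the trivial bundle of rank $p+1$, with metric $H\oplus 1$.
\item For holomorphic sections one has the standard inequality $i\partial\bar\partial\log|\sigma|^2\ge-\langle iR\sigma,\sigma\rangle/|\sigma|^2$. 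This gives $dd^c\log(F+s^2)\ge -C\Theta$, with $C$ depending only on the Chern curvature $R$ of $H$, hence uniform in $s$.
\end{itemize}
Your parenthetical ``equivalently $\sum_k\theta_k\sum_\ell|w^{(k)}_\ell|^2$'' names the right function. It is not equivalent to $d_Z^2$ at the level of Hessians, however, and it is this curvature argument, not a Taylor comparison, that proves the bound.

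Two smaller repairs are also needed.
\begin{itemize}
\item A nondecreasing function equal to $t$ and later constant is concave, not convex. This is harmless, since its second derivative lives at the fixed scale $d_Z\sim r_0$.
\item The binomial terms of $(\Theta+\tau dd^c\psi_s)^p$ are not all positive. Argue instead as follows. Inside the tube, the restriction of $\Theta_s$ to the normal directions is $\ge c\tau s^{-2}\Theta$. By min--max, the $p$ largest eigenvalues of $\Theta_s$ relative to $\Theta$ are therefore $\ge c\tau s^{-2}$. Since all eigenvalues are positive, $\Theta_s^p\wedge\Theta^{N-p}\ge c'\tau^ps^{-2p}\Theta^N$ there. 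Integrating over a tube of volume $\ge c''s^{2p}\int_K\Theta^{N-p}$ gives the asserted mass bound.
\end{itemize}
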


Now we mimic the proof of G.~Chen \cite{C21} to show the existence of K\"ahler current in the shifted class, satisfying the cone condition in the local convolution sense. The additional point is to make the diagonal mass independent of the modification, which is necessary for passing to the limit.
\begin{proposition}\label{prop:concentration}
Let $\pi_j:Y_j\to X$ be K\"ahler modifications, and let $D_j\ge0$ be effective real divisors.
Suppose there are K\"ahler forms
$h_j\in L_j=\pi_j^*\alpha-[D_j]$ satisfying
$\tr_{h_j}\pi_j^*\omega\le A_j\to a>0$.
There exist $e>0$ and a positive closed current
\begin{equation}\label{eq:shifted-current}
 S\in\alpha-e\beta,\qquad S\ge\omega/a,\qquad P_\omega(S)\le a
\end{equation}
in the weak local-convolution sense.
\end{proposition}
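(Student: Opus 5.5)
We follow G.~Chen's mass-concentration argument \cite[Section~3]{C21} on each model $Y_j$, extracting a positive constant $e$ that does not depend on $j$. We then push the resulting currents down to $X$ and pass to a weak limit, using the weak closedness of the convolution trace conditions \eqref{eq:convolution}.

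\textbf{Step 1: set up the product.} Fix $j$ and work on $M_j=Y_j\times Y_j$ with the diagonal $\Delta_j$, which has codimension $n$. To keep the tubular constants uniform, we do not apply Lemma~\ref{lem:dp-concentration} on $Y_j\times Y_j$. Instead we apply it once on the fixed product $X\times X$, with $\Theta=\kappa\boxplus\kappa$ and $Z=\Delta_X$. This yields fixed forms $\Theta_s\in[\Theta]$ with $\Theta_s\ge\Theta/2$ and a diagonal mass bound with a fixed constant $b$. We then pull $\Theta_s$ back by $\pi_j\times\pi_j$. The pullback is semipositive and lies in $\pi_j^*\alpha\boxplus\pi_j^*\alpha$, and it still concentrates mass near the diagonal on the biholomorphic locus. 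Away from the exceptional set, the pullback of a tube around $\Delta_X$ is a tube around $\Delta_j$. The mass estimate \eqref{eq:dp-mass}, with $K$ taken to be a large compact set in $\Delta_X$ avoiding the images of the exceptional sets, is therefore uniform in $j$ up to an arbitrarily small loss.

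\textbf{Step 2: the Chen alternative on the models.} Following Chen's dual formulation (a Hahn--Banach argument as in Theorem~\ref{thm:duality}, now for $P_\omega$ and a shifted class), suppose that no current $S_j\in L_j-e\pi_j^*\beta$ exists with $S_j\ge\Omega_j/A_j'$ and $P_{\Omega_j}(S_j)\le A_j'$. Separation then produces a positive $dd^c$-closed $(n-1,n-1)$ test form. Chen pairs such a form against the solutions of the twisted Monge--Amp\`ere problems
\[
\bigl(c\,h+\ddc\phi\bigr)^n\propto \Theta_s^n\ \text{restricted to}\ \Delta_j.
\]
These are obtained on the product through the concentrated forms by Yau's theorem. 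Combined with $\tr_{h_j}\Omega_j\le A_j$, the resulting pairing inequality forces a strict inequality. That strict inequality holds once $e$ is below a constant depending only on $b$, $n$, $a$, and the mixed intersections $L_j^r(\pi_j^*\beta)^{n-r}$.

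The crucial input is Lemma~\ref{lem:kahler-models}. The bounds \eqref{eq:mixed-bounds} give upper bounds on all mixed intersections by $\alpha^r\beta^{n-r}$ and a lower bound on $L_j^n$. Together with the uniform diagonal mass $b$ from Step~1, they make the threshold $e>0$ independent of $j$. Chen's argument uses exactly these numbers plus the pointwise bound $\tr_{h_j}\Omega_j\le A_j$, which already implies the $(n-1)$-partial trace bound $P_{\Omega_j}(h_j)\le A_j$. The conclusion on $Y_j$ is a positive closed current $S_j\in L_j-e\pi_j^*\beta$ with $S_j\ge\Omega_j/A_j'$ and $P_{\Omega_j}(S_j)\le A_j'$ in the convolution sense, where $A_j'\to a$.

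\textbf{Step 3: descend and take a limit.} Adding $[D_j]\ge0$ preserves these bounds, since the cone is preserved under adding positive currents. The pushforward $(\pi_j)_*(S_j+[D_j])$ is then a positive closed current in $\alpha-e\beta$. On the biholomorphic locus it satisfies the same inequalities. The exceptional image has measure zero, and the singular parts are positive, so by the frozen-background convolution criterion the bounds hold on $X$. The masses are fixed by the class, so a weak subsequence converges to some $S$. Weak closedness of the conditions $S\ge\omega/A$ and $P_\omega\le A$, as in Lemma~\ref{lem:attainment}, gives \eqref{eq:shifted-current} with $A_j'\to a$.

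The main obstacle is Step~2. It requires the quantitative, $j$-uniform version of Chen's concentration inequality, and in particular requires checking that the lower bound on the diagonal mass survives pullback to the modifications and is not lost near the exceptional loci.
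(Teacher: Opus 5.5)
The gap is Step~2, which carries the entire content of the proposition. It asserts the $j$-uniform margin $e$ rather than producing it, and the mechanism you sketch would not produce it. Some concentration is genuinely needed, because the hypothesis gives no uniform pointwise slack. $\Omega_j=\pi_j^*\omega$ degenerates along the exceptional locus, so $h_j^{-1}\Omega_j$ has a zero eigenvalue there and $P_{\Omega_j}(h_j)=\tr_{h_j}\Omega_j$. There is therefore no uniform room from which to subtract a multiple of $\Omega_j$ pointwise.

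A Hahn--Banach reformulation only converts the existence of $S_j$ into an inequality against $dd^c$-closed positive test forms. Proving that inequality with $e$ independent of $j$ is exactly the problem, and ``the resulting pairing inequality forces a strict inequality'' is not an argument. The Monge--Amp\`ere problem with right-hand side ``$\Theta_s^n$ restricted to $\Delta_j$'' cannot carry the concentration either. The Demailly--P\u{a}un forms concentrate mass in the directions normal to the diagonal, and restriction to $\Delta_j$ discards exactly those directions. Also, the pullback of $\Theta_s$ is only semipositive on $Y_j\times Y_j$, and $\pi_j^*\alpha,\pi_j^*\beta$ are not K\"ahler on $Y_j$. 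So neither can serve as the background for Yau's or Chen's existence theorems.

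What is missing is the quantitative concentration argument that the paper carries out; it is not a separation argument.

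\textbf{Backgrounds.} Replace $\Omega_j$ by the K\"ahler form $g_j=\Omega_j+\epsilon_jh_j$ with $\epsilon_j\downarrow0$, so that $\tr_{h_j}g_j\le a_j\to a$. Let $\Theta_j=g_{j,1}+g_{j,2}$ (pullbacks from the two factors) and $\widehat\Theta_{j,s}=\Theta_j+\tau dd^c(\pi_j\times\pi_j)^*\psi_s$, with $\psi_s$ from Lemma~\ref{lem:dp-concentration} applied once on $X\times X$. This part of your Step~1 is right.

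\textbf{Twisted equation.} Solve Chen's twisted equation \cite[Theorem~1.14]{C21} on $Y_j\times Y_j$, in the class $(1+t)p_1^*L_j+a_j^{-1}p_2^*[g_j]$ at level $(n+1)a_j$, with a source term containing $q\,\widehat\Theta_{j,s}^{2n}/\Theta_j^{2n}$. Its solution $W$ satisfies $W^{2n}\ge\frac{q}{2(n+1)a}\widehat\Theta_{j,s}^{2n}$ pointwise.

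\textbf{Uniform diagonal mass.} The mixed-intersection bounds of Lemma~\ref{lem:kahler-models} bound $\int W^n\wedge\widehat\Theta_{j,s}^n$ independently of $j$. Discarding the set where the $n$ largest eigenvalues of $W$ have large product therefore leaves definite mass in the lifted tube $\mathcal T_{j,s}$. This gives $\int_{\mathcal T_{j,s}}W^n\wedge(\Omega_{j,1}+\Omega_{j,2})^n\ge d_1$, uniformly in $j$.

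\textbf{Diagonal limits.} Let $s,t\downarrow0$. Skoda--El Mir and the support theorems give $\Lambda_j\ge\theta_*[\Delta_{Y_j}]$ for the limit $\Lambda_j$ of $W^n$, with $\theta_*$ independent of $j$. They also give $\mathbf 1_{\Delta_{Y_j}}\Lambda_j'=0$ for the limit $\Lambda_j'$ of $W^{n-1}$.

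\textbf{Fiber integration.} Only now can one use Chen's fiber integration $Q_j=\gamma_j(p_1)_*(\Lambda_j\wedge g_{j,2})\in L_j$, where $\gamma_j$ is a normalizing constant. His eigenvalue truncation, which needs $\mathbf 1_{\Delta_{Y_j}}\Lambda_j'=0$, allows subtracting $eg_j$ while keeping $P_{g_j}\le a_j$. Here $e$ depends only on $\theta_*$, $a$, $n$ and a bound for $\int_{Y_j}g_j^n$.

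Your Step~3 then goes through, with one correction. The shift is by $[g_j]=\pi_j^*\beta+\epsilon_jL_j$ rather than $\pi_j^*\beta$, so one pushes forward $Q_j-eg_j+[D_j]+e\epsilon_jh_j$ to land in $\alpha-e\beta$.
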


\begin{proof}
We use Chen's mass concentration
\cite[Theorem~1.18 and its proof in Section~3]{C21}.
 We give the details for that estimate before applying
Chen's fiber-integration argument.

\smallskip\noindent
\emph{1. Positive background forms.}
Put $V_\beta=\int_X\omega^n$, $\Omega_j=\pi_j^*\omega$, and
$\epsilon_j=1/j$. Set
\begin{equation}\label{eq:backgrounds}
 g_j:=\Omega_j+\epsilon_jh_j>0,\qquad
 \tr_{h_j}g_j\le a_j:=A_j+n\epsilon_j\longrightarrow a.
\end{equation}
After discarding finitely many indices, assume $a/2\le a_j\le2a$.
The intersection argument in
\eqref{eq:mixed-bounds} applies to these models.
Since $[g_j]=\pi_j^*\beta+\epsilon_jL_j$ and $\epsilon_j\le1$,
\begin{equation}\label{eq:positive-mixed}
 L_j^r[g_j]^{n-r}
 \le\sum_{\ell=0}^{n-r}\binom{n-r}{\ell}
                 \alpha^{r+\ell}\beta^{n-r-\ell}
 \quad(0\le r\le n).
\end{equation}
In particular, $V_\beta\le\int_{Y_j}g_j^n\le C_0$ for a constant
$C_0$ independent of $j$.

\smallskip\noindent
\emph{2. Chen's equation on the product.}
Let $p_1,p_2:Y_j\times Y_j\to Y_j$ be the projections.
Subscripts $1,2$ denote pullback from the corresponding factor of
$X\times X$ or $Y_j\times Y_j$.
Apply Lemma~\ref{lem:dp-concentration} to
$(M,\Theta,Z)=(X\times X,\omega_1+\omega_2,\Delta_X)$, where $\Delta_X$ is the diagonal submanifold.
Write
\[
 \widehat\omega_s=\omega_1+\omega_2+\tau dd^c\psi_s
       \ge\tfrac12(\omega_1+\omega_2).
\]
The functions $\psi_s$, the coefficient $\tau$, and all constants
in that lemma are fixed independently of $j$. Set
\begin{equation}\label{eq:product-backgrounds}
 \begin{gathered}
 \Theta_j=g_{j,1}+g_{j,2},\qquad
 \widehat\Omega_j=\Omega_{j,1}+\Omega_{j,2},\\
 \widehat\Theta_{j,s}
 =\Theta_j+\tau dd^c(\pi_j\times\pi_j)^*\psi_s
 \ge\tfrac12\Theta_j\ge\tfrac12\widehat\Omega_j.
 \end{gathered}
\end{equation}
For $0<t\le1$, define the class and the positive constant
\[
 \begin{aligned}
 \mathcal L_{j,t}
   &=(1+t)p_1^*L_j+a_j^{-1}p_2^*[g_j],\\
 f_{j,t}
   &=\frac{a_j(1+t)^nL_j^n
           -n(1+t)^{n-1}[g_j]L_j^{n-1}}{\int_{Y_j}g_j^n}>0.
 \end{aligned}
\]
Indeed, $n[g_j]L_j^{n-1}\le a_jL_j^n$ by
\eqref{eq:backgrounds}. The K\"ahler form
$(1+t)h_{j,1}+a_j^{-1}g_{j,2}\in\mathcal L_{j,t}$ satisfies
\[
 \tr_{(1+t)h_{j,1}+a_j^{-1}g_{j,2}}\Theta_j
 \le\frac{a_j}{1+t}+na_j<(n+1)a_j.
\]
Fix
\[
 q=\frac1{8n\bigl(2(n+1)a\bigr)^{2n-1}}>0
\]
and define
\[
 F_{j,t,s}
 =a_j^{-n}f_{j,t}
       +q\left(\frac{\widehat\Theta_{j,s}^{2n}}{\Theta_j^{2n}}-1\right).
\]
The hypotheses of \cite[Theorem~1.14]{C21}, in dimension $2n$
and at level $(n+1)a_j$, follow from the preceding strict trace
inequality and
\begin{equation}\label{eq:chen-hypotheses}
 \begin{gathered}
 F_{j,t,s}>-q>-\frac1{4n}\bigl((n+1)a_j\bigr)^{-(2n-1)},\\
 \int_{Y_j\times Y_j}F_{j,t,s}\Theta_j^{2n}
 =\binom{2n}{n}a_j^{-n}f_{j,t}
                           \left(\int_{Y_j}g_j^n\right)^2\\
 =(n+1)a_j\mathcal L_{j,t}^{2n}
       -2n[\Theta_j]\mathcal L_{j,t}^{2n-1}>0.
 \end{gathered}
\end{equation}
Here $[\widehat\Theta_{j,s}]=[\Theta_j]$, which gives the first
integral identity. We therefore obtain K\"ahler forms
$W_{j,t,s}\in\mathcal L_{j,t}$ satisfying
\begin{equation}\label{eq:product-equation}
 \tr_{W_{j,t,s}}\Theta_j
   +F_{j,t,s}\frac{\Theta_j^{2n}}{W_{j,t,s}^{2n}}=(n+1)a_j,
 \qquad
 P_{\Theta_j}(W_{j,t,s})<(n+1)a_j.
\end{equation}
In particular, $W_{j,t,s}\ge((n+1)a_j)^{-1}\Theta_j$.
Since $2n((n+1)a_j)^{1-2n}>q$, the equation yields
\[
 \begin{aligned}
 (n+1)a_jW_{j,t,s}^{2n}
 &=2n\Theta_j\wedge W_{j,t,s}^{2n-1}
       +(a_j^{-n}f_{j,t}-q)\Theta_j^{2n}
       +q\widehat\Theta_{j,s}^{2n}\\
 &\ge q\widehat\Theta_{j,s}^{2n}.
 \end{aligned}
\]
Thus we have the global lower bound
\begin{equation}\label{eq:product-determinant}
 W_{j,t,s}^{2n}\ge
       \frac{q}{2(n+1)a}\widehat\Theta_{j,s}^{2n}.
\end{equation}

\smallskip\noindent
\emph{3. A uniform coefficient on the diagonal.}
Let $\operatorname{Exc}\pi_j$ denote the exceptional locus.
Its image has zero $\omega^n$-measure, so choose
\[
 K_j\Subset X\setminus\pi_j(\operatorname{Exc}\pi_j),
 \qquad \int_{K_j}\omega^n\ge V_\beta/2.
\]
Identify $K_j$ with its image in $\Delta_X$.
For fixed $j$ and small $s$, the tube $\mathcal T_s(K_j)$ from
Lemma~\ref{lem:dp-concentration} lies in the
biholomorphic region of $\pi_j\times\pi_j$; denote its lift by
$\mathcal T_{j,s}$.
Since
\[
 \widehat\Theta_{j,s}
 =(\pi_j\times\pi_j)^*\widehat\omega_s
       +\epsilon_j(h_{j,1}+h_{j,2}),
\]
the measure $d\mu_{j,s}=\widehat\Theta_{j,s}^n\wedge\widehat\Omega_j^n$
has the lower bound
\begin{equation}\label{eq:tube-mass}
 \begin{aligned}
 \mu_{j,s}(\mathcal T_{j,s})
 &\ge\int_{\mathcal T_s(K_j)}
                 \widehat\omega_s^n\wedge(\omega_1+\omega_2)^n\\
 &\ge b\,2^n\int_{K_j}\omega^n
       \ge d_0:=b\,2^{n-1}V_\beta>0.
 \end{aligned}
\end{equation}
Here $b$ is the fixed constant in
\eqref{eq:dp-mass}, and
$(\omega_1+\omega_2)|_{\Delta_X}=2\omega$.
Only the required smallness of $s$ depends on $j$.

Let $\lambda_1\le\cdots\le\lambda_{2n}$ be the eigenvalues of
$W_{j,t,s}$ relative to $\widehat\Theta_{j,s}$.
The mixed-volume formula and
\eqref{eq:positive-mixed} give a uniform constant
$M>0$ such that
\begin{equation}\label{eq:large-eigenvalues}
 \begin{aligned}
 \int (\lambda_{n+1}\cdots\lambda_{2n})
                         \widehat\Theta_{j,s}^{2n}
 &\le\binom{2n}{n}\int W_{j,t,s}^n\wedge\widehat\Theta_{j,s}^n
 \le M,\\
 \int W_{j,t,s}^n\wedge\widehat\Theta_{j,s}^n
 &=\left(\int_{Y_j}g_j^n\right)
   \sum_{r=0}^n\binom nr^2(1+t)^r a_j^{-(n-r)}
                  L_j^r[g_j]^{n-r}.
 \end{aligned}
\end{equation}
All unmarked integrals here are over $Y_j\times Y_j$.
By \eqref{eq:product-backgrounds},
$d\mu_{j,s}\le2^n\widehat\Theta_{j,s}^{2n}$.
With $R=2^{n+1}M/d_0$, we have
\[
 \mu_{j,s}\bigl(\{\lambda_{n+1}\cdots\lambda_{2n}>R\}\bigr)
       \le\frac{2^nM}{R}=\frac{d_0}{2}.
\]
\par\needspace{4\baselineskip}
The subset of $\mathcal T_{j,s}$ where
$\lambda_{n+1}\cdots\lambda_{2n}\le R$ has $\mu_{j,s}$-mass at
least $d_0/2$. On this subset,
\eqref{eq:product-determinant} gives
\[
 \lambda_1\cdots\lambda_n\ge\frac{q}{2(n+1)aR},
 \qquad
 W_{j,t,s}^n\ge(\lambda_1\cdots\lambda_n)\widehat\Theta_{j,s}^n.
\]
Therefore
\begin{equation}\label{eq:diagonal-bound}
 \int_{\mathcal T_{j,s}}W_{j,t,s}^n\wedge\widehat\Omega_j^n
       \ge d_1:=\frac{q\,d_0}{4(n+1)aR}>0.
\end{equation}

For fixed $j$, the cohomology classes bound the masses of
$W_{j,t,s}^n$ and $W_{j,t,s}^{n-1}$ against powers of
$\Theta_j$. Choose $t_k,s_k\downarrow0$ and a common subsequence
such that, with $W_{j,k}=W_{j,t_k,s_k}$,
\[
 W_{j,k}^n\rightharpoonup\Lambda_j,
 \qquad W_{j,k}^{n-1}\rightharpoonup\Lambda'_j.
\]
Both limits are positive and closed. For every fixed small $r>0$,
the compact tube $\mathcal T_{j,r}$ eventually contains
$\mathcal T_{j,s_k}$. The Portmanteau inequality gives
\[
 d_1\le\limsup_{k\to\infty}
       \int_{\mathcal T_{j,r}}W_{j,k}^n\wedge\widehat\Omega_j^n
 \le(\Lambda_j\wedge\widehat\Omega_j^n)(\mathcal T_{j,r}).
\]
Letting $r\downarrow0$ yields
\[
 (\Lambda_j\wedge\widehat\Omega_j^n)
       \bigl(\Delta_{Y_j}\cap p_1^{-1}(\pi_j^{-1}(K_j))\bigr)
       \ge d_1.
\]
The restriction of a positive closed current to an analytic set is
positive and closed by the Skoda--El Mir extension theorem
\cite[Chapter~III, Theorem~2.3]{D12}.
Since $\Delta_{Y_j}$ is connected and smooth of codimension $n$,
the support theorems
\cite[Chapter~III, Theorems~2.10 and~2.13]{D12} therefore give
\begin{equation}\label{eq:diagonal-limits}
 \mathbf1_{\Delta_{Y_j}}\Lambda_j=\theta_j[\Delta_{Y_j}],
 \qquad \mathbf1_{\Delta_{Y_j}}\Lambda'_j=0.
\end{equation}
Here the second equality uses
$\operatorname{codim}\Delta_{Y_j}=n>n-1$.
Since $\widehat\Omega_j|_{\Delta_{Y_j}}=2\Omega_j$,
\begin{equation}\label{eq:uniform-coefficient}
 \begin{gathered}
 \theta_j\,2^n\int_{K_j}\omega^n\ge d_1,
 \qquad \theta_j\ge\theta_*:=\frac{d_1}{2^nV_\beta}>0,\\
 \Lambda_j\ge\theta_*[\Delta_{Y_j}]
       \quad\text{for every }j.
 \end{gathered}
\end{equation}

\smallskip\noindent
\emph{4. Chen's fiber integration and the limit on $X$.}
Define
\begin{equation}\label{eq:fiber-integral}
 \gamma_j=\frac{a_j^{n-1}}{n\int_{Y_j}g_j^n},\qquad
 Q_{j,k}=\gamma_j(p_1)_*(W_{j,k}^n\wedge g_{j,2}),\qquad
 Q_j=\gamma_j(p_1)_*(\Lambda_j\wedge g_{j,2}).
\end{equation}
Expanding the product class gives
\[
 [Q_{j,k}]=(1+t_k)L_j,
 \qquad Q_{j,k}\rightharpoonup Q_j\in L_j,
 \qquad Q_j\ge\gamma_j\theta_j g_j.
\]
We now apply the fiber-integration and eigenvalue-truncation argument
in \cite[proof of Theorem~1.18, Section~3]{C21}, substituting
\[
 (Y_j,g_j,L_j,a_j)
 \quad\text{for Chen's }(M,\chi,[\omega_0],c).
\]
That argument uses the product class of $W_{j,k}$, the inequality
$P_{\Theta_j}(W_{j,k})<(n+1)a_j$, and
\eqref{eq:diagonal-limits}; its conclusion, with the
normalization \eqref{eq:fiber-integral}, is
\begin{equation}\label{eq:chen-subtraction}
 0<e\le\tfrac12\gamma_j\theta_j
 \quad\Longrightarrow\quad
 Q_j-eg_j\ge0,\qquad P_{g_j}(Q_j-eg_j)\le a_j
 \quad\text{weakly}.
\end{equation}
The coefficient removed in the cited proof is exactly
$\gamma_j\theta_j/2$; a smaller coefficient is allowed by
monotonicity. The eigenvalue truncation in that proof is essential:
it uses $\mathbf1_{\Delta_{Y_j}}\Lambda'_j=0$ to preserve the
partial-trace bound after removing the diagonal contribution.
Thus \eqref{eq:chen-subtraction} is an application of
that argument, rather than an inference from $Q_j\ge\gamma_j\theta_jg_j$.
Possible additional components over the exceptional image contribute
positive currents and do not affect this argument.

By \eqref{eq:positive-mixed} and
\eqref{eq:uniform-coefficient}, we may choose
\[
 0<e:=\frac{(a/2)^{n-1}\theta_*}{4nC_0}
       \le\tfrac12\gamma_j\theta_j\qquad\text{for every }j.
\]
Put
\[
 R_j=Q_j-eg_j,\qquad
 U_j=(\pi_j)_*\bigl(R_j+[D_j]+e\epsilon_jh_j\bigr).
\]
These currents are positive and closed, and
\[
 [U_j]
 =(\pi_j)_*\bigl(L_j-e[g_j]+[D_j]+e\epsilon_jL_j\bigr)
 =\alpha-e\beta.
\]
On the complement of the exceptional and divisorial images,
monotonicity gives
$P_\omega((U_j)_{\rm ac})\le a_j$.
Those images have zero Lebesgue measure, while the singular coefficient
measures of $U_j$ are positive. Hence
\[
 U_j\ge\omega/a_j,\qquad
 P_{g_0}(U_j*\varrho_r)\le a_j
\]
for every constant $0<g_0\le\omega$ on the convolution chart,
by convexity and monotonicity. The masses
$\int_XU_j\wedge\kappa^{n-1}=(\alpha-e\beta)\alpha^{n-1}$
are fixed. Passing to a weakly convergent subsequence and then to each
fixed convolution gives \eqref{eq:shifted-current}.
\end{proof}

Adding back the class removed by mass concentration makes
the cone condition strict. We then apply
Lemma~\ref{lem:partial-trace}
to obtain analytic singularity type while retaining a
positive strictness margin
\begin{theorem}
\label{thm:strict-subsolution}
Suppose $\mathcal C_a\ne\varnothing$ for $a>0$.
There are a proper closed analytic set $Z\subset X$, a quasi-psh
function $\psi$, and $\eta>0$ such that
\begin{equation}\label{eq:strict-subsolution}
 \begin{gathered}
 B_{\rm an}=\kappa+dd^c\psi\in\alpha
       \text{ is a K\"ahler current},\qquad
 \psi\in C^\infty(X\setminus Z),\\
 P_\omega(B_{\rm an})\le a-\eta\quad\text{on }X\setminus Z.
 \end{gathered}
\end{equation}
The potential $\psi$ has logarithmic singularity type along a coherent
ideal, tends to $-\infty$ along $Z$, and has a smooth remainder
after principalization. In particular, the conclusion holds for
$a=\zeta$.
\end{theorem}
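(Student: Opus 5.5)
The plan is to combine Proposition~\ref{prop:concentration} with Lemma~\ref{lem:partial-trace}. Since $\mathcal C_a\ne\varnothing$, Lemma~\ref{lem:kahler-models} provides K\"ahler residual models $h_j\in L_j$ with $\tr_{h_j}\pi_j^*\omega\le A_j\to a$. These are exactly the hypotheses of Proposition~\ref{prop:concentration}. That proposition yields $e>0$ and a positive closed current $S\in\alpha-e\beta$ with $S\ge\omega/a$ and $P_\omega(S)\le a$ in the weak local-convolution sense.

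\textbf{Step 1: a current with strict margin.} Set $Q=S+e\omega\in\alpha$. It is a K\"ahler current, since $Q\ge(1/a+e)\omega$. The aim is a strict partial-trace bound for $Q_{\rm ac}$. Pointwise, let $A=S_{\rm ac}$ and work in an $\omega$-unitary frame. Denote by $\mu_1\ge\cdots\ge\mu_n$ the eigenvalues of $A^{-1}$. Then $Q_{\rm ac}^{-1}$ has eigenvalues $\mu_k/(1+e\mu_k)$. The map $x\mapsto x/(1+ex)$ is increasing and concave, so $P_\omega(Q_{\rm ac})=\sum_{k\le n-1}\mu_k/(1+e\mu_k)$.

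To get a uniform gap, I use the constraint $\sum_{k\le n-1}\mu_k\le a$ together with the lower bound $\mu_1\ge \frac1{n-1}\sum_{k\le n-1}\mu_k$. Concavity then gives
\[
\sum_{k\le n-1}\frac{\mu_k}{1+e\mu_k}\le\sum_{k\le n-1}\mu_k-\frac{e\mu_1^2}{1+e\mu_1}.
\]
This does not yet give a uniform margin when $\sum\mu_k$ is small. That case is harmless, however: if $\sum_{k\le n-1}\mu_k\le a/2$, the bound is already $a/2$. Otherwise $\mu_1\ge a/(2(n-1))$, and the subtracted term is at least a fixed $\eta_0>0$ depending only on $a,e,n$. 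Hence $P_\omega(Q_{\rm ac})\le a-\eta_1$ a.e. with $\eta_1=\min\{a/2,\eta_0\}>0$.

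The pointwise statement for $S_{\rm ac}$ follows from the weak local-convolution bound by differentiation of coefficient measures, as in the discussion after Definition~\ref{def:trace-family}. Frozen backgrounds $B\le\omega\le(1+\eta')B$ cost only a factor tending to $1$.

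\textbf{Step 2: analytic regularization.} Apply Lemma~\ref{lem:partial-trace} to $Q$ with $a_0=a-\eta_1$ and $\epsilon=\eta_1/2$. This produces $B_{\rm an}=\kappa+dd^c\psi$ with $B_{\rm an}\ge\omega/(a-\eta_1/2)$, smooth off a proper analytic set $Z$, and with $P_\omega(B_{\rm an})\le a-\eta$ on $X\setminus Z$, where $\eta=\eta_1/2$. The lemma also supplies the singularity conclusions: coherent-ideal logarithmic type, $\psi\to-\infty$ along $Z$, and a smooth remainder after principalization. Finally, Corollary~\ref{cor:critical-family} gives $\mathcal C_\zeta\ne\varnothing$, which yields the case $a=\zeta$.

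\textbf{Main obstacle.} The delicate point is the strictness in Step~1, namely that adding $e\omega$ produces a margin independent of the point even though $S$ may be very degenerate. The case split above is intended to handle this, and it must be done with the partial trace $P_\omega$ rather than the full trace. Everything else is bookkeeping passing the weak bound through convolution, using convexity and monotonicity of $P_{n-1,B}$ as in \eqref{eq:convolution}.
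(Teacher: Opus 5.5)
Your proposal is correct and follows the paper's proof. Lemma~\ref{lem:kahler-models} and Proposition~\ref{prop:concentration} give $S\in\alpha-e\beta$, adding $e\omega$ creates a strict partial-trace margin, Lemma~\ref{lem:partial-trace} supplies the analytic regularization, and Corollary~\ref{cor:critical-family} handles $a=\zeta$. The only difference is cosmetic: the paper gets the margin in one line from Jensen's inequality for the increasing concave function $x\mapsto x/(1+ex)$, namely $\sum_{i\le n-1}x_i/(1+ex_i)\le a/(1+ea/(n-1))<a$, so your case split (also valid) is unnecessary and the step you flag as the main obstacle is not actually delicate.
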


\begin{proof}
Lemma~\ref{lem:kahler-models} and
Proposition~\ref{prop:concentration}
give $e>0$ and $S\in\alpha-e\beta$ satisfying
\eqref{eq:shifted-current}. Set
\[
 B=S+e\omega\in\alpha,\qquad B\ge(1/a+e)\omega.
\]
At almost every point, let $x_1,\ldots,x_{n-1}$ be the largest
inverse eigenvalues of $S_{\rm ac}$ relative to $\omega$.
Then $\sum_{i=1}^{n-1}x_i\le a$. Since
$x\mapsto x/(1+ex)$ is increasing and concave,
\begin{equation}\label{eq:strict-margin}
 P_\omega(B_{\rm ac})
 =\sum_{i=1}^{n-1}\frac{x_i}{1+ex_i}
 \le\frac{a}{1+ea/(n-1)}=:b<a.
\end{equation}
Choose $0<\varepsilon<a-b$. Apply
Lemma~\ref{lem:partial-trace} to $B$, with
$a_0=b$, to obtain $B_{\rm an}=\kappa+dd^c\psi$, smooth outside
$Z$, with
\[
 B_{\rm an}\ge\frac{\omega}{b+\varepsilon},\qquad
 P_\omega(B_{\rm an})\le b+\varepsilon=a-\eta,
 \qquad \eta:=a-b-\varepsilon>0.
\]
The same lemma gives the stated singularity properties.
Finally, $\mathcal C_\zeta\ne\varnothing$ by
Corollary~\ref{cor:critical-family}.
\end{proof}

\section{Convergence of the J-flow}\label{sec:flow}
In this section we show that the $J$-flow, starting from an arbitrary smooth K\"ahler metric, will converge globally as a current and smoothly outside a proper subvariety. We fix  K\"ahler forms $\kappa\in\alpha$ and $\omega\in\beta$. Consider the $J$-flow with initial metric $\kappa+\ddc\varphi_0>0$, \begin{equation}
    \begin{cases}
        \partial_t\varphi=c-\tr_\varphi\omega,\\
        \varphi(0)=\varphi_0.
    \end{cases}
\end{equation} 
Here $c=\cab=n\alpha^{n-1}\beta/\alpha^n$ is the topological constant determined by $(\alpha,\beta).$
Write $\dot\varphi=\partial_t\varphi$ and
$\ddot\varphi=\partial_t^2\varphi$. In local holomorphic coordinates,
\[
 \omega=i g_{i\bar j}\,dz^i\wedge d\bar z^j,\qquad
 \kappa_\varphi:=\kappa+\ddc\varphi=i\chi_{i\bar j}\,dz^i\wedge d\bar z^j.
\]
With $(\chi^{i\bar j})$ the inverse of $\chi$, the linearized
operator is
\begin{equation}\label{eq:flow-linearization}
 \widetilde\Delta f=h^{i\bar j}f_{i\bar j},\qquad
 h^{i\bar j}=\chi^{i\bar\ell}\chi^{k\bar j}g_{k\bar\ell}.
\end{equation}

Let
\[
 \mathcal C_\zeta=\{T\},\qquad T=\kappa+\ddc\varphi_\infty,\qquad
 \int_X\varphi_\infty\,\omega^n=0,
 \qquad b(t)=\frac1{\beta^n}\int_X\varphi(t)\,\omega^n.
\]
No regularity of $\varphi_\infty$ is assumed here. We summarize some basic facts for the $J$-flow here. The long-time existence was proved by Chen~\cite{C04}, and the others are simple consequences of maximum principle. 
\begin{lemma}\label{lem:flow-facts}
The flow exists for long time, and along the flow,
\begin{enumerate}
    \item $(\partial_t-\widetilde\Delta)\dot\varphi=0$.
    \item $0<\min_X\tr_{{\varphi_0}}\omega
   \le\tr_{\varphi}\omega
   \le\max_X\tr_{{\varphi_0}}\omega,$ so $
  \kappa_\varphi
   \ge\frac{\omega}{\max_X\tr_{{\varphi_0}}\omega}.$
   \end{enumerate}
\end{lemma}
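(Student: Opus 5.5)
The plan is to differentiate the flow equation in time and apply the parabolic maximum principle to $\dot\varphi$. Long-time existence itself is taken from Chen \cite{C04}. Short-time existence is standard because the operator $\varphi\mapsto c-\tr_\varphi\omega$ is uniformly parabolic near a K\"ahler metric. Chen's argument supplies uniform $C^k$ bounds on finite time intervals, so the solution extends for all $t\ge0$. We record only the structural identities needed below.

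For (1), differentiate $\dot\varphi=c-\tr_\varphi\omega=c-\chi^{i\bar j}g_{i\bar j}$ in $t$. Since $\partial_t\chi_{k\bar\ell}=\dot\varphi_{k\bar\ell}$, the derivative of the inverse matrix is $\partial_t\chi^{i\bar j}=-\chi^{i\bar\ell}\dot\varphi_{k\bar\ell}\chi^{k\bar j}$. Hence
\[
\ddot\varphi=\chi^{i\bar\ell}\chi^{k\bar j}g_{i\bar j}\dot\varphi_{k\bar\ell}=h^{k\bar\ell}\dot\varphi_{k\bar\ell}=\widetilde\Delta\dot\varphi,
\]
where the last step uses the definition \eqref{eq:flow-linearization} after relabeling indices. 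The identity is pointwise and uses only that $g$ is time independent. So $\dot\varphi$ solves a linear parabolic equation.

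For (2), the operator $\widetilde\Delta$ is elliptic with no zeroth-order term, because $(h^{i\bar j})$ is positive definite whenever $\chi,g>0$. The weak parabolic maximum principle on $X\times[0,t_0]$ therefore gives $\min_X\dot\varphi(0)\le\dot\varphi(t)\le\max_X\dot\varphi(0)$. Since $\tr_\varphi\omega=c-\dot\varphi$, this is exactly the claimed two-sided bound on $\tr_\varphi\omega$ by its initial extrema. The lower bound is positive because $\kappa_{\varphi_0}$ is a smooth K\"ahler form on a compact manifold.

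Finally, $\tr_{\kappa_\varphi}\omega\le\Lambda:=\max_X\tr_{\varphi_0}\omega$ gives the metric lower bound. Each eigenvalue $\lambda_k$ of $\kappa_\varphi$ relative to $\omega$ satisfies $\lambda_k^{-1}\le\sum_\ell\lambda_\ell^{-1}\le\Lambda$, so $\kappa_\varphi\ge\omega/\Lambda$. No step is a real obstacle once long-time existence is quoted from \cite{C04}. The only care needed is the sign and index bookkeeping in the variation of $\chi^{-1}$, which produces precisely the coefficients $h^{i\bar j}$.
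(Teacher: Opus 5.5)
Your proposal is correct and takes the same route as the paper. The paper quotes Chen \cite{C04} for long-time existence and only remarks that the other assertions follow from the maximum principle; your computation of $\ddot\varphi=\widetilde\Delta\dot\varphi$, the maximum principle for $\dot\varphi=c-\tr_\varphi\omega$, and the eigenvalue bound $\kappa_\varphi\ge\omega/\max_X\tr_{\varphi_0}\omega$ fill in those omitted details.
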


The upper-test criterion of Lemma~\ref{lem:upper-tests} permits comparison
with the possibly singular potential $\varphi_\infty$. We learn this trick from \cite{FLSW14}.
\begin{lemma}\label{lem:flow-comparison}
There is $C>0$ such that
\begin{equation}\label{eq:flow-comparison}
 \varphi(t,x)\ge\varphi_\infty(x)+(c-\zeta)t-C
 \qquad(t\ge0,\ x\in X).
\end{equation}
\end{lemma}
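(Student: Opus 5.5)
We compare $\varphi(t)$ with the time-shifted singular potential $\Phi(t,x)=\varphi_\infty(x)+(c-\zeta)t-C$ via a parabolic maximum principle in which $\varphi_\infty$ enters only through upper tests, as in Lemma~\ref{lem:upper-tests}. First fix the normalization. $\varphi_\infty$ is quasi-psh, hence bounded above on $X$, and $\varphi_0$ is smooth. So choosing $C\ge\sup_X(\varphi_\infty-\varphi_0)$ gives $\Phi(0,\cdot)\le\varphi(0,\cdot)$. To get strictness we replace $C$ by $C+\epsilon(1+t)$ and let $\epsilon\downarrow0$ at the end.

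Suppose the inequality fails for the perturbed barrier. Then
\[
 \sup_X\bigl(\varphi_\infty+(c-\zeta)t-C-\epsilon(1+t)-\varphi(t)\bigr)
\]
becomes zero at some first time $t_0>0$. This supremum is attained, since $\varphi_\infty-\varphi(t)$ is upper semicontinuous on compact $X$, and it depends upper semicontinuously on $t$. Let $x_0$ be a maximum point at $t_0$. Near $x_0$ write $\kappa=dd^cr$. The smooth function
\[
 q=\varphi(t_0,\cdot)+r-(c-\zeta)t_0+C+\epsilon(1+t_0)
\]
then touches the psh function $U=\varphi_\infty+r$ from above at $x_0$. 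The condition $\tr_{(dd^cU)_{\rm ac}}\omega\le\zeta$ a.e.\ holds because $T\in\mathcal C_\zeta$. So Lemma~\ref{lem:upper-tests} gives $\tr_{\varphi(t_0)}\omega(x_0)=\tr_{dd^cq(x_0)}\omega\le\zeta$.

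On the other hand, $t_0$ is a first touching time. Hence the difference $\Phi_\epsilon-\varphi$ at $x_0$ was negative for $t<t_0$ and is zero at $t_0$. Since $\varphi$ is smooth in $t$, this yields
\[
 \partial_t\varphi(t_0,x_0)\le(c-\zeta)-\epsilon.
\]
The flow equation gives $\partial_t\varphi=c-\tr_{\varphi}\omega\ge c-\zeta$ at $(t_0,x_0)$, which contradicts the previous display.

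The main obstacle is the first-touching step. Because $\varphi_\infty$ is only upper semicontinuous and may equal $-\infty$, we cannot differentiate the difference in $t$ directly. The fix is that $\varphi_\infty$ is independent of $t$. So along the fixed point $x_0$ the function $t\mapsto\Phi_\epsilon(t,x_0)-\varphi(t,x_0)$ is smooth, and the value at $x_0$ is finite because it equals the supremum $0$. Thus $\Phi_\epsilon(t,x_0)-\varphi(t,x_0)\le0$ for $t<t_0$, with equality at $t_0$, and this gives the one-sided time derivative bound.

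We also need the supremum over $X$ to be strictly negative for small $t$ and to vary upper semicontinuously, so that $t_0$ is well defined. This follows from the $\epsilon$-margin at $t=0$ and local uniform continuity of $\varphi$ in $t$. Letting $\epsilon\downarrow0$ then gives \eqref{eq:flow-comparison} with $C=\sup_X(\varphi_\infty-\varphi_0)$.
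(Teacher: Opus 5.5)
Your proposal is correct and is essentially the paper's argument. The paper takes a space-time maximum of $H=\varphi_\infty+(c-\zeta-\varepsilon)t-C-\varphi$ on a finite cylinder $X\times[0,\tau]$ instead of a first touching time. The two key steps are the same in both: the spatial slice at the maximum is a smooth upper test for $\varphi_\infty$, so Lemma~\ref{lem:upper-tests} gives $\tr_{\varphi(t_0)}\omega(x_0)\le\zeta$, and the one-sided time derivative at $x_0$ then gives the contradiction.
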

\begin{proof}
Choose $C$ with $\varphi_\infty-C\le\varphi_0$. For $\varepsilon>0$ and
finite $\tau$, set
\[
 H(x,t)=\varphi_\infty(x)+(c-\zeta-\varepsilon)t-C-\varphi(t,x)
 \quad\text{on }X\times[0,\tau].
\]
If $\max H>0$, it is attained at $(x_0,t_0)$ with $t_0>0$ and
$\varphi_\infty(x_0)>-\infty$. The function
\[
 x\longmapsto\varphi(t_0,x)+\varphi_\infty(x_0)-\varphi(t_0,x_0)
\]
touches  $\varphi_\infty$ at $x_0$ from above. Lemma~\ref{lem:upper-tests},
applied after adding a local potential of $\kappa$, and
$\partial_tH(x_0,t_0)\ge0$ imply
\[
 \zeta+\varepsilon
 \le\tr_{\kappa+\ddc\varphi(t_0)}\omega(x_0)\le\zeta.
\]
Thus $H\le0$. Let $\varepsilon\downarrow0$ and then $\tau\to\infty$.
\end{proof}

\subsection{Global weak convergence of $J$-flow }
In this subsection, we show the weak convergence of the $J$-flow. The uniqueness of weak solution to the $J$-equation at minimal slope plays an important role here. 

We start with a simple observation about $L^1$-closeness of $\cC_a$:
\begin{lemma}\label{lem:trace-closure}
Let $\chi_j\in\alpha$ be smooth K\"ahler forms with a common positive
lower bound. Suppose
\[
 \chi_j\rightharpoonup S,\qquad
 \tr_{\chi_j}\omega\le f_j,\qquad
 f_j\longrightarrow a>0\quad\text{in }L^1(X).
\]
Then $S\in\mathcal C_a$.
\end{lemma}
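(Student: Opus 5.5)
The plan is to check the membership conditions of $\mathcal C_a$ directly through the local convolution characterization \eqref{eq:convolution}, using the convexity of the inverse trace to handle the fact that $f_j$ converges only in $L^1$. The limit $S$ is a weak limit of closed positive forms in $\alpha$, so it is closed, positive, and lies in $\alpha$. It remains to show that $\tr_{S_{\rm ac}}\omega\le a$ almost everywhere. Since $P_{q,B}(A)\le a$ forces $A\ge B/a$, this bound will also give $S_{\rm ac}>0$.

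Fix a coordinate ball, a constant positive form $B\le\omega$ on it, and a radius $r$ small enough that convolution is defined on a smaller ball. Each $\chi_j$ is smooth, and $\tr_{\chi_j}B\le\tr_{\chi_j}\omega\le f_j$. Convexity of $H\mapsto\tr(H^{-1}B)$ and Jensen's inequality, exactly as in \eqref{eq:convolved-trace}, give the pointwise bound
\[
\tr_{\chi_j*\varrho_r}B\le(\tr_{\chi_j}B)*\varrho_r\le f_j*\varrho_r .
\]
For fixed $r$, convolution against the smooth kernel $\varrho_r$ is continuous from weak convergence of currents to locally uniform convergence of coefficients. It is also continuous from $L^1$ to $L^\infty_{\rm loc}$ on the smaller ball. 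Hence $\chi_j*\varrho_r\to S*\varrho_r$ and $f_j*\varrho_r\to a*\varrho_r=a$ locally uniformly.

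To pass to the limit in the left-hand side we need inversion to be continuous, which requires a uniform positive lower bound on the convolved matrices. The common lower bound $\chi_j\ge\delta\omega$ provides this after convolution, giving $\chi_j*\varrho_r\ge\delta B'$ for a constant form $B'$ on the small ball. The same bound also follows from the trace inequality, since $f_j*\varrho_r$ is locally bounded. Letting $j\to\infty$ gives $\tr_{S*\varrho_r}B\le a$ at every point, for every admissible $B$ and $r$.

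The discussion after Definition~\ref{def:trace-family} (differentiation of coefficient measures, a countable family of shrinking balls, and frozen backgrounds $B=(1-\eta)\omega(z_0)$) then upgrades these convolution bounds to $\tr_{S_{\rm ac}}\omega\le a$ almost everywhere, as already carried out in Lemma~\ref{lem:attainment}. Therefore $S\in\mathcal C_a$.

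The only delicate point is that $f_j$ converges only in $L^1$, not uniformly, so the pointwise trace bound cannot be passed to the limit directly. The Jensen step above resolves this: averaging first turns $L^1$ convergence into uniform convergence at each fixed radius, and the convexity of the inverse trace lets the averaged bound control the trace of the averaged form.
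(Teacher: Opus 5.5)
Your proposal is correct and follows the paper's proof essentially step for step. Both arguments use the fixed-radius convexity bound $\tr_{\chi_j*\varrho_r}B\le f_j*\varrho_r$, then pass to the limit using smooth convergence of $\chi_j*\varrho_r$, uniform convergence of $f_j*\varrho_r$ to $a$, and a lower bound that keeps inversion continuous, and finally upgrade to $\tr_{S_{\rm ac}}\omega\le a$ almost everywhere via differentiation of coefficient measures and frozen backgrounds. Your side remark is also right: the uniform bound on $f_j*\varrho_r$ already yields the needed lower bound on the convolved matrices, so the common lower bound hypothesis is not essential at that step.
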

\begin{proof}
Fix a coordinate ball, a constant form $0<B\le\omega$, and a convolution
radius $r>0$. Convexity gives
\begin{equation}\label{eq:trace-closure}
 \tr_{\chi_j*\varrho_r}B
 \le(\tr_{\chi_j}B)*\varrho_r\le f_j*\varrho_r.
\end{equation}
On smaller balls, $\chi_j*\varrho_r\to S*\varrho_r$ smoothly and
$f_j*\varrho_r\to a$ uniformly. The common lower bound therefore gives
$\tr_{S*\varrho_r}B\le a$. Letting $r\downarrow0$ at Lebesgue points,
and choosing constant backgrounds tending to $\omega$ at each such
point, yields $\tr_{S_{\rm ac}}\omega\le a$ almost everywhere.
Positivity, closedness, and the cohomology class pass to the weak limit.
\end{proof}

We recall the standard compactness of normalized
quasi-plurisubharmonic potentials. The proof combines the
Green formula with local $L^1$-compactness
\cite[Chapter~I, Proposition~4.21]{D12}.
\begin{lemma}
\label{lem:normalized-potentials}
There is a constant $C$, depending only on $X,\kappa,\omega$, such
that every smooth $u$ with
$\kappa+dd^cu\ge0$ and $\sup_Xu=0$ satisfies
\begin{equation}\label{eq:potential-integral}
 \int_X|u|\,\omega^n\le C.
\end{equation}
Every sequence of such potentials has a subsequence converging in
$L^1(X)$ to a $\kappa$-psh function with supremum zero.
\end{lemma}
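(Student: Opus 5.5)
Two things need proving: a uniform $L^1$ bound for normalized $\kappa$-psh potentials, and sequential $L^1$ compactness. I would first pass to the Green function of $\omega$, then use the standard compactness theory for quasi-psh functions.

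\emph{Step 1: uniform bound.} Let $G(x,y)$ be the Green function of the Laplacian $\Delta_\omega$ on $(X,\omega)$, normalized so that
\[
 u(x)=\frac1{\beta^n}\int_X u\,\omega^n-\int_X G(x,y)\Delta_\omega u(y)\,\omega^n(y),
\]
with $G\ge -A$ for a constant $A$ depending only on $(X,\omega)$ (add a constant to $G$ if needed). The hypothesis $\kappa+dd^cu\ge0$ gives
\[
 \Delta_\omega u=\tr_\omega(dd^cu)\ge-\tr_\omega\kappa\ge -C_1 .
\]
Split $\Delta_\omega u=(\Delta_\omega u+C_1)-C_1$. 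The first piece is nonnegative, so it pairs with $G\ge -A$ to give
\[
 -\int_X G\,(\Delta_\omega u+C_1)\,\omega^n\le A\int_X(\Delta_\omega u+C_1)\,\omega^n=AC_1\beta^n,
\]
using $\int_X\Delta_\omega u\,\omega^n=0$. The constant piece contributes $C_1\int_X G(x,\cdot)\,\omega^n$, which is bounded because $G(x,\cdot)$ is uniformly integrable. Evaluating at a maximum point $x_0$, where $u(x_0)=0$, yields
\[
 0=u(x_0)\le\frac1{\beta^n}\int_X u\,\omega^n+C_2 .
\]
Since $u\le0$, we have $\int_X|u|\,\omega^n=-\int_X u\,\omega^n\le C_2\beta^n$, which is \eqref{eq:potential-integral}. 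The only bookkeeping needed is the sign convention in the Green representation.

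\emph{Step 2: compactness, first extraction.} Cover $X$ by finitely many coordinate balls carrying smooth local potentials $r_i$ with $dd^cr_i=\kappa$. On each ball $u_j+r_i$ is psh, bounded above uniformly (because $u_j\le0$), and uniformly bounded in $L^1$ by Step 1. By \cite[Chapter~I, Proposition~4.21]{D12}, such a family is relatively compact in $L^1_{\rm loc}$, since it does not converge uniformly to $-\infty$. A diagonal extraction over the finite cover, using slightly shrunken balls that still cover $X$, gives $u_j\to u$ in $L^1(X)$.

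\emph{Step 3: identifying the limit.} Take the upper semicontinuous regularization of the $L^1$ limit. Then $u$ is $\kappa$-psh, and $u\le0$ passes to the limit. The step I expect to be the main obstacle is $\sup_X u=0$, since $L^1$ limits can lose the supremum. I would first use the Hartogs-type fact
\[
 \limsup_j \sup_K u_j\le\sup_K u\quad\text{for compact }K,
\]
valid for $L^1_{\rm loc}$-convergent psh sequences. Apply it with $K=X$, covered by compact subsets of the charts. Because $\sup_X u_j=0$ for every $j$, this gives $\sup_X u\ge0$, and therefore $\sup_X u=0$.
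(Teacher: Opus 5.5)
Your proposal is correct and takes the route the paper indicates: the Green representation evaluated at a maximum point gives the uniform $L^1$ bound, and \cite[Chapter~I, Proposition~4.21]{D12} on a finite cover of coordinate balls gives compactness. The paper leaves implicit the step that the limit still has $\sup_X u=0$. You supply it correctly with Hartogs' lemma, applied to $u_j+r_i$ after subtracting the continuous local potential $r_i$ on compact subsets of the charts.
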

Now we can show the limit of the asymptotic slope. 
\begin{lemma}\label{lem:drift}
\begin{equation}\label{eq:drift}
 \frac{\varphi(t)}t\longrightarrow c-\zeta\quad\text{in }L^1(X),
 \qquad
 \frac{\sup_X\varphi(t)}t\longrightarrow c-\zeta.
\end{equation}
The first convergence is locally uniform wherever $\varphi_\infty$ is
locally bounded below.
\end{lemma}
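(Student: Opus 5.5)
The plan is to squeeze $\varphi(t)/t$ between two bounds that both tend to $c-\zeta$, and to get the lower bound from Lemma~\ref{lem:flow-comparison}. That lemma gives
\[
\frac{\varphi(t)}{t}\ \ge\ \frac{\varphi_\infty}{t}+(c-\zeta)-\frac{C}{t}.
\]
Since $\varphi_\infty\in L^1(X)$, the negative part $(\varphi(t)/t-(c-\zeta))_-$ tends to zero in $L^1$. It does so locally uniformly wherever $\varphi_\infty$ is locally bounded below. It also gives $\liminf\sup_X\varphi(t)/t\ge c-\zeta$.

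\textbf{Step 1: upper bound on the mean.} Let $u(t)=\varphi(t)-\sup_X\varphi(t)$. By Lemma~\ref{lem:normalized-potentials}, $\int_X|u(t)|\omega^n\le C$, so $\varphi(t)/t$ and $\sup_X\varphi(t)/t$ differ by $O(1/t)$ in $L^1$. It therefore suffices to show $\limsup b(t)/t\le c-\zeta$, equivalently $\limsup\sup_X\varphi(t)/t\le c-\zeta$.

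For this, integrate the flow and use convexity. By the trace Cauchy--Schwarz inequality in the form of Lemma~\ref{lem:trace-cs}, for any Gauduchon (balanced) metric $\chi$,
\[
\int_X \tr_\varphi\omega\,\tr_\chi\kappa_\varphi\,\chi^n\ \ge\ \int_X\bigl(\tr\sqrt{\chi^{-1}\omega}\bigr)^2\chi^n .
\]
A cleaner route is to use the time average. Set $S_t=\frac1t\int_0^t\kappa_{\varphi(s)}\,ds\in\alpha$. These are smooth K\"ahler forms with the uniform lower bound of Lemma~\ref{lem:flow-facts}. By convexity of the inverse trace,
\[
\tr_{S_t}\omega\ \le\ \frac1t\int_0^t\tr_{\varphi(s)}\omega\,ds\ =\ c-\frac{\varphi(t)-\varphi_0}{t}=:f_t .
\]
Let $a'=\limsup_t (c-\sup_X\varphi(t)/t)$'s complement; concretely, suppose along a sequence $t_k\to\infty$ that $\sup_X\varphi(t_k)/t_k\to c-\zeta+\delta$ with $\delta>0$. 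After extracting a subsequence, $(\varphi(t_k)-\sup\varphi(t_k))/t_k\to0$ in $L^1$, so $f_{t_k}\to\zeta-\delta$ in $L^1$. Taking a weak limit $S$ of $S_{t_k}$, Lemma~\ref{lem:trace-closure} gives $S\in\mathcal C_{\zeta-\delta}$. This contradicts $a_*=\zeta$ from Corollary~\ref{cor:threshold-models}. Hence $\limsup\sup_X\varphi/t\le c-\zeta$, and combined with the lower bound, $\sup_X\varphi(t)/t\to c-\zeta$.

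\textbf{Step 2: conclude $L^1$ and local uniform convergence.} Write
\[
\frac{\varphi(t)}{t}-(c-\zeta)=\frac{u(t)}{t}+\Bigl(\frac{\sup_X\varphi(t)}{t}-(c-\zeta)\Bigr).
\]
Here $u/t\to0$ in $L^1$ and the second term tends to $0$, which gives $L^1$ convergence. For local uniformity, use the upper bound $\varphi(t)/t\le\sup_X\varphi(t)/t$ together with the pointwise lower bound from Lemma~\ref{lem:flow-comparison}. On sets where $\varphi_\infty\ge -C'$, both sides converge uniformly to $c-\zeta$.

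\textbf{Main obstacle.} The delicate point is the $L^1$-convergence needed for Lemma~\ref{lem:trace-closure}: we need $f_{t_k}\to\zeta-\delta$ in $L^1$, not just in mean. This needs the normalized potentials $u(t_k)$ to satisfy $u(t_k)/t_k\to0$ in $L^1$, which Lemma~\ref{lem:normalized-potentials} supplies. One must also check that $S_{t_k}$ stays in $\alpha$ with the uniform lower bound, so the weak limit is admissible. If this limit-of-averages argument has gaps, I would instead apply Theorem~\ref{thm:duality} with a fixed near-optimal Gauduchon metric to bound $b'(t)$ from above directly.
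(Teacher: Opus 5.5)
Your proposal is correct and follows essentially the paper's argument: the lower bound from Lemma~\ref{lem:flow-comparison}, then the upper bound via the time-averaged forms, convexity of the inverse trace, Lemma~\ref{lem:trace-closure} and $a_*=\zeta$, with Lemma~\ref{lem:normalized-potentials} controlling the normalization and the same sandwich giving local uniformity (you normalize by $\sup_X\varphi(t)$ rather than by the mean $b(t)$, which is immaterial). The one detail worth adding is that Lemma~\ref{lem:flow-facts} gives $\dot\varphi\le c-\min_X\tr_{\kappa_{\varphi_0}}\omega$; hence $\sup_X\varphi(t)/t$ is bounded, which produces your sequence $t_k$, and the limiting level satisfies $\zeta-\delta\ge\min_X\tr_{\kappa_{\varphi_0}}\omega>0$, as Lemma~\ref{lem:trace-closure} requires.
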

\begin{proof}
Lemma~\ref{lem:normalized-potentials} and the definition of $b(t)$ give
\begin{equation}\label{eq:flow-normalization}
 \sup_X(\varphi-b)\le C,\qquad
 \frac1{\beta^n}\int_X|\varphi-b|\,\omega^n
 =\frac2{\beta^n}\int_X(\varphi-b)_+\,\omega^n\le2C.
\end{equation}
Integrating \eqref{eq:flow-comparison} gives
$\liminf b(t)/t\ge c-\zeta$. By lemma~\ref{lem:flow-facts}
$|\dot\varphi|\le C$, so $b(t)/t$ is bounded for $t\ge1$.

Suppose $t_j\to\infty$ and $b(t_j)/t_j\to\ell$. Then
$\varphi(t_j)/t_j\to\ell$ in $L^1(X)$. Set
\[
 \overline\chi_j=\frac1{t_j}\int_0^{t_j}
                   \kappa_{\varphi(t)}\,dt\in\alpha.
\]
Convexity and the flow equation give
\begin{equation}\label{eq:average-trace}
 \tr_{\overline\chi_j}\omega
 \le\frac1{t_j}\int_0^{t_j}
          \tr_{\kappa_{\varphi(s)}}\omega\,ds
 =c-\frac{\varphi(t_j)-\varphi_0}{t_j}
 \longrightarrow c-\ell\quad\text{in }L^1(X).
\end{equation}
The forms $\overline\chi_j$ have a uniform lower bound by Lemma~\ref{lem:flow-facts} and
$\int_X\overline\chi_j\wedge\kappa^{n-1}=\alpha^n$. A subsequence
therefore converges weakly to a current in $\mathcal C_{c-\ell}$ by
Lemma~\ref{lem:trace-closure}. Here
$c-\ell\ge\min_X\tr_{\kappa_{\varphi_0}}\omega>0$.
Since $a_*=\zeta$, we obtain $\ell\le c-\zeta$. Thus
$b(t)/t\to c-\zeta$, and \eqref{eq:flow-normalization} proves
\eqref{eq:drift}.

If $\inf_K\varphi_\infty>-\infty$ on a compact set $K$, then
\[
 c-\zeta+\frac{\inf_K\varphi_\infty-C}{t}
 \le\inf_K\frac{\varphi(t)}t
 \le\sup_K\frac{\varphi(t)}t
 \le\frac{\sup_X\varphi(t)}t\longrightarrow c-\zeta.
\]
This proves the local uniform assertion.
\end{proof}

The following differential inequality converts convergence of temporal
averages into convergence at every time.
\begin{lemma}\label{lem:time}
\begin{equation}\label{eq:time}
 \partial_t\bigl(\tr_{\varphi}\omega\bigr)
 \ge-\frac1{2t}\tr_{\varphi}\omega
 \qquad(t>0).
\end{equation}
Consequently $\sqrt t\,\tr_{\varphi(t)}\omega(x)$ is
nondecreasing in $t$ for each $x\in X$.
\end{lemma}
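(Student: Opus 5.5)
The plan is a Li--Yau/Hamilton-type maximum principle for the trace. Set $u=\tr_\varphi\omega=c-\dot\varphi>0$. By Lemma~\ref{lem:flow-facts}(1), $u$ solves the same linear equation $(\partial_t-\widetilde\Delta)u=0$. The claim \eqref{eq:time} is equivalent to
\[
F:=2t\,\partial_tu+u\ge0 .
\]
The monotonicity of $\sqrt t\,u$ then follows at once from $\partial_t(\sqrt t\,u)=F/(2\sqrt t)$. So everything reduces to showing that $F$ stays nonnegative, which I would do with the parabolic minimum principle for $F$.

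\textbf{Step 1: evolution of $u_t=\partial_tu$.} Differentiate $\partial_tu=h^{i\bar j}u_{i\bar j}$ in time. This gives
\[
(\partial_t-\widetilde\Delta)u_t=(\partial_th^{i\bar j})\,u_{i\bar j}.
\]
Since $\partial_t\chi_{i\bar j}=\dot\varphi_{i\bar j}=-u_{i\bar j}$ and $h=\chi^{-1}g\chi^{-1}$, differentiating the inverse gives
\[
\partial_th=\chi^{-1}U\chi^{-1}g\chi^{-1}+\chi^{-1}g\chi^{-1}U\chi^{-1},\qquad U=(u_{i\bar j}).
\]
Put $A=\chi^{-1/2}U\chi^{-1/2}$ and $G=\chi^{-1/2}g\chi^{-1/2}>0$. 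Then the right-hand side equals $2\tr(AGA)$. The Cauchy--Schwarz argument of Lemma~\ref{lem:trace-cs} gives
\[
\tr(AGA)\,\tr G\ge\bigl(\tr(GA)\bigr)^2 .
\]
Here $\tr G=u$ and $\tr(GA)=\widetilde\Delta u=u_t$. Hence
\[
(\partial_t-\widetilde\Delta)u_t\ge \frac{2u_t^2}{u}.
\]

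\textbf{Step 2: evolution of $F$.} Combining Step 1 with $(\partial_t-\widetilde\Delta)u=0$ gives
\[
(\partial_t-\widetilde\Delta)F\ge 2u_t+\frac{4t\,u_t^2}{u}=\frac{2u_t}{u}\,F .
\]
This is the key structural inequality: its right-hand side is a bounded multiple of $F$.

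\textbf{Step 3: maximum principle.} On a compact interval $[0,\tau]$, the flow is smooth and $u$ is bounded below by Lemma~\ref{lem:flow-facts}(2), so the coefficient $b=2u_t/u$ is bounded, say $|b|\le K$. Set $G_\varepsilon=e^{-Kt}F+\varepsilon e^{t}$. At $t=0$ we have $F=u>0$. If $G_\varepsilon$ first vanishes at $(x_0,t_0)$ with $t_0>0$, then $F<0$ there, and at that point $\partial_tG_\varepsilon\le0$ and $\widetilde\Delta G_\varepsilon\ge0$. The inequality of Step 2 then gives
\[
0\ge(\partial_t-\widetilde\Delta)G_\varepsilon\ge (b-K)e^{-Kt}F+\varepsilon e^t>0,
\]
using $(b-K)F\ge0$ when $F<0$. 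This is a contradiction. Letting $\varepsilon\downarrow0$ and then $\tau\to\infty$ gives $F\ge0$ for all $t>0$, which is \eqref{eq:time}.

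I expect the main obstacle to be Step 1, namely the correct sign and the Cauchy--Schwarz bound for $(\partial_th^{i\bar j})u_{i\bar j}$. The risk is the order of the noncommuting factors in $\partial_th$. I would verify in a frame where $\chi=I$ that both terms reduce to $\tr(UGU)$ with $U$ Hermitian, so that the quadratic form is manifestly nonnegative and the trace inequality applies.
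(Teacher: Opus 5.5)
Your proposal is correct and is essentially the paper's argument. Your $F$ is $-H$ for the paper's $H=2t\ddot\varphi+\dot\varphi-c$, and the paper uses the same three steps: the identity for $(\partial_t-\widetilde\Delta)\ddot\varphi$ (computed in a frame diagonalizing $\kappa_\varphi$), the same Cauchy--Schwarz bound by $\tr_\varphi\omega=c-\dot\varphi$, and the linear inequality $(\partial_t-\widetilde\Delta)H\le-\frac{2\ddot\varphi}{c-\dot\varphi}H$ closed by the maximum principle. Your frame-free computation and explicit barrier $e^{-Kt}F+\varepsilon e^{t}$ only add detail; note that $\tr(AGA)\,\tr G\ge(\tr(GA))^2$ is the Hilbert--Schmidt Cauchy--Schwarz inequality for $G^{1/2}$ and $AG^{1/2}$, not literally Lemma~\ref{lem:trace-cs}.
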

\begin{proof}
At a point, choose $\omega$-normal coordinates in which
$(\chi_{i\bar j})=\operatorname{diag}(\lambda_1,\ldots,\lambda_n)$.
Differentiating $\ddot\varphi=\widetilde\Delta\dot\varphi$ gives
\[
 (\partial_t-\widetilde\Delta)\ddot\varphi
 =-2\sum_{i,j}\frac{|\dot\varphi_{i\bar j}|^2}{\lambda_i^2\lambda_j}.
\]
By Cauchy--Schwarz inequality,
\[
 (\ddot\varphi)^2
 =\left(\sum_i\frac{\dot\varphi_{i\bar i}}{\lambda_i^2}\right)^2
 \le\left(\sum_i\lambda_i^{-1}\right)
          \sum_i\frac{|\dot\varphi_{i\bar i}|^2}{\lambda_i^3}
 \le(c-\dot\varphi)\sum_{i,j}
          \frac{|\dot\varphi_{i\bar j}|^2}{\lambda_i^2\lambda_j}.
\]
For $H=2t\ddot\varphi+\dot\varphi-c$, it follows that
\[
 (\partial_t-\widetilde\Delta)H
 \le2\ddot\varphi-\frac{4t(\ddot\varphi)^2}{c-\dot\varphi}
 =-\frac{2\ddot\varphi}{c-\dot\varphi}H,\qquad
 H(0)=-\tr_{\varphi_0}\omega<0.
\]
On every finite cylinder, the coefficients are smooth and bounded and
$\widetilde\Delta$ is uniformly elliptic. The maximum principle gives
$H\le0$, equivalently
$-\ddot\varphi\ge-(c-\dot\varphi)/(2t)$. This is \eqref{eq:time}.
\end{proof}
\begin{lemma}\label{lem:trace-limit}
Let $a>0$. If $\varphi(t)/t\to c-a$ in $L^1(X)$, then
\[
 \tr_{\varphi(t)}\omega\longrightarrow a
 \quad\text{in }L^1(X).
\]
If the potential convergence is locally uniform on an open set, so is
the trace convergence.
\end{lemma}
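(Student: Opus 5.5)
The plan is to combine the exact identity for time averages of the trace, given by the flow equation, with the almost-monotonicity of Lemma~\ref{lem:time}, using short time windows $[\lambda t,t]$ and $[t,t/\lambda]$ with $\lambda\uparrow1$. Integrating $\partial_t\varphi=c-\tr_\varphi\omega$ over $[t_1,t_2]$ gives, pointwise on $X$,
\[
 \frac1{t_2-t_1}\int_{t_1}^{t_2}\tr_{\varphi(s)}\omega\,ds
 =c-\frac{\varphi(t_2)-\varphi(t_1)}{t_2-t_1}.
\]
Fix $0<\lambda<1$. Apply this with $(t_1,t_2)=(\lambda t,t)$ and with $(t,t/\lambda)$. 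Together with the hypothesis $\varphi(t)/t\to c-a$ in $L^1(X)$, it yields
\[
 \frac{\varphi(t)-\varphi(\lambda t)}{(1-\lambda)t}
 =\frac{1}{1-\lambda}\Bigl(\frac{\varphi(t)}{t}-\lambda\frac{\varphi(\lambda t)}{\lambda t}\Bigr)
 \longrightarrow c-a\quad\text{in }L^1(X).
\]
The same holds for the window $[t,t/\lambda]$. Hence both window averages $A^-_\lambda(t)$ (over $[\lambda t,t]$) and $A^+_\lambda(t)$ (over $[t,t/\lambda]$) of the trace converge to $a$ in $L^1(X)$ as $t\to\infty$, for each fixed $\lambda$.

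Next I use Lemma~\ref{lem:time}: $s\mapsto\sqrt s\,\tr_{\varphi(s)}\omega(x)$ is nondecreasing. For $s\in[\lambda t,t]$ this gives $\tr_{\varphi(s)}\omega\le\sqrt{t/s}\,\tr_{\varphi(t)}\omega$. Averaging, and using $\frac1{1-\lambda}\int_\lambda^1u^{-1/2}du=\frac{2}{1+\sqrt\lambda}$, I get
\[
 \tr_{\varphi(t)}\omega\ge\frac{1+\sqrt\lambda}{2}\,A^-_\lambda(t).
\]
For $s\in[t,t/\lambda]$, monotonicity gives $\tr_{\varphi(t)}\omega\le\sqrt{s/t}\,\tr_{\varphi(s)}\omega\le\lambda^{-1/2}\tr_{\varphi(s)}\omega$. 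Averaging yields $\tr_{\varphi(t)}\omega\le\lambda^{-1/2}A^+_\lambda(t)$. Thus, pointwise,
\[
 \frac{1+\sqrt\lambda}{2}A^-_\lambda(t)-a\le\tr_{\varphi(t)}\omega-a\le\lambda^{-1/2}A^+_\lambda(t)-a,
\]
so $|\tr_{\varphi(t)}\omega-a|$ is bounded by the sum of the absolute values of the two outer terms. Taking $L^1$ norms and $\limsup_{t\to\infty}$ gives
\[
 \limsup_{t\to\infty}\|\tr_{\varphi(t)}\omega-a\|_{L^1}\le a\,\vol_\omega(X)\Bigl(\bigl(1-\tfrac{1+\sqrt\lambda}2\bigr)+(\lambda^{-1/2}-1)\Bigr).
\]
Letting $\lambda\uparrow1$ proves the $L^1$ convergence. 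The locally uniform statement follows verbatim: on an open set where $\varphi(t)/t\to c-a$ locally uniformly, the window averages converge locally uniformly, and the same two-sided pointwise sandwich transfers this.

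The main point to check is the correctness of the squeeze, namely that Lemma~\ref{lem:time} gives the monotonicity of $\sqrt t\,\tr$ in exactly the direction used on both sides. It does: the quantity is nondecreasing, which bounds earlier values from above on the left window and later values from below on the right window. The remaining step is that the error factors $\frac{1+\sqrt\lambda}2$ and $\lambda^{-1/2}$ tend to $1$. No integrability issue arises, since $\tr_{\varphi}\omega$ is uniformly bounded by Lemma~\ref{lem:flow-facts}, so all averages are bounded measurable functions and Fubini applies.
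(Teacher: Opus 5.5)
Your proof is correct and follows essentially the same route as the paper. You integrate the flow equation over the windows $[\lambda t,t]$ and $[t,t/\lambda]$ (the paper's $[t/R,t]$ and $[t,Rt]$ with $R=1/\lambda$), sandwich $\tr_{\varphi(t)}\omega$ between the window averages using the monotonicity of $\sqrt t\,\tr_{\varphi(t)}\omega$ from Lemma~\ref{lem:time}, and let $\lambda\uparrow1$. The only difference is your upper factor $\lambda^{-1/2}$, which is cruder than the paper's $(1+\lambda^{-1/2})/2$ (obtained by integrating $\sqrt{s/t}$ exactly); this is immaterial in the limit.
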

\begin{proof}
Fix $R>1$ and $t>0$. Lemma~\ref{lem:time} gives, pointwise on $X$,
\[
 \begin{aligned}
 \tr_{\varphi(s)}\omega
 &\ge\sqrt{t/s}\,\tr_{\varphi(t)}\omega
       &&(t\le s\le Rt),\\
 \tr_{\varphi(s)}\omega
 &\le\sqrt{t/s}\,\tr_{\varphi(t)}\omega
       &&(t/R\le s\le t).
 \end{aligned}
\]
Integrating these inequalities yields
\[
 \begin{aligned}
 \int_t^{Rt}\tr_{\varphi(s)}\omega\,ds
 &\ge2t(\sqrt R-1)\tr_{\varphi(t)}\omega,\\
 \int_{t/R}^t\tr_{\varphi(s)}\omega\,ds
 &\le2t(1-R^{-1/2})\tr_{\varphi(t)}\omega.
 \end{aligned}
\]
By the flow equation, the two integrals equal
$c(R-1)t-\varphi(Rt)+\varphi(t)$ and
$c(1-R^{-1})t-\varphi(t)+\varphi(t/R)$, respectively.
Dividing by the positive coefficients of the trace therefore gives
\begin{equation}\label{eq:time-averages}
 \begin{aligned}
 \tr_{\varphi(t)}\omega
 &\le\frac{\sqrt R+1}{2}
   \left(c-\frac{\varphi(Rt)-\varphi(t)}{(R-1)t}\right),\\
 \tr_{\varphi(t)}\omega
 &\ge\frac{\sqrt R+1}{2\sqrt R}
   \left(c-\frac{\varphi(t)-\varphi(t/R)}{(1-R^{-1})t}\right).
 \end{aligned}
\end{equation}
Both right-hand functions in the parenthesis tend to $a$ in $L^1(X)$. Hence
\[
 \begin{aligned}
 \limsup_{t\to\infty}\int_X
   \bigl(\tr_{\varphi(t)}\omega-a\bigr)_+\,\omega^n
 &\le\frac{\sqrt R-1}{2}\,a\beta^n,\\
 \limsup_{t\to\infty}\int_X
   \bigl(a-\tr_{\varphi(t)}\omega\bigr)_+\,\omega^n
 &\le\frac{\sqrt R-1}{2\sqrt R}\,a\beta^n.
 \end{aligned}
\]
Let $R\downarrow1$. If the potential convergence is uniform on a
compact set, the same argument applies to the suprema of the two
positive parts on that set.
\end{proof}

\begin{proposition}\label{prop:flow-weak}
\begin{equation}\label{eq:flow-weak}
 \tr_{\varphi(t)}\omega\longrightarrow\zeta
       \quad\text{in }L^1(X),\qquad
 \kappa+\ddc\varphi(t)\rightharpoonup T.
\end{equation}
The trace convergence is locally uniform wherever $\varphi_\infty$
is locally bounded below. Moreover,
\begin{equation}\label{eq:flow-level}
 \begin{aligned}
 b'(t)&=c-\frac1{\beta^n}
            \int_X\tr_{\varphi(t)}\omega\,\omega^n
          \longrightarrow c-\zeta,\\
 \partial_t(\varphi-b)
   &=c-b'(t)-\tr_{\varphi}\omega.
 \end{aligned}
\end{equation}
\end{proposition}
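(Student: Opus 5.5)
The plan is to combine the drift lemma with the trace-limit lemma, and then identify every weak limit using the rigidity theorem. First, Lemma~\ref{lem:drift} gives $\varphi(t)/t\to c-\zeta$ in $L^1(X)$, locally uniformly wherever $\varphi_\infty$ is locally bounded below. Since $\zeta>0$ by Lemma~\ref{lem:positive-slope}, Lemma~\ref{lem:trace-limit} applies with $a=\zeta$. It yields $\tr_{\varphi(t)}\omega\to\zeta$ in $L^1(X)$, with locally uniform convergence on the same open set. This is the first assertion of \eqref{eq:flow-weak}.

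Next, for the formula for $b'$, differentiate under the integral: $b'(t)=\frac1{\beta^n}\int_X\dot\varphi\,\omega^n=c-\frac1{\beta^n}\int_X\tr_{\varphi}\omega\,\omega^n$. The $L^1$ convergence of the trace then gives $b'(t)\to c-\zeta$. The identity for $\partial_t(\varphi-b)$ is just the flow equation minus $b'$.

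For the weak convergence, note that the currents $\kappa_{\varphi(t)}$ have fixed mass $\alpha^n$ against $\kappa^{n-1}$. They also have the uniform lower bound $\kappa_{\varphi}\ge\omega/\max_X\tr_{\varphi_0}\omega$ from Lemma~\ref{lem:flow-facts}. So for any $t_j\to\infty$ a subsequence converges weakly to a positive closed current $S\in\alpha$. Apply Lemma~\ref{lem:trace-closure} with $\chi_j=\kappa_{\varphi(t_j)}$ and $f_j=\tr_{\varphi(t_j)}\omega$, which converges to $\zeta$ in $L^1$. This gives $S\in\mathcal C_\zeta$. By Theorem~\ref{thm:rigidity} (for $n\ge3$; on surfaces via the analogous statement, which I would need to supply separately), $\mathcal C_\zeta=\{T\}$, so $S=T$. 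Since every subsequential limit is $T$, the whole family converges: $\kappa+\ddc\varphi(t)\rightharpoonup T$.

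The main obstacle is the identification $\mathcal C_\zeta=\{T\}$. This requires the full rigidity theorem, and the case $n=2$ is not covered by the section as written. Everything else is a direct assembly of the lemmas already proved. One minor check is that Lemma~\ref{lem:trace-closure} only needs the $L^1$ convergence $f_j\to\zeta$ together with the uniform lower bound, and both are available.
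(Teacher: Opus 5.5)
Your proposal is correct and is essentially the paper's own proof: Lemmas~\ref{lem:drift} and~\ref{lem:trace-limit} give the trace convergence, weak compactness plus Lemma~\ref{lem:trace-closure} places every subsequential limit in $\mathcal C_\zeta=\{T\}$, and differentiating $b(t)$ gives \eqref{eq:flow-level}. Your $n=2$ caveat is not a gap peculiar to your argument. The paper invokes $\mathcal C_\zeta=\{T\}$ in exactly the same way, as the standing input fixed at the start of Section~\ref{sec:flow}. Moreover, no step in Section~\ref{sec:rigidity} seems to require $n\ge3$: for $n=2$ the Michelsohn root is simply $\chi_j=q_j=c_jh_j-\omega_j$, so no separate surface argument should be needed.
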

\begin{proof}
Lemmas~\ref{lem:drift} and~\ref{lem:trace-limit} give the trace
convergence. Every sequence $t_j\to\infty$
has a subsequence on which $\kappa+\ddc\varphi(t_j)$ converges weakly.
Lemma~\ref{lem:trace-closure}, with
$f_j=\tr_{\varphi(t_j)}\omega$, puts the limit in
$\mathcal C_\zeta=\{T\}$. Thus the entire flow converges.
Differentiating the integral defining $b(t)$ proves
\eqref{eq:flow-level}.
\end{proof}
In the semistable case, the perturbed solutions used in
Corollary~\ref{cor:semistable} give global comparison functions.
Lemma~\ref{lem:trace-limit} then yields uniform convergence on $X$. We remark that when $\zeta<c$, one cannot expect uniform  convergence for the trace, since the average of the trace with respect to $\kappa_\varphi^n$ is always equal to $c$. 
\begin{corollary}\label{cor:uniform-trace}
Suppose that $(\alpha,\beta)$ is $J$-semistable. Then
\begin{equation}\label{eq:uniform-trace}
 \left\|\frac{\varphi(t)}t\right\|_{L^\infty(X)}\longrightarrow0,
 \qquad
 \|\dot\varphi(t)\|_{L^\infty(X)}
 =\|\tr_{\kappa+\ddc\varphi(t)}\omega-c\|_{L^\infty(X)}
 \longrightarrow0.
\end{equation}
\end{corollary}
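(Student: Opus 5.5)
The idea is to build global smooth comparison functions from the perturbed smooth solutions in the proof of Corollary~\ref{cor:semistable}, and then feed them into the time-averaging Lemma~\ref{lem:trace-limit}, now in $L^\infty$. By Corollary~\ref{cor:semistable}, $\zeta=c$. For each $\epsilon>0$ we take the K\"ahler form $h_\epsilon=\kappa+\ddc u_\epsilon\in\alpha$ with $u_\epsilon$ smooth and
\[
\tr_{h_\epsilon}(\omega+\epsilon\kappa)=c+n\epsilon .
\]
Since $\omega\le\omega+\epsilon\kappa$, this gives $\tr_{h_\epsilon}\omega\le c+n\epsilon$ on all of $X$.

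\emph{Lower bound.} We rerun the maximum-principle comparison of Lemma~\ref{lem:flow-comparison} with $\varphi_\infty$ replaced by the smooth function $u_\epsilon$ and with slope $-n\epsilon$. Set
\[
H=u_\epsilon-n\epsilon t-C_\epsilon-\varphi ,
\]
with $C_\epsilon$ chosen so that $H(0)\le0$. At an interior maximum we have $\ddc\varphi\le\ddc u_\epsilon$, hence $\tr_\varphi\omega\ge\tr_{h_\epsilon}\omega$. Therefore
\[
\partial_t H=-n\epsilon-c+\tr_\varphi\omega\ge -n\epsilon-c+\tr_{h_\epsilon}\omega .
\]
To close the argument I need strictness, so I use slope $-n\epsilon-\delta$ and let $\delta\to0$; this yields a contradiction. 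Hence
\[
\varphi\ge u_\epsilon-n\epsilon t-C_\epsilon ,\qquad \inf_X\frac{\varphi(t)}{t}\ge -n\epsilon-\frac{C'_\epsilon}{t}.
\]

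\emph{Upper bound.} Lemma~\ref{lem:drift} gives $\sup_X\varphi(t)/t\to c-\zeta=0$. Combining the two bounds and letting $t\to\infty$ and then $\epsilon\to0$ gives the first assertion of \eqref{eq:uniform-trace}, namely $\|\varphi(t)/t\|_{L^\infty}\to0$.

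\emph{Trace convergence.} The pointwise inequalities \eqref{eq:time-averages} in the proof of Lemma~\ref{lem:trace-limit} hold at every $x\in X$. They involve only differences such as $(\varphi(Rt)-\varphi(t))/((R-1)t)$, and these tend to $0$ uniformly by the previous step. Hence
\[
\limsup_{t\to\infty}\sup_X\,(\tr_\varphi\omega-c)\le\tfrac{\sqrt R-1}{2}\,c ,\qquad \limsup_{t\to\infty}\sup_X\,(c-\tr_\varphi\omega)\le\Bigl(1-\tfrac{\sqrt R+1}{2\sqrt R}\Bigr)c .
\]
Letting $R\downarrow1$ gives $\|\tr_\varphi\omega-c\|_{L^\infty}\to0$. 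Since $\dot\varphi=c-\tr_\varphi\omega$, this is the second assertion.

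The main obstacle is the uniform lower bound: the global potential of $T$ may be unbounded, so it cannot serve as the comparison function, and this is why the smooth solutions $h_\epsilon$ are needed. The delicate point there is handling $\epsilon$ carefully, so that the error grows like $n\epsilon t$ rather than being bounded.
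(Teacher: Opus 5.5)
Your overall strategy is the paper's: compare $\varphi$ with the smooth potentials $u_\epsilon$ of Song's perturbed solutions to get $\|\varphi(t)/t\|_{L^\infty}\to0$. Then run the pointwise inequalities \eqref{eq:time-averages} uniformly on $X$ with $a=c$. However, the maximum-principle step is written with the inequalities reversed, and as written it does not close.

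At a spatial maximum of $H=u_\epsilon-(n\epsilon+\delta)t-C_\epsilon-\varphi$ one has $\ddc(u_\epsilon-\varphi)\le0$, i.e.\ $\kappa_\varphi\ge h_\epsilon$. Hence $\tr_\varphi\omega\le\tr_{h_\epsilon}\omega\le c+n\epsilon$, and so $\partial_tH=-(n\epsilon+\delta)-c+\tr_\varphi\omega\le-\delta<0$. This contradicts $\partial_tH\ge0$ at a positive maximum over $X\times[0,\tau]$, which must occur at some $t_0>0$ because $H(0)\le0$. Your version instead bounds $\partial_tH$ from below by $-n\epsilon-c+\tr_{h_\epsilon}\omega$. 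That number is nonpositive, so the bound is perfectly compatible with $\partial_tH\ge0$ and yields nothing. With the sign fixed, the lower bound $\varphi\ge u_\epsilon-n\epsilon t-C_\epsilon$ holds. The rest of your argument, including the uniform version of Lemma~\ref{lem:trace-limit} and the limits as $R\downarrow1$, is then correct.

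The only real difference from the paper is the upper bound. The paper uses the same $u_\epsilon$ on both sides. From $\omega\le\omega+\epsilon\kappa\le(1+C\epsilon)\omega$ it gets $\frac{c+n\epsilon}{1+C\epsilon}\le\tr_{h_\epsilon}\omega\le c+n\epsilon$. Hence $u_\epsilon\pm(\delta_\epsilon t+C_\epsilon)$, with $\delta_\epsilon=\|\tr_{h_\epsilon}\omega-c\|_{L^\infty}\to0$, are a sub- and a supersolution, giving $|\varphi-u_\epsilon|\le C_\epsilon+\delta_\epsilon t$ in one stroke. You instead take the upper bound $\sup_X\varphi(t)/t\to0$ from Lemma~\ref{lem:drift} with $\zeta=c$. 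This is legitimate, since that lemma is proved earlier. But it imports the threshold identification $a_*=\zeta$ and a weak-compactness argument for a bound that the supersolution built from $h_\epsilon$ gives elementarily.
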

\begin{proof}
Choose $C>0$ with $\kappa\le C\omega$. For every $\varepsilon>0$,
the proof of Corollary~\ref{cor:semistable}, using
\cite[Corollary~1.2]{S20}, provides a smooth K\"ahler form
\[
 \kappa_\varepsilon=\kappa+\ddc u_\varepsilon\in\alpha,
 \qquad
 \tr_{\kappa_\varepsilon}(\omega+\varepsilon\kappa)=c+n\varepsilon.
\]
Since $\omega\le\omega+\varepsilon\kappa\le(1+C\varepsilon)\omega$,
\[
 \frac{c+n\varepsilon}{1+C\varepsilon}
 \le\tr_{\kappa_\varepsilon}\omega\le c+n\varepsilon,
 \qquad
 \delta_\varepsilon
 :=\|\tr_{\kappa_\varepsilon}\omega-c\|_{L^\infty(X)}
 \longrightarrow0.
\]
Set $C_\varepsilon=\|\varphi_0-u_\varepsilon\|_{L^\infty(X)}$.
The functions
\[
 u_\varepsilon-\delta_\varepsilon t-C_\varepsilon,
 \qquad
 u_\varepsilon+\delta_\varepsilon t+C_\varepsilon
\]
are respectively a subsolution and a supersolution of
\eqref{eq:flow}, because their spatial metrics equal $\kappa_\varepsilon$
and
\[
 -\delta_\varepsilon
 \le c-\tr_{\kappa_\varepsilon}\omega\le\delta_\varepsilon.
\]
They enclose $\varphi_0$ at $t=0$. The parabolic comparison principle
on each finite cylinder gives
\[
 |\varphi(t,x)-u_\varepsilon(x)|
 \le C_\varepsilon+\delta_\varepsilon t.
\]
Thus, for each fixed $\varepsilon>0$,
\[
 \limsup_{t\to\infty}
 \left\|\frac{\varphi(t)}t\right\|_{L^\infty(X)}
 \le\delta_\varepsilon.
\]
Let $\varepsilon\downarrow0$ and apply
Lemma~\ref{lem:trace-limit} on $X$, with $a=c$.
No bound on $\|u_\varepsilon\|_{L^\infty(X)}$ uniform in
$\varepsilon$ is required.
\end{proof}
\subsection{Uniform estimates off a subvariety}
We use the strict subsolution from Section~\ref{sec:concentration} to
bound the potential and the metric away from its pole set. Fix
\[
 B=\kappa+\ddc\psi\ge b_0\omega,\qquad
 \sup_X\psi=0,\qquad Z=\{\psi=-\infty\},
\]
where $\psi$ has analytic singularity type and is smooth on
$X\setminus Z$, and
\begin{equation}\label{eq:flow-barrier}
 P_\omega(B)\le\zeta-\eta\quad\text{on }X\setminus Z,
 \qquad b_0>0,\quad 0<\eta<\zeta.
\end{equation}
Such a $B$ exists by Theorem~\ref{thm:strict-subsolution}.

Recall the following calculations of Weinkove, and Song-Weinkove \cite{W06,SW08}, \begin{lemma}\label{lem:flow-trace}
There is $C_\omega\ge0$, depending only on $\omega$ and $n$, such that
\begin{equation}\label{eq:flow-trace}
 (\widetilde\Delta-\partial_t)
       \log\tr_\omega(\kappa_\varphi)
 \ge-C_\omega\tr_h\omega.
\end{equation}
If $\lambda_1,\ldots,\lambda_n$ are the eigenvalues of
$\kappa+\ddc\varphi$ relative to $\omega$, then
$\tr_h\omega=\sum_i\lambda_i^{-2}$.
\end{lemma}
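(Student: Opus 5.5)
We prove \eqref{eq:flow-trace} by the standard second-order computation of Weinkove \cite{W06} and Song--Weinkove \cite{SW08}, done pointwise at a fixed $(x,t)$. Choose holomorphic normal coordinates for $\omega$ at $x$, so that $g_{i\bar j}=\delta_{ij}$ and $dg_{i\bar j}=0$, and so that $\chi_{i\bar j}=\lambda_i\delta_{ij}$. In these coordinates $h^{i\bar j}=\lambda_i^{-2}\delta_{ij}$, and hence $\tr_h\omega=\sum_i\lambda_i^{-2}$, which is the final assertion. Set $S=\tr_\omega\kappa_\varphi=g^{k\bar\ell}\chi_{k\bar\ell}$.

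\textbf{Evolution of $S$.} Differentiating the flow gives $\partial_t\chi_{k\bar\ell}=\partial_k\partial_{\bar\ell}\dot\varphi=-\partial_k\partial_{\bar\ell}(\chi^{p\bar q}g_{p\bar q})$. Expanding this fourth-order term:
\begin{itemize}
\item Use closedness of $\kappa_\varphi$ to commute derivatives, $\chi_{k\bar\ell p}=\chi_{p\bar\ell k}$.
\item The term $\chi^{p\bar i}\chi^{j\bar q}g_{p\bar q}\,\chi_{i\bar j k\bar k}$ reproduces $\widetilde\Delta S$, up to curvature terms of $\omega$.
\item The remaining terms are of three kinds: good terms quadratic in $\partial\chi$, of the form $\lambda_i^{-2}\lambda_j^{-1}|\chi_{i\bar j k}|^2$ summed over $i,j,k$; terms quadratic in $\partial\chi$ paired with first derivatives of $g$ (these vanish at $x$ in normal coordinates); and terms $\chi^{p\bar i}\chi^{j\bar q}\partial_k\partial_{\bar k}g_{p\bar q}$, involving the curvature of $\omega$, which are bounded by $C\sum_i\lambda_i^{-2}$.
\end{itemize}
Commuting third derivatives of $\chi$ in the $\widetilde\Delta S$ term also produces curvature terms. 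Since $\lambda_k\le S$, these are bounded by $C\,S\sum_i\lambda_i^{-2}$.

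\textbf{Passing to the logarithm.} This yields
\[
 (\widetilde\Delta-\partial_t)S\ \ge\ \sum_{i,j,k}\frac{|\chi_{i\bar jk}|^2}{\lambda_i^2\lambda_j}-C\,S\sum_i\lambda_i^{-2}.
\]
Then
\[
(\widetilde\Delta-\partial_t)\log S=\frac{(\widetilde\Delta-\partial_t)S}{S}-\frac{h^{i\bar j}S_iS_{\bar j}}{S^2}.
\]

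\textbf{Main obstacle: the gradient term.} We must show
\[
\frac{h^{i\bar j}S_iS_{\bar j}}{S}=\frac{1}{S}\sum_i\lambda_i^{-2}\Bigl|\sum_k\chi_{k\bar k i}\Bigr|^2\le\sum_{i,j,k}\frac{|\chi_{i\bar jk}|^2}{\lambda_i^2\lambda_j}.
\]
Fix $i$. By Cauchy--Schwarz with weights $\lambda_k$,
\[
\Bigl|\sum_k\chi_{k\bar ki}\Bigr|^2\le\Bigl(\sum_k\lambda_k\Bigr)\sum_k\frac{|\chi_{k\bar ki}|^2}{\lambda_k}=S\sum_k\frac{|\chi_{k\bar ki}|^2}{\lambda_k}.
\]
Next, $\chi_{k\bar k i}=\chi_{i\bar k k}$ by closedness. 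Hence
\[
\lambda_i^{-2}\sum_k\frac{|\chi_{k\bar ki}|^2}{\lambda_k}=\sum_k\frac{|\chi_{i\bar kk}|^2}{\lambda_i^2\lambda_k},
\]
which is part of the good term (the index pattern $j=k$).

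Summing over $i$ absorbs the gradient term completely. What remains is
\[
(\widetilde\Delta-\partial_t)\log S\ge-C\sum_i\lambda_i^{-2}=-C_\omega\tr_h\omega,
\]
with $C_\omega$ depending only on the curvature of $\omega$ and on $n$. This is \eqref{eq:flow-trace}.
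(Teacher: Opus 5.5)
Your proposal is correct. It is the standard Weinkove computation that the paper simply cites from \cite{W06,SW08}: the fourth-order terms cancel by closedness, you keep the good term $\sum_{i,j,k}\lambda_i^{-2}\lambda_j^{-1}|\chi_{i\bar jk}|^2$, and you absorb the gradient term by weighted Cauchy--Schwarz using its $j=k$ slice. One bookkeeping slip concerns the curvature terms.
\begin{itemize}
\item The $\partial\bar\partial g$ term coming from $\partial_tS$ is $\chi^{p\bar q}\partial_k\partial_{\bar k}g_{p\bar q}=O(\sum_p\lambda_p^{-1})$. It has one inverse factor, not two, so it is not $O(\sum_i\lambda_i^{-2})$ as you state.
\item In coordinates $\chi_{k\bar k a\bar a}=\chi_{a\bar a k\bar k}$ holds exactly by closedness. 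The curvature term in $\widetilde\Delta S$ therefore comes from $\partial\bar\partial g^{k\bar\ell}$, not from commuting derivatives of $\chi$.
\end{itemize}
Both terms are still bounded by $C\,S\sum_i\lambda_i^{-2}$ because $\lambda_p\le S$. Your displayed inequality for $(\widetilde\Delta-\partial_t)S$ and the final estimate therefore stand.
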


Now we can use the singular strict subsolution to get the weighted $C^2$-estimate. 
\begin{lemma}\label{lem:flow-weight} There are $t_0,N,C>0$ such that, for every
$\tau\ge t_0$, setting
\[
 m_\tau=\inf_{(X\setminus Z)\times[t_0,\tau]}
                    \bigl(\varphi(x,t)-b(t)-\psi(x)\bigr),
\]
one has
\begin{equation}\label{eq:flow-weight}
 \tr_\omega(\kappa_\varphi(t))
 \le C\exp\!\left[N\bigl(\varphi(t)-b(t)-\psi-m_\tau\bigr)\right]
 \quad\text{on }X\setminus Z,\quad t_0\le t\le\tau.
\end{equation}
The constants are independent of $\tau$.
\end{lemma}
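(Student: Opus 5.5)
We apply the maximum principle to the quantity
\[
 G=\log\tr_\omega(\kappa_\varphi)-N\bigl(\varphi-b(t)-\psi\bigr)
\]
on $(X\setminus Z)\times[t_0,\tau]$, following Song--Weinkove \cite{SW08}, with the singular strict subsolution $B$ replacing their smooth cone form. Since $\psi\to-\infty$ along $Z$ and $\varphi(t)$ is smooth on $X$, we have $-N(\varphi-b-\psi)\to-\infty$ near $Z$. Hence $G\to-\infty$ near $Z$ for each $t$, and the maximum of $G$ over the compact-in-time cylinder is attained at an interior point of $X\setminus Z$ or at $t=t_0$. At $t=t_0$ the trace $\tr_\omega\kappa_{\varphi(t_0)}$ is bounded by a constant depending on $t_0$. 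The term $-N(\varphi-b-\psi)$ is at most $-Nm_\tau$. This gives the bound $\log\tr_\omega\kappa_\varphi\le C+N(\varphi-b-\psi-m_\tau)$, which is exactly \eqref{eq:flow-weight}. So it suffices to show that at an interior maximum with $t>t_0$ one has $\tr_\omega\kappa_\varphi\le C$.

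At such a point we compute $(\widetilde\Delta-\partial_t)G\le0$. By Lemma~\ref{lem:flow-trace} the first term contributes at least $-C_\omega\tr_h\omega$. For the second term, note that $\widetilde\Delta(\varphi-\psi)=h^{i\bar j}\bigl((\kappa_\varphi)_{i\bar j}-B_{i\bar j}\bigr)=\tr_{\kappa_\varphi}\omega-\tr_h B$. Also $\partial_t(\varphi-b)=c-b'(t)-\tr_\varphi\omega$ by \eqref{eq:flow-level}. Therefore
\[
 0\ge-C_\omega\tr_h\omega-N\bigl(\tr_\varphi\omega-\tr_hB\bigr)+N\bigl(c-b'(t)-\tr_\varphi\omega\bigr).
\]
Here $\tr_hB=\sum_i B_{i\bar i}/\lambda_i^2$ in $\omega$-normal coordinates diagonalizing $\kappa_\varphi$. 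By Proposition~\ref{prop:flow-weak}, $c-b'(t)\to\zeta$, so we choose $t_0$ with $c-b'(t)\ge\zeta-\eta/4$ for $t\ge t_0$. The inequality then reduces to
\[
 N\Bigl(\tr_hB-2\tr_\varphi\omega+\zeta-\tfrac\eta4\Bigr)\le C_\omega\tr_h\omega .
\]

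The main obstacle is the algebraic lemma of Song--Weinkove type adapted to the partial-trace condition \eqref{eq:flow-barrier}. We need, for $\lambda$ the eigenvalues of $\kappa_\varphi$ relative to $\omega$: if the largest eigenvalue is $\lambda_n\ge K$ for a large $K$ depending only on $\eta,b_0,\zeta$, then
\[
 \tr_hB-2\tr_\varphi\omega+\zeta-\eta/4\ge\tfrac{\eta}{8}\tr_h\omega\quad\text{or at least}\quad\ge \epsilon_0\tr_h\omega .
\]
I expect the sign bookkeeping in $\widetilde\Delta$ versus $\tr_\varphi\omega$ to need adjusting, following \cite{SW08} precisely; the intended mechanism is the standard one. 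We would prove it via the Guan--Chen form of the cone condition. Since $P_\omega(B)\le\zeta-\eta$ and $B\ge b_0\omega$, for $\lambda_n$ large the concavity of $A\mapsto-\tr_A\omega$ yields, with $F(A)=\tr_A\omega$,
\[
 F(B)-F(\kappa_\varphi)\ge dF_{\kappa_\varphi}(B-\kappa_\varphi),
\]
and here $dF_{\kappa_\varphi}(B-\kappa_\varphi)=\tr_\varphi\omega-\tr_hB$ corresponds to the linearized term. The partial trace bound controls $F(B)$ restricted to the hyperplane orthogonal to the large direction. The large eigenvalue $\lambda_n$ makes $\tr_\varphi\omega$ close to the $(n-1)$-partial trace, which is $\le$ the level $\zeta-\eta/4$ up to error $O(1/K)$. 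This produces a margin $\eta/2$ times a quantity comparable to $1+\tr_h\omega$, using $B\ge b_0\omega$ to dominate $\tr_h B\ge b_0\tr_h\omega$. We then choose $N$ so that $N\epsilon_0>C_\omega$.

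The resulting contradiction shows that at the maximum point $\lambda_n\le K$. Together with the uniform lower bound $\kappa_\varphi\ge\omega/\max\tr_{\varphi_0}\omega$ from Lemma~\ref{lem:flow-facts}, this gives $\tr_\omega\kappa_\varphi\le nK$. Hence $\max G\le\log(nK)-Nm_\tau$, and the constants $t_0,N,C$ depend only on $\eta,b_0,\zeta,\omega,\varphi_0$, not on $\tau$.
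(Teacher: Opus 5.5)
Your overall route is the paper's. You use the same test function $\log\tr_\omega\kappa_\varphi-N(\varphi-b-\psi)$, the same choice of $t_0$ from $c-b'(t)\to\zeta$, and the same handling of the slice $t=t_0$ and of the region near $Z$. Your inequality $N(\tr_hB-2\tr_\varphi\omega+\zeta-\eta/4)\le C_\omega\tr_h\omega$ at a maximum with $t>t_0$ is correct as written; the signs need no adjustment. Your condition $N\epsilon_0>C_\omega$ plays the role of the paper's replacement $\widehat B=B-(C_\omega/N)\omega$. For $N$ large this still satisfies $\widehat B\ge\frac{b_0}{2}\omega$ and $P_\omega(\widehat B)\le\zeta-\eta/2$.

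The algebraic inequality you isolate is true, but the justification you sketch for it does not work.
\begin{itemize}
\item The global convexity inequality $F(B)\ge F(\kappa_\varphi)+dF_{\kappa_\varphi}(B-\kappa_\varphi)$ only gives $\tr_hB-2\tr_\varphi\omega\ge-\tr_B\omega$. But $\tr_B\omega$ is not bounded by $\zeta-\eta$: only the partial trace $P_\omega(B)$ is, and the full trace can exceed $\zeta-\eta/4$.
\item Your next claim, that the $(n-1)$-partial trace of $\kappa_\varphi$ is at most $\zeta-\eta/4+O(1/K)$ at the maximum point, has no basis. You have no pointwise upper bound of this kind on $\tr_\varphi\omega$, only $L^1$ convergence to $\zeta$.
\item The margin proportional to $\tr_h\omega$ cannot come from $\tr_hB\ge b_0\tr_h\omega$, because the term $\tr_hB$ is already spent against $2\tr_\varphi\omega$.
\end{itemize}

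Your remark that the partial-trace bound should be used on the hyperplane transverse to a large direction is the right idea, and it is exactly how the paper closes the argument. Work in an $\omega$-unitary frame diagonalizing $\kappa_\varphi$ and fix any index $j$. For $i\ne j$, complete squares: $\widehat B_{i\bar i}/\lambda_i^2-2/\lambda_i\ge-1/\widehat B_{i\bar i}$. For $i=j$, keep only $\widehat B_{j\bar j}/\lambda_j^2-2/\lambda_j\ge-2/\lambda_j$. Then use
\[
 \sum_{i\ne j}\frac1{\widehat B_{i\bar i}}
 \le\tr\bigl((\widehat B|_{H_j})^{-1}\bigr)
 \le P_\omega(\widehat B)\le\zeta-\frac\eta2,
 \qquad H_j=\{z_j=0\}.
\]
The first step holds because each diagonal entry of the inverse of a positive Hermitian matrix is at least the reciprocal of the corresponding diagonal entry.

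Splitting $B=\widehat B+(C_\omega/N)\omega$ absorbs the $C_\omega\tr_h\omega$ term exactly. The maximum-point inequality becomes $0\ge\eta/4-2/\lambda_j$ for every $j$, so $\tr_\omega\kappa_\varphi\le 8n/\eta$ there. This needs no case distinction on the largest eigenvalue and no use of the lower bound from Lemma~\ref{lem:flow-facts}. With this replacement for your third paragraph, the rest of your argument goes through.
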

\begin{proof}
Choose $N$ sufficiently large that
\[
 \widehat B=B-\frac{C_\omega}{N}\omega\ge\frac{b_0}{2}\omega,
 \qquad P_\omega(\widehat B)\le\zeta-\frac\eta2.
\]
Choose $t_0$ with $c-b'(t)\ge\zeta-\eta/4$ for $t\ge t_0$. This is possible by Proposition~\ref{prop:flow-weak}.
The flow equation gives
\[
 \begin{aligned}
 \widetilde\Delta(\varphi-b-\psi)
   &=\tr_{\varphi}\omega-\tr_h B,\\
 (\widetilde\Delta-\partial_t)(\varphi-b-\psi)
   &=2\tr_{\varphi}\omega-(c-b')-\tr_h B.
 \end{aligned}
\]
Consider
\[
 H=\log\tr_\omega(\kappa+\ddc\varphi)-N(\varphi-b-\psi).
\]
On each finite cylinder, $H\to-\infty$ uniformly as $x\to Z$.
If its maximum occurs at a time greater than $t_0$, then
\eqref{eq:flow-trace} gives, in an $\omega$-unitary frame diagonalizing
$\kappa+\ddc\varphi$,
\begin{equation}\label{eq:flow-maximum}
 0\ge(\widetilde\Delta-\partial_t)H
 \ge N\left(c-b'
      +\sum_i\frac{\widehat B_{i\bar i}}{\lambda_i^2}
      -2\sum_i\lambda_i^{-1}\right).
\end{equation}
For the coordinate hyperplane $H_j=\{z_j=0\}$, Cauchy--Schwarz
gives
\begin{equation}\label{eq:diagonal-compression}
 \sum_{i\ne j}\frac1{\widehat B_{i\bar i}}
 \le\tr\bigl((\widehat B|_{H_j})^{-1}\bigr)
 \le P_\omega(\widehat B).
\end{equation}
Indeed, each diagonal entry of the inverse is at least the reciprocal
of the corresponding diagonal entry. Completing squares now yields
\[
 \begin{aligned}
 \sum_i\frac{\widehat B_{i\bar i}}{\lambda_i^2}
       -2\sum_i\lambda_i^{-1}
 &\ge-\sum_{i\ne j}\frac1{\widehat B_{i\bar i}}
       -\frac2{\lambda_j}\\
 &\ge-P_\omega(\widehat B)-\frac2{\lambda_j}.
 \end{aligned}
\]
Consequently $0\ge\eta/4-2/\lambda_j$ for every $j$, and
\[
 \tr_\omega(\kappa_\varphi)\le\frac{8n}{\eta},
 \qquad H\le\log(8n/\eta)-Nm_\tau
\]
at this maximum. If the maximum occurs at $t_0$, then
\[
 H+Nm_\tau
 \le\max_{X\setminus Z}H(t_0)
      +N\inf_{X\setminus Z}(\varphi(t_0)-b(t_0)-\psi),
\]
which is independent of $\tau$. Thus $H\le C-Nm_\tau$ in both cases,
proving \eqref{eq:flow-weight}.
\end{proof}

To bound the infimum $m_\tau$, we use the following form of ABP estimate \cite[Proposition~11]{S18}. 
\begin{lemma}
\label{lem:contact-volume}
Let $B_r\subset\mathbb R^d$ be a Euclidean ball, and let
$v$ be smooth near $\overline B_r$, with
\[
 v\ge0,\qquad v(0)=0,\qquad
 v|_{\partial B_r}\ge\varepsilon r^2,\qquad \varepsilon>0.
\]
For the measurable set
\[
 E=\left\{x\in B_r:
 v(x)\le\frac{\varepsilon r^2}{4},\
 |Dv(x)|\le\frac{\varepsilon r}{4},\
 D^2v(x)\ge0\right\},
\]
one has
\begin{equation}\label{eq:contact-volume}
 |B_{\varepsilon r/4}|
       \le\int_E\det_{\mathbb R}D^2v.
\end{equation}
Here $|\cdot|$ denotes Euclidean volume.
\end{lemma}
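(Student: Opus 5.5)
This is the Alexandrov--Bakelman--Pucci contact-set argument. I will show that the gradient map $Dv:E\to\mathbb R^d$ covers the ball $B_{\varepsilon r/4}$ and then apply the area formula.

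\emph{Step 1 (covering).} Fix $p\in\mathbb R^d$ with $|p|<\varepsilon r/4$. Consider the affine functions $\ell(x)=p\cdot x+h$ and lower $h$ from $+\infty$ until the graph of $\ell$ first touches the graph of $v$ from below on $\overline B_r$. Equivalently, minimize $w(x)=v(x)-p\cdot x$ over $\overline B_r$. At the origin, $w(0)=0$. On the boundary,
\[
w\ge\varepsilon r^2-|p|r>\varepsilon r^2-\varepsilon r^2/4>0 .
\]
So the minimum is attained at an interior point $x_0\in B_r$, and $w(x_0)\le0$. At this point:
\begin{itemize}
\item $Dv(x_0)=p$, so $|Dv(x_0)|<\varepsilon r/4$;
\item $D^2v(x_0)=D^2w(x_0)\ge0$;
\item $v(x_0)\le p\cdot x_0\le|p|r<\varepsilon r^2/4$.
\end{itemize}
Hence $x_0\in E$ and $Dv(x_0)=p$. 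This gives $B_{\varepsilon r/4}\subset Dv(E)$, up to the boundary sphere, which has measure zero.

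\emph{Step 2 (area formula).} The map $Dv$ is smooth near $\overline B_r$, and its Jacobian is $\det D^2v$. On $E$ this determinant is nonnegative, since $D^2v\ge0$ there. The set $E$ is closed, hence measurable. The area formula, in the form $|Dv(E)|\le\int_E|\det D^2v|$, then gives
\[
|B_{\varepsilon r/4}|\le|Dv(E)|\le\int_E\det{}_{\mathbb R}D^2v,
\]
which is \eqref{eq:contact-volume}. Only the inequality direction of the area formula is needed, because the counting multiplicity is at least one on $Dv(E)$.

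The only delicate point is the strict margin in Step 1 that keeps the minimum away from $\partial B_r$. The bound $\varepsilon r^2-|p|r>0$ handles it with room to spare. The thresholds $\varepsilon r^2/4$ and $\varepsilon r/4$ in the definition of $E$ are chosen exactly so that every interior touching point lands in $E$. No further obstacle is expected; this is the standard proof of \cite[Proposition~11]{S18}.
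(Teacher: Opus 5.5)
Your argument is correct and is the standard ABP proof (covering of $B_{\varepsilon r/4}$ by $Dv(E)$, then the area formula) behind \cite[Proposition~11]{S18}, which the paper cites without reproducing; your Step 1 checks directly that each interior minimum of $v-p\cdot x$ lies in the pointwise set $E$, and this suffices because $E$ contains Sz\'ekelyhidi's supporting-hyperplane contact set. The only implicit points are that $B_r$ is centered at the origin, which you use in $|p\cdot x|\le|p|r$ and which holds in the application in Lemma~\ref{lem:flow-potential}, and that $E$ is actually compact because $v>\varepsilon r^2/4$ near $\partial B_r$.
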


We apply Lemma~\ref{lem:contact-volume} near a minimum of
$\varphi-b-\psi$ to obtain a bound independent of time.
\begin{lemma}\label{lem:flow-potential}
There are constants $C,N>0$,
independent of $t$, such that
\begin{equation}\label{eq:flow-potential}
 \psi-C\le\varphi(t)-b(t)\le C,\qquad
 \tr_\omega(\kappa_{\varphi(t)})\le Ce^{-N\psi}
 \quad\text{on }X\setminus Z.
\end{equation}
\end{lemma}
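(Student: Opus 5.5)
The upper bound $\varphi(t)-b(t)\le C$ is already contained in \eqref{eq:flow-normalization}, via Lemma~\ref{lem:normalized-potentials}. So the real work is a lower bound $m_\tau\ge -C$, uniform in $\tau$. Once we have it, Lemma~\ref{lem:flow-weight} gives
\[
\tr_\omega(\kappa_{\varphi(t)})\le C\exp[N(\varphi-b-\psi+C)]\le C'e^{-N\psi},
\]
using $\varphi-b\le C$. On $[0,t_0]$ the flow is smooth, so all bounds there are trivial.

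To bound $m_\tau$ we use an ABP argument in the style of Sz\'ekelyhidi \cite{S18}, applied to the function $u=\varphi-b-\psi$ at a fixed time. Suppose the infimum of $u$ over $(X\setminus Z)\times[t_0,\tau]$ is attained at $(x_0,t_1)$. It is attained, because $u\to+\infty$ along $Z$. We may assume $t_1>t_0$; otherwise the bound is the initial one. At this point $\partial_t u\le 0$, which gives
\[
\tr_{\varphi}\omega(x_0,t_1)\ge c-b'(t_1)\ge \zeta-\eta/4 .
\]
Now work in a fixed coordinate ball $B_1(x_0)$. If $Z$ meets it, $u=+\infty$ there, which only helps. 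Set
\[
v=u-u(x_0,t_1)+\varepsilon|z-x_0|^2 ,
\]
with $\varepsilon$ small, and apply Lemma~\ref{lem:contact-volume} in real dimension $2n$ at time $t_1$. On the contact set $E$ we have $D^2v\ge0$, hence $\ddc v\ge0$, so
\[
\kappa_{\varphi}\ge B-\varepsilon\omega\ge \widehat B .
\]
The point is that at time $t_1$ the trace is only controlled at $x_0$, not on $E$. So instead we follow the standard parabolic variant: take the contact set in space–time, or use $\partial_t u\le0$ on a suitable parabolic contact set, so that on $E$
\[
\tr_\varphi\omega\ge c-b'\ge\zeta-\eta/4 .
\]
Combined with $\kappa_\varphi\ge\widehat B$ and $P_\omega(\widehat B)\le \zeta-\eta/2$, this pins one eigenvalue: the largest inverse eigenvalue of $\kappa_\varphi$ is at least $\eta/4$. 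Hence the smallest eigenvalue $\lambda_{\min}$ of $\kappa_\varphi$ is bounded above. The remaining eigenvalues are controlled by the lower bound of Lemma~\ref{lem:flow-facts}. This yields $\det D^2v\le C\det(\ddc v)^2$, which in turn is bounded by a constant times the weighted quantity
\[
\lambda_{\min}^{-?}\tr_\omega(\kappa_\varphi)^{2n-?}.
\]
Feeding the weighted estimate from Lemma~\ref{lem:flow-weight} into this, the bound on $E$ becomes
\[
\det D^2v\le C e^{2nN(u-m_\tau)}\le C e^{2nN\varepsilon/4},
\]
since $v\le\varepsilon/4$ on $E$ forces $u-m_\tau\le \varepsilon/4$. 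Then \eqref{eq:contact-volume} gives
\[
c\varepsilon^{2n}\le C|E|\le C\,|\{u-m_\tau\le\varepsilon/4\}\cap B_1| .
\]

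Finally we integrate. On this set $\varphi-b\le m_\tau+\psi+\varepsilon/4$. By the $L^1$ bound on $\varphi-b$ and the local integrability of $e^{-p\psi}$ for small $p>0$ (analytic singularities), a very negative $m_\tau$ would force this set to have tiny measure, giving a contradiction. Concretely, we use
\[
|\{\varphi-b\le -M/2\}|\le C/M
\quad\text{and}\quad
|\{\psi\le -M/2\}|\le Ce^{-pM/2},
\]
so $|E|\to0$ as $m_\tau\to-\infty$, contradicting $|E|\ge c\varepsilon^{2n}$. The main obstacle is the second step: turning the weighted $C^2$ bound into a determinant bound that survives the exponential factor. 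That step needs the lower bound $\kappa_\varphi\ge\widehat B$ on $E$, together with the choice of the parabolic contact set. I would first try the time-slice version, using the maximum over $[t_0,\tau]$ so that the exponential weight is $\le e^{N\varepsilon}$ on $E$.
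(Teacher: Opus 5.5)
Your skeleton is the paper's: a single-time ABP argument at a point $(x_0,t_1)$ where $u=\varphi-b-\psi$ attains $m_\tau$, a lower bound on the volume of the contact set $E$ coming from Lemma~\ref{lem:flow-weight}, and then the $L^1$ bound on $\varphi-b$. The gap is in the central step, bounding $\det D^2v$ on $E$. As written you misdiagnose it and leave it unfinished: the eigenvalue pinning, the parabolic contact set, and the exponents $\lambda_{\min}^{-?}$ are never closed. That mechanism also cannot close it. An upper bound on $\lambda_{\min}$, together with the \emph{lower} bound on $\kappa_\varphi$ from Lemma~\ref{lem:flow-facts}, gives no upper bound on the other eigenvalues of $dd^cv$.

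Your diagnosis that ``at time $t_1$ the trace is only controlled at $x_0$'' is false. Lemma~\ref{lem:flow-weight} is a pointwise bound on all of $(X\setminus Z)\times[t_0,\tau]$. At the single time $t_1$ it therefore gives $\tr_\omega\kappa_{\varphi(t_1)}\le Ce^{N(u-m_\tau)}\le Ce^{N\varepsilon r^2/4}$ everywhere on $E$.

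What you are missing is an \emph{upper} bound on $dd^cv$; the lower bound $\kappa_\varphi\ge\widehat B$ is not what is needed. Since $dd^cv=\kappa_{\varphi}-B+\varepsilon\,dd^c|z|^2$ and $B\ge b_0\omega$, choose $\varepsilon$ with $\varepsilon\,dd^c|z|^2\le b_0\omega$. Then $0\le dd^cv\le\kappa_{\varphi(t_1)}$ on $E$. By AM--GM this gives $\det_{\mathbb R}D^2v\le 4^n\det_{\mathbb C}(v_{i\bar j})^2\le C(\tr_\omega\kappa_{\varphi(t_1)})^{2n}\le C$ on $E$. Neither $\partial_tu\le0$, nor the case split $t_1>t_0$, nor any space--time contact set plays a role. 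Your closing remark about ``the time-slice version'' is exactly what works once this upper bound is in place.

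Two smaller points. First, Lemma~\ref{lem:contact-volume} needs $v$ smooth on $\overline B_r$, so ``$u=+\infty$ on $Z$ only helps'' has to be implemented. The paper does this by using $\theta(u-m_\tau)+\varepsilon|z-x_0|^2$, where $\theta$ is smooth and nondecreasing, $\theta(s)=s$ for $s\le\varepsilon r^2/2$, $\theta\ge\varepsilon r^2/2$ beyond, and $\theta$ is constant on $[\varepsilon r^2,\infty)$. This makes $v$ smooth across $Z$ and does not change $E$. Second, the integrability of $e^{-p\psi}$ is superfluous. Since $\sup_X\psi=0$, on $E$ you directly have $\varphi-b\le m_\tau+\varepsilon r^2/4$. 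Then $\int_E\omega^n\ge c_1$ and the $L^1$ bound give $c_1(-m_\tau-\varepsilon r^2/4)_+\le 2C_0\beta^n$, i.e.\ $m_\tau\ge-C$.
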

\begin{proof}
By \eqref{eq:flow-normalization},
\begin{equation}\label{eq:flow-integral}
 \sup_X(\varphi-b)\le C_0,\qquad
 \frac1{\beta^n}\int_X|\varphi-b|\,\omega^n\le2C_0.
\end{equation}
For $\tau\ge t_0$, from Lemma~\ref{lem:flow-weight}, choose 
$(p,t_*)$ where $\varphi-b-\psi$ attains $m_\tau$.

Choose a coordinate ball $B_r$ centered at $p$, with $r>0$ uniform
over a finite atlas, and $\varepsilon>0$ such that
$\varepsilon\ddc|z|^2\le b_0\omega$.
Choose a smooth nondecreasing $\theta:[0,\infty)\to[0,\infty)$ with
\[
 \theta(s)=s\quad(0\le s\le\varepsilon r^2/2),\qquad
 \theta(s)\ge\varepsilon r^2/2\quad(s\ge\varepsilon r^2/2),
\]
and constant on $[\varepsilon r^2,\infty)$. Define
\[
 v(z)=\theta\bigl(\varphi(t_*,z)-b(t_*)-\psi(z)-m_\tau\bigr)
          +\varepsilon|z|^2.
\]
The first term is constant near $Z\cap\overline B_r$, so $v$ extends
smoothly across $Z$. Moreover,
\[
 v\ge0,\qquad v(0)=0,\qquad
 v|_{\partial B_r}\ge\varepsilon r^2.
\]
On the contact set $E$ of Lemma~\ref{lem:contact-volume},
\[
 E\cap Z=\varnothing,\qquad
 0\le\varphi(t_*)-b(t_*)-\psi-m_\tau\le\varepsilon r^2/4,
\]
and
\[
 0\le\ddc v
 =\kappa+\ddc\varphi(t_*)-B+\varepsilon\ddc|z|^2
 \le\kappa+\ddc\varphi(t_*).
\]
By \eqref{eq:flow-weight},
$\tr_\omega(\kappa+\ddc\varphi(t_*))\le Ce^{N\varepsilon r^2/4}$ on $E$.
Since $D^2v\ge0$ there,
\[
 \det_{\mathbb R}D^2v
 \le4^n\det_{\mathbb C}(v_{i\bar j})^2\le C.
\]
Equation~\eqref{eq:contact-volume} and comparability of coordinate and
$\omega$-volumes give $\int_E\omega^n\ge c_1>0$, uniformly in
$p,t_*,\tau$. As $\psi\le0$, we also have
$\varphi(t_*)-b(t_*)\le m_\tau+\varepsilon r^2/4$ on $E$. Therefore
\[
 c_1(-m_\tau-\varepsilon r^2/4)_+
 \le\int_E|\varphi(t_*)-b(t_*)|\,\omega^n
 \le2C_0\beta^n.
\]
Thus $m_\tau\ge-C$ for every $\tau$. Substitution in
\eqref{eq:flow-weight}, together with \eqref{eq:flow-integral}, proves
\eqref{eq:flow-potential} for $t\ge t_0$.
Enlarging $C$ covers the smooth flow on $[0,t_0]$.
\end{proof}

Now standard Evans-Krylov estimates give the higher order estimate. Together with the uniqueness of the limit, we get the following
\begin{theorem}
For every smooth $\varphi_0$ with $\kappa+\ddc\varphi_0>0$, the solution to \eqref{eq:flow}
exists for all $t\ge0$. The analytic set $Z$ in
Theorem~\ref{thm:main} can be chosen using only
$(X,\alpha,\beta,\kappa,\omega)$ so that
\begin{equation}
 \begin{aligned}
 \kappa+\ddc\varphi(t)&\rightharpoonup T
       &&\text{on }X,\\
 \kappa+\ddc\varphi(t)&\longrightarrow T
       &&\text{in }C^\infty_{\rm loc}(X\setminus Z).
 \end{aligned}
\end{equation}
For
\[
 b(t)=\frac1{\beta^n}\int_X\varphi(t)\,\omega^n,
\]
the potentials $\varphi(t)-b(t)$ converge in $L^1(X)$ and
$C^\infty_{\rm loc}(X\setminus Z)$ to the potential of $T$ with
zero $\omega^n$-mean. Moreover,
\[
 \frac{b(t)}t,\ b'(t)\longrightarrow c-\zeta,\qquad
 \tr_{\varphi(t)}\omega\longrightarrow\zeta
       \quad\text{in }L^p(X,\omega^n),\quad 1\le p<\infty.
\]
\end{theorem}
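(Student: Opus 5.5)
The plan is to combine the global weak convergence already established in Proposition~\ref{prop:flow-weak} with the local estimates of Lemma~\ref{lem:flow-potential}. Long-time existence is Chen's theorem (Lemma~\ref{lem:flow-facts}). We fix $Z=\{\psi=-\infty\}$, where $B=\kappa+\ddc\psi$ is the analytic strict subsolution of Theorem~\ref{thm:strict-subsolution} with $a=\zeta$. This $B$ depends only on $(X,\alpha,\beta,\kappa,\omega)$ and not on $\varphi_0$, which gives the asserted choice of $Z$.

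\textbf{Step 1: uniform ellipticity on compact sets.} Fix $K\Subset X\setminus Z$. Lemma~\ref{lem:flow-facts} gives the lower bound $\kappa_{\varphi}\ge\omega/\max\tr_{\varphi_0}\omega$. Lemma~\ref{lem:flow-potential} gives the upper bound $\tr_\omega\kappa_\varphi\le Ce^{-N\inf_K\psi}$ and the bound $|\varphi-b|\le C$ on $K$. Hence $u=\varphi-b$ solves the uniformly parabolic equation $\partial_t u=c-b'(t)-\tr_{\kappa+\ddc u}\omega$ on a neighborhood of $K$, with bounded $C^0$ and $C^{1,1}$ norms. The operator $-\tr_{A}\omega$ is concave in $A$, being a concave function of the Hermitian matrix $A$. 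The time-dependent forcing $c-b'(t)$ is spatially constant and bounded, and it is Lipschitz in $t$ because $b''=-\beta^{-n}\int\partial_t\tr\,\omega^n$ is controlled by the parabolic equation.

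\textbf{Step 2: higher-order estimates.} We apply the local parabolic Evans--Krylov theorem (Krylov, Lieberman) on the cylinders $K\times[t-1,t+1]$, for $t\ge t_0+1$, to get $C^{2,\gamma}$ bounds for $u$. We then differentiate the equation in space and use parabolic Schauder estimates to bootstrap to uniform $C^k_{\rm loc}(X\setminus Z)$ bounds that are independent of $t$. The main obstacle is the interior Evans--Krylov step. The equation is not of Monge--Amp\`ere form, but the concavity of $A\mapsto -\tr_A\omega$ on the uniformly elliptic range is what that theorem requires. The time dependence of $b'(t)$ is only a lower-order forcing term and does not affect the argument.

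\textbf{Step 3: identification of the limit.} By Arzel\`a--Ascoli, every sequence $t_j\to\infty$ has a subsequence along which $\kappa_{\varphi(t_j)}$ converges in $C^\infty_{\rm loc}(X\setminus Z)$. Proposition~\ref{prop:flow-weak} gives weak convergence to $T$, so the smooth limit equals $T|_{X\setminus Z}$, and the whole flow converges. In particular $T$ is smooth on $X\setminus Z$. It is K\"ahler there because of the lower bound, and by the local uniform trace convergence it satisfies $\tr_T\omega=\zeta$.

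For the potentials, Lemma~\ref{lem:normalized-potentials} and \eqref{eq:flow-normalization} give $L^1$-precompactness of $u(t)$. Every $L^1$ limit $u_\infty$ satisfies $\kappa+\ddc u_\infty=T$ and $\int u_\infty\omega^n=0$, because $\int u(t)\omega^n=0$. Hence $u_\infty=\varphi_\infty$, and the local smooth convergence follows from Step 2. Finally, $b'(t)\to c-\zeta$ is \eqref{eq:flow-level}, and integrating it gives $b(t)/t\to c-\zeta$. The convergence $\tr_\varphi\omega\to\zeta$ in $L^p$ follows from the $L^1$ convergence in Proposition~\ref{prop:flow-weak} together with the uniform $L^\infty$ bound of Lemma~\ref{lem:flow-facts}, by interpolation.
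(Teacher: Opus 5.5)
Your proposal is correct and follows the paper's own route. The paper combines uniform parabolic ellipticity on compact subsets of $X\setminus Z$ (from Lemmas~\ref{lem:flow-facts} and~\ref{lem:flow-potential}), Evans--Krylov plus Schauder bootstrapping, identification of every local smooth limit with $T$ via Proposition~\ref{prop:flow-weak}, and the $L^p$ statement by interpolating the $L^1$ convergence against the $L^\infty$ trace bound. Two small inaccuracies do no harm. First, Lemma~\ref{lem:flow-potential} bounds only the complex Hessian $\ddc\varphi$, not a full $C^{1,1}$ norm, but that is all the Evans--Krylov theory for concave equations in $\ddc u$ needs. Second, nothing available shows that $b'$ is Lipschitz in $t$ (Lemma~\ref{lem:time} only gives $b''\le C/t$); this is unnecessary, because $\varphi-ct$ solves the autonomous equation $\partial_t(\varphi-ct)=-\tr_{\kappa+\ddc\varphi}\omega$ and has bounded oscillation on $K\times[t-1,t+1]$, since $|b'|\le C$.
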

\begin{proof}
    Only the last $L^p$ convergence needs to be explained. It is because \[
 \|\tr_{\varphi(t)}\omega-\zeta\|_{L^p(\omega^n)}^p
 \le C^{p-1}
    \|\tr_{\varphi(t)}\omega-\zeta\|_{L^1(\omega^n)}
 \longrightarrow0\qquad(1\le p<\infty).
\]
\end{proof}
\section{The J-null locus in the semistable case}\label{sec:null}

In this section, assume $n\ge2$ and that $(\alpha,\beta)$ is
$J$-semistable. Thus $\zeta=c$ by Corollary~\ref{cor:semistable}.
We combine Fang--Ma's null-locus theorem with
Lemma~\ref{lem:partial-trace} to prescribe the singularity type
of a strict subsolution whose pole set is the numerical null
locus. This also specifies the smooth convergence locus in
Section~\ref{sec:flow}.

For a reduced irreducible $V\subsetneq X$ of dimension $1\le p<n$,
write
\begin{equation}\label{eq:null-defect}
 J_p(V)=(c\alpha^p-p\beta\alpha^{p-1})[V]\ge0,
 \qquad
 \Null_J(\alpha,\beta)=\bigcup_{J_{\dim V}(V)=0}V.
\end{equation}
We retain $T$ for the solution at the birational minimal slope and
use $B$ for strict subsolutions.

\subsection{Analytic characterization}

A closed positive current $B=\kappa+\ddc\psi\in\alpha$ is a
\emph{strict weak subsolution} if, for some $0<\eta<c$,
\begin{equation}\label{eq:null-strict}
 P_\omega(B_{\rm ac})\le c-\eta
 \quad\text{almost everywhere}.
\end{equation}
By \eqref{eq:convolution}, this is equivalent to the corresponding
local-convolution inequality. Write
\[
 \Pole(B)=\{\psi=-\infty\}.
\]

\begin{definition}\label{def:null-families}
Let $\mathcal R_\omega$ be the family of strict weak subsolutions
with logarithmic singularity type and smooth remainder after
principalization, in the sense of Section~\ref{sec:bergman}, which
are smooth outside their pole sets. Let $\mathcal B_\omega$ be
the subfamily whose potentials have logarithmic singularities with smooth remainder
on $X$ (this is so called the logarithmic singularity in \cite{CT15,L26a}: locally,
\[
 \psi=a\log\sum_j|f_j|^2+g,\qquad
 a>0,\quad f_j\in\cO,\quad g\in C^\infty.
\]
Define
\begin{equation}\label{eq:null-analytic-loci}
 E_{\rm res}(\omega)=\bigcap_{B\in\mathcal R_\omega}\Pole(B),
 \qquad
 E_{\rm base}(\omega)=\bigcap_{B\in\mathcal B_\omega}\Pole(B),
\end{equation}
with an empty intersection interpreted as $X$.
\end{definition}

For either family,
$\Pole(B)=\{x:\nu(B,x)>0\}$, since a bounded remainder does not
change Lelong numbers. A smooth remainder on $X$ pulls back to
a smooth remainder after principalization; hence
\[
 \mathcal B_\omega\subseteq\mathcal R_\omega,
 \qquad E_{\rm res}(\omega)\subseteq E_{\rm base}(\omega).
\]
These definitions distinguish two singularity conventions in the
analogue of the null-locus characterization of Collins--Tosatti
\cite{CT15}.

We summarize Fang--Ma's theorem here. Their forms $\omega,\chi$
correspond to $\kappa,\omega$ here. The assertions about
irreducible components are contained in
\cite[Proposition~5.2 and the proof of Theorem~5.1]{FM26}.

\begin{theorem}[{Fang--Ma \cite[Theorems~4.3 and~5.1]{FM26}}]
\label{thm:null-current}
The set $\Null_J(\alpha,\beta)$ in \eqref{eq:null-defect} is proper analytic and
has finitely many irreducible components, each of positive
dimension and $J$-null. There are $0<\tau<1$, $0<\eta<c$, and
a quasi-psh function $v$ such that
\[
 \begin{gathered}
 S=(1-\tau)\kappa+\ddc v\ge0,\qquad
 P_\omega(S_{\rm ac})\le c-\eta\quad\text{a.e.},\\
 \{v=-\infty\}=N,\qquad v\in C^\infty(X\setminus N).
 \end{gathered}
\]
Moreover, every strict weak subsolution $B\in\alpha$ that is
smooth outside its exact proper analytic pole set $Z$ satisfies
$N\subseteq Z$.
\end{theorem}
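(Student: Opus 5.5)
Since this is Fang--Ma's result, the intended proof is to quote \cite[Theorems~4.3 and~5.1]{FM26} and translate their forms $(\omega,\chi)$ into our $(\kappa,\omega)$. What follows is how I would reconstruct the argument in the language of the present paper. It has three parts: the structure of $\Null_J(\alpha,\beta)$, the construction of $S$, and the inclusion $N\subseteq Z$.

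\emph{Structure of the null locus.} I would take finiteness of the $J$-null subvarieties in the semistable case from \cite{L26c}. Then $N:=\Null_J(\alpha,\beta)$ is a finite union of irreducible subvarieties of positive dimension, each $J$-null. In particular $N$ is proper and analytic.

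\emph{Construction of $S$.} I would argue by induction on dimension, following the Collins--Tosatti / Song gluing strategy \cite{CT15,S20}.
\begin{enumerate}
\item Perturb the slope slightly. For small $\tau$, the pair $((1-\tau)\alpha,\beta)$ has slope $c/(1-\tau)>c$. Every subvariety not contained in $N$ is then strictly stable, uniformly, by the finiteness in \cite{L26c}.
\item Near each component of $N$, use Chen's mass concentration (as in Section~\ref{sec:concentration}), applied to a resolution of that component, to build local models with logarithmic poles along the component.
\item Glue these local models with a global strict subsolution on $X\setminus N$ by the regularized maximum, exactly as in the proof of Theorem~\ref{thm:bergman}.
\end{enumerate}
The partial-trace bound $P_\omega\le c-\eta$ survives the gluing by convexity and monotonicity of $P_\omega$. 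Smoothness off $N$ follows because every glued piece is smooth there.

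\emph{Inclusion $N\subseteq Z$.} This is the part I would check in detail. Suppose a $J$-null $V$ of dimension $p$ is not contained in $Z$, and let $B$ be a strict weak subsolution, smooth off its pole set $Z$.
\begin{enumerate}
\item On $V_{\rm reg}\setminus Z$, restricting to $TV$ gives, pointwise, $\tr_{B|_V}\omega|_V\le P_\omega(B)\le c-\eta$. Hence
\[
cB^p-p\,\omega\wedge B^{p-1}\ge \eta B^p \quad\text{on } V_{\rm reg}\setminus Z.
\]
\item Compare with cohomology. Principalize on a resolution $\widetilde V\to V$ so that $B=\sigma+[D]$ there, with $\sigma$ smooth and semipositive. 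On $\widetilde V\setminus D$ the inequality above holds for $\sigma$.
\item Consider the path $L_s=[\sigma]+s[D]$ from $[\sigma]$ to $\alpha|_V$. Run the one-sided Taylor/orthogonality argument of Lemma~\ref{lem:orthogonality} on $\widetilde V$. This should give
\[
0=J_p(V)\ \ge\ \eta\int_{V\setminus Z}B^p+(\text{nonnegative divisorial terms}),
\]
where the divisorial terms are nonnegative because of semistability on the components of $D$ over $V$.
\item Since $B\ge\omega/c$ and $V\not\subseteq Z$, the integral $\int_{V\setminus Z}B^p$ is positive, a contradiction.
\end{enumerate}

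The main obstacle is step 3 of the inclusion argument: controlling the sign of the mixed terms involving $[D]$ and $\omega$. The non-pluripolar masses of $B|_V$ are only bounded above by cohomology, while the $\omega\wedge B^{p-1}$ term enters with a negative sign. The first thing I would try is an induction on $\dim V$: apply the semistability inequality to the lower-dimensional intersections of $V$ with the components of $D$ to absorb these terms. If that fails, I would simply rely on \cite[Proposition~5.2]{FM26}.
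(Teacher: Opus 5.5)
Quoting \cite{FM26} is what the paper does, but you skip the one check the paper performs. Relabeling $(\omega,\chi)\mapsto(\kappa,\omega)$ is not enough, because Fang--Ma's minimality clause is stated for their own normalized family. The paper therefore rescales. If $B=\kappa+dd^c\psi$ has $P_\omega(B_{\rm ac})\le d<c$, choose $0<\sigma<1-d/c$. Then $(1-\sigma)B=(1-\sigma)\kappa+dd^c((1-\sigma)\psi)$ has $P_\omega\le d/(1-\sigma)<c$ and the same pole set, and after normalizing the potential by a constant it lies in their family.

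Your reconstruction of $S$ would not replace the citation.
\begin{itemize}
\item Step~1 gains nothing. With $c'=c/(1-\tau)$,
\[
\bigl(c'((1-\tau)\alpha)^p-p\beta((1-\tau)\alpha)^{p-1}\bigr)[V]=(1-\tau)^{p-1}J_p(V),
\]
so the null subvarieties are unchanged. Uniform positivity off $N$ does not follow from finiteness of the null components. The real role of $1-\tau$ is the reverse: at the fixed level $c$, the class $(1-\tau)\alpha$ is strictly destabilized by each null component, and that is what forces poles there.
\item Step~2 yields at best a current on (a resolution of) $V$ controlling partial traces of $V$. It gives nothing on a neighborhood of $V$ in $X$. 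There, since $p\le n-1$, $P_\omega$ controls the full tangential trace along $V$, and for currents in $\alpha|_V$ without mass loss its average over $V$ is exactly $c$.
\item Step~3 only moves the problem. Whichever entry of the regularized maximum wins near $N$ must already be a strict subsolution on a neighborhood of $N$, and no such object has been built.
\end{itemize}
The local mechanism the paper attributes to \cite{FM26} runs the other way: strict subsolutions are extended near subvarieties, with growth estimates, to remove poles along non-null components.

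For the inclusion $N\subseteq Z$, your step~2 assumes a smooth remainder after principalization. That is not a hypothesis, since $B$ is only smooth off $Z$. This is repairable: replace $B$ by the approximants of Lemma~\ref{lem:partial-trace}, whose poles lie in $Z$ exactly as in the proof of Theorem~\ref{thm:null-locus}.

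Step~3 is the real gap, and the Taylor expansion cannot close it. Lemma~\ref{lem:orthogonality} presupposes $c(s)\ge\zeta$ along the whole path. Semistability controls $pD\cdot(cL_s^{p-1}-(p-1)\mu^*\beta\,L_s^{p-2})$ only at $s=1$. For $p=1$ this term is $c\deg D\ge0$, so curves are fine.

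What your construction actually yields is an admissible test $L=[\sigma]=\mu^*\alpha|_V-[D]$, big and nef on a modification of $V$, with $c_L\le c-\eta$. Meanwhile the restricted pair is $J$-semistable on $V$ with its own slope equal to $c$, because $V$ is null. The missing input is therefore the identity $\zeta_V=c$ for the restricted pair. If $V$ is smooth, Corollary~\ref{cor:semistable} applied on $V$ supplies it, and the contradiction is immediate at $s=0$ with no expansion. For singular $V$ you need this statement on a resolution where the classes are only big and nef, and the paper does not provide it.
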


For the last assertion, Fang--Ma's normalization is recovered
by scaling. If $B=\kappa+\ddc\psi$ and
$P_\omega(B_{\rm ac})\le d<c$, choose $0<\sigma<1-d/c$. Then
\[
 (1-\sigma)B=(1-\sigma)\kappa+\ddc((1-\sigma)\psi),
 \qquad
 P_\omega((1-\sigma)B_{\rm ac})\le\frac d{1-\sigma}<c.
\]
After subtracting a constant from the potential, this is a
member of their defining family with the same pole set.
Theorem~\ref{thm:null-current} asserts smoothness of $v$ off
$\Null_J(\alpha,\beta)$, without asserting logarithmic singularity type.
The next theorem obtains this singularity type by applying Bergman kernel approximation in Section~\ref{sec:bergman}.

\begin{theorem}\label{thm:null-locus}
There exists $B\in\mathcal R_\omega$ with $\Pole(B)=\Null_J(\alpha,\beta)$.
Consequently,
\begin{equation}\label{eq:null-equality}
 E_{\rm res}(\omega)=\Null_J(\alpha,\beta).
\end{equation}
\end{theorem}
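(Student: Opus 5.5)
The plan is to start from the Fang--Ma current of Theorem~\ref{thm:null-current}, restore it to the class $\alpha$, and regularize it with the partial-trace Bergman construction of Lemma~\ref{lem:partial-trace}. Two points need care: that this regularization creates no poles outside $N:=\Null_J(\alpha,\beta)$, and that it keeps a strict margin below $c$. The reverse inclusion then comes from the last assertion of Theorem~\ref{thm:null-current}.

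\emph{Step 1: a strict subsolution in $\alpha$ with pole set $N$.} Take $S=(1-\tau)\kappa+\ddc v$ from Theorem~\ref{thm:null-current}. Set
\[
B_0:=S+\tau\kappa=\kappa+\ddc v\in\alpha .
\]
Then $B_0\ge\tau\kappa$, so $B_0$ is a K\"ahler current. Since $P_\omega$ is decreasing in matrix order, $(B_0)_{\rm ac}\ge S_{\rm ac}$ gives
\[
P_\omega((B_0)_{\rm ac})\le c-\eta\quad\text{a.e.}
\]
Moreover $\{v=-\infty\}=N$ and $v\in C^\infty(X\setminus N)$.

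\emph{Step 2: Bergman regularization.} Apply Lemma~\ref{lem:partial-trace} to $Q=B_0$ with $a_0=c-\eta$ and $\epsilon=\eta/2$. This yields $B=\kappa+\ddc\psi$ with the following properties:
\begin{itemize}
\item[(i)] $B\ge\omega/(c-\eta/2)$;
\item[(ii)] $B$ is smooth off a proper analytic set $Z=Z_m$;
\item[(iii)] $P_\omega(B)\le c-\eta/2$ on $X\setminus Z$;
\item[(iv)] $\psi\to-\infty$ exactly along $Z$, with logarithmic singularity type and smooth remainder after principalization.
\end{itemize}
Because $Z$ has measure zero, (iii) gives $P_\omega(B_{\rm ac})\le c-\eta/2$ a.e. Hence $B\in\mathcal R_\omega$ and $\Pole(B)=Z_m$.

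\emph{Step 3: $\Pole(B)\subseteq N$.} By the proof of Theorem~\ref{thm:bergman}, $Z_m$ is the common zero set of the multiplier ideals $\mathcal I(mu_i)$, where the $u_i=v+q_i$ are local potentials of $B_0$. Fix $x\notin N$. Then $v$ is smooth near $x$, so $e^{-mu_i}$ is locally integrable near $x$ and $1\in\mathcal I(mu_i)_x$. Equivalently $K_{i,m}(x,x)>0$, so $x\notin Z_m$. (Alternatively, the zero set of $\mathcal I(mu)$ lies in $\{\nu(u,\cdot)\ge 1/m\}\subseteq\{v=-\infty\}$.)

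\emph{Step 4: $N\subseteq\Pole(B)$ and the equality of loci.} $B$ is a strict weak subsolution that is smooth outside its exact proper analytic pole set $Z_m$. The last assertion of Theorem~\ref{thm:null-current}, with the rescaling by $(1-\sigma)$ recorded before the statement, gives $N\subseteq Z_m$. Together with Step 3 this gives $\Pole(B)=N$, which proves the first claim. If $N=\varnothing$, then $B$ is a smooth K\"ahler form and everything is trivial.

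For \eqref{eq:null-equality}, the existence of $B$ gives $E_{\rm res}(\omega)\subseteq\Pole(B)=N$. Conversely, every member of $\mathcal R_\omega$ is by definition a strict weak subsolution smooth outside its analytic pole set. Theorem~\ref{thm:null-current} therefore gives $N\subseteq\Pole(S')$ for every $S'\in\mathcal R_\omega$, hence $N\subseteq E_{\rm res}(\omega)$.

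I expect Step 3 to be the main obstacle. One has to confirm that the glued potential $v_m$ has pole set exactly the common zero locus $Z_m$ of the local ideals, and that no additional poles arise from the regularized maximum or the cutoffs $\vartheta_i$. This should follow from the construction, since the overlap estimate \eqref{eq:overlap} keeps all contributing $\widetilde v_{i,m}$ finite off $Z_m$. The only other point to verify is that the Fang--Ma hypothesis ``smooth outside its exact pole set'' is met, which is clause (iv) of Step 2.
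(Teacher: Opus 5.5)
Your proposal is correct and follows the paper's proof essentially step for step. Like the paper, you restore the Fang--Ma current to $B_0=\kappa+\ddc v\in\alpha$, regularize it with Lemma~\ref{lem:partial-trace}, note that $\mathcal I(mu)=\cO$ wherever $v$ is smooth so that the Bergman construction creates no poles off $\Null_J(\alpha,\beta)$, and close the chain of inclusions with the minimality assertion of Theorem~\ref{thm:null-current}. Your extra remarks are details the paper leaves implicit: $\Pole(B)$ is exactly $Z_m$ because every contributing $\widetilde v_{i,m}$ is finite off $Z_m$, and the degenerate case $N=\varnothing$ is trivial.
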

\begin{proof}
Take $S,\tau,\eta,v$ from Theorem~\ref{thm:null-current}.
Monotonicity of $P_\omega$ gives
\[
 B_0=S+\tau\kappa=\kappa+\ddc v\in\alpha,\qquad
 B_0\ge\frac{\omega}{c-\eta},\qquad
 P_\omega((B_0)_{\rm ac})\le c-\eta.
\]
For $0<\varepsilon<\eta$, Lemma~\ref{lem:partial-trace} gives
$B_\varepsilon\in\mathcal R_\omega$ with
$P_\omega((B_\varepsilon)_{\rm ac})\le c-\eta+\varepsilon$.

We check that the construction introduces no poles outside $\Null(\alpha,\beta)$.
Every local potential $u$ of $B_0$ is smooth there, so
$\mathcal I(mu)=\cO$ there for every $m>0$.
Lemma~\ref{lem:common-ideal} and the gluing in
Theorem~\ref{thm:bergman} therefore give
$\Pole(B_\varepsilon)\subseteq N$.
The minimality assertion of Theorem~\ref{thm:null-current},
applied to every member of $\mathcal R_\omega$, now yields
\[
 \Null_J(\alpha,\beta)\subseteq E_{\rm res}(\omega)
 \subseteq\Pole(B_\varepsilon)\subseteq \Null_J(\alpha,\beta).
\]
\end{proof}

\begin{corollary}\label{cor:null-removal}
Let $B_*\in\mathcal R_\omega$, and let $V$ be a
positive-dimensional irreducible component of $\Pole(B_*)$
with $J_{\dim V}(V)>0$. There is $B\in\mathcal R_\omega$
smooth near a general point of $V$.
\end{corollary}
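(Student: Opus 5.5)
The corollary should follow directly from Theorem~\ref{thm:null-locus}, with no new analysis. The point is that $V$ cannot lie inside the $J$-null locus, so the member $B$ supplied by that theorem is smooth near a general point of $V$.

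\emph{Step 1: $V\not\subseteq\Null_J(\alpha,\beta)$.} By Theorem~\ref{thm:null-current}, $\Null_J(\alpha,\beta)$ is a proper analytic set with finitely many irreducible components $V_1,\dots,V_k$, each $J$-null. Suppose $V\subseteq\Null_J(\alpha,\beta)$. Since $V$ is irreducible, $V\subseteq V_i$ for some $i$.

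\begin{itemize}
\item If $V=V_i$, then $J_{\dim V}(V)=0$, contradicting the hypothesis.
\item If $V\subsetneq V_i$, then $V_i\subseteq\Pole(B_*)$. Indeed, Theorem~\ref{thm:null-locus} gives $\Null_J(\alpha,\beta)=E_{\rm res}(\omega)\subseteq\Pole(B_*)$, because $B_*\in\mathcal R_\omega$. But then $V$ is strictly contained in the irreducible analytic subset $V_i$ of $\Pole(B_*)$, so $V$ is not an irreducible component of $\Pole(B_*)$. This is again a contradiction.
\end{itemize}

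Hence $V\cap\Null_J(\alpha,\beta)$ is a proper analytic subset of $V$.

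\emph{Step 2: choice of $B$.} Take $B\in\mathcal R_\omega$ with $\Pole(B)=\Null_J(\alpha,\beta)$, as given by Theorem~\ref{thm:null-locus}. By Definition~\ref{def:null-families}, members of $\mathcal R_\omega$ are smooth outside their pole sets, so $B$ is smooth on $X\setminus\Null_J(\alpha,\beta)$. A general point $x\in V$ lies in the Zariski-open dense subset $V\setminus\Null_J(\alpha,\beta)$ from Step 1. Because $\Null_J(\alpha,\beta)$ is closed, $x$ has a neighborhood in $X$ disjoint from it, and $B$ is smooth there.

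\emph{Main subtlety.} The only delicate point is excluding $V\subsetneq V_i$. This depends on $V$ being a component of $\Pole(B_*)$ rather than an arbitrary subvariety of it. Here I would use that $E_{\rm res}(\omega)\subseteq\Pole(B_*)$, which holds by the definition of $E_{\rm res}(\omega)$ as an intersection over $\mathcal R_\omega$.
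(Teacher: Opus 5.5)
Your argument is correct and is essentially the paper's proof. The paper also shows $V\not\subseteq\Null_J(\alpha,\beta)$: since $\Null_J(\alpha,\beta)\subseteq\Pole(B_*)$, containment would make $V$ an irreducible component of $\Null_J(\alpha,\beta)$, hence $J$-null. It then uses the current of Theorem~\ref{thm:null-locus}, which is smooth off $\Null_J(\alpha,\beta)$.

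The only difference is cosmetic. The paper takes $\Null_J(\alpha,\beta)\subseteq\Pole(B_*)$ directly from the minimality assertion in Theorem~\ref{thm:null-current}, whereas you obtain it from $E_{\rm res}(\omega)=\Null_J(\alpha,\beta)$, which is proved from that same assertion.
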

\begin{proof}
Theorem~\ref{thm:null-current} gives $N\subseteq\Pole(B_*)$.
If $V\subseteq N$, then $V$ would be an irreducible component
of $N$, and hence $J$-null. Thus $V\nsubseteq N$.
The current in Theorem~\ref{thm:null-locus} is smooth on
$X\setminus N$.
\end{proof}

The local construction underlying this consequence is
\cite[Proposition~5.4 and the proof of Theorem~5.1]{FM26}.
It uses quantitative growth estimates and extension of a
strict subsolution near a subvariety. The prescribed
singularity type above is supplied separately by
Lemma~\ref{lem:partial-trace}.

The same realizing current gives the following refinement
of Theorem~\ref{thm:flow}.

\begin{corollary}\label{cor:null-flow}
In the $J$-semistable case, the analytic set in
Theorems~\ref{thm:main} and~\ref{thm:flow} can be chosen to be
$\Null_J(\alpha,\beta)$. In particular,
\[
 T\in C^\infty(X\setminus N),\qquad
 \kappa+\ddc\varphi(t)\longrightarrow T
 \quad\text{in }C^\infty_{\rm loc}(X\setminus N).
\]
\end{corollary}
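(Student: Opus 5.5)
The statement is Corollary~\ref{cor:null-flow}. The plan is to rerun the local estimates of Section~\ref{sec:flow}, using as barrier the current $B\in\mathcal R_\omega$ supplied by Theorem~\ref{thm:null-locus} in place of the one from Theorem~\ref{thm:strict-subsolution}. The rest of the argument should then go through unchanged.

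First fix the barrier. In the semistable case $\zeta=c$ by Corollary~\ref{cor:semistable}. Theorem~\ref{thm:null-locus} then gives $B=\kappa+\ddc\psi$ with
\[
P_\omega(B_{\rm ac})\le c-\eta',
\]
for some $\eta'>0$, analytic singularity type, smooth remainder after principalization, smooth off its pole set, and $\Pole(B)=N:=\Null_J(\alpha,\beta)$. Since $B$ is produced by Lemma~\ref{lem:partial-trace}, it also satisfies $B\ge b_0\omega$ with $b_0=1/(c-\eta'+\varepsilon)>0$. After subtracting a constant so that $\sup_X\psi=0$, these are exactly the hypotheses \eqref{eq:flow-barrier}, with $\zeta=c$ and $Z=N$. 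Smoothness of $\psi$ on $X\setminus N$ lets the pointwise inequality $P_\omega(B)\le c-\eta'$ hold classically there.

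Next, repeat Lemmas~\ref{lem:flow-weight} and~\ref{lem:flow-potential} verbatim. The maximum principle argument only needs three things: $H\to-\infty$ near the pole set, which holds because $\psi\to-\infty$ along $N$ with logarithmic singularity type; the pointwise partial-trace margin on the complement of $N$; and Proposition~\ref{prop:flow-weak}, which gives $c-b'(t)\to\zeta$. The ABP step likewise only uses that $\theta(\varphi-b-\psi-m_\tau)$ is constant near the pole set. Together these give
\[
\psi-C\le\varphi-b\le C,\qquad \tr_\omega\kappa_\varphi\le Ce^{-N\psi}
\]
on $X\setminus N$. Combined with the lower bound of Lemma~\ref{lem:flow-facts}, the flow is uniformly parabolic on compact subsets of $X\setminus N$. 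Evans--Krylov and Schauder bootstrapping then give local smooth precompactness there.

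Finally, identify the limit. Every subsequential $C^\infty_{\rm loc}$ limit agrees with the weak limit, which is $T$ by Proposition~\ref{prop:flow-weak}. Hence the whole flow converges in $C^\infty_{\rm loc}(X\setminus N)$ and $T$ is smooth there. The main point needing care is the first step: checking that the realizing current really carries a uniform Kähler lower bound and a pointwise (not just a.e.) partial-trace bound off $N$, so that it satisfies \eqref{eq:flow-barrier}. Both follow from the construction in Lemma~\ref{lem:partial-trace}.
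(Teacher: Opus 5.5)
Your proposal is correct and follows the paper's proof. You take $\zeta=c$ from Corollary~\ref{cor:semistable}, use the realizing current $B=\kappa+\ddc\psi$ of Theorem~\ref{thm:null-locus}, normalized by $\sup_X\psi=0$, as the barrier in \eqref{eq:flow-barrier} with $Z=N$, and rerun Lemmas~\ref{lem:flow-weight} and~\ref{lem:flow-potential} together with the local regularity and limit-identification argument of Section~\ref{sec:flow}. Your check of the K\"ahler lower bound, which is automatic because $P_\omega(B)\le c-\eta'$ already forces $B\ge\omega/(c-\eta')$, is the point the paper leaves implicit.
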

\begin{proof}
By Corollary~\ref{cor:semistable}, $\zeta=c$.
Choose $B=\kappa+\ddc\psi$ from Theorem~\ref{thm:null-locus},
with $\sup_X\psi=0$. It satisfies
\eqref{eq:strict-subsolution} with $a=\zeta$ and $Z=N$, and
$\psi\to-\infty$ along $N$.
Apply Lemmas~\ref{lem:flow-weight} and~\ref{lem:flow-potential},
followed by the local regularity and convergence argument in
Section~\ref{sec:flow}.
\end{proof}

\begin{remark}
    Replacing $\mathcal R_\omega$ by $\mathcal B_\omega$ requires
a stronger regularization statement which requires an approximation with analytic singularity type and smooth remainder on original manifold, which is not clear so far. Thus
\eqref{eq:null-equality} does not identify the locus defined
using smooth logarithmic remainders on $X$.
\end{remark}

\printbibliography
\end{document}